\documentclass{amsart}
\usepackage[utf8]{inputenc}
\usepackage{amsfonts}
\usepackage{amsmath}
\numberwithin{equation}{section}
\usepackage{amssymb}
\usepackage{amsthm}
\usepackage{mathrsfs}
\usepackage{mathtools}
\usepackage[hidelinks]{hyperref}
\usepackage[capitalise]{cleveref}
\usepackage{tikz}
\usetikzlibrary{arrows.meta,positioning,calc,fit,backgrounds}
\usepackage{pgfplots}
\pgfplotsset{compat=1.16}
\usepgfplotslibrary{fillbetween}
\usepackage{graphicx}
\usepackage[top=3cm,bottom=3cm,left=2.5cm,right=2.5cm,headsep=10pt,letterpaper]{geometry} % Page margins

\usepackage[
    backend=biber,
    style=alphabetic,
    sorting=nyt
]{biblatex}
\theoremstyle{plain}
\newtheorem{theorem}{Theorem}[section]
\newtheorem{proposition}[theorem]{Proposition}
\newtheorem{lemma}[theorem]{Lemma}
\newtheorem{corollary}[theorem]{Corollary}

\newtheorem*{thmA}{Theorem A}
\newtheorem*{thmB}{Theorem B}
\newtheorem*{thmC}{Theorem C}

\theoremstyle{definition}

\theoremstyle{remark}
\newtheorem{remark}[theorem]{Remark}

\allowdisplaybreaks[2]
\newcommand{\R}{\mathbb R}
\newcommand{\E}{\mathbb E}
\newcommand{\eps}{\varepsilon}
\newcommand{\norm}[1]{\left\|#1\right\|}
\newcommand{\Acal}{\mathfrak A}

\newcommand{\sA}{\mathsf A}
\newcommand{\sH}{\mathsf H}
\newcommand{\sG}{\mathsf G}
\newcommand{\sL}{\mathsf L}

\title[Statistical uniformization by heat-kernel Fisher information]{Statistical Uniformization of Surfaces by Heat-Kernel Fisher Information}

\author[G. Cho]{Gunhee Cho}
\address{Department of Mathematics \\ Texas State University \\ San Marcos, TX 78666, USA}
\email{wvx17@txstate.edu}
\thanks{Corresponding author: Gunhee Cho (\texttt{wvx17@txstate.edu}).}

\author[H. C. Jang]{Hyun Chul Jang}
\address{Department of Mathematics \\ Texas State University \\ San Marcos, TX 78666, USA}
\email{hcjang@txstate.edu}

\subjclass[2020]{Primary 58J35; Secondary 35K08, 53C21, 62B11}

\keywords{heat kernel, Fisher information, entropy production, information geometry, Calabi energy, flat tori, conformal metric, curvature energy, resolution threshold, nonlinear stability, Levi parametrix}

\hypersetup{pdftitle={Statistical Uniformization of Surfaces by Heat-Kernel Fisher Information},pdfauthor={Gunhee Cho, Hyun Chul Jang}}

\begin{document}

\begin{abstract}
A Brownian particle observed at time $t$ carries information about where it
started. On a closed surface this makes the surface itself a statistical
model, whose Fisher information pulls back to a heat-smoothed metric
$h_t=2tJ_t$; its volume $V_t$ is a normalized Jeffreys volume, and
$\log(V_t/2t)$ is the model-dependent constant in Rissanen's
minimum-description-length complexity.
We ask whether maximizing this statistical size among metrics of fixed area
in a conformal class recovers the uniformizing metric. First,
$9(A-2\pi\chi t-V_t)/t^2$ converges to the curvature energy $\int K^2\,dA$
(the Calabi energy up to a topological constant), uniformly on classes of
bounded geometry; hence, within a fixed-area conformal class of bounded
geometry containing the uniformizer, approximate maximizers of $V_t$
converge to the constant-curvature metric (on the sphere, to the round
metrics). Second, near every flat torus we prove a sharp finite-scale
version: for small perturbations whose oscillations are resolved by the
diffusion, $t|k|^2\le\theta<\tau_*$, the flat-subtracted defect is
comparable to $\|\Delta f\|_2^2$, uniformly in the frequency cutoff and over
noncollapsing lattices. The explicit threshold $\tau_*=4.408\ldots$ is
optimal, and it is exactly the observation time at which the information
about one curvature mode is maximal. Third, the flat structure is seen only
at an exponential scale: the volume loss of a flat torus measures the
ambiguity of winding and is comparable to
$t^{-1/2}e^{-\operatorname{sys}^2/(16t)}$, which selects the hexagonal lattice as $t\to0$ but not at all times.
\end{abstract}

\maketitle
\enlargethispage{3pt}

\section{Introduction}

Every closed surface carries, in each conformal class, a metric of
constant curvature. We ask whether this metric can be singled out by a
purely statistical criterion. Release a diffusion with generator
$\Delta_g$ from an unknown source point $x$ and record its position at
time $t$. The Fisher information $J_t(x)$ of this experiment measures how
sharply the endpoint locates the source. With the natural normalization
$h_t=2tJ_t$, its total volume $V_t=\operatorname{Vol}(\mathcal M,h_t)$ is the
normalized Jeffreys volume of the source model: a classical measure of
how many statistically distinguishable source positions the surface
contains \cite{Jeffreys1946,Rissanen1996}. On the Euclidean plane $h_t$ is
exactly the metric. On a surface, curvature changes the Fisher volume
element, and on a torus the possibility that the diffusion winds around a
handle blurs the source and lowers it (Figure~\ref{fig:statmodel}).

We call \emph{statistical uniformization} the problem of recovering the
uniformizing metric by maximizing $V_t$ among metrics of fixed area in a
conformal class, or equivalently by minimizing the normalized deficit
\[
 E_t(g)=\frac9{t^2}\bigl(A-V_t(g)\bigr).
\]
Three questions arise, and their answers live at three different scales
(Figure~\ref{fig:scales}). Does the Fisher volume select constant
curvature as $t\to0$? How fine a curvature oscillation does a diffusion
observed at time $t$ resolve stably, and where does this fail? On a torus,
where every conformal class is already flat, does the Fisher volume also
prefer a particular flat structure? The heat-kernel Fisher construction
itself is classical \cite{ISS,AmariNagaoka,AJLS}; our subject is its
finite-scale variational geometry.

\begin{figure}[t]
\centering
\begin{tikzpicture}[scale=0.92,
  lbl/.style={font=\footnotesize,align=center},
  path1/.style={thin,blue!55!black},
  path2/.style={thin,red!60!black}]
% panel (a): plane
\begin{scope}
 \fill[gray!6] (-1.9,-1.6) rectangle (1.9,1.6);
 \foreach \r/\o in {1.2/12,0.85/22,0.5/35} {\fill[blue!\o] (0,0) circle (\r);}
 \draw[path1] (0,0) -- (0.25,0.18) -- (0.18,0.45) -- (0.5,0.62) -- (0.72,0.5) -- (0.95,0.8);
 \draw[path1] (0,0) -- (-0.2,-0.15) -- (-0.45,-0.05) -- (-0.6,-0.38) -- (-0.92,-0.45);
 \fill (0.95,0.8) circle (1.1pt) node[right,font=\scriptsize]{$y$};
 \fill (-0.92,-0.45) circle (1.1pt) node[left,font=\scriptsize]{$y'$};
 \draw[thick,black] (0,0) circle (0.32);
 \fill (0,0) circle (1.4pt) node[below right=-1pt,font=\scriptsize]{$x$};
 \node[lbl] at (0,-2.05) {(a) plane: $h_t=g$,\\ $V_t=$ area};
\end{scope}
% panel (b): curved surface
\begin{scope}[xshift=5cm]
 \fill[gray!6] (-1.9,-1.6) rectangle (1.9,1.6);
 \foreach \y in {-1.2,-0.6,0,0.6,1.2}
   \draw[gray!45] plot[domain=-1.9:1.9,samples=40] (\x,{\y+0.35*exp(-(\x*\x+\y*\y)/0.6)*(1-\y)});
 \foreach \xx in {-1.2,-0.6,0,0.6,1.2}
   \draw[gray!45] plot[domain=-1.6:1.6,samples=40] ({\xx+0.35*exp(-(\xx*\xx+\x*\x)/0.6)*\xx},\x);
 \foreach \r/\o in {1.05/12,0.75/22,0.45/35} {\fill[blue!\o,rotate=20] (0,0) ellipse ({\r*1.12} and {\r*0.9});}
 \draw[thick,black,rotate=20] (0,0) ellipse (0.37 and 0.29);
 \fill (0,0) circle (1.4pt) node[below right=-1pt,font=\scriptsize]{$x$};
 \node[lbl] at (0,-2.05) {(b) curved: $h_t=(1-tK)g+O(t^2)$,\\ $A-V_t=2\pi\chi t+\frac{t^2}9\!\int\! K^2+\dots$};
\end{scope}
% panel (c): torus with winding
\begin{scope}[xshift=10cm]
 \draw[thick] (-1.6,-1.4) rectangle (1.6,1.4);
 \draw[->,thick] (-0.15,1.4) -- (0.15,1.4); \draw[->,thick] (-0.15,-1.4) -- (0.15,-1.4);
 \draw[->>,thick] (-1.6,-0.15) -- (-1.6,0.15); \draw[->>,thick] (1.6,-0.15) -- (1.6,0.15);
 \begin{scope}
  \clip (-1.6,-1.4) rectangle (1.6,1.4);
  \foreach \r/\o in {0.9/14,0.6/24} {\fill[blue!\o] (1.1,0) circle (\r);}
  \foreach \r/\o in {0.9/14,0.6/24} {\fill[blue!\o] (-2.1,0) circle (\r);}
 \end{scope}
 \fill (1.1,0) circle (1.4pt) node[above=1pt,font=\scriptsize]{$x$};
 \draw[path2] (1.1,0) -- (1.3,0.12) -- (1.6,0.05);
 \draw[path2] (-1.6,0.05) -- (-1.35,-0.1) -- (-1.15,0.08);
 \fill (-1.15,0.08) circle (1.1pt) node[below,font=\scriptsize]{$y$};
 \node[lbl] at (0,-2.05) {(c) torus: winding blurs $x$,\\ $1-V_t/A\asymp t^{-1/2}e^{-\operatorname{sys}^2/(16t)}$};
\end{scope}
\end{tikzpicture}
\caption{The statistical model behind $V_t$. A diffusion started at an
unknown source $x$ is observed at time $t$; the shaded blob is the law of
its endpoint, and the black ellipse is the unit ball of the normalized
Fisher information $h_t=2tJ_t$. (a) On the plane $h_t$ is exactly the
metric. (b) Curvature changes the Fisher volume element at order $t$;
after Gauss--Bonnet the leading curvature penalty is $t^2\int K^2/9$
(Theorem~\ref{thm:uniformCalabi}). (c) On a flat torus a path may leave
through one side and return through the opposite side, so an endpoint is
consistent with several lifts of the source; this winding ambiguity
lowers the Fisher volume by an exponentially small amount
(Theorem~\ref{thm:systolicloss}). The pictures are schematic.}
\label{fig:statmodel}
\end{figure}

The first result answers the selection question in every genus.

\begin{thmA}[Recovery of uniformization]
Let $\mathcal M$ be a closed, connected, oriented surface and fix a class of metrics of area $A$
with bounded geometry \textup{(}uniform ellipticity, finitely many bounded
derivatives and an injectivity-radius bound\textup{)}. Uniformly on this
class,
\[
 V_t(g)=A-2\pi\chi(\mathcal M)\,t-\frac{t^2}{9}\int_{\mathcal M}K_g^2\,dA_g+O(t^3)
 \qquad(t\to0).
\]
Consequently, approximate maximizers of $V_t$ \textup{(}with error $o(t^2)$\textup{)} in a precompact class of
this kind inside one conformal class, containing its constant-curvature
metric, converge to that metric in $C^2$ as $t\to0$ \textup{(}on the sphere, to the
set of round metrics\textup{)}.
\end{thmA}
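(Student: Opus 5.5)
The argument has two parts: a uniform short-time expansion of the Fisher volume density, then a variational consequence of the $t^2$ coefficient.

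\textbf{Step 1: pointwise expansion of $J_t$.} The Fisher information of the source model is
\[
J_t(x)(v,v)=\int_{\mathcal M}\bigl(d_x\log p_t(x,y)[v]\bigr)^2\,p_t(x,y)\,dA(y).
\]
I will work in normal coordinates at $x$. Into this I substitute the Minakshisundaram--Pleijel / Levi parametrix
\[
p_t(x,y)=(4\pi t)^{-1}e^{-d^2/4t}\bigl(u_0+tu_1+t^2u_2+O(t^3)\bigr),
\]
with $u_0=\Theta^{-1/2}$, where $\Theta$ is the Jacobian of $\exp_x$. Since the kernel concentrates at scale $\sqrt t$, I rescale $y=\exp_x(\sqrt t\,z)$ and integrate Gaussian moments. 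This gives
\[
h_t=2tJ_t=g-\tfrac{t}{3}\,c_1K\,g+t^2(\cdots)+O(t^3),
\]
with explicit universal constants. The $t^2$ term involves $K^2$ and $\Delta K$, because in dimension two the curvature tensor is determined by $K$. Taking $\sqrt{\det}$ then yields the volume density
\[
dV_{h_t}=\bigl(1-tK+t^2(aK^2+b\Delta K)+O(t^3)\bigr)\,dA.
\]
The normalization of $-tK$ is fixed by the figure's $h_t=(1-tK)g$.

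\textbf{Step 2: integrate.} The term $\int\Delta K$ vanishes. Gauss--Bonnet gives $\int K=2\pi\chi$. So
\[
V_t=A-2\pi\chi t+a\,t^2\int K^2+O(t^3).
\]
The content of the first claim is that $a=-1/9$. I will confirm this constant independently on the round sphere, where $p_t$ has a zonal expansion and $V_t$ can be computed directly to order $t^2$. The universal coefficients $a,b$ are determined by two independent model checks, the sphere and the hyperbolic plane. Using both protects against an arithmetic error in the Gaussian moment computation.

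\textbf{Step 3: uniformity.} The $O(t^3)$ remainder must be uniform over the bounded-geometry class. The parametrix remainder is controlled by finitely many derivatives of the metric and by a lower bound on the injectivity radius. Off-diagonal contributions are $O(e^{-c/t})$ by Gaussian upper bounds. The derivative bounds also require Li--Yau/Hamilton-type gradient estimates for $\nabla_x\log p_t$, uniform in the class. I need these to justify differentiating the expansion under the integral and to bound the Fisher integrand in the far region, where $\log p_t$ is not given by the parametrix. I expect this to be the main obstacle. My first approach will be to split the integral at $d(x,y)=\delta$ with $\delta$ below the injectivity radius. On the near region I use the parametrix with derivative remainders. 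On the far region I use the Gaussian bounds $|\nabla\log p_t|\lesssim d/t+t^{-1/2}$ together with $p_t\lesssim t^{-1}e^{-d^2/5t}$, which make that region exponentially small.

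\textbf{Step 4: selection.} Take $g$ in a fixed conformal class with fixed area $A$. Then $A$ and $\chi$ are fixed, and
\[
\int K^2\,dA=\int(K-\bar K)^2\,dA+\frac{(2\pi\chi)^2}{A},
\qquad \bar K=\frac{2\pi\chi}{A}.
\]
Let $g_t$ satisfy $V_t(g_t)\ge\sup V_t-o(t^2)$, and compare with the uniformizer $g_0$, which lies in the class. Uniform expansion gives $\int(K_{g_t}-\bar K)^2\to0$. Precompactness in $C^{k}$ lets me pass to subsequential limits $g_\infty$. These limits have $K=\bar K$ and are conformal to $g_0$, so by uniqueness of constant-curvature metrics of given area in a conformal class, $g_\infty=g_0$. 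On $S^2$ I instead get the Möbius orbit of round metrics. Finally, compactness in $C^{k}$ with $k>2$ upgrades subsequential convergence to $C^2$ convergence of the whole family, towards $g_0$ or towards the set of round metrics respectively.
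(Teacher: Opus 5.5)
Your proposal is correct in outline, but it obtains the decisive constant by a different route than the paper.

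The paper never computes $H_2$ pointwise. Its parametrix and Laplace argument (Lemma~\ref{lem:fisherexpansion}) only establishes a uniform expansion $h_t=g+tH_1+t^2H_2+O(t^3)$ with $H_1=c_{\rm H}Kg$. The values $-2\pi\chi$ and $-\frac19\int K^2\,dA$ then come from the de Bruijn identity $\frac12\int\operatorname{tr}_gh_t\,dA=-t\frac{d}{dt}\operatorname{Ent}_t$. This is integrated in $t$ and matched, by uniqueness of asymptotic expansions, with the heat-entropy coefficients of \cite{MPR}. The determinant enters only through $D_t=O(t^4)$, since $h_t^{\rm tf}=O(t^2)$.

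You instead use invariant theory at weight four to write the density as $1+\alpha tK+t^2(aK^2+b\Delta K)+O(t^3)$, and fix $\alpha,a$ by one model computation. This avoids the entropy asymptotics altogether and gives a pointwise statement up to the divergence term. It is also short on the unit sphere, writing $p_t(x,y)=F_t(d(x,y))$:
\begin{enumerate}
\item[(i)] By symmetry $h_t$ is isotropic, so $\sqrt{\det h_t}=\frac12\operatorname{tr}h_t=t\int(\partial_r\log F_t)^2F_t\,dA$.
\item[(ii)] The radial transport equations give $u_0=(r/\sin r)^{1/2}$, $u_1=u_0\cdot\frac14\bigl(1+r^{-2}-r^{-1}\cot r\bigr)=\frac13+\frac{r^2}{30}+O(r^4)$ and $u_2(0)=\frac1{15}$.
\item[(iii)] The moments $\mathbb E\,r^{2k}=(4t)^kk!$ of the weight $\frac r{2t}e^{-r^2/4t}\,dr$ then yield $1-t-\frac{t^2}9+O(t^3)$, i.e.\ $\alpha=-1$ and $a=-\frac19$.
\end{enumerate}
The cost is that universality and weight-$2j$ homogeneity of the coefficients (Step~4 of the paper's lemma) must be invoked at order $t^2$, not only for $H_1$. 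Your selection step coincides with Corollary~\ref{cor:statisticaluniformization}.

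Three points should be fixed.

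First, the sphere and the hyperbolic plane do not determine both $a$ and $b$. Both have $\Delta K=0$ and $K^2=1$, so they give the same equation for $a$ and none for $b$. This is harmless only because $\int\Delta K\,dA=0$; say that $b$ is never needed rather than claiming to find it.

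Second, the $t$-coefficient cannot be taken from the figure caption. It is an output of the same sphere computation.

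Third, splitting at a fixed radius $\delta$ does not work with the absolute parametrix remainder $\|p_t-k_t\|_{C^1}\lesssim t^{N-3/2}$, where $k_t=(4\pi t)^{-1}e^{-d^2/4t}\sum_j t^ju_j$. Near $d=\delta$ the main term $k_t\approx t^{-1}e^{-\delta^2/4t}$ is exponentially smaller than the available bound on the remainder, so $|\nabla_xp_t|^2/p_t$ cannot be expanded there. The fix is as follows:
\begin{enumerate}
\item[(i)] Take the inner region $d^2\le mt\log(1/t)$, where $k_t\gtrsim t^{m/4-1}$ dominates the remainder for large $N$.
\item[(ii)] Bound everything outside it by the Gaussian and logarithmic-gradient bounds you already cite.
\item[(iii)] That outer part is only $O(t^{m/10})$ after the factor $2t$, not exponentially small, but $m=30$ suffices.
\end{enumerate}
This is the localization the paper uses, and its constants are the ones that must be tracked for uniformity.
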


\noindent This is Theorem~\ref{thm:uniformCalabi} and
Corollary~\ref{cor:statisticaluniformization}. Equivalently, after removing
the topological term $18\pi\chi/t$, the deficit $E_t$ converges to the
curvature energy $\int K^2\,dA$, which differs from the Calabi energy
$\operatorname{Ca}(g)=\int(K-\bar K)^2\,dA$ by the constant $(2\pi\chi)^2/A$.
The coefficient comes from the heat-entropy asymptotics of Menkovski,
Portegies and Ravelonanosy~\cite{MPR} and an integrated de Bruijn identity;
it agrees with an independent flat computation.

Theorem~A is a statement at a fixed metric. It does not see a curvature
oscillation whose wavelength shrinks with $t$, and this is the regime where
the statistics are most interesting. Near a flat torus
$T_{\mathcal L}=\R^2/\mathcal L$ of area $A$ we write $g_f=(1+f)g_0$ with
$\int f=0$, expand $f$ in the dual frequencies $k\in\mathcal L^\vee$, and
measure its size by the Wiener norm $\sA(f)=\sum_k|\widehat f_k|$ and the
curvature energy $N(f)=\|\Delta f\|_2^2\approx4\int K^2\,dA$.

\begin{thmB}[Sharp finite-scale stability]
Fix $A,\ell_0>0$ and $0<\theta<\tau_*$. There are
$\varepsilon_*,t_*,c_*,C_*>0$, depending only on $A,\ell_0,\theta$, such
that on every flat torus of area $A$ and systole at least $\ell_0$,
\[
 c_*N(f)\le E_t(g_f)-E_t(g_0)\le C_*N(f)
 \quad(0<t<t_*)
\]
whenever
\[
 \int f=0,\quad \sA(f)\le\varepsilon_*,\quad
 \operatorname{supp}\widehat f\subset\{|k|\le\Lambda\},\quad
 t\Lambda^2\le\theta.
\]
The constants are independent of the cutoff $\Lambda$ and of the number of
modes. Here $\tau_*\approx4.408$ \textup{(}we prove $4<\tau_*<5$\textup{)} is
the unique positive zero of an explicit response function $q$, and no
positive constant uniform over full bands extends to $\theta\ge\tau_*$.
\end{thmB}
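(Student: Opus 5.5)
We propose to prove Theorem~B by a second-order expansion of the Fisher volume in the conformal factor, followed by a diagonalization on the Fourier side and a uniform control of the cubic remainder in the Wiener algebra.

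\textbf{Setting up the perturbation.} Fix the flat torus $T_{\mathcal L}$ and write $g_f=(1+f)g_0$. The heat kernel $p^f_t$ of $\Delta_{g_f}=(1+f)^{-1}\Delta_0$ will be built from the flat kernel $p^0_t$, a sum over lattice translates, by a Levi parametrix/Duhamel series: $p^f_t=p^0_t+\sum_{m\ge1}P_m$, where $P_m$ is an $m$-fold time-ordered convolution with the multiplication operator $-f(1+f)^{-1}\Delta_0$. Every term is a finite trigonometric sum in the modes of $f$. We then expand $J_t$, the Fisher information $\int p\,\partial_i\log p\,\partial_j\log p$, and $V_t=\int\sqrt{\det h_t}$ in powers of $f$. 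The linear term integrates to zero because $\int f=0$ and the flat background is translation invariant. So
\[
E_t(g_f)-E_t(g_0)=Q_t(f)+R_t(f).
\]

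\textbf{The quadratic form.} By translation invariance, $Q_t$ is diagonal in $k$:
\[
Q_t(f)=\sum_k m_t(k)\,|\widehat f_k|^2 .
\]
Up to lattice-winding corrections, which are $O(e^{-c\ell_0^2/t})$ uniformly for systole at least $\ell_0$, the multiplier is computed on $\R^2$. Scaling then gives
\[
m_t(k)=\tfrac{9}{t^2}\cdot t^2|k|^4\,\tfrac{q(t|k|^2)}{9}\,A,
\]
that is, $m_t(k)\asymp |k|^4\,q(t|k|^2)$ for an explicit response function $q$ obtained from Gaussian integrals, with $q(0)=1$ matching Theorem~A, since $N(f)\approx4\int K^2$. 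We will compute $q$ in closed form, presumably a combination of $e^{-\tau/c}$ and polynomials in $\tau$. We then show that $q>0$ on $[0,\tau_*)$ with a unique simple zero at $\tau_*\in(4,5)$, checked by sign evaluations and monotonicity of an auxiliary function. On $|k|^2t\le\theta<\tau_*$, compactness gives $\min q\ge c(\theta)>0$ and $\max q\le C$. Hence
\[
c\,N(f)\le Q_t(f)\le C\,N(f)
\]
uniformly in $\Lambda$, in the number of modes, and in the lattice. Sharpness follows by taking a single mode with $t|k|^2$ near or beyond $\tau_*$ and small amplitude: there $Q_t$ is nonpositive to leading order while the remainder is cubic.

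\textbf{The remainder (main obstacle).} We must show $|R_t(f)|\le C\,\sA(f)\,N(f)$ with $C$ independent of $\Lambda$. Then $\sA(f)\le\eps_*$ small absorbs it into $Q_t$. The danger is that the $m$-th order term is a multilinear form in modes $k_1,\dots,k_m$ whose kernel could grow with frequencies like $t^{-1}$ times powers of $|k|$. Our plan is to write each term's symbol as
\[
\sigma_m(k_1,\dots,k_m)\,\widehat f_{k_1}\cdots\widehat f_{k_m},\qquad \sum_i k_i=0,
\]
and to prove the bound
\[
|\sigma_m|\le C^m\,|k_a|^2|k_b|^2
\]
for some pair $a,b$, using the constraint $t|k_i|^2\le\theta$ to convert every stray factor of $t|k|^2$ into a constant. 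The two factors $|k|^2$ come from the vanishing of $\sigma$ when any mode is constant, which reflects the scaling and translation invariance of the Fisher construction. Given this bound, Young's inequality on the convolution (Wiener algebra) yields
\[
|R_t|\le\sum_{m\ge3}C^m\,\sA^{m-2}\sum_k|k|^4|\widehat f_k|^2,
\]
which converges for $\sA$ small. The hardest part is extracting the two factors of $|k|^2$ from the logarithmic derivatives and the square root of the determinant. We would handle the logarithm by expanding $\log(1+u)$ with $u=(p^f-p^0)/p^0$, bounded in the Gaussian-weighted sense by $C\sA$. The winding terms are again exponentially small and are absorbed for $t<t_*$.
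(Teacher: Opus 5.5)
\textbf{Verdict: genuine gap.} At the top level your plan matches the paper's proof of Theorem~\ref{thm:flatlattices}: the exact diagonal multiplier $|k|^4q(t|k|^2)$, a cubic remainder of size $C\sA(f)N(f)$ absorbed by Wiener smallness, an exponentially small periodization error made relative to $N(f)$ through $\sA(f)\le C_W\|\Delta f\|_2$, and sharpness from modes $\cos(Mk_0\cdot x)$ at $t_M=\tau_0/(M^2|k_0|^2)$ with $\tau_0\in(\tau_*,\Theta)$. But the step you flag as the main obstacle is the entire content of cutoff independence, and the mechanism you give for it does not work.

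First, the symbols do not vanish when a mode is constant. A constant conformal factor is a homothety, which rescales the observation time: $V_t((1+c)g)=(1+c)V_{t/(1+c)}(g)$. Differentiating the quadratic response in a constant mode therefore gives a cubic symbol proportional to $|k|^4\bigl(3q(\tau)+\tau q'(\tau)\bigr)$, which is nonzero.

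More seriously, the band condition $t|k|^2\le\theta$ only absorbs factors $t|k|^2$ from above. It cannot turn pieces of the symbol of total degree below four in $k$ into $C\sA(f)N(f)$ for low modes without a lower spectral cutoff. Examples are pieces carrying $t^{-1}$, such as those from $t^{-1}\int f|\nabla f|^2\,dx$ or $t^{-1}\|\nabla f\|_2^2$. Such pieces are genuinely present in the constituents of $A-V_t$: for instance $\int(\alpha_t-\rho)\,dx$ and $t\int\operatorname{tr}Q_t\,dx$ are each of size $t\|\nabla f\|_2^2$. They must therefore cancel identically. In the paper this is done by exact identities for which your plan has no counterpart:
\begin{itemize}
\item the regression identity $h_t=\alpha_tI+2tQ_t$ with the compensating $\pm\frac t2\mathcal I(\rho)$;
\item the clock formula \eqref{naclock:eq:exact}, where weighted self-adjointness leaves $L\rho$ and $La$ as two separate Hessian factors;
\item the first-gradient cancellation in the score, resting on the two-dimensional orthogonality \eqref{namom:orthogonality} and Lemma~\ref{score:lem:bridge};
\item the integrated moment cancellations of Theorem~\ref{mom:main}, namely $J_3\approx-8t^2\mathcal I$ and $H_4\approx32t^2\mathcal I$;
\item the exact cancellation of the clock square $\mathcal J_t$ between $\mathfrak A_t$ and $\mathfrak B_t$;
\item the quartic anisotropy bound $\|\nabla f\|_4^4\le10\sA(f)^2N(f)$.
\end{itemize}

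Second, the series bookkeeping itself is unavailable. The ratio $u=(p^f-p^0)/p^0$ is not uniformly small. The perturbed kernel decays with a different Gaussian exponent, governed by the conformal distance, so $|u|$ can grow like $e^{c\sA(f)|w|^2/t}$ in the tails. Consequently the expansions of $\log(1+u)$ or $1/(1+u)$ diverge there. For nonreal amplitudes no lower bound on the kernel is known, so you also cannot read off $C^m$ bounds from holomorphy of the Fisher quotient on a cutoff-independent disk.

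The paper never sums an expansion. It uses Cauchy estimates only for numerators: kernels, the residuals $\mathcal N$ and $U$, and the clock and moment functionals. It controls at most four real $s$-derivatives of the quotients through the two-sided Gaussian bounds with exponents $\kappa_\pm$ and the margin $6\kappa_--5\kappa_+>0$. It then closes with Taylor's formula, using the third variation at the nonconstant backgrounds $sf$ (Corollary~\ref{cor:planevariations}). At such backgrounds the third derivative is no longer a single translation-invariant trilinear symbol, which is why all estimates are made in physical space before any Fourier summation.

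Two smaller points. With $N(f)\approx4\int K^2\,dA$ the normalization forces $q(0)=\frac14$, not $1$. Also, $q$ is not an exponential polynomial: its closed form contains $e^{-\tau/2}\operatorname{Shi}(\tau/2)$. So uniqueness of the zero and $4<\tau_*<5$ need an argument like the paper's monotonicity of $\Omega=e^z\Upsilon/p$ via the sign of $\mathscr V$.
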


\noindent In statistical terms, the flat metric is a strict, nonlinearly
stable local maximizer of the Jeffreys volume among all resolved conformal
perturbations, with constants that do not deteriorate as the resolution
grows (Theorem~\ref{thm:flatlattices}). The resolution condition
$t|k|^2<\tau_*$ says that the wavelength of every oscillation exceeds
$2\pi\sqrt{t/\tau_*}\approx3\sqrt t$, about twice the per-coordinate
diffusion length $\sqrt{2t}$.

The third result concerns the flat structure itself.

\begin{thmC}[The flat structure appears at an exponential scale]
For every flat torus $0<V_t<A$, and the defect $d_t=1-V_t/A$ is the
information hidden by the winding of the diffusion. As $t\to0$,
\[
 d_t(\mathcal L)=\frac{\sqrt\pi\,m\,s}{8\sqrt t}\,
 e^{-s^2/(16t)}\bigl(1+O(t)\bigr),
\]
where $s$ is the systole and $m$ the number of shortest lattice vectors,
counting $\pm\ell$ \textup{(}$m=6$ for the hexagonal torus\textup{)}; the
bounds are uniform over lattices with systole bounded below.
Hence, among flat tori of fixed area and systole bounded below, maximizers
of $V_t$ converge to the hexagonal torus as $t\to0$, and the hexagonal torus beats every fixed
competitor at small times, although not at all times.
\end{thmC}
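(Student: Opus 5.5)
On a flat torus $T_{\mathcal L}=\R^2/\mathcal L$ the heat kernel is the periodized Gaussian
\[
p_t(x,y)=\frac1{4\pi t}\sum_{\ell\in\mathcal L}e^{-|x-y+\ell|^2/(4t)} .
\]
By translation invariance $J_t$ is a constant matrix. Hence $h_t=2tJ_t$ is constant, $V_t=A\sqrt{\det(2tJ_t)}$, and $d_t=1-\sqrt{\det(2tJ_t)}$. The plan is to write $J_t$ as a mixture Fisher information and compare it with the unwound Gaussian. Put the source at $0$ and let $y$ range over a fundamental domain. The endpoint law is the mixture over lifts $\ell$ with posterior weights
\[
w_\ell(y)=\frac{e^{-|y+\ell|^2/4t}}{\sum_{\ell'\in\mathcal L}e^{-|y+\ell'|^2/4t}} ,
\]
and the score is $\nabla_x\log p=\frac{1}{2t}\,\E_w[y+\ell]$.

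\textbf{Step 1 (exact identity).} Unfold the integral to $\R^2$. With $Y\sim N(0,2tI)$ the planar Fisher information is $\frac1{(2t)^2}\E[YY^T]=\frac1{2t}I$. By the variance decomposition (the law of total covariance, equivalently the missing-information principle),
\[
2tJ_t=I-\frac1{2t}\,\E_Y\bigl[\operatorname{Cov}_{w(Y)}(Y+\ell)\bigr].
\]
The subtracted matrix is positive semidefinite and nonzero, because every $y$ has at least two lifts with positive weight. It is also strictly less than $2tI$: by Cramér–Rao-type reasoning the score has nonzero variance, or equivalently $J_t$ is positive definite since $p_t$ is a nonconstant smooth density. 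Together these give $0<\det(2tJ_t)<1$, i.e.\ $0<V_t<A$. This settles the first claim and identifies $d_t$ as the winding ambiguity.

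\textbf{Step 2 (asymptotics).} As $t\to0$ the weights are exponentially concentrated on the nearest lift. The posterior covariance is non-negligible only near the bisector hyperplanes $\langle y,\ell\rangle\approx-|\ell|^2/2$, where two lifts $0$ and $-\ell$ compete. There the covariance is rank one,
\[
w(1-w)\,\ell\ell^T,\qquad w=\bigl(1+e^{u}\bigr)^{-1},\quad u=\frac{\langle y,\ell\rangle+|\ell|^2/2}{2t}\,|\ell|\cdot\frac{1}{|\ell|}.
\]
Integrate against the Gaussian density of $Y$ near the bisector. The transverse density there is $\approx(4\pi t)^{-1/2}e^{-|\ell|^2/(16t)}$, times the logistic profile in the normal variable. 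Computing $\int w(1-w)\,e^{\text{(linear tilt)}}$ exactly is a Gaussian–logistic integral, and since the tilt is of the same order it must be done carefully. After taking the trace, $1-\sqrt{\det}\approx\frac12\operatorname{tr}(\cdot)$, and summing over $\pm\ell$ gives the contribution $\frac{\sqrt\pi\,|\ell|}{8\sqrt t}e^{-|\ell|^2/16t}$ per lattice vector. Summing over the $m$ shortest vectors gives the main term.

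\textbf{Step 3 (error terms).} Three sources of error must be controlled:
\begin{itemize}
\item next-shortest vectors, which contribute relatively $e^{-(s_2^2-s^2)/16t}$;
\item triple overlaps, which are exponentially smaller;
\item the $O(t)$ curvature-free corrections from Laplace's method.
\end{itemize}
The next-shortest contribution is not automatically $O(t)$ uniformly: lattices with $s_2$ close to $s$ must be handled. I would therefore state the uniform version with the sum over all $\ell$, and isolate the minimal ones only when the gap $s_2^2-s^2\gg t\log(1/t)$. The systole lower bound bounds the lattice point counts uniformly. This bookkeeping, and the exact evaluation of the Gaussian–logistic integral yielding the constant $\sqrt\pi/8$, are the main obstacles.

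\textbf{Step 4 (selection of the hexagonal torus).} At fixed area the hexagonal lattice maximizes the systole $s$. The leading term is dominated by the exponent $e^{-s^2/16t}$, so any lattice with a smaller systole has a larger $d_t$ for small $t$. By uniformity, maximizers have $s\to s_{\rm hex}$, hence converge to the hexagonal torus by compactness of lattices with systole bounded below. The failure at all times I would exhibit by a numerical or large-$t$ comparison. As $t\to\infty$, $d_t\to1$ at a rate governed by the first eigenvalue $4\pi^2/s^{\vee2}$, where $s^\vee$ is the dual systole. A lattice with a larger dual systole, such as a square or elongated lattice, then beats the hexagonal torus at some finite $t$.
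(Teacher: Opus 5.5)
Your route is the paper's route: the total-covariance identity $I-H_t=\frac1{2t}\E\operatorname{Cov}(Z\mid Y)$, localization of the loss at midpoints of shortest vectors, $1-\sqrt{\det(I-Q)}\approx\frac12\operatorname{tr}Q$, and selection by maximality of the systole. The proposal nevertheless has genuine gaps.

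\textbf{The analytic core is left open.} You never compute the leading constant (you list it as the main obstacle). You also never prove the uniform two-sided bound that your Step~4 invokes as ``uniformity''.

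The missing device is to pair the two competing images, rather than integrate a one-sided logistic against a tilted Gaussian. Write your subtracted matrix as $\frac1{4t}\sum_{\ell\ne0}\ell\otimes\ell\,I_\ell(t)$, with $I_\ell(t)=\int_{\R^2}\phi_t(z)\phi_t(z+\ell)/p_t^{\mathcal L}(z)\,dz$, $\phi_t(z)=(4\pi t)^{-1}e^{-|z|^2/(4t)}$ and $p_t^{\mathcal L}=\sum_{\ell}\phi_t(\cdot+\ell)$. This follows from the pairwise covariance formula and unfolding. In coordinates $z=-\ell/2+u\ell/r+v\ell^\perp/r$, $r=|\ell|$, one has exactly
\[
 \frac{\phi_t(z)\phi_t(z+\ell)}{\phi_t(z)+\phi_t(z+\ell)}
 =\frac{e^{-r^2/(16t)}}{8\pi t}\,e^{-(u^2+v^2)/(4t)}\operatorname{sech}\frac{ru}{4t}.
\]
So the tilt disappears; in your one-sided form, its odd part simply integrates to zero. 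Then $\int_{\R}\operatorname{sech}=\pi$ gives $I_\ell(t)=\frac{\sqrt{\pi t}}{r}e^{-r^2/(16t)}(1+O(t))$. This is $\frac{\sqrt\pi\,r}{8\sqrt t}e^{-r^2/(16t)}$ per \emph{signed} vector in $d_t$; the $m$ in the statement already counts $\pm\ell$.

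Because $p_t^{\mathcal L}\ge\phi_t(z)+\phi_t(z+\ell)$, the same expression bounds $I_\ell$ from above for every $\ell\ne0$. Combined with $\sum_{\ell\ne0}|\ell|e^{-|\ell|^2/(16t)}\le Ce^{-s^2/(16t)}$ (packing plus $s^2\le2A/\sqrt3$), this gives the uniform upper bound with no gap hypothesis.

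The uniform lower bound needs an argument you do not supply. For a shortest $\ell$, every other image satisfies $|m-\ell/2|^2\ge3r^2/4$. Hence on the box $|u|\le t/r$, $|v|\le\sqrt t$ one has $p_t^{\mathcal L}\le C(\phi_t(z)+\phi_t(z+\ell))$ uniformly, and the box alone gives $I_\ell\ge c\sqrt t\,r^{-1}e^{-r^2/(16t)}$.

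Your caution about $s_2$ close to $s$ is correct. The paper claims only $O_{\mathcal L}(t)$ in the sharp asymptotic. Its uniform statement is $d_t\asymp t^{-1/2}e^{-s^2/(16t)}$, which is exactly what the selection step uses.

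\textbf{The large-time reversal is misstated.} Since $V_t/A=\sqrt{\det(2tJ_t)}$ with $J_t\approx\sum_{\xi}\xi\otimes\xi\,e^{-2t|\xi|^2}$, the determinant needs two independent dual directions. As $t\to\infty$ this gives $V_t/A\asymp t\,e^{-t(|\xi_1|^2+|\xi_2|^2)}$ for a reduced basis $\xi_1,\xi_2$ of $\mathcal L^\vee$, so the rate is not governed by $\lambda_1$ alone. For the area-one $2{:}1$ rectangle this rate is $10\pi^2>16\pi^2/\sqrt3$. That elongated lattice therefore does \emph{not} overtake the hexagonal one at large $t$, consistent with the row $t/A=.05$ in Remark~\ref{rem:hexnotalltimes}.

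The square does overtake it, because its first dual shell is isotropic: $V_t^{\rm sq}/A\sim16\pi^2te^{-8\pi^2t}$ against $V_t^{\rm hex}/A\sim\frac{48\pi^2}{\sqrt3}te^{-16\pi^2t/\sqrt3}$. This is the paper's argument. With $\lambda_1=\operatorname{sys}(\mathcal L^\vee)^2$, the hexagonal lattice has the \emph{largest} dual systole at fixed area, so your ``larger dual systole'' has the comparison backwards.

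\textbf{A smaller point on positivity.} ``$p_t$ is a nonconstant smooth density'' does not give $J_t\succ0$. You need that no directional derivative of $p_t^{\mathcal L}$ vanishes identically. One way: its Fourier coefficients $A^{-1}e^{-t|\xi|^2}$ are all positive and $\mathcal L^\vee$ spans $\R^2$.
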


\noindent This is Proposition~\ref{prop:wrappingloss},
Theorem~\ref{thm:systolicloss}, Corollary~\ref{cor:hexselection} and
Remark~\ref{rem:hexnotalltimes}. The exponent $s^2/16t$, rather than
$s^2/4t$, reflects the overlap of two images at the midpoint of a shortest
vector (Figure~\ref{fig:winding}). Theorems~A--C separate three scales:
the conformal factor is seen at order $t^2$, curvature oscillations are
resolved up to $t|k|^2=\tau_*$, and the flat structure only at the
exponentially small scale $e^{-\operatorname{sys}^2/16t}$.

\begin{figure}[t]
\centering
\begin{tikzpicture}[x=2.5cm,y=1.05cm,font=\footnotesize]
% x = log|k| in [0,2.6], y = log t in [-4,0.6]
\fill[green!10] (0,-4) -- (0,0.322) -- (2.161,-4) -- cycle;
\fill[red!8] (0,0.644) -- (0,0.6) -- (2.6,0.6) -- (2.6,-4) -- (2.322,-4) -- cycle;
\fill[orange!35,opacity=0.45] (0,0.12) rectangle (2.6,0.6);
\draw[thick,green!40!black] (0,0.322) -- (2.161,-4);
\draw[thick,densely dashed,red!70!black] (0,0.644) -- (2.322,-4);
\draw[->] (0,-4) -- (2.75,-4) node[right]{$\log|k|$};
\draw[->] (0,-4) -- (0,0.8) node[above]{$\log t$};
\node[align=center,green!30!black] at (0.75,-3.2) {resolved band $t|k|^2\le\theta$:\\ coercive (Thm.~B)};
\node[align=center,red!60!black] at (2.05,-1.1) {$t|k|^2>\tau_*$:\\ saddle directions};
\node[orange!45!black,anchor=west] at (0.35,0.36) {$t\gtrsim\operatorname{sys}^2/16$: winding loss visible (Thm.~C)};
\draw[->,thick,blue!60!black] (0.06,-0.2) -- (0.06,-3.8);
\node[align=left,blue!50!black,anchor=west] at (0.1,-1.55) {fixed metric,\\ $t\to0$ (Thm.~A)};
\draw[thick,green!40!black] (2.72,-1.9) -- ++(0.2,0) node[right,black]{$t|k|^2=\theta$};
\draw[thick,densely dashed,red!70!black] (2.72,-2.4) -- ++(0.2,0) node[right,black]{$t|k|^2=\tau_*$};
\end{tikzpicture}
\caption{Three scales of statistical uniformization on a torus, in the
plane of frequency $|k|$ and observation time $t$ (logarithmic axes). A
fixed metric is reached along vertical lines $t\to0$ (Theorem~A). An
oscillation of frequency $k$ is resolved stably below the line
$t|k|^2=\theta$ (Theorem~B); above the dashed line $t|k|^2=\tau_*$ it
creates saddle directions (as $t\to0$, Corollary~\ref{cor:responseobstruction}).
Theorem~B also assumes small amplitude. The flat structure contributes at
every $t$, but becomes visible only once $t$ is comparable to
$\operatorname{sys}^2/16$ (Theorem~C). Schematic.}
\label{fig:scales}
\end{figure}

The threshold $\tau_*$ comes from an exact computation. On the plane lift the second variation of $E_t$ at the flat metric is
diagonal in Fourier space,
\[
 \tfrac12D^2E_t^\infty(0)[f,f]=A\sum_{k\ne0}|k|^4q(t|k|^2)|\widehat f_k|^2,
 \qquad q(0)=\tfrac14,
\]
and $q$ has a closed form through the hyperbolic sine integral. We prove by
hand that $q$ has a unique positive zero $\tau_*\in(4,5)$, that
$q\ge3/1000$ on $[0,4]$, and that $q<0$ beyond $\tau_*$. The zero has a
statistical meaning. Perturb the metric by one mode,
$(1+\eta\cos(k\cdot x))g_0$, and estimate the amplitude $\eta$, at $\eta=0$, from a
uniformly drawn source and the unwrapped (plane-lift) endpoint of the
diffusion at time $t$. The Fisher
information of this experiment is
\[
 I(\tau)=\tfrac12\mathscr B(\tau),\qquad
 \mathscr B(\tau)=\int_0^1\!\!\int_0^1e^{-\tau(u+v-2uv)}(1+\tau uv)^2\,du\,dv,
 \qquad q(\tau)=\frac9\tau\,I'(\tau),
\]
with $\tau=t|k|^2$ (Proposition~\ref{prop:amplitudeinformation}). Short
observations reveal the mode through the local speed of the diffusion,
long ones through its equilibrium density, and both give information
$\frac12$; in between the information is larger, and it is maximal at
exactly $\tau_*$.
Thus the Fisher volume detects a curvature mode stably precisely while
observing longer still increases the information about that mode
(Figure~\ref{fig:threeresponses}). By contrast, the second variation of the
fixed-time heat trace is positive at every scale
(Proposition~\ref{prop:heattrace}), so the threshold is a property of the
source Fisher information, not of spectral data.

The main contributions of the paper are the following.
\begin{enumerate}
\item \emph{Statistical recovery of uniformization in every genus}: a
uniform small-time expansion of the Fisher volume on bounded-geometry
classes, and convergence of approximate maximizers to the uniformizing
metric (Theorem~\ref{thm:uniformCalabi},
Corollary~\ref{cor:statisticaluniformization}).
\item \emph{An exact response with a sharp threshold}: the multiplier
$|k|^4q(t|k|^2)$, its closed form, hand-proved bounds, and saddle
directions beyond $\tau_*$ (Sections~\ref{sec:response}
and~\ref{sec:compact}).
\item \emph{Nonlinear stability uniform in the resolution}: Theorem~B, with
a spectral-tail variant (Theorem~\ref{thm:softstability}), the same result
for entropy production (Theorem~\ref{thm:entropy}), radial convexity
(Theorem~\ref{thm:radialselection}) and a tame Hessian estimate with a
contracting finite-dimensional gradient flow
(Theorem~\ref{thm:mixedhessian}, Corollary~\ref{cor:galerkinflow}).
\item \emph{A statistical meaning of $\tau_*$}: it is the unique optimal
observation time for one curvature mode
(Proposition~\ref{prop:amplitudeinformation}).
\item \emph{The flat structure}: the exact winding loss, its systolic
exponential scale and asymptotic selection of the hexagonal torus
(Section~\ref{sec:latticestatistics}).
\item \emph{The curved limit of the method}: on the round sphere conformal
automorphisms give exact zero modes, so the flat response formula cannot
hold uniformly on the full band at the round sphere
(Proposition~\ref{prop:sphericalgauge}).
\end{enumerate}

The main difficulty in Theorem~B is to control third and fourth variations of $E_t$ without losing
powers of the frequency. On the plane the heat kernel of a flat metric is
an exact Gaussian, and a perturbed kernel is close to Gaussian uniformly in
the resolution (Section~\ref{sec:kernel}). Regressing the displacement on
the score splits the deficit into a clock term, a compensated trace and an
anisotropy term (Figure~\ref{fig:closure}). Two exact cancellations,
of the first-gradient part of the score and of the third and fourth
displacement moments, leave only terms with two Hessian factors in $L^2$, plus an anisotropy
quartic in $\nabla f$,
which is what makes the estimates independent of the cutoff.

Our comparison points are heat-kernel Fisher pullbacks~\cite{ISS},
heat-kernel and eigenfunction embeddings~\cite{BBG,JMS,Portegies},
heat-entropy asymptotics~\cite{MPR}, conformal heat perturbations
\cite{Morpurgo,Zhao}, and conformal spectral extremal problems
\cite{OPS,LaugesenMorpurgo}; Theorem~B is a finite-scale, local analogue of
the extremal role of uniform metrics for the determinant of the
Laplacian~\cite{OPS}. The Calabi energy and its surface flows provide the
classical variational reference~\cite{Chrusciel,ChenCalabi,StruweCalabi}.
Other heat-Fisher and entropy constructions treat large-scale volume
growth~\cite{Ge}, evolving Ricci-flow kernels~\cite{CK}, or pointwise
curvature recovery~\cite{Sangha}. Montgomery's theorem on minimal theta
functions~\cite{Montgomery1988} is the spectral precedent for hexagonal
selection among flat tori.

All stability results are perturbative. Theorem~A assumes the
compactness of the competing class; it neither proves an unrestricted
minimization theorem nor gives a new proof of uniformization. Theorem~B
concerns the value of $E_t$ on a small Wiener ball; strong convexity holds
on smaller $H^2$ balls (Theorem~\ref{thm:mixedhessian},
Corollary~\ref{cor:galerkinflow}), and no
infinite-dimensional gradient flow is constructed. The optimality of
$\tau_*$ refers to the one-mode experiment described above. Theorem~C is a
small-time statement, and numerics show that the ranking of lattices can
reverse at moderate times. Whether the threshold $\tau_*$ is universal for
high-frequency perturbations of curved metrics is open
(Section~\ref{sec:curvedquestions}).

Section~\ref{sec:framework} fixes notation and the regression identity.
Sections~\ref{sec:kernel}--\ref{sec:closure} prove the nonlinear estimates.
Section~\ref{sec:response} computes the response and its statistical
meaning, and Sections~\ref{sec:compact}--\ref{sec:entropy} prove Theorem~B
and its variants. Section~\ref{sec:radialconsequences} proves Theorem~A and
radial stability, Section~\ref{sec:mixedhessian} treats mixed variations,
Section~\ref{sec:latticestatistics} the flat structure, and
Section~\ref{sec:curvedquestions} curved backgrounds.

\section{Periodic geometry and displacement regression}\label{sec:framework}
\subsection{Notation and conventions}\label{subsec:notation}
Throughout the periodic analysis, $\mathcal L\subset\R^2$ is a Euclidean
lattice of covolume $A$ and systole at least $\ell_0>0$,
$\mathbb T^2=T_{\mathcal L}=\R^2/\mathcal L$, and
$\mathcal L^\vee=\{k:k\cdot\ell\in2\pi\mathbb Z\ (\ell\in\mathcal L)\}$.
All Fourier sums are over $\mathcal L^\vee$, and
$\widehat v_k=A^{-1}\int_{\mathbb T^2}v(x)e^{-ik\cdot x}\,dx$.
The square torus is used only for explicitly identified examples.
We sometimes write $N_{\mathcal L}=N$ and $\mathsf A_{\mathcal L}=\mathsf A$
to emphasize the period cell. Compact constants may depend on
$A,\ell_0$; plane estimates use only the norms displayed below.

For a periodic function $v$ we write
\[
 \sA(v)=\|v\|_{\mathcal A}=\sum_k|\widehat v_k|,\quad
 \sH(v)=\|\Delta v\|_2,\quad
 \sG(v)=\|\nabla v\|_4,\quad
 \sL(v)=\|\nabla v\|_\infty,\quad
 N(v)=\sH(v)^2 .
\]
Norms use unnormalized Lebesgue measure on $\mathbb T^2$. For the perturbation $f$ we
put $\varepsilon=\sA(f)$, and $\Lambda$ bounds its Fourier frequencies;
$\mathcal I(\rho)=\int_{\mathbb T^2}|\nabla\log\rho|^2$, and $q$ always denotes the
response multiplier of Section~\ref{sec:response}.

\emph{Gaussians.} The exponents of Section~\ref{sec:kernel} are
$\kappa_\pm=(1\pm\gamma)/4$; auxiliary exponents $\kappa_0,\kappa_1$ strictly
between $\kappa_-$ and $1/4$ are fixed in Sections~\ref{naani:sec}
and~\ref{sec:secondscore}. A Roman letter with an exponent subscript,
$G_\kappa(u,w)=(\kappa/\pi)u^{-1}e^{-\kappa|w|^2/u}$, is a normalized
envelope; a calligraphic letter with a coefficient subscript,
$\mathcal G_\varrho(u,w)=\varrho(4\pi u)^{-1}e^{-\varrho|w|^2/(4u)}$, is the
exact flat kernel of $\varrho^{-1}\Delta$.

\emph{Subscripts and complex parameters.} On kernels and tensors a
subscript $s$ or $z$ ($r_s$, $h_s$, $r_z$) records the coefficient and a
subscript $t$ ($r_t$, $h_t$, $Q_t$) the observation time at a fixed
coefficient; when both matter the time is an argument, as in
$r_s(t,x,y)$. Two equivalent complex normalizations occur. In
Theorem~\ref{thm:neargaussian}(b) and Sections~\ref{sec:compclock}
and~\ref{mom:section} the coefficient is
$1+zF$ with $F=f/\varepsilon$, $\sA(F)=1$, $|z|\le r$, and real parameter
$z=s\varepsilon$; in Sections~\ref{naani:sec}
and~\ref{sec:secondscore} it is $\rho_z=1+zf$ with $|z|\varepsilon\le r$
and real parameter $z=s$. In Section~\ref{mom:section}, $\sH,\sG$
abbreviate $\sH(F),\sG(F)$, and the result is converted to
$N(f)=\varepsilon^2N(F)$ at the end.

\emph{Section~\ref{sec:response}.} The functions $\Upsilon$, $\mathsf a$,
$\mathsf b$, $p$, $\mathscr F$, $\mathscr S$, $\mathscr V$, $\mathscr L_k$ and
$\Omega$ there are auxiliary functions of the
scale variable ($\tau$, or $z=\tau/2$), unrelated to the Laplacian,
the area $A$, the score, the volume and the generator; the italic $a,b$ of
Subsection~\ref{subsec:closedform} are the integration variables
$u+v-2uv$ and $uv$.

\emph{Constants.} Unless stated otherwise, $c,C,C_\theta,C_{\theta,j},\dots$
denote positive finite constants depending only on $\theta$ (or on the
Lipschitz constant $L_*$ of Section~\ref{sec:soft}), on the fixed Gaussian
margin $\gamma$, and on the order $j$ of differentiation. Compact estimates also depend
on $A,\ell_0$ through the lattice bound and periodization. We take
$\gamma=1/2$ for the complex upper bounds of
Sections~\ref{sec:compclock} and~\ref{mom:section}, and a fixed
$\gamma<1/11$ for the real quotient estimates of Sections~\ref{naani:sec},
\ref{sec:secondscore}, \ref{sec:response}, \ref{sec:compact}
and~\ref{sec:mixedhessian}; amplitude radii are the
minimum of those obtained in these sections. Constants never depend
on the cutoff $\Lambda$, the number of Fourier modes, the observation time
in the stated range, or the profile $f$.

All source derivatives hold the target point fixed. Local proof symbols
are defined where they occur; $N(f)$ always denotes the curvature-scale
quadratic norm, whereas $\mathcal N$ denotes a residual score numerator.

\subsection{Heat-Fisher geometry and regression}
Let $\Delta=\partial_1^2+\partial_2^2$ on the flat torus defined above.
For a real smooth periodic $f$ put
\[
 \varepsilon=\sA(f),\qquad N(f)=\sH(f)^2,
 \qquad \rho_s=1+sf,\quad L_s=\rho_s^{-1}\Delta.
\]
The plane transition density $r_s(t,x,y)$ is relative to fixed Lebesgue
measure, and every source derivative holds $y$ fixed. The normalized
Fisher tensor and volume per source cell are
\[
 J_s=\int_{\mathbb R^2}\frac{\nabla_xr_s\otimes\nabla_xr_s}{r_s}\,dy,
 \qquad h_s=2tJ_s,\qquad V_t^\infty(sf)=\int_{\mathbb T^2}\sqrt{\det h_s}\,dx.
\]
For $g_s=\rho_s g_0$ the Riemannian heat density is $p_s=r_s/\rho_s(y)$;
its source score is unchanged, and in two dimensions the conformal
density factors cancel in the volume formula. If $f$ has mean zero, define
\[
 E_t^\infty(sf)=\frac9{t^2}\bigl(A-V_t^\infty(sf)\bigr),\qquad
 \mathcal C(sf)=\int_{\mathbb T^2} K_{g_s}^2\,dA_{g_s},\qquad
 R_t^\infty=E_t^\infty-\mathcal C.
\]
The compact versions $h_s^T,V_t^T,E_t^T,R_t^T$ use the periodized
transition and are compared with the plane versions in
Section~\ref{sec:compact}.

\paragraph{Jeffreys volume and source-model complexity.}
For a fixed compact surface metric $g$ and time $t>0$, consider the
statistical model $\{p_t^g(x,\cdot)\,dA_g:x\in\mathcal M\}$ in which the
unknown parameter is the source $x$. Its Fisher tensor is precisely
$J_t^g$. The coordinate-invariant Jeffreys measure~\cite{Jeffreys1946} is
$d\operatorname{vol}_{J_t^g}$, with normalizing constant
\begin{equation}\label{eq:jeffreysvolume}
 Z_t(g)=\int_{\mathcal M}d\operatorname{vol}_{J_t^g}
       =\frac{V_t(g)}{2t}.
\end{equation}
The factor $2t$ is essential: $V_t$ is the \emph{normalized} Jeffreys
volume in dimension two. For $n$ independent endpoints from the same
unknown source, the regular fixed-$t$ parametric-complexity asymptotics
of Rissanen~\cite{Rissanen1996} have the form
\[
 \log C_n(t,g)=\log\frac{n}{2\pi}+\log Z_t(g)+o(1),\qquad n\to\infty.
\]
This interpretation presupposes the usual uniform regularity and
identifiability conditions for that asymptotic statement; it is not a
joint assertion uniform in $t\downarrow0$. Its geometric constant is
$\log Z_t$, rather than $Z_t$ or $V_t$ themselves.
At fixed area and time, minimizing $E_t$ is exactly maximizing $Z_t$.
The resolved-band theorem (Theorem~\ref{thm:flatlattices}: $0<t<t_*$,
Wiener amplitude at most $\varepsilon_*$, $t\Lambda^2\le\theta<\tau_*$) therefore
makes the flat metric a strict local maximizer of the source model's Jeffreys volume within the permitted
mean-zero conformal perturbations. This is a finite-scale, local analogue
of the variational role of uniform metrics in the determinant extremal
theorem of Osgood--Phillips--Sarnak~\cite{OPS}; it is neither an identity
with $\det'\Delta$ nor a global extremal theorem for $V_t$.

\begin{lemma}[Interpolation and Wiener bounds]\label{lem:interp}
\begin{enumerate}
\item[(a)] For real smooth periodic $v$,
\begin{equation}\label{eq:GNbasic}
 \|\nabla v\|_4^2\le\sqrt{10}\,\|v\|_\infty\|\Delta v\|_2 .
\end{equation}
\item[(b)] For complex smooth periodic $v$,
$\|\nabla v\|_4^2\le2\sqrt5\,\|v\|_\infty\|\Delta v\|_2$, where
$|\nabla v|^2=\nabla v\cdot\overline{\nabla v}$.
\item[(c)] If $v$ has mean zero, then
\begin{equation}\label{eq:WienerHtwo}
 \sA(v)\le C_W\sH(v),\qquad
 C_W^2=\frac1A\sum_{k\in\mathcal L^\vee\setminus\{0\}}|k|^{-4}<\infty.
\end{equation}
\end{enumerate}
\end{lemma}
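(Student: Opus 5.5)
For (a) and (b) the plan is to integrate by parts on the torus, where there are no boundary terms. For real $v$,
\[
\int|\nabla v|^4=\int|\nabla v|^2\,\nabla v\cdot\nabla v=-\int v\,\operatorname{div}\bigl(|\nabla v|^2\nabla v\bigr)
=-\int v\,|\nabla v|^2\Delta v-\int v\,\nabla v\cdot\nabla|\nabla v|^2 .
\]
Writing $H=D^2v$ for the Hessian, we have $\nabla|\nabla v|^2=2H\nabla v$. Applying H\"older with $\|v\|_\infty$ gives
\[
\|\nabla v\|_4^4\le\|v\|_\infty\|\nabla v\|_4^2\bigl(\|\Delta v\|_2+2\|H\|_{2}\bigr),
\]
where $\|H\|_2$ is the $L^2$ norm of the operator norm of $H$. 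The key torus fact is $\int|D^2v|_{HS}^2=\int(\Delta v)^2$, obtained by integrating by parts twice. The crude bound $|H|_{op}\le|H|_{HS}$ would give the constant $3$, which exceeds $\sqrt{10}$, so the $\sqrt{10}$ must come from a sharper pointwise treatment.

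To get it, combine the two terms before estimating. Pointwise, the quantity to control is $|\nabla v|^2\Delta v+2\,\nabla v^{T}H\nabla v$. Write $\Delta v=\lambda_1+\lambda_2$ for the eigenvalues of $H$, and $w=\nabla v/|\nabla v|$. Then this quantity equals $|\nabla v|^2\bigl(\lambda_1+\lambda_2+2(\lambda_1w_1^2+\lambda_2w_2^2)\bigr)$. The bracket is a linear form in $(\lambda_1,\lambda_2)$ with coefficients $(1+2w_1^2,\,1+2w_2^2)$. Since $w_1^2+w_2^2=1$, the sum of squares of the coefficients is at most $3^2+1^2=10$. Cauchy--Schwarz then bounds the bracket by $\sqrt{10}\,(\lambda_1^2+\lambda_2^2)^{1/2}=\sqrt{10}\,|H|_{HS}$. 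Consequently
\[
\|\nabla v\|_4^4\le\sqrt{10}\,\|v\|_\infty\int|\nabla v|^2|H|_{HS}\le\sqrt{10}\,\|v\|_\infty\|\nabla v\|_4^2\,\|H\|_{L^2(HS)}=\sqrt{10}\,\|v\|_\infty\|\nabla v\|_4^2\|\Delta v\|_2 .
\]
Dividing by $\|\nabla v\|_4^2$, when it is nonzero, gives \eqref{eq:GNbasic}.

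For complex $v$ in (b), the same integration by parts uses $|\nabla v|^2=\nabla v\cdot\overline{\nabla v}$. The Hessian is now complex symmetric and cannot be diagonalized by a rotation, so the eigenvalue trick fails. Instead, expand $\nabla|\nabla v|^2=H\overline{\nabla v}+\overline H\nabla v$ and bound the combined pointwise term by Cauchy--Schwarz in $\mathbb C^{2\times2}$. The worst case gives a coefficient vector of the form $\delta_{ij}+w_i\bar w_j+\bar w_iw_j$, whose Hilbert--Schmidt norm squared is at most $2+2\cdot2\cdot\ldots$, and the bookkeeping should give $\sqrt{20}=2\sqrt5$. Alternatively, one can split $v=a+ib$ and apply (a) to $a$ and $b$ with the triangle inequality. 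That route gives a constant like $2\sqrt{10}$ with $\|a\|_\infty,\|b\|_\infty\le\|v\|_\infty$, so the direct computation is needed to reach $2\sqrt5$. I expect this constant bookkeeping in (b) to be the only fiddly step, and would first try $|\nabla v|^2\Delta v$ plus the pairing $(\nabla v)^T H\overline{\nabla v}$ with the norm identity $\int|D^2v|^2=\int|\Delta v|^2$, which also holds for complex $v$.

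For (c), Parseval gives $\|\Delta v\|_2^2=A\sum_k|k|^4|\widehat v_k|^2$ under the stated normalization. Because $v$ has mean zero, $\widehat v_0=0$, and Cauchy--Schwarz gives
\[
\sA(v)=\sum_{k\ne0}|k|^{-2}\cdot|k|^2|\widehat v_k|\le\Bigl(\sum_{k\ne0}|k|^{-4}\Bigr)^{1/2}\Bigl(\sum_{k}|k|^4|\widehat v_k|^2\Bigr)^{1/2}=C_W\sH(v).
\]
The finiteness of $\sum_{k\ne0}|k|^{-4}$ follows by comparing with $\int_{|k|\ge1}|k|^{-4}\,dk<\infty$ in two dimensions, since $\mathcal L^\vee$ has a positive minimal length and finite covolume.
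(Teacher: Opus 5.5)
Parts (a) and (c) are correct and essentially the paper's argument. In (a) the paper applies Cauchy--Schwarz in the Frobenius pairing against the coefficient matrix $|\nabla v|^2I+2\nabla v\otimes\nabla v$, whose eigenvalues are $3|\nabla v|^2$ and $|\nabla v|^2$. Your diagonalization of $D^2v$ yields the same inequality. Part (c) is the same Cauchy--Schwarz plus Parseval argument.

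\textbf{The gap is in part (b), which is not proved.} Your direct computation stops at ``the bookkeeping should give $\sqrt{20}$.'' The reason you give for abandoning the splitting $v=a+ib$ is a miscount: that route gives exactly $2\sqrt5$, and it is the paper's proof. The argument runs as follows.
\begin{enumerate}
\item[(i)] Pointwise $|\nabla v|^2=|\nabla a|^2+|\nabla b|^2$, so Minkowski in $L^2$ gives $\|\nabla v\|_4^2\le\|\nabla a\|_4^2+\|\nabla b\|_4^2$.
\item[(ii)] Applying (a) with $\|a\|_\infty,\|b\|_\infty\le\|v\|_\infty$ bounds this by $\sqrt{10}\,\|v\|_\infty(\|\Delta a\|_2+\|\Delta b\|_2)$.
\item[(iii)] Since $|\Delta v|^2=(\Delta a)^2+(\Delta b)^2$ pointwise, $\|\Delta a\|_2+\|\Delta b\|_2\le\sqrt2\,\|\Delta v\|_2$, and $\sqrt{10}\cdot\sqrt2=2\sqrt5$.
\end{enumerate}
Your constant $2\sqrt{10}$ appears only if you discard this orthogonality. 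That happens if you bound $\|\Delta a\|_2$ and $\|\Delta b\|_2$ each by $\|\Delta v\|_2$, or if you square the triangle inequality $\|\nabla v\|_4\le\|\nabla a\|_4+\|\nabla b\|_4$.

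The direct route can also be made to work, but not with the single coefficient matrix $\delta_{ij}+w_i\bar w_j+\bar w_iw_j$ you propose. Write $g=\nabla v$ and $H=D^2v$. After integrating by parts, the integrand is
\[
v\,\bigl[\,|g|^2\,\overline{\Delta v}+\bar g^{T}H\bar g+\bar g^{T}\,\overline{H}\,g\,\bigr].
\]
One term pairs with $H$ and two pair with $\overline H$, so no single coefficient matrix exists. Treat the two pairings separately:
\begin{enumerate}
\item[(i)] The $\overline H$ terms pair against $|g|^2I+\operatorname{Re}(\bar g g^{T})$, whose Frobenius norm is at most $\sqrt5\,|g|^2$.
\item[(ii)] The $H$ term pairs against $\bar g\bar g^{T}$, whose Frobenius norm is $|g|^2$.
\item[(iii)] Combine with $\int|D^2v|_F^2=\int|\Delta v|^2$, which also holds for complex $v$.
\end{enumerate}
This gives the constant $1+\sqrt5$, which is even smaller than $2\sqrt5$.
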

\begin{proof}
(a) Periodic integration by parts gives
$\int_{\mathbb T^2}|\nabla v|^4dx=-\int_{\mathbb T^2} v\,\langle D^2v,\;|\nabla v|^2I
 +2\nabla v\otimes\nabla v\rangle\,dx$.
The matrix $|\nabla v|^2I+2\nabla v\otimes\nabla v$ has eigenvalues
$3|\nabla v|^2$ and $|\nabla v|^2$, hence Frobenius norm
$\sqrt{10}\,|\nabla v|^2$. Cauchy--Schwarz in the Frobenius pairing and in
$L^2$, with $\|D^2v\|_2=\|\Delta v\|_2$, gives
$\|\nabla v\|_4^4\le\sqrt{10}\|v\|_\infty\|\Delta v\|_2\|\nabla v\|_4^2$.
(b) Write $v=v_1+iv_2$. Then
$\|\nabla v\|_4^2\le\|\nabla v_1\|_4^2+\|\nabla v_2\|_4^2$, and (a) with
$\|\Delta v_1\|_2+\|\Delta v_2\|_2\le\sqrt2\|\Delta v\|_2$ gives the claim.
(c) Cauchy--Schwarz on $\sum_{k\ne0}|k|^{-2}\cdot|k|^2|\widehat v_k|$
and Parseval give the inequality. Lemma~\ref{lem:latticeWiener} below
gives a uniform bound in terms of $A,\ell_0$.
\end{proof}
The constant in (a) is not optimal but is carried explicitly through the
later estimates and Proposition~\ref{prop:curvatureequivalence}. Since
$\|f\|_\infty\le\sA(f)$, (a) gives the form
$\sG(f)^2\le\sqrt{10}\,\varepsilon\,\sH(f)$ used throughout. Lemma~\ref{lem:cauchyjets} below passes from complex bounds to real
amplitude jets.

\begin{lemma}[A uniform lattice bound]\label{lem:latticeWiener}
If $\operatorname{sys}(\mathcal L)\ge\ell_0>0$ and $f$ has mean zero, then
\[
 \mathsf A_{\mathcal L}(f)\le C_W(\mathcal L)\|\Delta f\|_2,\qquad
 C_W(\mathcal L)^2=\frac1A\sum_{\xi\in\mathcal L^\vee\setminus\{0\}}|\xi|^{-4}
 \le\frac{40}{3A}\left(\frac{A}{2\pi\ell_0}\right)^4.
\]
\end{lemma}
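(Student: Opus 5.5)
The plan is to reduce the dual-lattice sum to a sum over $\mathcal L$ itself, and then bound that sum using a reduced basis.

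\emph{Step 1 (duality in the plane).} Let $e_1,e_2$ be a basis of $\mathcal L$ and $J$ the rotation by $\pi/2$. The vectors $k_1=\frac{2\pi}{A}Je_2$ and $k_2=-\frac{2\pi}{A}Je_1$ satisfy $k_i\cdot e_j=2\pi\delta_{ij}$. Indeed, $Je_2\cdot e_2=0$ and $Je_2\cdot e_1=\pm A$, and the orientation can be fixed so the sign is $+$. Hence $\mathcal L^\vee=\frac{2\pi}{A}J\mathcal L$, and $|\xi|=\frac{2\pi}{A}|\ell|$ for corresponding vectors. Therefore
\[
 C_W(\mathcal L)^2=\frac1A\Bigl(\frac{A}{2\pi}\Bigr)^4\sum_{\ell\in\mathcal L\setminus\{0\}}|\ell|^{-4},
\]
and it suffices to show $\sum_{\ell\ne0}|\ell|^{-4}\le\frac{40}{3}\,s^{-4}$ with $s=\operatorname{sys}(\mathcal L)\ge\ell_0$.

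\emph{Step 2 (reduced basis).} Choose a Lagrange--Gauss reduced basis, so that $|e_1|=s\le|e_2|$ and $|e_1\cdot e_2|\le s^2/2$. For $\ell=me_1+ne_2$ this gives
\[
 |\ell|^2\ge s^2m^2-s^2|mn|+s^2n^2=s^2\,Q(m,n),\qquad Q(m,n)=m^2-|mn|+n^2 .
\]
Thus $\sum_{\ell\ne0}|\ell|^{-4}\le s^{-4}\sum_{(m,n)\ne0}Q(m,n)^{-2}$. The problem is now a single explicit numerical series, independent of $\mathcal L$.

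\emph{Step 3 (bounding the series).} Group the pairs by $j=\max(|m|,|n|)$. There are exactly $8j$ pairs with $\max(|m|,|n|)=j$, and on each of them $Q\ge\frac34j^2$, the minimum being attained near $|m|=j/2$, $|n|=j$.
\begin{itemize}
\item For $j=1$, compute exactly: all eight pairs $(\pm1,0),(0,\pm1),(\pm1,\pm1)$ have $Q=1$, so the shell contributes $8$.
\item For $j\ge2$, the bound gives at most $8j\cdot\frac{16}{9}j^{-4}$ per shell.
\end{itemize}
Summing,
\[
 \sum_{(m,n)\ne0}Q(m,n)^{-2}\le 8+\frac{128}{9}\bigl(\zeta(3)-1\bigr)<8+\frac{128}{9}\cdot0.21<11<\frac{40}{3}.
\]
Combining this with Step 1 and with $s\ge\ell_0$ yields the stated bound.

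\emph{Main obstacle.} The only delicate point is getting a constant below $40/3$. Cruder routes overshoot. Packing disks of radius $\ell_0/2$ into annuli gives roughly $16\zeta(3)+8\zeta(4)\approx28$. Replacing $Q$ by $(m^2+n^2)/2$ gives about $24$. The remedy is to treat the first shell exactly and keep the sharp minimum $\frac34j^2$ on the later shells. The reduction-basis inequality $|e_1\cdot e_2|\le s^2/2$ is standard and can be cited.
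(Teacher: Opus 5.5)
Your argument is correct, but it follows a different route from the paper's.

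Both proofs start from the duality $\mathcal L^\vee=(2\pi/A)J\mathcal L$. The paper then stays in the dual lattice and uses only the minimal separation $\ell_0^\vee=2\pi\ell_0/A$ of dual points. Disjoint disks of radius $\ell_0^\vee/2$ give the counting bound $\#(R)\le4R^2/(\ell_0^\vee)^2+4R/\ell_0^\vee$, with $\#(R)=0$ for $R<\ell_0^\vee$. The layer-cake identity $\sum_{\xi\ne0}|\xi|^{-4}=4\int_{\ell_0^\vee}^\infty\#(R)R^{-5}\,dR$ then produces exactly $8+\frac{16}{3}=\frac{40}{3}$.

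You instead pull the sum back to $\mathcal L$ and use a Lagrange--Gauss reduced basis to bound $|\ell|^2$ from below by $s^2(m^2-|mn|+n^2)$. You then sum a lattice-independent series shell by shell. The shell count $8j$, the bound $Q\ge\frac34j^2$, the exact first shell and $\zeta(3)-1<0.21$ all check out, and give the sharper constant $11$.

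The trade-off is this. The paper's route needs no reduction theory and uses nothing but uniform discreteness, so it transfers to other separated point sets and to other dimensions. Yours exploits the two-dimensional lattice structure and gains a better constant.

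Two small remarks:
\begin{itemize}
\item Your ``main obstacle'' paragraph is off. The packing route does not overshoot: $40/3$ is precisely what it gives when the count is integrated continuously rather than annulus by annulus.
\item You leave implicit the first inequality $\mathsf A_{\mathcal L}(f)\le C_W(\mathcal L)\|\Delta f\|_2$. This is the Cauchy--Schwarz/Parseval step of Lemma~\ref{lem:interp}(c), which the paper also just invokes.
\end{itemize}
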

\begin{proof}
For the quarter-turn $J$, $\mathcal L^\vee=(2\pi/A)J\mathcal L$ (an oriented
basis $v_1,v_2$ of determinant $A$ has reciprocal basis
$-2\pi Jv_2/A,2\pi Jv_1/A$), so
distinct dual points are at least
$\ell_0^\vee=2\pi\ell_0/A$ apart. The disks of radius $\ell_0^\vee/2$ about dual points of length
at most $R$, including the origin, are disjoint and lie in the
disk of radius $R+\ell_0^\vee/2$. Hence the nonzero counting function obeys
$\#(R)\le4R^2/(\ell_0^\vee)^2+4R/\ell_0^\vee$, with $\#(R)=0$ for $R<\ell_0^\vee$,
and by Tonelli
$\sum_{\xi\ne0}|\xi|^{-4}
 =4\int_{\ell_0^\vee}^\infty\#(R)R^{-5}\,dR
 \le(8+\frac{16}3)(\ell_0^\vee)^{-4}=\frac{40}3(\ell_0^\vee)^{-4}$.
Cauchy--Schwarz and Parseval finish the proof.
\end{proof}

\begin{lemma}[Cauchy jets along a real segment]\label{lem:cauchyjets}
Let $X$ be a complex Banach space, $r>0$ and $0<\varepsilon\le r/4$. Let
$\Phi$ be holomorphic from $\{|\zeta|<r\}$ to $X$ with $\|\Phi(\zeta)\|\le M$
for $|\zeta|\le3r/4$. Then, for real $|s|\le1$ and every integer $j\ge0$,
\[
 \Bigl\|\frac{d^j}{ds^j}\Phi(s\varepsilon)\Bigr\|
 \le j!\Bigl(\frac2r\Bigr)^j\varepsilon^jM .
\]
\end{lemma}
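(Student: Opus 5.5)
The plan is to apply the Cauchy estimate for holomorphic Banach-valued functions at each point of the real segment and then rescale by the chain rule. Write $\Psi(s)=\Phi(s\varepsilon)$. Then
\[
 \frac{d^j}{ds^j}\Psi(s)=\varepsilon^j\Phi^{(j)}(s\varepsilon),
\]
so it suffices to show $\|\Phi^{(j)}(\zeta_0)\|\le j!(2/r)^jM$ for every real $\zeta_0$ with $|\zeta_0|\le\varepsilon$. Since $\varepsilon\le r/4$, such points satisfy $|\zeta_0|\le r/4$.

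Fix such a $\zeta_0$ and use the Cauchy integral formula on the circle $|\zeta-\zeta_0|=r/2$. This circle lies in $\{|\zeta|\le|\zeta_0|+r/2\}\subset\{|\zeta|\le3r/4\}$, where $\|\Phi\|\le M$. The formula is valid for $X$-valued holomorphic maps: one can apply continuous linear functionals and use Hahn--Banach, or integrate directly with the Bochner/Riemann integral. It gives
\[
 \Phi^{(j)}(\zeta_0)=\frac{j!}{2\pi i}\oint_{|\zeta-\zeta_0|=r/2}\frac{\Phi(\zeta)}{(\zeta-\zeta_0)^{j+1}}\,d\zeta .
\]
Estimating the integral by length times sup yields
\[
 \|\Phi^{(j)}(\zeta_0)\|\le j!\,M\,(r/2)^{-j}=j!\,(2/r)^jM .
\]
Multiplying by $\varepsilon^j$ gives the claim. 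The case $j=0$ is the bound $\|\Phi(s\varepsilon)\|\le M$ itself, since $|s\varepsilon|\le r/4$.

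No step is a real obstacle. The only point to check is that the circle of radius $r/2$ around any point of $[-\varepsilon,\varepsilon]$ stays inside the disk of radius $3r/4$ where the bound $M$ holds, and this is exactly what the hypothesis $\varepsilon\le r/4$ guarantees. It also uses that $\Phi$ is holomorphic on the open disk of radius $r$, which contains that closed disk.
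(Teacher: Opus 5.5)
Your proposal is correct and follows the paper's proof: the chain rule gives $\varepsilon^j\Phi^{(j)}(s\varepsilon)$, and Cauchy's estimate on the circle of radius $r/2$ about $s\varepsilon$, which stays inside $|\zeta|\le 3r/4$ because $\varepsilon\le r/4$, gives the bound $j!(2/r)^jM$. Your added remark on the vector-valued Cauchy formula is a harmless detail that the paper leaves implicit.
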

\begin{proof}
The derivative equals $\varepsilon^j\Phi^{(j)}(s\varepsilon)$. The circle
of radius $r/2$ about $s\varepsilon$ lies in $|\zeta|\le\varepsilon+r/2\le3r/4$,
and Cauchy's estimate on it gives $\|\Phi^{(j)}(s\varepsilon)\|\le
j!(2/r)^jM$.
\end{proof}

\begin{lemma}[Regression and ellipticity]\label{lem:regression}
For a positive smooth periodic $\rho$ in the small-amplitude class
of Theorem~\ref{thm:neargaussian}, let $S=\nabla_x\log r_t(x,X_t)$,
$w=X_t-x$, and
\[
 m_t=\frac1t\int_0^tP_u(\rho^{-1})\,du,\quad
 \alpha_t=m_t^{-1},\quad
 Q_t=\mathbb E_x[(S-w/(2tm_t))^{\otimes2}].
\]
Then
\[
 \mathbb E_x w=0,\qquad \mathbb E_x[ww^T]=2tm_t I,\qquad
 \mathbb E_x[Sw^T]=I,\qquad h_t=\alpha_tI+2tQ_t.
\]
In particular $Q_t\succeq0$, and $h_t\succeq(1-\varepsilon)I$ for
$\rho=1+sf$, $|s|\le1$ and $\varepsilon<1$.
\end{lemma}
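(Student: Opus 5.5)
The identities are first and second moment computations for the plane diffusion with generator $L=\rho^{-1}\Delta$, combined with one integration by parts in the source variable; the decomposition of $h_t$ then follows by expanding a square. Throughout, $X_t$ is the plane diffusion started at $x$, with transition density $r_t(x,\cdot)$ relative to Lebesgue measure, and $P_u$ is its semigroup. For the analytic justifications I will use the Gaussian envelope bounds of Theorem~\ref{thm:neargaussian}, which is stated later in the paper. Those bounds control $r_t$ and $\nabla_x r_t$ by normalized Gaussians $G_{\kappa_\pm}$, so every polynomial moment below is finite, and differentiation in $x$ under $\int\,dy$ is legitimate by dominated convergence.

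\emph{Mean and covariance.} Each coordinate function is $L$-harmonic, since $L y_i=\rho^{-1}\Delta y_i=0$. Hence $y_i-x_i$ is a local martingale, and the Gaussian moment bounds make it a true martingale, so $\mathbb E_x w=0$. For the covariance, $L(y_iy_j)=2\rho^{-1}(y)\delta_{ij}$. Dynkin's formula, or equivalently differentiating $t\mapsto\int r_t(x,y)(y-x)_i(y-x)_j\,dy$ with the forward equation and integrating by parts in $y$, then gives
\[
\mathbb E_x[w_iw_j]=2\delta_{ij}\int_0^t P_u(\rho^{-1})(x)\,du=2tm_t\,\delta_{ij}.
\]

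\emph{Cross moment.} Since $\mathbb E_x[Sw^T]=\int\nabla_x r_t(x,y)\,(y-x)^T\,dy$, I will write
\[
\nabla_x\!\int r_t(x,y)(y-x)^T\,dy=\int\nabla_x r_t\,(y-x)^T\,dy-\int r_t\,dy\;I .
\]
The left side is $\nabla_x(\mathbb E_xw)^T=0$, and mass conservation gives $\int r_t\,dy=1$. Therefore $\mathbb E_x[Sw^T]=I$. Interchanging $\nabla_x$ with $\int dy$ is the only delicate point, and it is covered by the envelope bound on $\nabla_x r_t$.

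\emph{Decomposition and ellipticity.} Set $c=1/(2tm_t)$ and expand the square, using $J_t=\mathbb E_x[SS^T]$ and the three identities above:
\[
Q_t=J_t-c\bigl(\mathbb E[Sw^T]+\mathbb E[wS^T]\bigr)+c^2\,\mathbb E[ww^T]=J_t-2cI+c^2\,2tm_tI=J_t-\tfrac{1}{2tm_t}I.
\]
Multiplying by $2t$ gives $h_t=2tJ_t=\alpha_tI+2tQ_t$. Here $Q_t\succeq0$ because it is the expectation of a square. For $\rho=1+sf$ with $|s|\le1$ we have $|f|\le\|f\|_\infty\le\sA(f)=\varepsilon<1$, so $\rho^{-1}\le(1-\varepsilon)^{-1}$ pointwise. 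Since $P_u$ is Markov, this gives $m_t\le(1-\varepsilon)^{-1}$, hence $\alpha_t\ge1-\varepsilon$ and $h_t\succeq(1-\varepsilon)I$.

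The main obstacle is the analytic bookkeeping rather than the algebra. It consists of upgrading the local martingale to a true martingale, justifying Dynkin's formula for the unbounded function $y_iy_j$, and differentiating under the integral in $x$. All three reduce to the Gaussian upper bounds on $r_t$ and $\nabla_xr_t$ in the small-amplitude class, so I would state those bounds once and cite them at each step.
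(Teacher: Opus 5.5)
Your proposal is correct and follows essentially the same route as the paper. You use It\^o/Dynkin for the mean and covariance, differentiate the zero displacement mean at fixed target (justified by the Gaussian source bounds of Theorem~\ref{thm:neargaussian}) to get $\mathbb E_x[Sw^T]=I$, expand the residual square to get $h_t=\alpha_tI+2tQ_t$, and use the Markov bound $m_t\le(1-\varepsilon)^{-1}$ for ellipticity. One small simplification is available: because the diffusion coefficient $\sqrt{2/\rho}$ is bounded, the It\^o integral is already a true martingale and the It\^o isometry gives the covariance directly, so these two identities need no moment bounds.
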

\begin{proof}
It\^o's formula for $dX_u=\sqrt{2/\rho(X_u)}\,dW_u$ gives the first two
identities. Differentiating the zero displacement mean at fixed target gives
the third; the source Gaussian bound of Theorem~\ref{thm:neargaussian}
justifies differentiation and the displacement moments. Expanding the
residual square proves the fourth (for the semigroup calculus see
\cite{BGL}). Since $m_t\le(1-\varepsilon)^{-1}$, the last assertion follows.
\end{proof}

With $\mathcal I(\rho)=\int_{\mathbb T^2}|\nabla\log\rho|^2dx$ define
\begin{align}
 \mathfrak A_t&=\int_{\mathbb T^2}(\alpha_t-\rho)dx+\frac t2\mathcal I(\rho),\notag\\
 \mathfrak B_t&=t\int_{\mathbb T^2}\operatorname{tr}Q_tdx-\frac t2\mathcal I(\rho),\label{eq:ABdef}\\
 D_t&=\int_{\mathbb T^2}\left(\frac12\operatorname{tr}h_t-\sqrt{\det h_t}\right)dx.
 \notag
\end{align}
The two-dimensional determinant identity and $\int_{\mathbb T^2}\rho=A$
(mean zero) give
\begin{equation}\label{eq:master}
 A-V_t^\infty=-\mathfrak A_t-\mathfrak B_t+D_t,
 \qquad E_t^\infty=\frac9{t^2}(-\mathfrak A_t-\mathfrak B_t+D_t).
\end{equation}
The proof estimates each compensated term before summing them;
Figure~\ref{fig:closure} shows the decomposition and where each term is
controlled.
\begin{figure}[tbp]
\centering
\begin{tikzpicture}[
 box/.style={draw,rounded corners=2pt,align=center,font=\footnotesize,inner sep=4pt},
 note/.style={font=\scriptsize,align=center,text=black!75},
 arr/.style={-{Stealth[length=2mm]},thick}]
\node[box,fill=gray!8,text width=7.6cm] (E) at (0,0)
 {$E_t=\dfrac9{t^2}\,(A-V_t)=\dfrac9{t^2}\bigl(-\mathfrak A_t-\mathfrak B_t+D_t\bigr)$
  \\[1pt] {\scriptsize regression identity \eqref{eq:master}}};
\node[box,text width=3.5cm] (A) at (-4.6,-2.0)
 {clock $\mathfrak A_t=\mathfrak K_t+\mathcal J_t$\\[1pt]
  {\scriptsize two separate Hessian factors}\\{\scriptsize Thm~\ref{naclock:thm:clockderivative}}};
\node[box,text width=3.5cm] (B) at (0,-2.0)
 {compensated trace\\ $\mathfrak B_t=\mathcal S_t+\mathfrak M_t-\mathcal J_t$\\[1pt]
  {\scriptsize Prop.~\ref{namom:identity}, Thm~\ref{thm:scalar}}};
\node[box,text width=3.5cm] (D) at (4.6,-2.0)
 {anisotropy $D_t\ge0$\\[1pt]{\scriptsize quartic: $\le C t^2\|\nabla f\|_4^4$}\\
  {\scriptsize Thm~\ref{naani:main}}};
\node[box,text width=2.9cm] (S) at (-1.45,-4.15)
 {score square $\mathcal S_t$\\[1pt]{\scriptsize first-gradient cancellation}\\
  {\scriptsize Cor.~\ref{score:cor:square}}};
\node[box,text width=3.3cm] (M) at (2.1,-4.15)
 {moments $\mathfrak M_t$\\[1pt]{\scriptsize integrated moment cancellation}\\
  {\scriptsize Thm~\ref{mom:main}}};
\node[box,dashed,text width=2.9cm] (J) at (-4.85,-4.15)
 {clock square $\mathcal J_t$\\[1pt]{\scriptsize enters $-\mathfrak A_t$ and $-\mathfrak B_t$}\\
  {\scriptsize with opposite signs}};
\node[box,fill=blue!6,text width=10.4cm] (R) at (0,-6.25)
 {$\displaystyle E_t=\frac9{t^2}\bigl(-\mathfrak K_t-\mathcal S_t-\mathfrak M_t+D_t\bigr),\qquad
  \sup_{|s|\le1}\bigl|\partial_s^jE_t(sf)\bigr|\le C_\theta\,\varepsilon^{\,j-2}\|\Delta f\|_2^2\quad(j=3,4)$\\[1pt]
  {\scriptsize each term: $|\partial_s^j(\cdot)|\le C_\theta t^2\varepsilon^{j-2}\|\Delta f\|_2^2$, no lower spectral cutoff
  (Cor.~\ref{cor:planevariations})}};
\node[box,fill=green!6,text width=10.4cm] (Q) at (0,-7.75)
 {quadratic part: $\tfrac12D^2E_t^\infty(0)[f,f]=A\sum_k|k|^4q(t|k|^2)|\widehat f_k|^2$
  (Prop.~\ref{prop:response}) $\;\Longrightarrow\;$ coercivity for $t\Lambda^2\le\theta<\tau_*$
  (Thm~\ref{thm:stability})};
\draw[arr] (E.south) -- (A.north);
\draw[arr] (E.south) -- (B.north);
\draw[arr] (E.south) -- (D.north);
\draw[arr] (B.south) -- (S.north);
\draw[arr] (B.south) -- (M.north);
\draw[arr,dashed] (A.south) -- (A.south |- J.north);
\draw[arr,dashed] (B.west) -- (-3.15,0 |- B.west) |- ([yshift=4pt]J.east);
\draw[arr] (S.south) -- (S.south |- R.north);
\draw[arr] (M.south) -- (M.south |- R.north);
\draw[arr] (D.south) -- (D.south |- R.north);
\draw[arr] (A.west) -- ++(-0.35,0) |- node[pos=0.25,left,font=\scriptsize]{$\mathfrak K_t$} (R.west);
\draw[arr] (R.south) -- (Q.north);
\end{tikzpicture}
\caption{The energy decomposition behind the nonlinear estimates. The
regression identity splits the deficit into the clock $\mathfrak A_t$, the
compensated trace $\mathfrak B_t$ and the anisotropy $D_t$. The clock square
$\mathcal J_t$ occurs in $\mathfrak A_t$ and in $\mathfrak B_t$ with opposite
signs and cancels exactly (dashed), leaving four terms. Each is controlled
at the relative energy scale $t^2\|\Delta f\|_2^2$ without a lower
spectral cutoff: the clock retains two separate Hessian factors, the score
square and the moments are compensated by exact cancellations, and the
anisotropy is quartic. Combined with the exact quadratic multiplier, this
gives the main theorem.}\label{fig:closure}
\end{figure}
A mixed source norm is denoted
$\|F(t;w)\|_{[p]}=\|x\mapsto F(t,x,x+w)\|_{L^p(\mathbb T^2)}$;
the source derivative is always taken before this displacement substitution.

\paragraph{The only role of the resolution hypothesis in Sections~\ref{sec:kernel}--\ref{sec:closure}.}
For a smooth real periodic $f$ with $\varepsilon=\sA(f)<\infty$ and
$L_*>0$, consider the diffusion-scale Lipschitz condition
\begin{equation}\label{eq:HL}
 \sqrt t\,\|\nabla f\|_\infty\le L_*\varepsilon .
\end{equation}
If $\operatorname{supp}\widehat f\subset\{|k|\le\Lambda\}$ and
$t\Lambda^2\le\theta$, then $|\nabla f|\le\sum_k|k||\widehat f_k|\le
\Lambda\varepsilon$, so \eqref{eq:HL} holds with $L_*=\sqrt\theta$. In
Sections~\ref{sec:kernel}--\ref{sec:closure} the resolution hypotheses
enter the proofs only through this consequence: in
Step~1 of the proof of Theorem~\ref{thm:neargaussian} (the bound
$\|\nabla\phi\|_\infty\le\sqrt\theta$ and \eqref{eq:paramlip}), in the
convergence of the anisotropy and score defect series (the bound
$|z|\sqrt t\|\nabla f\|_\infty\le r\sqrt\theta$ for $|z|\varepsilon\le r$),
and through these kernel bounds in the clock and moment sections; the
interpolation inequalities of Lemma~\ref{lem:interp} hold for all smooth
periodic functions. Hence every result of
Sections~\ref{sec:kernel}--\ref{sec:closure} holds verbatim for smooth
real periodic $f$ satisfying \eqref{eq:HL}, with $\sqrt\theta$ replaced by
$L_*$ and constants depending on $L_*$ (and on $\gamma$ and the jet
order) instead of $\theta$. We state them in the resolved form; the
general form is used in Section~\ref{sec:soft}.

\section{Uniform kernels on a resolved Wiener ball}
\label{sec:kernel}
We use Levi's parametrix \cite{Levi} in its classical target-frozen
parabolic form \cite{Friedman,IKO}, recording the Gaussian exponents and
parameter dependence needed for real Fisher quotients. Classical two-sided
Gaussian bounds \cite{Aronson,Davies,Grigoryan} do not by themselves give
exponents $\kappa_\pm$ arbitrarily close to $1/4$ or holomorphic parameter
dependence. All Levi estimates of the paper (here and in
Sections~\ref{naani:sec}, \ref{sec:secondscore} and~\ref{sec:mixedhessian})
rest on the two lemmas below.

\subsection{Frozen kernels and marked Levi sums}\label{subsec:levitools}
Let $g$ be a smooth complex periodic function, $\rho=1+g$ and $a=\rho^{-1}$.
For a complex coefficient value $\varrho$ let
$\mathcal G_\varrho(u,v)=\varrho(4\pi u)^{-1}e^{-\varrho|v|^2/(4u)}$. The
target-frozen kernel $Z(u,x,y)=\mathcal G_{\rho(y)}(u,x-y)$ and its defect
$K=(a(x)-a(y))\Delta_xZ$ satisfy $(\partial_u-a(x)\Delta_x)Z=-K$. The Levi
series is $r=Z+\sum_{n\ge1}Z*K^{*n}$, with
$(U*W)(u,x,y)=\int_0^u\int U(u-v,x,\xi)W(v,\xi,y)\,d\xi\,dv$. All kernels
are simultaneously periodic, the mixed norms are those of
Section~\ref{sec:framework}, and in a convolution chain every intermediate
point is written as the source plus a displacement.

\begin{lemma}[Frozen kernels]\label{lem:frozen}
Let $\Re\varrho\ge\varrho_0>0$ and $|\varrho|\le2$ for all coefficient values
used, and let $0<\kappa<\varrho_0/4$. Then, with $G=G_\kappa$ and
$p\in\{4,\infty\}$,
\begin{align}
 |\nabla_v^\ell\mathcal G_\varrho(u,v)|&\le Cu^{-\ell/2}G(u,v),\qquad
 \ell=0,1,2,\label{eq:frozenG}\\
 |\nabla_v^\ell(\mathcal G_{\varrho_1}-\mathcal G_{\varrho_0'})(u,v)|
   &\le C|\varrho_1-\varrho_0'|u^{-\ell/2}G(u,v),\qquad\ell=0,1,
   \label{eq:frozenGdiff}\\
 \|K(u;w)\|_{[p]}&\le C\|\nabla a\|_p\,u^{-1/2}G(u,w).\label{eq:frozenK}
\end{align}
The same bounds hold for complex time $\mathfrak t=u\omega$, $|\omega-1|\le\eta$,
whenever $\Re(\varrho/\omega)\ge\varrho_0$. The constants depend only on
$\varrho_0$, the gap $\varrho_0/4-\kappa$ and, for complex time, on $\eta$.
\end{lemma}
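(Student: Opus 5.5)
The estimates are elementary Gaussian calculus, and the plan is to prove them in the order stated. The key is to write $\mathcal G_\varrho(u,v)=\varrho(4\pi u)^{-1}e^{-\varrho|v|^2/(4u)}$ and compare its modulus with the envelope $G_\kappa$. Since $|e^{-\varrho|v|^2/(4u)}|=e^{-\Re\varrho\,|v|^2/(4u)}\le e^{-\varrho_0|v|^2/(4u)}$, we obtain
\[
|\mathcal G_\varrho(u,v)|\le \frac{2}{4\pi u}e^{-\kappa|v|^2/u}\,e^{-(\varrho_0/4-\kappa)|v|^2/u}.
\]
The last factor is kept as a spare Gaussian margin. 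For \eqref{eq:frozenG}, differentiation in $v$ produces polynomial prefactors: $\nabla_v\mathcal G_\varrho=-\varrho v/(2u)\,\mathcal G_\varrho$, and $\nabla_v^2$ gives terms $\varrho/(2u)$ and $\varrho^2 v\otimes v/(4u^2)$. With $\xi=|v|/\sqrt u$, each prefactor is $u^{-\ell/2}$ times a polynomial in $\xi$ of degree $\le\ell$, with coefficients bounded via $|\varrho|\le2$. The bound $\sup_\xi \xi^j e^{-(\varrho_0/4-\kappa)\xi^2}\le C(j,\varrho_0/4-\kappa)$ absorbs these polynomials. This yields \eqref{eq:frozenG} with constants depending only on $\varrho_0$ and the gap.

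For the difference bound \eqref{eq:frozenGdiff}, I would interpolate along the segment $\varrho(\sigma)=\varrho_0'+\sigma(\varrho_1-\varrho_0')$. Since the half-plane $\{\Re\varrho\ge\varrho_0\}$ and the disc $\{|\varrho|\le2\}$ are both convex, the whole segment satisfies the hypotheses. Then $\partial_\varrho\mathcal G_\varrho=\bigl(\varrho^{-1}-|v|^2/(4u)\bigr)\mathcal G_\varrho$, and $|\varrho^{-1}|\le\varrho_0^{-1}$. The extra factor $\xi^2$ is again absorbed by the spare margin, and likewise after one $v$-derivative. Integrating over $\sigma\in[0,1]$ gives the factor $|\varrho_1-\varrho_0'|$. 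The case $\ell=2$ is not claimed, so no more is needed.

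For \eqref{eq:frozenK}, I would write $K(u,x,y)=(a(x)-a(y))\Delta_x\mathcal G_{\rho(y)}(u,x-y)$ and substitute $y=x+w$, keeping in mind that in the mixed norm the source varies with the displacement fixed, per the convention of Section~\ref{sec:framework}. Pointwise, \eqref{eq:frozenG} with $\ell=2$ gives $|\Delta\mathcal G_{\rho(x+w)}(u,-w)|\le Cu^{-1}G_{\kappa'}(u,w)$ for some $\kappa<\kappa'<\varrho_0/4$. The key step, and the only one needing care, is the increment $a(x)-a(x+w)=-\int_0^1\nabla a(x+\sigma w)\cdot w\,d\sigma$. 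Taking the $L^p(dx)$ norm and using translation invariance of Lebesgue measure on the torus gives $\|a(\cdot)-a(\cdot+w)\|_p\le|w|\,\|\nabla a\|_p$ by Minkowski's integral inequality. The Gaussian factor depends only on $w$ and $u$, uniformly in the coefficient value, so it pulls out of the norm. The leftover $|w|u^{-1}G_{\kappa'}(u,w)$ equals $u^{-1/2}\xi G_{\kappa'}$, which is bounded by $Cu^{-1/2}G_\kappa(u,w)$ after spending the margin $\kappa'-\kappa$. Both $p=4$ and $p=\infty$ are covered.

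Complex time $\mathfrak t=u\omega$ is handled the same way. The kernel becomes $\varrho(4\pi u\omega)^{-1}e^{-(\varrho/\omega)|v|^2/(4u)}$, and the relevant real part is $\Re(\varrho/\omega)\ge\varrho_0$ by assumption. The prefactors carry powers of $|\omega|^{-1}\le(1-\eta)^{-1}$, and $|\varrho/\omega|\le 2/(1-\eta)$. All constants therefore acquire a dependence on $\eta$ (taking $\eta<1$), and the arguments above go through verbatim. I expect no genuine obstacle. The main bookkeeping point is to split the Gaussian margin consistently, so that the constants depend only on $\varrho_0$, on $\varrho_0/4-\kappa$, and on $\eta$, and not on the coefficient $g$ beyond $\|\nabla a\|_p$.
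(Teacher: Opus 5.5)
Your proposal is correct and follows essentially the same route as the paper: you bound the modulus and let the Gaussian gap absorb the polynomial prefactors in $|v|/\sqrt u$. You integrate the $\varrho$-derivative along the segment, and you correctly note, where the paper is silent, that convexity of the admissible set keeps the segment admissible. You handle $K$ by the segment formula with Minkowski's inequality and periodic translation, and complex time by the exact substitution $\varrho\mapsto\varrho/\omega$ (with $\eta<1$ implicit).
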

\begin{proof}
One has $\nabla_v\mathcal G_\varrho=-\varrho v\mathcal G_\varrho/(2u)$,
$\Delta_v\mathcal G_\varrho=(-\varrho/u+\varrho^2|v|^2/(4u^2))\mathcal G_\varrho$
and $|\mathcal G_\varrho|\le Cu^{-1}e^{-\varrho_0|v|^2/(4u)}$; the gap absorbs
the polynomials in $|v|/\sqrt u$, giving \eqref{eq:frozenG}.
For \eqref{eq:frozenGdiff} integrate the $\varrho$-derivative along the segment
from $\varrho_0'$ to $\varrho_1$. For \eqref{eq:frozenK}, $|K|\le|a(x)-a(x+w)|
\cdot Cu^{-2}(1+|w|^2/u)e^{-\varrho_0|w|^2/(4u)}$, while
$a(x+w)-a(x)=\int_0^1\nabla a(x+\lambda w)\cdot w\,d\lambda$, Minkowski's
inequality and periodic translation give
$\|a(\cdot)-a(\cdot+w)\|_p\le|w|\|\nabla a\|_p$; the gap absorbs $|w|/\sqrt u$
and $|w|^2/u$. For complex time replace $\varrho$ by $\varrho/\omega$.
\end{proof}

\begin{lemma}[Marked beta summation]\label{lem:markedbeta}
Let $G=G_\kappa$ and let $Z,K$ be kernels with $|Z|\le C_0G$,
$|\nabla_xZ|\le C_0u^{-1/2}G$ and
$\|K(u;w)\|_{[p]}\le C_0L_pu^{-1/2}G(u,w)$ for $p\in\{4,\infty\}$ (for $m=0$
only $p=\infty$ is needed). Let
$m\in\{0,1,2\}$ and $1/p_m=m/4$. For $n\ge\max(m,1)$,
\begin{align*}
 \|Z*K^{*n}(t;w)\|_{[p_m]}
 &\le C_0^{n+1}L_4^mL_\infty^{n-m}
   \frac{\Gamma(1/2)^n}{\Gamma(1+n/2)}t^{n/2}G(t,w),\\
 \|(\nabla_xZ)*K^{*n}(t;w)\|_{[p_m]}
 &\le C_0^{n+1}L_4^mL_\infty^{n-m}
   \frac{\Gamma(1/2)^{n+1}}{\Gamma((n+1)/2)}t^{(n-1)/2}G(t,w),
\end{align*}
where the source derivative acts only on the leftmost factor, at fixed
intermediate variables. If $L_\infty\sqrt t\le M$ and $m'=\max(m,1)$,
the sums over $n\ge m'$ are at most
$C(C_0,M)L_4^mL_\infty^{m'-m}t^{m'/2}G(t,w)$ and
$C(C_0,M)L_4^mL_\infty^{m'-m}t^{(m'-1)/2}G(t,w)$, respectively. For $m=0$
the $[\infty]$ norm is the pointwise bound. The same bounds hold along a
complex time ray, with constants changed by the ray length factors.
\end{lemma}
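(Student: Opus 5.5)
We prove the bounds by induction on the length $n$ of the convolution chain, using two ingredients: the Gaussian semigroup property of the envelopes $G_\kappa$, and Hölder's inequality in the source variable. The two envelope facts are standard. First, $\int_{\R^2}G_\kappa(u-v,w-w')G_\kappa(v,w')\,dw'=G_\kappa(u,w)$, since $G_\kappa(u,\cdot)$ is the Gaussian density with covariance $(u/2\kappa)I$. Second, the beta integral $\int_0^u(u-v)^{a-1}v^{b-1}dv=B(a,b)u^{a+b-1}$. Write a chain as
\[
 Z*K^{*n}(t,x,x+w)=\int\cdots\int Z(u_0,x,x+w_1)\,K(u_1,x+w_1,x+w_2)\cdots K(u_n,x+w_n,x+w)\,dw_i\,dv_i,
\]
with times $u_0+\dots+u_n=t$ and every intermediate point written as source plus displacement.

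\textbf{Source norm.} We take the $L^{p_m}$ norm in $x$ inside all the $w_i$ and $v_i$ integrals, by Minkowski's inequality. The $Z$ factor is bounded pointwise by $C_0G(u_0,w_1)$. Each $K$ factor is a function of $x$ evaluated at a translated point $x+w_i$, with displacement $w_{i+1}-w_i$. By periodic translation invariance, its $L^p$ norm in $x$ equals $\|K(u_i;w_{i+1}-w_i)\|_{[p]}$. We choose $m$ of the $K$ factors (say the first $m$) to be measured in $L^4$ and the remaining $n-m$ in $L^\infty$. Hölder with $\sum 1/p=m/4=1/p_m$ then gives the product bound
\[
 C_0^{n+1}L_4^mL_\infty^{n-m}\,G(u_0,w_1)\prod_{i\ge1}u_i^{-1/2}G(u_i,\cdot).
\]
The condition $n\ge m$ ensures there are enough $K$ factors to mark.

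\textbf{Space and time integrals.} The spatial integrals collapse via the semigroup property to $G(t,w)$. The time integral over the simplex of $\prod_{i=1}^nu_i^{-1/2}$ (with $u_0$ carrying exponent $0$, i.e.\ $\Gamma(1)$) equals the Dirichlet integral
\[
 \frac{\Gamma(1)\,\Gamma(1/2)^n}{\Gamma(1+n/2)}\,t^{n/2}.
\]
For the $\nabla_xZ$ version, the factor $u_0^{-1/2}$ turns $\Gamma(1)$ into $\Gamma(1/2)$, giving
\[
 \frac{\Gamma(1/2)^{n+1}}{\Gamma((n+1)/2)}\,t^{(n-1)/2}.
\]
The derivative acts at fixed intermediate variables, which is exactly how the $Z$ bound is applied.

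\textbf{Summation and complex time.} We factor $L_\infty^{n-m}t^{n/2}=L_\infty^{m'-m}t^{m'/2}\,(L_\infty\sqrt t)^{n-m'}\le L_\infty^{m'-m}t^{m'/2}M^{n-m'}$. The resulting series $\sum_n C_0^{n+1}M^{n}\Gamma(1/2)^n/\Gamma(1+n/2)$ converges because $\Gamma(1+n/2)$ grows superexponentially. The same holds for the gradient series. For complex time we parametrize the rays as $u\omega$. The envelope bounds of Lemma~\ref{lem:frozen} hold there, and $|d(u\omega)|=|\omega|\,du$ contributes a factor $|\omega|^{n}$ per chain, which is absorbed into the constants.

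The main obstacle is the bookkeeping in the source norm step. We must make sure that the Hölder split is legitimate even though the factors are evaluated at different translated points. Translation invariance handles this once Minkowski has been applied with the displacements held fixed. We must also make sure the envelope convolution uses a single common $\kappa$, which is why every kernel is bounded with the same $G_\kappa$.
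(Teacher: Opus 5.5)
Your proposal is correct and follows the paper's proof essentially step for step: Minkowski in the intermediate displacements, periodic translation and H\"older with $m$ marked $L^4$ factors, exact convolution of the normalized Gaussian envelopes, the Dirichlet integral with $\alpha_0=1$ (resp.\ $\tfrac12$) and $\alpha_j=\tfrac12$, and superexponential growth of $\Gamma(1+n/2)$ for the sums. The only slips are cosmetic: after you factor out $L_\infty^{m'-m}t^{m'/2}$, the summand carries $M^{n-m'}$ rather than $M^n$; and no induction is actually used, since you expand the chain directly.
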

\begin{proof}
Put $m$ factors $K$ in $L^4$ and all others in $L^\infty$. Minkowski's
inequality in the intermediate displacements, periodic translation, and
H\"older's inequality $\|h(x+v)k(x+v')\|_{L^2_x}\le\|h\|_4\|k\|_4$ for the
marked factors bound the $[p_m]$ norm of each chain by the corresponding
scalar chain. Normalized Gaussians convolve exactly in the displacement,
and the Dirichlet integral
$\int_{u_0+\dots+u_n=t}\prod_ju_j^{\alpha_j-1}
=t^{\sum\alpha_j-1}\prod\Gamma(\alpha_j)/\Gamma(\sum\alpha_j)$ with
$\alpha_0=1$ (respectively $\frac12$) and $\alpha_j=\frac12$ gives the time
factors. The sums converge since $\Gamma(1+n/2)$ grows faster than any
exponential.
\end{proof}

\subsection{Near-Gaussian parameter jets}
\begin{theorem}[Resolved near-Gaussian parameter jets]\label{thm:neargaussian}
Fix $\theta<\infty$ and $0<\gamma<1$.
There are $\varepsilon_0=\varepsilon_0(\theta,\gamma)>0$, $c>0$, and
constants $C_j$, $j\ge0$, and $C_\theta$, such that the following holds.

\smallskip\noindent\emph{(a) Real parameters.} Let $f$ be a real trigonometric
polynomial supported in $|k|\le\Lambda$, with
$\varepsilon=\|f\|_{\mathcal A}\le\varepsilon_0$ and $t\Lambda^2\le\theta$.
For the fixed-Lebesgue plane kernel $r_s$ of $(1+sf)^{-1}\Delta$, $|s|\le1$,
put $\kappa_-=(1-\gamma)/4$ and $\kappa_+=(1+\gamma)/4$. Then
\begin{equation}\label{eq:nonzerolower}
 r_s(t,x,y)\ge c t^{-1}e^{-\kappa_+|y-x|^2/t},
\end{equation}
and, for every integer $j\ge0$,
\begin{align}
 |\partial_s^jr_s|+|t\partial_t\partial_s^jr_s|
 &\le C_j\varepsilon^jt^{-1}e^{-\kappa_-|y-x|^2/t},\label{eq:nonzerodensityjets}\\
 |\nabla_x\partial_s^jr_s|+|t\partial_t\nabla_x\partial_s^jr_s|
 &\le C_j\varepsilon^jt^{-3/2}e^{-\kappa_-|y-x|^2/t}.\label{eq:nonzerosourcejets}
\end{align}
The constants are independent of the cutoff, mode count, and period after
rescaling. The indicated derivatives commute. Time differentiation is at
fixed coefficient $f$. The case $f=0$ is interpreted by the exact flat kernel.

\smallskip\noindent\emph{(b) Complex parameters.} There is
$\bar r_\theta=\bar r_\theta(\theta,\gamma)>0$ with the following property. Let $F$ be a real
trigonometric polynomial supported in $|k|\le\Lambda$, with
$\|F\|_{\mathcal A}\le1$, and let $t\Lambda^2\le\theta$. The plane kernel
$r_z$ of $(1+zF)^{-1}\Delta$ is holomorphic in $|z|<2\bar r_\theta$, and,
uniformly for $0<u\le t$ and $|z|\le\bar r_\theta$,
\begin{equation}\label{eq:complexkernelupper}
 |r_z(u,x,y)|\le C_\theta u^{-1}e^{-\kappa_-|x-y|^2/u},\qquad
 |\nabla_xr_z(u,x,y)|\le C_\theta u^{-3/2}e^{-\kappa_-|x-y|^2/u},
\end{equation}
with the same $\kappa_-$. No lower bound or positivity is asserted for
nonreal $z$.
\end{theorem}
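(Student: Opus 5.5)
We construct $r_z$ by the target-frozen Levi parametrix of Subsection~\ref{subsec:levitools}, $r=Z+\sum_{n\ge1}Z*K^{*n}$, with $\rho=1+zF$ complex for part (b). The first step is to choose the frozen exponent.

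\textbf{Step 1: frozen exponent.} For $|z|\le 2\bar r_\theta$ we have $|\rho-1|\le 2\bar r_\theta$, since $\|F\|_\infty\le\sA(F)\le1$. Taking $\bar r_\theta$ small makes $\Re\rho$ close to $1$ and $|\rho|\le2$. We choose $\varrho_0$ so close to $1$ that $\kappa_-=(1-\gamma)/4<\varrho_0/4$ with a fixed gap. Lemma~\ref{lem:frozen} then gives $|Z|\le CG_{\kappa}$ and $|\nabla_xZ|\le Cu^{-1/2}G_\kappa$ for some $\kappa$ strictly between $\kappa_-$ and $\varrho_0/4$. The defect satisfies $\|K(u;w)\|_{[\infty]}\le C\|\nabla a\|_\infty u^{-1/2}G_\kappa$.

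\textbf{Step 2: summing the series.} The resolution hypothesis enters only through $\|\nabla a\|_\infty\le C|z|\,\|\nabla F\|_\infty\le C\Lambda$. Hence $L_\infty\sqrt u\le C\sqrt\theta=:M$ for all $u\le t$. Lemma~\ref{lem:markedbeta} with $m=0$ sums the Levi series, giving $|r_z-Z|\le C(\theta)G_\kappa$ and the analogous bound for $\nabla_x$. This yields \eqref{eq:complexkernelupper}, after relaxing $\kappa$ to $\kappa_-$, which also absorbs the normalizing factor $\kappa/\pi$. Each term of the series is holomorphic in $z$, and the convergence is locally uniform, so $r_z$ is holomorphic in $|z|<2\bar r_\theta$. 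That the sum equals the true fundamental solution follows from the standard Levi argument: the Hölder continuity in $x$ needed for the Volterra equation comes from the smoothness of the trigonometric polynomial $F$. All constants depend only on $\theta,\gamma$; periodicity is irrelevant because we work on the plane lift.

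\textbf{Step 3: part (a).} Set $F=f/\varepsilon$ and $z=s\varepsilon$. We require $\varepsilon_0\le\bar r_\theta/4$ and apply Lemma~\ref{lem:cauchyjets} to the holomorphic map $z\mapsto r_z(t,x,y)$. The Cauchy estimate then gives $|\partial_s^jr_s|\le C_j\varepsilon^jt^{-1}e^{-\kappa_-|x-y|^2/t}$, and likewise for $\nabla_x$. This proves \eqref{eq:nonzerodensityjets} and \eqref{eq:nonzerosourcejets} without their time-derivative terms.

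For $t\partial_t$ we use complex time $\mathfrak t=u\omega$, $|\omega-1|\le\eta$, as allowed by Lemma~\ref{lem:frozen} and Lemma~\ref{lem:markedbeta}. The kernel is holomorphic in $\omega$ as well, and a Cauchy estimate in $\log\mathfrak t$ bounds $t\partial_t$ by the sup over a small disc. This costs only a slightly worse exponent, which is again absorbed in the gap between $\kappa$ and $\kappa_-$. The derivatives commute by joint holomorphy.

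\textbf{Main obstacle: the lower bound \eqref{eq:nonzerolower}.} We must show $r_s\ge ct^{-1}e^{-\kappa_+|y-x|^2/t}$ with $\kappa_+$ arbitrarily close to $1/4$; this is the step I expect to be hardest. I would try first to compare with the exact frozen kernel. Since $\rho(y)\le1+\varepsilon$, we have $Z\ge c\,t^{-1}e^{-(1+\varepsilon)|w|^2/(4t)}$, while the remainder satisfies $|r-Z|\le C\sqrt\theta\,\varepsilon\, t^{-1}e^{-\kappa|w|^2/t}$. The factor $\varepsilon$ comes from $\|\nabla a\|_\infty\le C\varepsilon\Lambda$. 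This comparison dominates only for $|w|\lesssim\sqrt t$ and fails in the tails.

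I would therefore obtain an on-diagonal lower bound $r_s(u,x,y)\ge cu^{-1}$ for $|x-y|\le\delta\sqrt u$, and then chain it. By Chapman--Kolmogorov along $n$ steps of length $t/n$ on the straight segment, with $n\approx|w|^2/(\delta^2 t)$, the product of the local lower bounds gives $e^{-(1/4+O(\delta+\varepsilon))|w|^2/t}$. Optimizing $\delta$ and $\varepsilon_0$ against $\gamma$ yields the exponent $\kappa_+$. The delicate point is making the per-step loss $1+O(\delta+\varepsilon)$ sharp enough to produce the exponent $1/4$ rather than a generic Aronson constant.
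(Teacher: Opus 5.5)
Your construction of the complex kernel and the upper jets matches the paper's argument: a target-frozen Levi series summed with Lemma~\ref{lem:markedbeta}, using only $\|F\|_\infty\le1$ and $\sqrt t\|\nabla F\|_\infty\le\sqrt\theta$, followed by Cauchy estimates in the amplitude and along complex time rays.

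There is a genuine gap in the lower bound \eqref{eq:nonzerolower}, and the mechanism you propose cannot produce the exponent $\kappa_+$. Suppose you use an on-diagonal bound $r_s(u,x,y)\ge cu^{-1}$ on $|x-y|\le\delta\sqrt u$ with $\delta$ small, chained over $n\approx|w|^2/(\delta^2t)$ steps. Then every intermediate point is confined to a disk of radius $\lesssim\delta\sqrt{u_n}$. Each step contributes a factor $\lesssim c\delta^2$, so the chain gives roughly $\exp\bigl(-C\delta^{-2}\log(1/\delta)\,|w|^2/t\bigr)$. Shrinking $\delta$ makes the exponent worse, not closer to $1/4$. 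Even a per-step relative accuracy $1+O(\delta+\varepsilon)$ against the Gaussian would accumulate to an exponent loss of order $n(\delta+\varepsilon)\approx(\delta^{-1}+\varepsilon\delta^{-2})|w|^2/t$. The quantity that must be small is the per-step logarithmic loss divided by the squared step length in parabolic units; with short steps it is large. This is exactly the generic Aronson constant you were worried about.

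The missing idea is to go the other way, to a large parabolic ball. Your own estimate $|r-Z|\le C\sqrt\theta\,\varepsilon\,u^{-1}e^{-\kappa|w|^2/u}$ carries the factor $\varepsilon$. Also $Z\ge\frac{1-\varepsilon}{1+\varepsilon}\mathcal G_{1+\varepsilon}$, so relative to $\mathcal G_{1+\varepsilon}$ the error on $|w|\le R_*\sqrt u$ is at most $C\varepsilon\sqrt\theta\,e^{((1+\varepsilon)/4-\kappa_-)R_*^2}$. Hence the comparison does not merely hold for $|w|\lesssim\sqrt t$. For any prescribed $R_*$, shrinking $\varepsilon_0$ in terms of $R_*$ and $\theta$ gives $r_s\ge\frac12\mathcal G_{1+\varepsilon}$ on $|x-y|\le R_*\sqrt u$ for all $u\le t$.

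Now chain with few, long steps, keeping the Gaussian factor in each one:
\begin{itemize}
\item Take $n=\max\{1,\lceil|w|^2/((R_*-2)^2u)\rceil\}$ steps and intermediate points $x+jw/n+\delta_j$ with $|\delta_j|\le\sqrt{u_n}$, so every increment has length at most $R_*\sqrt{u_n}$.
\item Since $\sum_j(\delta_j-\delta_{j-1})=0$, the cross terms cancel exactly: $\sum_j|w/n+\delta_j-\delta_{j-1}|^2=|w|^2/n+\sum_j|\delta_j-\delta_{j-1}|^2\le|w|^2/n+4nu_n$.
\item The product therefore reproduces $e^{-(1+\varepsilon)|w|^2/(4u)}$ up to a factor $e^{-O(n)}$. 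That factor comes from the disk areas and the $\frac12$'s, and $O(n)=O(1)+O\bigl(|w|^2/((R_*-2)^2u)\bigr)$.
\end{itemize}
Choosing $R_*$ with $4/(R_*-2)^2\le\gamma/8$ and then $\varepsilon_0\le\gamma/2$ yields the exponent $\kappa_+$. This is also why $\varepsilon_0$ must depend on $\gamma$.
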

\begin{proof}
\emph{Step 1: freeze the scaling before differentiating.}
Fix the physical target time $t$ and an anchor $x_0$, rescale space by
$\sqrt t$, and put $\phi(\xi)=f(x_0+\sqrt t\,\xi)/\varepsilon$. The Wiener
bound gives $\|\phi\|_\infty\le1$ and $\|\nabla\phi\|_\infty\le\sqrt\theta$;
only these enter below, and the rescaled period enters no constant. For a
complex parameter $\zeta$ set $\rho_\zeta=1+\zeta\phi$, $a_\zeta=\rho_\zeta^{-1}$.
For $|\zeta|\le r_0<1/4$,
\begin{equation}\label{eq:paramlip}
 |a_\zeta(x)-a_\zeta(y)|\le L_\zeta|x-y|,
 \qquad L_\zeta=\frac{|\zeta|\sqrt\theta}{(1-r_0)^2}.
\end{equation}
The real parameter is $\zeta=s\varepsilon$. The physical kernel is
$t^{-1}$ times the rescaled kernel at time one; the anchor and the rescaled
profile stay fixed under source and time differentiation, so
$t\partial_{t'}$ at $t'=t$ is the dimensionless time derivative at one.

\emph{Step 2: a convergent complex parametrix.}
For complex time $\mathfrak t$ near a positive real time, use
$Z_\zeta(\mathfrak t,x,y)=\rho_\zeta(y)(4\pi\mathfrak t)^{-1}
\exp(-\rho_\zeta(y)|x-y|^2/(4\mathfrak t))$ and
$K_\zeta=(a_\zeta(x)-a_\zeta(y))\Delta_xZ_\zeta$, so that
$(\partial_\mathfrak t-a_\zeta(x)\Delta_x)Z_\zeta=-K_\zeta$.
Choose $r_0,\eta>0$ with $(r_0+\eta)/(1-\eta)\le\gamma/4$.
If $\mathfrak t=u\omega$ with $u>0$ and $|\omega-1|\le\eta$, then
$|\rho_\zeta(y)/\omega-1|\le(r_0+\eta)/(1-\eta)$, so
$\Re(\rho_\zeta(y)/\omega)\ge1-\gamma/4$.
Set $G=G_{\kappa_-}$. Since $\kappa_-<(1-\gamma/4)/4$, Lemma~\ref{lem:frozen},
applied in rescaled variables where $\|\nabla a_\zeta\|_\infty\le L_\zeta$ by
\eqref{eq:paramlip}, gives along such rays
\begin{equation}\label{eq:levibounds}
 |Z_\zeta|\le CG,\qquad
 |\nabla_xZ_\zeta|\le Cu^{-1/2}G,\qquad
 |K_\zeta|\le CL_\zeta u^{-1/2}G.
\end{equation}
For complex time the convolution is taken along the straight ray,
$(U*W)(\mathfrak t,x,y)=\mathfrak t\int_0^1\int U((1-v)\mathfrak t,x,\xi)
W(v\mathfrak t,\xi,y)\,d\xi\,dv$. Lemma~\ref{lem:markedbeta} with $m=0$
gives
\begin{equation}\label{eq:levicorrection}
 |Z_\zeta*K_\zeta^{*n}|\le
 \frac{C(CL_\zeta)^n\Gamma(1/2)^n}{\Gamma(1+n/2)}u^{n/2}G,\qquad
 |\nabla_x(Z_\zeta*K_\zeta^{*n})|
 \le\frac{C(CL_\zeta)^n\Gamma(1/2)^{n+1}}{\Gamma((n+1)/2)}u^{(n-1)/2}G,
\end{equation}
and the same Dirichlet integral gives
$|K_\zeta^{*n}|\le(CL_\zeta)^n\Gamma(1/2)^n\Gamma(n/2)^{-1}u^{n/2-1}G$.
Hence $\Phi_\zeta=\sum_{n\ge1}K_\zeta^{*n}$ and
$\mathcal R_\zeta=Z_\zeta+Z_\zeta*\Phi_\zeta$ converge normally on compact
subsets of the parameter and time domains, together with the source
derivative, and $|\mathcal R_\zeta|\le C_\theta G$,
$|\nabla_x\mathcal R_\zeta|\le C_\theta u^{-1/2}G$ uniformly for $0<u\le2$.
Factoring the first power of $L_\zeta\sqrt u$ in
\eqref{eq:levicorrection} also gives, on the real time axis,
\begin{equation}\label{eq:levilocalerror}
 |\mathcal R_\zeta-Z_\zeta|
 \le C_\theta L_\zeta\sqrt u\,G,\qquad 0<u\le1.
\end{equation}
Distributionally,
$(\partial_\mathfrak t-a_\zeta\Delta)(Z_\zeta*\Phi_\zeta)
=\Phi_\zeta-K_\zeta*\Phi_\zeta$, which cancels $-K_\zeta$ since
$\Phi_\zeta=K_\zeta+K_\zeta*\Phi_\zeta$.
The initial trace of $Z_\zeta$ is the delta kernel; the correction has zero
initial trace by \eqref{eq:levilocalerror}. For real positive coefficients,
uniqueness of bounded distributional heat solutions, followed by regularity
for smooth uniformly elliptic coefficients, identifies $\mathcal R_\zeta$
with the transition density. Normal convergence of the complex integral
formulas proves holomorphy in $\zeta$ and $\mathfrak t$.

\emph{Step 3: parameter and time jets.}
Assume $\varepsilon\le r_0/4$. A parameter circle of radius $r_0/2$ about
$\zeta=s\varepsilon$ remains inside $|\zeta|<r_0$, so Cauchy's formula and
$\partial_s=\varepsilon\partial_\zeta$ give
$|\partial_s^j\mathcal R_\zeta|\le j!(2\varepsilon/r_0)^jC_\theta G$ and
$|\nabla_x\partial_s^j\mathcal R_\zeta|\le j!(2\varepsilon/r_0)^jC_\theta u^{-1/2}G$.
A time circle of radius $\eta u/2$ gives the same bounds for
$u\partial_u$ and the indicated mixed derivatives, with an extra factor at
most $2/\eta$. Holomorphy and normal convergence justify commutation of all
derivatives in the theorem.

\emph{Step 4: a near-diagonal real comparison.}
For real $|\zeta|\le\varepsilon$, let $a_-=1/(1+\varepsilon)$ and let
$\mathcal G_{1+\varepsilon}(u,w)=(4\pi a_-u)^{-1}e^{-|w|^2/(4a_-u)}$ be the
flat kernel for the constant coefficient $\rho=1+\varepsilon$. The exact
frozen formula yields
$Z_\zeta(u,x,y)\ge\frac{1-\varepsilon}{1+\varepsilon}\mathcal G_{1+\varepsilon}(u,x-y)$.
Let $R_*>2$ be the number fixed in Step~5; it depends only on $\gamma$. On
$|x-y|\le R_*\sqrt u$, $u\le1$, the ratio of the error in
\eqref{eq:levilocalerror} to $\mathcal G_{1+\varepsilon}$ is at most
$C_\theta\varepsilon\sqrt\theta\exp([(1+\varepsilon)/4-\kappa_-]R_*^2)$.
Reduce $\varepsilon_0$ so this is at most $1/4$ and
$\varepsilon_0\le1/8$. Since $(1-\varepsilon)/(1+\varepsilon)\ge7/9$,
\begin{equation}\label{eq:localgaussianlower}
 \mathcal R_\zeta(u,x,y)\ge\tfrac12\mathcal G_{1+\varepsilon}(u,x-y),
 \qquad |x-y|\le R_*\sqrt u,\ u\le1.
\end{equation}

\emph{Step 5: an explicit global chaining estimate.}
Write $w=y-x$, $n=\max\{1,\lceil |w|^2/[u(R_*-2)^2]\rceil\}$ and $u_n=u/n$.
Use the positive real semigroup and restrict the $n-1$ intermediate points to
$x_j=x+jw/n+\delta_j$, $|\delta_j|\le\sqrt{u_n}$, $\delta_0=\delta_n=0$.
Each increment has length at most $R_*\sqrt{u_n}$, so
\eqref{eq:localgaussianlower} applies to every factor, and the cross terms
telescope exactly:
$\sum_{j=1}^n|w/n+\delta_j-\delta_{j-1}|^2
=|w|^2/n+\sum_{j=1}^n|\delta_j-\delta_{j-1}|^2\le|w|^2/n+4nu_n$.
Each restricted disk has area $\pi u_n$. Hence
\[
 \mathcal R_\zeta(u,x,y)
 \ge 2^{-n}(4\pi a_-u_n)^{-n}(\pi u_n)^{n-1}
       e^{-|w|^2/(4a_-u)-n/a_-}
 =\frac1{\pi u_n}
       e^{-n[\log(8a_-)+1/a_-]}e^{-|w|^2/(4a_-u)}.
\]
For $1/2\le a_-\le1$, $\log(8a_-)+1/a_-\le4$.
Using $u_n\le u$ and $n\le1+|w|^2/[u(R_*-2)^2]$, a deliberately weakened
bound is
\begin{equation}\label{eq:sharpchainedlower}
 \mathcal R_\zeta(u,x,y)\ge\frac{e^{-4}}{4\pi u}
 \exp\left[-\left(\frac{1+\varepsilon}{4}
           +\frac4{(R_*-2)^2}\right)\frac{|w|^2}{u}\right].
\end{equation}
Choose $R_*$ so that $4/(R_*-2)^2\le\gamma/8$, then reduce
$\varepsilon_0$ so $\varepsilon_0\le\gamma/2$; the exponent in
\eqref{eq:sharpchainedlower} is then at most $\kappa_+$.
All choices depend only on $\theta,\gamma$. Restoring the fixed physical
scaling gives \eqref{eq:nonzerolower}--\eqref{eq:nonzerosourcejets}.
\end{proof}

\begin{proof}[Proof of part (b)]
Apply Steps~1 and~2 with $f$, $\varepsilon$, $\zeta$ replaced by $F$, $1$,
$z$, so $\phi(\xi)=F(x_0+\sqrt t\,\xi)$ has $\|\phi\|_\infty\le1$ and
Lipschitz constant at most $\sqrt\theta$; Step~2 applies for $|\zeta|<r_0$,
with $r_0$ depending only on $\gamma$. Put $\bar r_\theta=r_0/2$. Physical
times $0<u\le t$ correspond to rescaled times in $(0,1]$, and the bounds on
$\mathcal R_\zeta$ and $\nabla_x\mathcal R_\zeta$ of Step~2 become
\eqref{eq:complexkernelupper} after restoring the physical scaling.
Normal convergence gives holomorphy. (The proof gives existence of a
cutoff-independent amplitude radius, not a useful numerical value.)
\end{proof}

\section{The compensated displacement clock}
\label{sec:compclock}
For a smooth positive periodic function $\rho$, put
\[
 L_\rho=\rho^{-1}\Delta,\qquad P_u^\rho=e^{uL_\rho},\qquad a_\rho=\rho^{-1},
 \qquad m_t=\frac1t\int_0^tP_u^\rho a_\rho\,du,\qquad \alpha_t=m_t^{-1}.
\]
The same semigroup acts on periodic functions on the plane. The lifted
diffusion has displacement covariance $2tm_t(x)I$, so $\alpha_t$ is
exactly the scalar term in the efficient displacement residual
decomposition. With $\mathcal I(\rho)=\int_{\mathbb T^2}\rho^{-2}|\nabla\rho|^2dx$
as in Section~\ref{sec:framework},
\begin{equation}\label{naclock:eq:Adef}
 \Acal_t(\rho)=\int_{\mathbb T^2}(\alpha_t-\rho)\,dx+\frac t2\mathcal I(\rho).
\end{equation}
For complex $\rho$ near $1$, $\mathcal I$ denotes the holomorphic continuation
$\int_{\mathbb T^2}\rho^{-2}(\nabla\rho\cdot\nabla\rho)dx$ (\emph{no complex
conjugation}); the operators and the scalar inverse are likewise continued
holomorphically wherever the latter is nonzero.

\begin{lemma}[Exact two-factor cancellation]\label{naclock:lem:exact}
Write $a=\rho^{-1}$, $L=L_\rho$, $P_u=P_u^\rho$, and $d=m_t-a$. Then
\begin{equation}\label{naclock:eq:d}
 d=\frac1t\int_0^t(t-u)P_uLa\,du,
\end{equation}
and
\begin{equation}\label{naclock:eq:exact}
 \Acal_t(\rho)
 =-\frac1t\int_0^t(t-u)\int_0^u
       \int_{\mathbb T^2}\rho(P_vL\rho)La\,dx\,dv\,du
    +\int_{\mathbb T^2}\frac{\rho^2d^2}{m_t}\,dx.
\end{equation}
These formulas also hold on a complex parameter disk on which the
semigroup is holomorphic and $m_t$ stays away from zero.
\end{lemma}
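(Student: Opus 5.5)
The plan is to prove both formulas by direct semigroup calculus: Duhamel for $P_u$, an exact algebraic expansion of $1/m_t$ around $\rho$, and the symmetry of $L$ with respect to the measure $\rho\,dx$. Everything is first done for real positive $\rho$, where $P_u=e^{uL}$ is a genuine Markov semigroup on smooth periodic functions. The complex case then follows by holomorphic continuation.

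\emph{Step 1: formula \eqref{naclock:eq:d}.} Since $P_0=I$ and $\partial_vP_va=P_vLa$, we have $P_ua-a=\int_0^uP_vLa\,dv$. Averaging over $u\in[0,t]$ gives
\[
d=m_t-a=\frac1t\int_0^t\int_0^uP_vLa\,dv\,du .
\]
Fubini in $(u,v)$ on the triangle $0\le v\le u\le t$ produces the weight $t-v$, which is \eqref{naclock:eq:d} after renaming $v$ as $u$.

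\emph{Step 2: algebraic split of the clock.} Because $\rho a=1$, we have $\alpha_t-\rho=(1-\rho m_t)/m_t=-\rho d/m_t$. Inserting $1=\rho(m_t-d)$ once more gives the exact identity
\[
\alpha_t-\rho=-\rho^2d+\frac{\rho^2d^2}{m_t}.
\]
The quadratic term is already the last term of \eqref{naclock:eq:exact}. It therefore remains to show
\[
-\int\rho^2d\,dx+\frac t2\mathcal I(\rho)
=-\frac1t\int_0^t(t-u)\int_0^u\int\rho\,(P_vL\rho)\,La\,dx\,dv\,du .
\]

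\emph{Step 3: symmetry and a second Duhamel step.} $L=\rho^{-1}\Delta$ is symmetric for the pairing $\langle\phi,\psi\rangle_\rho=\int\phi\psi\rho\,dx$, so $P_u$ is symmetric too. By Step 1,
\[
\int\rho^2d\,dx=\frac1t\int_0^t(t-u)\,\langle\rho,P_uLa\rangle_\rho\,du,
\qquad
\langle\rho,P_uLa\rangle_\rho=\langle P_u\rho,La\rangle_\rho .
\]
Expand $P_u\rho=\rho+\int_0^uP_vL\rho\,dv$. The zeroth-order term is
\[
\langle\rho,La\rangle_\rho=\int\rho\,\Delta(\rho^{-1})\,dx=-\int\nabla\rho\cdot\nabla\rho^{-1}\,dx=\mathcal I(\rho).
\]
Since $\frac1t\int_0^t(t-u)\,du=t/2$, this term contributes exactly $\frac t2\mathcal I(\rho)$, which cancels the $+\frac t2\mathcal I$ in $\Acal_t$. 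The remaining term is precisely the triple integral in \eqref{naclock:eq:exact}, which completes the real case.

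\emph{Step 4: complex parameters.} This is the only step needing care. Every manipulation above uses only bilinear, not sesquilinear, pairings: Duhamel, Fubini, the symmetry $\int\phi\,(\Delta\psi)=\int(\Delta\phi)\,\psi$, integration by parts, and division by $m_t$. So the identities persist on a complex parameter disk as identities between holomorphic functions. This holds wherever the semigroup is holomorphic (as in Theorem~\ref{thm:neargaussian}(b)) and $m_t\neq0$. Alternatively, both sides agree for real parameters and are holomorphic, so the identity theorem extends them. The main obstacle is justifying the exchanges of integrals and the symmetry of $P_u$ for complex coefficients. I would handle this either through the normally convergent Levi series or simply by analytic continuation from the real segment.
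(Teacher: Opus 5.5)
Your proposal is correct and follows the paper's proof essentially step for step. The shared steps are Duhamel plus Fubini for $d$, the exact reciprocal identity $\alpha_t-\rho=-\rho^2d+\rho^2d^2/m_t$, weighted symmetry of $P_u$ with a second Duhamel expansion of $P_u\rho$ to isolate $\frac t2\mathcal I(\rho)$, and analytic continuation from the real segment for complex parameters. One caveat: the bilinearity remark alone does not give symmetry of $P_u$ for complex $\rho$; your identity-theorem fallback is what supplies it, as in the paper.
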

\begin{proof}
The semigroup identity $P_ua-a=\int_0^uP_vLa\,dv$ gives \eqref{naclock:eq:d}.
The scalar reciprocal identity
$\alpha_t-\rho=-\rho^2d+\rho^2d^2/m_t$ is exact, and periodic integration by
parts gives
$\int_{\mathbb T^2}\rho^2La\,dx=\int_{\mathbb T^2}\rho\Delta(\rho^{-1})\,dx
=\int_{\mathbb T^2}\frac{\nabla\rho\cdot\nabla\rho}{\rho^2}\,dx=\mathcal I(\rho)$. Thus the linear-in-time contribution of $d$ cancels the
second term in \eqref{naclock:eq:Adef}. For the remainder use weighted
bilinear symmetry,
\begin{equation}\label{naclock:eq:symmetry}
 \int_{\mathbb T^2}\rho\,u_1P_vu_2\,dx=\int_{\mathbb T^2}\rho\,(P_vu_1)u_2\,dx,
\end{equation}
which for real $\rho$ is self-adjointness in $L^2(\rho dx)$. Consequently
$\int_{\mathbb T^2}\rho^2(P_u-I)La\,dx
=\int_{\mathbb T^2}\rho\,[(P_u-I)\rho]La\,dx
=\int_0^u\int_{\mathbb T^2}\rho(P_vL\rho)La\,dx\,dv$,
and substitution into \eqref{naclock:eq:d} proves \eqref{naclock:eq:exact}.
For smooth fixed profiles all time integrals are integrable at zero. On a
complex disk, \eqref{naclock:eq:symmetry} first extends from real parameters
by the identity theorem for arbitrary fixed smooth test functions, and then
applies to the smooth parameter-dependent functions used above; the same
argument extends the semigroup and generator identities. No positivity is
used at complex parameters.
\end{proof}

The point of \eqref{naclock:eq:exact} is that $L\rho$ and $La$ enter as
\emph{two separate second-derivative factors}; an uncancelled Taylor
estimate of $P_uLa-La$ would instead require $L^2a$.

\subsection*{Uniform nonzero-amplitude derivatives}

\begin{lemma}[The precise complex input]\label{naclock:lem:complexinput}
Fix $\theta<\infty$. For a real trigonometric polynomial $F$ with
$\|F\|_{\mathcal A}\le1$, supported in $|k|\le\Lambda$, and
$t\Lambda^2\le\theta$, there are $r_0>0$ and $M_\theta<\infty$,
independent of $F,\Lambda,t$, such that the periodic semigroups for
$L_z=(1+zF)^{-1}\Delta$ are holomorphic on $|z|<r_0$ and satisfy
\begin{equation}\label{naclock:eq:complexP}
 \sup_{0<u\le t,\,|z|\le r_0}
 \max\{\|P_u^z\|_{2\to2},\|P_u^z\|_{\infty\to\infty}\}
 \le M_\theta,
 \qquad P_u^z1=1.
\end{equation}
The radius can be decreased if needed so the estimates hold on its closure.
\end{lemma}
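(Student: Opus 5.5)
The semigroup is obtained directly from the complex kernel of Theorem~\ref{thm:neargaussian}(b), by periodization. For $|z|\le\bar r_\theta$ the plane kernel $r_z(u,x,y)$ of $(1+zF)^{-1}\Delta$ is holomorphic in $z$. Since $F$ is $\mathcal L$-periodic, the kernel commutes with lattice translations, $r_z(u,x+\ell,y+\ell)=r_z(u,x,y)$. We then define the periodic semigroup by
\[
P_u^zv(x)=\int_{\mathbb R^2}r_z(u,x,y)v(y)\,dy
\]
for periodic $v$, which is the same as integrating the periodized kernel $\sum_{\ell}r_z(u,x,y+\ell)$ over a fundamental cell. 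We set $r_0=\bar r_\theta$, shrinking it slightly so that the bounds hold on the closed disk.

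\emph{Bound on $P_u^z$.} By \eqref{eq:complexkernelupper},
\[
\sup_x\int_{\mathbb R^2}|r_z(u,x,y)|\,dy\le C_\theta\int u^{-1}e^{-\kappa_-|w|^2/u}\,dw=C_\theta\pi/\kappa_-,
\]
uniformly in $0<u\le t$ and $|z|\le r_0$. This gives the $\infty\to\infty$ bound. The same Gaussian bound is symmetric in $x$ and $y$, so the $y$-integrals are also bounded uniformly. Schur's test on the periodized kernel, or equivalently Minkowski's inequality in the displacement $w=y-x$ together with translation invariance of $\|v(\cdot+w)\|_{L^2(\mathbb T^2)}$, then gives $\|P_u^z\|_{2\to2}\le C_\theta\pi/\kappa_-$. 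We take $M_\theta$ to be this constant. It depends only on $\theta$ and $\gamma$, and not on $F$, $\Lambda$, $t$ or the lattice, because the kernel bound is pointwise in the displacement.

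\emph{Holomorphy.} The Levi series converges normally with the Gaussian majorant, locally uniformly in $z$. Hence $z\mapsto P_u^zv$ is holomorphic into $L^2$ and into $L^\infty$ for each fixed $v$, and by Cauchy integrals also in operator norm. Differentiation under the integral is justified by dominated convergence with the majorant $u^{-1}e^{-\kappa_-|w|^2/u}$.

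\emph{Conservation $P_u^z1=1$.} For real $z$ with $|z|$ small, $1+zF>0$ and the operator is a diffusion generator without a potential. The transition density is therefore conservative: on the plane, $\int r_z(u,x,y)\,dy=1$ follows because $\mathbb E_x1=1$ for a non-exploding diffusion with bounded coefficients, which is also consistent with $\mathbb E_xw=0$ in Lemma~\ref{lem:regression}. The map $z\mapsto\int r_z(u,x,y)\,dy$ is holomorphic on the disk and equals $1$ on a real interval, so by the identity theorem it is identically $1$. Alternatively, in Step~2's construction $L_z1=0$ and $\partial_u P_u^z1=P_u^zL_z1=0$ directly.

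\emph{Main obstacle.} The step needing the most care is identifying the Levi-series operator $P_u^z$ at complex $z$ with a semigroup, that is, establishing $P_{u+v}^z=P_u^zP_v^z$ and $\partial_uP_u^z=P_u^zL_z$. The Levi series is constructed kernel-by-kernel, and positivity is unavailable at complex $z$. We would argue as above: both sides of each identity are holomorphic in $z$ by normal convergence, and they agree for real $z$ by uniqueness of the transition density. The identity theorem then extends them to the whole disk.
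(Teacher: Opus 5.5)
Your proposal is correct and follows the paper's proof. Both periodize the complex Gaussian bound of Theorem~\ref{thm:neargaussian}(b), take the row bound for $\|P_u^z\|_{\infty\to\infty}$ and Schur (equivalently Minkowski in the displacement) for $\|P_u^z\|_{2\to2}$, and extend $P_u^z1=1$ from real $z$ to the disk by holomorphy. Rely on that identity-theorem argument: your alternative via $\partial_uP_u^z1=P_u^zL_z1$ presupposes exactly the commutation $\partial_uP_u^z=P_u^zL_z$ that you yourself flag as the main obstacle.
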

\begin{proof}[Proof]
Theorem~\ref{thm:neargaussian}(b), with $\gamma=1/2$ and $r_0=\bar r_\theta$,
gives holomorphy and, uniformly for $0<u\le t$ and $|z|\le r_0$,
$|r_z(u,x,y)|\le C_\theta u^{-1}\exp(-c|x-y|^2/u)$. Periodize this kernel.
The integral of its absolute value in either variable over $\mathbb T^2$ is
bounded by the integral of this Gaussian over $\mathbb R^2$, independently
of $u$ and the cutoff. The two Schur bounds give the $L^2$ estimate and the
row bound the $L^\infty$ estimate. The real identity $P_u^z1=1$ extends to
the complex disk by holomorphy.
\end{proof}

\begin{theorem}[Compensated clock derivatives at the relative energy scale]
\label{naclock:thm:clockderivative}
For every $\theta<\infty$ there are $\varepsilon_\theta>0$ and
$C_{\theta,j}<\infty$, independent of the Fourier cutoff, such that the
following holds. Let $f$ be a real trigonometric polynomial supported in
$|k|\le\Lambda$, let $\varepsilon=\|f\|_{\mathcal A}\le
\varepsilon_\theta$, and suppose $t\Lambda^2\le\theta$.
For each fixed integer $j\ge2$,
\begin{equation}\label{naclock:eq:jbound}
 \sup_{|s|\le1}\left|
 \frac{d^j}{ds^j}\Acal_t(1+sf)\right|
 \le C_{\theta,j}t^2\varepsilon^{j-2}N(f).
\end{equation}
In particular $|\partial_s^3\Acal_t(1+sf)|\le C_{\theta,3}t^2\varepsilon N(f)$
and $|\partial_s^4\Acal_t(1+sf)|\le C_{\theta,4}t^2\varepsilon^2N(f)$.
The derivatives are taken at genuinely nonconstant backgrounds $1+sf$.
Mean zero is not needed for this theorem; it is needed for the heat-energy
identification below.
\end{theorem}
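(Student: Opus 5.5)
The plan is to prove a single holomorphic bound on a complex disk and then obtain every real jet at once from Lemma~\ref{lem:cauchyjets}. Write $f=\varepsilon F$ with $\sA(F)=1$ and $N(f)=\varepsilon^2N(F)$, and set $\Phi(z)=\Acal_t(1+zF)$ for $|z|<r_0$, where $r_0$ and $M_\theta$ are as in Lemma~\ref{naclock:lem:complexinput}. I will show that, after shrinking $r_0$ if necessary,
\[
 |\Phi(z)|\le C_\theta\,t^2|z|^2N(F)\qquad(|z|\le 3r_0/4).
\]
Lemma~\ref{lem:cauchyjets} with $r=r_0$ and $M=C_\theta t^2r_0^2N(F)$ then gives, for $\varepsilon\le r_0/4$ and $|s|\le1$,
\[
 \Bigl|\tfrac{d^j}{ds^j}\Acal_t(1+sf)\Bigr|\le j!\,(2/r_0)^j\varepsilon^jC_\theta t^2r_0^2N(F)=C_{\theta,j}\,t^2\varepsilon^{j-2}N(f),
\]
which is \eqref{naclock:eq:jbound}. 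The bound is in fact valid for every $j$; only $j\ge2$ is stated. Holomorphy of $\Phi$ follows from holomorphy of the periodic semigroups in Lemma~\ref{naclock:lem:complexinput}, together with holomorphy of $m_t^{-1}$ on the region where $m_t$ stays away from zero, which is checked below.

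\textbf{Step 1: the two second-derivative factors.} With $\rho=1+zF$ and $a=\rho^{-1}$ I have $L\rho=\rho^{-1}z\Delta F$ and
\[
 La=\rho^{-1}\Delta(\rho^{-1})=-z\rho^{-3}\Delta F+2z^2\rho^{-4}\,\nabla F\cdot\nabla F .
\]
No conjugation appears here, and $F$ is real. For $|z|\le r_0<1/4$ the factors $|\rho|^{\pm1}$ are bounded. Lemma~\ref{lem:interp}(a) with $\|F\|_\infty\le\sA(F)=1$ gives $\|\nabla F\|_4^2\le\sqrt{10}\,\sH(F)$. Hence
\[
 \|L\rho\|_2\le C|z|\,\sH(F),\qquad \|La\|_2\le C|z|\,\sH(F).
\]
Each factor is linear in $z$ and in the Hessian norm.

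\textbf{Step 2: bounding the exact formula \eqref{naclock:eq:exact}.} Lemma~\ref{naclock:lem:exact} holds on the complex disk. For the first term, Cauchy--Schwarz and the $L^2$ bound \eqref{naclock:eq:complexP} give
\[
 \Bigl|\int\rho(P_vL\rho)La\Bigr|\le CM_\theta|z|^2N(F).
\]
The time weight $\frac1t\int_0^t(t-u)u\,du=t^2/6$ then yields $C t^2|z|^2N(F)$. For the second term, \eqref{naclock:eq:d} and the $L^2$ bound give $\|d\|_2\le\frac t2M_\theta\|La\|_2\le Ct|z|\sH(F)$. For the denominator, $m_t-1=\frac1t\int_0^tP_u(a-1)\,du$ because $P_u1=1$. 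Since $\|a-1\|_\infty\le2|z|$, the $L^\infty$ bound gives $|m_t-1|\le2M_\theta|z|\le1/2$ once $r_0$ is reduced, so $|m_t|^{-1}\le2$. This makes $\alpha_t$ holomorphic and bounds the second term by $C\|d\|_2^2\le Ct^2|z|^2N(F)$. All constants depend only on $\theta$ through $M_\theta$; none depends on $\Lambda$, the number of modes, or $t$. In particular no lower spectral cutoff enters, because only $\|\Delta F\|_2$ and $\|\nabla F\|_4$ appear.

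\textbf{Main obstacle.} The substantive inputs are the uniform complex semigroup bounds and the validity of the exact cancellation identity at complex parameters. Both are supplied by Lemmas~\ref{naclock:lem:complexinput} and~\ref{naclock:lem:exact}, so the remaining care is bookkeeping. First, the holomorphic continuation of $\mathcal I$ must be used, with no conjugation, so that $\Phi$ is genuinely holomorphic. Second, every estimate must be applied at $|z|\le3r_0/4$, inside the closed disk on which \eqref{naclock:eq:complexP} holds. Third, the interpolation step must be applied to the real function $F$; Lemma~\ref{lem:interp}(b) is not needed here.
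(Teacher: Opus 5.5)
Your proposal is correct and follows the paper's proof essentially step for step. Both arguments prove the complex-disk bound $|\Acal_t(1+zF)|\le C_\theta t^2|z|^2N(F)$ by taking absolute values only after the exact two-factor cancellation of Lemma~\ref{naclock:lem:exact}, control $m_t^{-1}$ through $P_u1=1$ and the $L^\infty$ semigroup bound, and pass to real jets via Lemma~\ref{lem:cauchyjets}. The only point you leave implicit is the trivial case $f=0$, which the paper treats separately.
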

\begin{proof}
The case $f=0$ is exact. Otherwise write $F=f/\varepsilon$ and use
Lemma~\ref{naclock:lem:complexinput}. Choose $0<r_\theta<\min(r_0,1/4)$ so
small that $M_\theta r_\theta/(1-r_\theta)\le1/2$.
Set $\rho_z=1+zF$ and $a_z=\rho_z^{-1}$. Since $P_u^z1=1$,
$m_t(z)-1=t^{-1}\int_0^tP_u^z(a_z-1)\,du$, so
$\|m_t(z)-1\|_\infty\le M_\theta|z|/(1-r_\theta)\le\frac12$ and
$m_t^{-1}$ is uniformly bounded and holomorphic on this disk; only the
inverse of a time-averaged scalar is needed, not of a kernel.

Write $b_\mp=1\mp r_\theta$, $K_1=b_-^{-1}$ and
$K_2=b_-^{-3}+2\sqrt{10}\,r_\theta b_-^{-4}$. Lemma~\ref{lem:interp}(a) for
the real profile $F$, $\|F\|_\infty\le1$, and the explicit formulas for
$L_z\rho_z$ and $L_za_z$ give
$\|L_z\rho_z\|_2\le K_1|z|\sH(F)$ and $\|L_za_z\|_2\le K_2|z|\sH(F)$.
Equation~\eqref{naclock:eq:d} gives
$\|d\|_2\le M_\theta t\|L_za_z\|_2/2$. Taking absolute values
\emph{after} the cancellation in \eqref{naclock:eq:exact} yields, uniformly
on the complex disk,
\begin{equation}\label{naclock:eq:complexA}
 |\Acal_t(1+zF)|
 \le \frac{b_+M_\theta t^2}{6}
       \|L_z\rho_z\|_2\|L_za_z\|_2
       +\frac{b_+^2M_\theta^2t^2}{2}\|L_za_z\|_2^2
 \le C_\theta |z|^2t^2N(F),
\end{equation}
where, for example,
$C_\theta=b_+M_\theta K_1K_2/6+b_+^2M_\theta^2K_2^2/2$.

Take $\varepsilon_\theta\le r_\theta/4$. Lemma~\ref{lem:cauchyjets} with
$\Phi(z)=\Acal_t(1+zF)$, $r=r_\theta$ and, by \eqref{naclock:eq:complexA},
$M=C_\theta r_\theta^2t^2N(F)$ gives, since $\varepsilon^2N(F)=N(f)$,
\[
 |\partial_s^j\Acal_t(1+sf)|
 \le j!\left(\frac2{r_\theta}\right)^j
       C_\theta r_\theta^2 t^2\varepsilon^jN(F)
 =C_{\theta,j}t^2\varepsilon^{j-2}N(f).
\]
\end{proof}

\section{The anisotropy term}
\label{naani:sec}

Norms in this section use Euclidean measure on $\mathbb T^2$; source
derivatives hold the target fixed; matrix norms are Frobenius.
Let $r_s(t,x,y)$ be the periodic-plane, fixed-Lebesgue transition density
for $L_s=\rho_s^{-1}\Delta$, $\rho_s=1+sf$. Put
\[
 J_s(t,x)=\int_{\mathbb R^2}
 \frac{\nabla_xr_s\otimes\nabla_xr_s}{r_s}\,dy,
 \qquad h_s=2tJ_s,
 \qquad
 D_t^\infty(sf)=\int_{\mathbb T^2}
 \left(\frac12\operatorname{tr}h_s-\sqrt{\det h_s}\right)dx.
\]
In dimension two the source conformal-density factors cancel in this
integrated anisotropy defect.

\begin{theorem}[Anisotropy fourth variation at the relative energy scale]
\label{naani:main}
Fix $\theta<\infty$. There are $\varepsilon_\theta>0$ and
$C_\theta<\infty$, independent of the Fourier cutoff and number of modes,
with the following property. Let $f$ be a real trigonometric polynomial
supported in $|k|\le\Lambda$, put
$\varepsilon=\|f\|_{\mathcal A}\le\varepsilon_\theta$, and suppose
$t\Lambda^2\le\theta$. Then
\begin{equation}\label{naani:bound}
 \sup_{|s|\le1}
 \left|\frac{d^4}{ds^4}\left(\frac{D_t^\infty(sf)}{t^2}\right)\right|
 \le C_\theta\|\nabla f\|_4^4
 \le10C_\theta\varepsilon^2\|\Delta f\|_2^2.
\end{equation}
The same bound holds with the third derivative in place of the fourth.
Both estimates hold at nonconstant backgrounds.
\end{theorem}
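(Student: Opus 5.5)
The plan is to exploit the fact that the anisotropy defect is a smooth function of the trace-free part of $h_s$ which vanishes to second order. In two dimensions, with eigenvalues $\lambda_1,\lambda_2$ of $h_s$,
\[
 \tfrac12\operatorname{tr}h_s-\sqrt{\det h_s}=\tfrac12\bigl(\sqrt{\lambda_1}-\sqrt{\lambda_2}\bigr)^2
 =\Phi(h_s)\,|h_s^\circ|^2,
 \qquad h_s^\circ=h_s-\tfrac12(\operatorname{tr}h_s)I .
\]
Here $\Phi$ is real-analytic and bounded, with all derivatives bounded, on the cone $h\succeq\tfrac12 I$; this cone contains every $h_s$ by Lemma~\ref{lem:regression}. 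By the regression identity $h_s=\alpha_tI+2tQ_t$, so $h_s^\circ=2tQ_t^\circ$, and therefore
\[
 D_t^\infty(sf)/t^2=4\int_{\mathbb T^2}\Phi(h_s)\,|Q_t^\circ(s)|^2\,dx .
\]
The whole problem thus reduces to parameter jets of the trace-free score residual $Q^\circ_t(s)=\mathbb E_x[(S-w/(2tm_t))^{\otimes2}]^\circ$, measured at the scale $t^{-1}\cdot t|\nabla f|^2$. The second inequality in \eqref{naani:bound} is then immediate from Lemma~\ref{lem:interp}(a) and $\|f\|_\infty\le\varepsilon$.

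\emph{Step 1: a holomorphic representation of the residual.} Write the score residual as $S-w/(2tm_t)=\nabla_x\log r_s-w/(2tm_t)$. Using Theorem~\ref{thm:neargaussian}(a) (the lower bound \eqref{eq:nonzerolower} and the jets \eqref{eq:nonzerodensityjets}, \eqref{eq:nonzerosourcejets} with $\gamma<1/11$, so that the quotient $e^{-2\kappa_-|w|^2/t}/e^{-\kappa_+|w|^2/t}$ is still integrable), I would show that $J_s$, $Q_t(s)$ and $h_s$ are smooth in $s$. I would also show that every $s$-derivative of $Q_t$ is bounded pointwise by $C_j\varepsilon^jt^{-1}$.

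This alone gives only $\|\partial_s^j h^\circ\|_\infty\lesssim\varepsilon^j$, which is far too weak. The key refinement is to expand the kernel through the Levi series of Step~2 in the rescaled variables. The frozen term $Z_\zeta$ is an exact Gaussian with covariance $2t\rho(y)^{-1}I$, which is isotropic, so it contributes nothing to the trace-free part: its score is exactly collinear with $w$. Hence $Q^\circ$ is generated only by the defect kernels $K_\zeta$, each of which carries a factor $\nabla a$.

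\emph{Step 2: marked estimates.} I would apply Lemma~\ref{lem:markedbeta} with $m=1,2$, placing the marked $K$ factors in $L^4$. This should yield
\[
 \|\partial_s^jQ_t^\circ\|_{L^2}\le C_j\varepsilon^{j-2}\|\nabla f\|_4^2
\]
for the genuinely quadratic contributions, namely those with two defect factors. For the one-defect contribution I would seek an exact identity exhibiting its trace-free part as $t\,(D^2 a)^\circ$ convolved against Gaussian weights. Taking four $s$-derivatives of $\Phi(h_s)|Q^\circ|^2$ by Leibniz produces pairings $\langle\partial^aQ^\circ,\partial^bQ^\circ\rangle$ with $a+b\le4$, multiplied by bounded derivatives of $\Phi$ whose jets carry further factors of $\varepsilon$. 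Every pairing in which both factors are at least quadratic in the defect is then bounded by $C\|\nabla f\|_4^4$ via Cauchy--Schwarz.

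\emph{Main obstacle.} The hard part is the pairings in which one factor is the linear, Hessian-type part of $Q^\circ$, since that part is of size $t\|D^2f\|$ rather than $t|\nabla f|^2$. My first attempt would handle the linear piece $(D^2a)^\circ$ by periodic integration by parts, moving one derivative onto the partner factor, which contains $\nabla a$. This would turn the term into $\int|\nabla f|^2\cdot(\text{gradient-quadratic})$, in the spirit of the proof of Lemma~\ref{lem:interp}(a).

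If that fails, the fallback is to complexify, using $\rho_z=1+zF$ and the bounds \eqref{eq:complexkernelupper}. I would remove the exact quadratic part of $D$ in $z$ and bound the remainder $D_t(zF)-D_2z^2$ by $Ct^2|z|^3\|\nabla F\|_4^2\,\|\Delta F\|_2$, or better, on $|z|\le\bar r_\theta$. Lemma~\ref{lem:cauchyjets} then controls the third and fourth $s$-derivatives, because the quadratic part has vanishing third and fourth derivatives in $z$. This route also explains why the statement is restricted to $j=3,4$. Making the remainder genuinely quartic in $\nabla F$ is where I expect the real work to lie.
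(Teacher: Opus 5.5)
There is a genuine gap. The estimate that carries the theorem is that each residual factor is controlled by a \emph{single} $\|\nabla f\|_4$ in the mixed $L^4$ norm, so that $\|\partial_s^jQ_s^{\rm tf}\|_2\lesssim\varepsilon^{j-2}\|\nabla f\|_4^2$ for $j\ge2$. You never prove this, and the two structural claims that organize your Step~2 are incorrect.

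First, the frozen Gaussian does contribute to $Q^\circ$. Its score $\rho(y)w/(2t)$ is collinear with $w$, but the coefficient depends on $y=x+w$, while your predictor uses $m_t(x)$. The residual is therefore $(\rho(x+w)-\alpha_t(x))w/(2t)$, and already $\mathbb E[(\nabla\rho\cdot w)^2\,w\otimes w]/(4t^2)$ has trace-free part $2(\nabla\rho\otimes\nabla\rho)^\circ\neq0$. This is harmless in size, but your reduction discards it. Also, the Levi pieces do not enter the nonlinear quotient $\nabla r\otimes\nabla r/r$ additively.

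Second, the ``main obstacle'' does not exist. $Q_t$ is the square of a residual that vanishes at $s=0$, so $Q^\circ=h^\circ/(2t)$ has no part linear in the perturbation, and there is no factor $t(D^2a)^\circ$ to integrate by parts. The natural Hessian-type piece of your residual is the clock correction $(\rho(x)-\alpha_t(x))w/(2t)$, whose coefficient depends on $x$ alone. Since $\mathbb E_x[Sw^T]=I$ and $\mathbb E_x[ww^T]=\sigma_tI$, changing the predictor by $c(x)w$ changes only the isotropic part of the residual covariance.

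That observation is the missing idea. Work with the frozen-local-clock numerator $\mathcal N_s=\nabla_xr_s-\rho_s(x)wr_s/(2t)$ and $\Sigma_s=\int\mathcal N_s\otimes\mathcal N_s/r_s\,dy$. Then $\Sigma_s^{\rm tf}=Q_s^{\rm tf}$, hence $D_t^\infty/t^2=4\int|\Sigma_s^{\rm tf}|^2\mathfrak b_s\,dx$. The leading term of the numerator is $[\rho_z(y)-\rho_z(x)]wZ_z/(2t)$, which the segment formula bounds in the $[4]$ norm through $|z|\|\nabla f\|_4$. The Levi corrections are bounded by Lemma~\ref{lem:markedbeta} with $m=1$. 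Absorbing displacement polynomials gives $\|\mathcal N_z(t;w)\|_{[4]}\le C_\theta|z|\|\nabla f\|_4t^{-1}e^{-\kappa_-|w|^2/t}$, with no Hessian and no spectral cutoff.

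Cauchy estimates act only on this holomorphic numerator. Quotient jets are taken at real $s$ with the lower bound \eqref{eq:nonzerolower}. Four derivatives can produce $r_s^{-5}$ against four density jets, so the needed margin is $6\kappa_--5\kappa_+>0$. This margin, not integrability of $e^{-2\kappa_-|w|^2/t}/e^{-\kappa_+|w|^2/t}$, is what forces $\gamma<1/11$. The result is $\|\partial_s^j\Sigma_s\|_2\le C\varepsilon^{j-2}\|\nabla f\|_4^2$ for $j\ge2$, and Leibniz with $\|\partial_s^k\mathfrak b_s\|_\infty\le C\varepsilon^k$ gives $C_\theta\|\nabla f\|_4^4$.

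Your complex fallback cannot replace this. For nonreal $z$, $D_t(zF)$ contains $1/r_z$ and $\sqrt{\det h}$, and Theorem~\ref{thm:neargaussian}(b) gives no lower bound or positivity off the real axis, so neither holomorphy nor the bound is available. Even a remainder bound $C|z|^3\|\nabla F\|_4^2\|\Delta F\|_2$ would yield only $\varepsilon\|\nabla f\|_4^2\|\Delta f\|_2$, i.e.\ at best the weaker right-hand form in \eqref{naani:bound}, not the first inequality. Finally, on the plane $D_t$ vanishes to fourth order, so there is no quadratic part to remove.
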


\paragraph{An exact residual that retains one spatial derivative.}
Write $w=y-x$, $c_s(x)=\rho_s(x)/(2t)$, and define
\begin{equation}\label{naani:residual}
 \mathcal N_s=\nabla_xr_s-c_s(x)w r_s,
 \qquad
 \Sigma_s(x)=\int_{\mathbb R^2}\frac{\mathcal N_s\otimes \mathcal N_s}{r_s}\,dy.
\end{equation}
The lifted diffusion has displacement covariance $\sigma_s(x)I$, and its
source score $S_s=\nabla_x\log r_s$ satisfies $\mathbb E_x[S_sw^T]=I$, by
differentiating the martingale identity $\mathbb E_x w=0$ (justified by the
Gaussian source bounds). Expanding the square in \eqref{naani:residual},
\begin{equation}\label{naani:tracefree}
 \Sigma_s=J_s+(c_s^2\sigma_s-2c_s)I,
 \qquad \Sigma_s^{\mathrm{tf}}=J_s^{\mathrm{tf}}=Q_s^{\mathrm{tf}},
\end{equation}
where $Q_s$ is the efficient displacement residual covariance using the
actual clock; the new residual has the same trace-free part although its
isotropic term uses the frozen local coefficient.
For a positive $2\times2$ matrix $h$,
$\operatorname{tr}h/2-\sqrt{\det h}
=|h^{\mathrm{tf}}|^2/(\operatorname{tr}h+2\sqrt{\det h})$, so (with the
Frobenius convention)
\begin{equation}\label{naani:identity}
 \frac{D_t^\infty(sf)}{t^2}
 =4\int_{\mathbb T^2} |\Sigma_s^{\mathrm{tf}}|^2 \mathfrak b_s\,dx,
 \qquad
 \mathfrak b_s=(\operatorname{tr}h_s+2\sqrt{\det h_s})^{-1}.
\end{equation}

\paragraph{A marked spatial-gradient estimate in the Levi series.}
We use the mixed norms of Section~\ref{subsec:levitools}; in a convolution
chain any one factor may be kept in $L^p$ and the others in $L^\infty$, by
Minkowski's inequality in the intermediate displacements and periodic
translation; the source supremum is never taken before a
gradient is retained in $L^4$.
Choose $0<\gamma<1/11$ and set $\kappa_\mp=(1\mp\gamma)/4$,
$\kappa_1=(1-\gamma/2)/4$. Choose a fixed $r>0$ sufficiently small, in
particular $r<\gamma/2$, within the complex-parametrix domain of
Section~\ref{sec:kernel}. For complex $z$ with $|z|\varepsilon\le r$, let
$Z_z,K_z$ be the frozen kernel and defect of
Section~\ref{subsec:levitools} for $\rho_z=1+zf$, $a_z=\rho_z^{-1}$. Since
$\Re\rho_z\ge1-r$ and $\kappa_1<(1-r)/4$, Lemma~\ref{lem:frozen} with
$\|\nabla a_z\|_p\le C|z|\|\nabla f\|_p$ gives, with $F_4=\sG(f)$ and
$F_\infty=\sL(f)$,
\begin{align}
 |Z_z|&\le CG_{\kappa_1},&
 |\nabla_x Z_z|&\le Cu^{-1/2}G_{\kappa_1},\notag\\
 \|K_z(u;w)\|_{[4]}&\le C|z|F_4u^{-1/2}G_{\kappa_1}(u,w),&
 \|K_z(u;w)\|_{[\infty]}&\le C|z|F_\infty u^{-1/2}G_{\kappa_1}(u,w),
 \label{naani:basic}
\end{align}
where the source derivative holds $y$ fixed and so does not differentiate
$\rho_z(y)$.
Write $L_4=|z|F_4$ and $L_\infty=|z|F_\infty$. Since
$\sqrt tF_\infty\le\sqrt t\Lambda\varepsilon\le\sqrt\theta\varepsilon$, we
have $L_\infty\sqrt t\le r\sqrt\theta$, and \eqref{naani:basic} supplies the
hypotheses of Lemma~\ref{lem:markedbeta} with $G=G_{\kappa_1}$ and $m=1$.
The normally convergent complex Levi series
$r_z=Z_z+\sum_{n\ge1}Z_z*K_z^{*n}$ is the transition kernel for real
positive coefficients and is analytic in $z$ without a complex lower bound.
For the correction $\Xi_z=r_z-Z_z$, Lemma~\ref{lem:markedbeta} gives
\begin{equation}\label{naani:correction}
 \|\Xi_z(t;w)\|_{[4]}\le C_\theta|z|F_4\sqrt tG_{\kappa_1}(t,w),
 \qquad
 \|\nabla_x\Xi_z(t;w)\|_{[4]}
 \le C_\theta|z|F_4G_{\kappa_1}(t,w).
\end{equation}
If $\nabla f=0$, all correction terms vanish.

\paragraph{Analytic residual numerators and real quotients.}
The leading Gaussian has residual
$\nabla_xZ_z-\rho_z(x)wZ_z/(2t)=[\rho_z(y)-\rho_z(x)]wZ_z/(2t)$, whose mixed
$L^4$ bound is $C|z|F_4(|w|^2/t)G_{\kappa_1}(t,w)$ by the segment formula
for $f$ (as in the proof of Lemma~\ref{lem:frozen}) and periodic
translation. By \eqref{naani:correction} the correction residual is at most
$C_\theta|z|F_4(1+|w|/\sqrt t)G_{\kappa_1}(t,w)$. The margin
$\kappa_1-\kappa_-$ absorbs both displacement polynomials, so
\begin{equation}\label{naani:Ncomplex}
 \|\mathcal N_z(t;w)\|_{[4]}
 \le C_\theta |z|F_4t^{-1}e^{-\kappa_-|w|^2/t}.
\end{equation}
For $\varepsilon\le r/4$ apply Lemma~\ref{lem:cauchyjets} in the variable
$\zeta=z\varepsilon$ to $\zeta\mapsto\mathcal N_{\zeta/\varepsilon}(t;\cdot)$
with values in $L^4$ of the source; by \eqref{naani:Ncomplex},
$M=C_\theta(3r/4)\varepsilon^{-1}F_4t^{-1}e^{-\kappa_-|w|^2/t}$, whence
\begin{equation}\label{naani:Njets}
 \|\partial_s^j\mathcal N_s(t;w)\|_{[4]}
 \le C_{\theta,j}\varepsilon^{j-1}F_4t^{-1}e^{-\kappa_-|w|^2/t},
 \qquad 1\le j\le4.
\end{equation}
For $j=0$ retain the sharper direct bound with factor $|s|F_4$.
The case $\varepsilon=0$ is exact.

For real $s$, Theorem~\ref{thm:neargaussian} gives
$r_s\ge ct^{-1}e^{-\kappa_+|w|^2/t}$ and
$|\partial_s^kr_s|\le C_k\varepsilon^kt^{-1}e^{-\kappa_-|w|^2/t}$, $0\le k\le4$.
A term in the $j$th derivative of $(\mathcal N_s\otimes \mathcal N_s)/r_s$ is a
constant multiple of
\begin{equation}\label{naani:quotient}
 r_s^{-m-1}(\partial_s^{\nu_1}\mathcal N_s)
 \otimes(\partial_s^{\nu_2}\mathcal N_s)
 \prod_{\ell=1}^m\partial_s^{k_\ell}r_s,
 \qquad
 \nu_1+\nu_2+\sum_{\ell=1}^mk_\ell=j,
 \quad k_\ell\ge1,\quad 0\le m\le j.
\end{equation}
Place the two residual factors in $L^4_x$ and the others in $L^\infty_x$.
The resulting $L^2_x$ envelope is a scalar amplitude factor times
$t^{-1}\exp(-[(m+2)\kappa_--(m+1)\kappa_+]|w|^2/t)$; for $j\le4$ the worst
exponent is $6\kappa_--5\kappa_+=(1-11\gamma)/4>0$, and integration in the
displacement cancels $t^{-1}$. Minkowski's inequality therefore yields
\begin{align}
 \|\Sigma_s\|_2&\le C s^2F_4^2,&
 \|\partial_s\Sigma_s\|_2&\le C|s|F_4^2,&
 \|\partial_s^2\Sigma_s\|_2&\le CF_4^2,\notag\\
 \|\partial_s^3\Sigma_s\|_2&\le C\varepsilon F_4^2,&
 \|\partial_s^4\Sigma_s\|_2&\le C\varepsilon^2F_4^2.
 \label{naani:Tjets}
\end{align}
Indeed, a term in \eqref{naani:quotient} carries $\varepsilon$ to the power
$j-\boldsymbol{1}_{\nu_1>0}-\boldsymbol{1}_{\nu_2>0}$, and an additional
$|s|$ for each undifferentiated residual. For $j\ge2$ the exponent is at
least $j-2$. For $j=1$ either a residual is differentiated, leaving $|s|$,
or a density is, leaving $\varepsilon s^2$; both give the second bound in
\eqref{naani:Tjets}. The trace-free projection preserves these bounds up to
a fixed factor.

\begin{proof}[Completion of the proof of Theorem~\ref{naani:main}]
The plane displacement regression bound gives $h_s\succeq(1-\varepsilon)I$,
and the near-Gaussian density and source jets give
$\|\partial_s^kh_s\|_\infty\le C_k\varepsilon^k$ for $k\le4$. Thus $h_s$
stays in a common compact subset of the positive matrices, and the finite
real chain rule for the denominator in \eqref{naani:identity} gives
$\|\partial_s^k\mathfrak b_s\|_\infty\le C_k\varepsilon^k$, $0\le k\le4$.
The fourth derivative of \eqref{naani:identity} is a finite sum of
integrals with derivative orders $i+j+k=4$ on the two $\Sigma_s^{\mathrm{tf}}$
factors and $\mathfrak b_s$. H\"older's inequality and
\eqref{naani:Tjets} bound every term by $C_\theta F_4^4$, since all
remaining powers of $\varepsilon$ are nonnegative and $|s|\le1$; the term
with orders $(2,2,0)$ has no additional small factor. This proves the first
bound in \eqref{naani:bound}. For three derivatives the same product rule
contains $\Sigma^{(3)}\Sigma$, $\Sigma^{(2)}\Sigma^{(1)}$ and lower-order
products with derivatives of $\mathfrak b_s$; bounding the zero- and
first-order $s$ factors and all nonnegative $\varepsilon$ powers by one
gives the same $C_\theta F_4^4$ bound.
Finally Lemma~\ref{lem:interp}(a) gives
$\|\nabla f\|_4^4\le10\|f\|_\infty^2\|\Delta f\|_2^2
\le10\varepsilon^2\|\Delta f\|_2^2$, with no lattice count; this is the
second bound in \eqref{naani:bound}.
\end{proof}
\section{Quadratic-score cancellation}
\label{namom:section}
The compensated scalar residual admits a further exact reduction, isolating
the leading spatial-gradient contribution; the following sections estimate
the remainder after spatial integration.

Let $\rho$ be smooth, positive, and periodic, fix $t>0$ and a source $x$,
and write $w=X_t-x$, $S=\nabla_x\log r_t(x,X_t)$, $g=\nabla\rho(x)$,
$c=\rho(x)/(2t)$; source derivatives hold the target fixed. Define the
frozen-clock residual numerator and covariance by
\[
 \mathcal N(t,x,y)=\nabla_xr_t(x,y)-c(y-x)r_t(x,y),\qquad
 \Sigma=\mathbb E_x[(S-cw)^{\otimes2}],
\]
($\Sigma$ uses the frozen clock, unlike the efficient residual covariance
$Q$), and
\begin{equation}\label{namom:Psi}
 \Psi(x,w)=\frac{(2ww^T-|w|^2I)g}{8t},\qquad
 M_3(x)=\mathbb E_x[|w|^2w],\qquad
 M_4(x)=\mathbb E_x|w|^4.
\end{equation}

\begin{proposition}[Exact quadratic-score cancellation and square completion]
\label{namom:identity}
In dimension two one has
\begin{equation}\label{namom:orthogonality}
 \mathbb E_x[S\cdot\Psi]=0.
\end{equation}
If $U=\mathcal N-r_t\Psi$, then, pointwise in the source,
\begin{align}\label{namom:square}
 \operatorname{tr}\Sigma
 ={}&\int_{\mathbb R^2}\frac{|U(t,x,y)|^2}{r_t(x,y)}\,dy
       -\frac{\rho\,g\cdot M_3}{8t^2}
    -\frac{|g|^2M_4}{64t^2}.
\end{align}
Writing $a=\rho^{-1}$, $m=\sigma_t/(2t)$, and $d=m-a$, the relation to
the efficient residual is exactly
\begin{equation}\label{namom:efficient}
 t(\operatorname{tr}\Sigma-\operatorname{tr}Q)
     =\frac{\rho^2d^2}{m}.
\end{equation}
\end{proposition}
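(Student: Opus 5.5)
The plan is to reduce all three identities to the moment calculus of Lemma~\ref{lem:regression}. The basic tool is the score identity
\[
 \mathbb E_x[S\,\psi(X_t)]=\nabla_x\mathbb E_x[\psi(X_t)]
\]
for a fixed test function $\psi$ of the target. It is obtained by differentiating $P_t\psi(x)=\int r_t(x,y)\psi(y)\,dy$ at fixed target, and the Gaussian source bounds of Theorem~\ref{thm:neargaussian} justify differentiating under the integral for polynomial $\psi$. Throughout we use $\mathbb E_xw=0$ and $\mathbb E_x[ww^T]=\sigma_tI$ with $\sigma_t=2tm$.

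\emph{Orthogonality \eqref{namom:orthogonality}.} Take $\psi(y)=(y-x_0)_j(y-x_0)_k$ with $x_0$ frozen, so that
\[
 \mathbb E_x\psi=\sigma_t(x)\delta_{jk}+(x-x_0)_j(x-x_0)_k .
\]
Differentiating in $x$ and then setting $x_0=x$ gives the third-order score moment $\mathbb E_x[S_iw_jw_k]=\partial_i\sigma_t\,\delta_{jk}$. Contract this against $(2w_iw_j-|w|^2\delta_{ij})g_j/(8t)$:
\begin{itemize}
\item the first term contributes $2\partial_j\sigma_t\,g_j$;
\item the trace term contributes $\mathbb E[S_j|w|^2]g_j=\partial_j\sigma_t\,\delta_{kk}g_j=2\partial_j\sigma_t\,g_j$.
\end{itemize}
The two cancel exactly. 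This cancellation is the only place the dimension enters, through $\delta_{kk}=2$. Since $g$ is evaluated at the source, it is constant under the $y$-integration and causes no difficulty.

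\emph{Square completion \eqref{namom:square}.} Expand
\[
 \frac{|U|^2}{r}=\frac{|\mathcal N|^2}{r}-2\,\mathcal N\cdot\Psi+r|\Psi|^2 .
\]
\begin{itemize}
\item The first term integrates in $y$ to $\operatorname{tr}\Sigma$.
\item For the cross term, $\int\mathcal N\cdot\Psi\,dy=\mathbb E_x[(S-cw)\cdot\Psi]=-c\,\mathbb E_x[w\cdot\Psi]$ by the orthogonality just proved. Moreover $w\cdot\Psi=|w|^2\,w\cdot g/(8t)$, so the cross term contributes $+\rho\,g\cdot M_3/(8t^2)$.
\item For the last term, the symmetric matrix $2ww^T-|w|^2I$ has eigenvalues $\pm|w|^2$. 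Hence $|\Psi|^2=|w|^4|g|^2/(64t^2)$, and this term contributes $|g|^2M_4/(64t^2)$.
\end{itemize}
Rearranging gives \eqref{namom:square}.

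\emph{Relation \eqref{namom:efficient}.} For a clock constant $\beta$, expand
\[
 \mathbb E|S-\beta w|^2=\mathbb E|S|^2-2\beta\operatorname{tr}\mathbb E[Sw^T]+\beta^2\operatorname{tr}\mathbb E[ww^T]=\mathbb E|S|^2-4\beta+4tm\beta^2 .
\]
Apply this with two choices of $\beta$:
\begin{itemize}
\item with $\beta=c=\rho/(2t)$ it gives $t\operatorname{tr}\Sigma=t\,\mathbb E|S|^2-2\rho+\rho^2m$;
\item with $\beta=1/(2tm)$ it gives $t\operatorname{tr}Q=t\,\mathbb E|S|^2-1/m$.
\end{itemize}
The difference is $\rho^2m-2\rho+1/m=(\rho m-1)^2/m=\rho^2(m-a)^2/m$, which is \eqref{namom:efficient}.

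All steps are algebraic once the third-order score moment is available. I expect the main obstacle to be that moment, specifically justifying $\nabla_x$ of the quadratic moment under the integral and the freezing of $x_0$. The Gaussian source bounds make the integrands absolutely integrable uniformly near the source, so dominated convergence should suffice.
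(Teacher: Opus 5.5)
Your proposal is correct and follows the paper's proof essentially step for step. You obtain the third-order score moment $\mathbb E_x[S_iw_jw_k]=\partial_i\sigma_t\,\delta_{jk}$ by differentiating the covariance at fixed target, get the cancellation through $\delta_{kk}=2$, and expand $\mathbb E_x|S-cw-\Psi|^2$ using $(2ww^T-|w|^2I)^2=|w|^4I$ and $w\cdot\Psi=|w|^2\,w\cdot g/(8t)$. For \eqref{namom:efficient} you compare the residuals with $\beta=\rho/(2t)$ and $\beta=1/(2tm)$, which explicitly derives the regression identity $\operatorname{tr}\Sigma-\operatorname{tr}Q=2(\sigma_t-\lambda)^2/(\sigma_t\lambda^2)$, $\lambda=2t/\rho$, that the paper simply quotes.
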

\begin{proof}
Since $\mathbb E_xw=0$ and $\mathbb E_x[w_jw_k]=\sigma_t(x)\delta_{jk}$,
differentiating the covariance at fixed target gives
$\mathbb E_x[S_iw_jw_k]=\partial_i\sigma_t\,\delta_{jk}$ (the two
derivatives of $w$ have zero expectation since the mean vanishes). In
dimension $n$ the contraction with $(2ww^T-|w|^2I)g$ gives
$(2-n)\nabla\sigma_t\cdot g$, which vanishes for $n=2$, proving
\eqref{namom:orthogonality}. Gaussian source bounds and polynomial
displacement moments justify this differentiation and all integrals below.

The identities $(2ww^T-|w|^2I)^2=|w|^4I$ and
$w\cdot\Psi=|w|^2w\cdot g/(8t)$ give
\[
 \mathbb E_x|\Psi|^2=\frac{|g|^2M_4}{64t^2},\qquad
 \mathbb E_x[\Psi\cdot(S-cw)]
      =-\frac{\rho\,g\cdot M_3}{16t^2}.
\]
Expanding $\mathbb E_x|S-cw-\Psi|^2$ proves \eqref{namom:square}. Finally,
the exact displacement-regression identity with the frozen-clock predictor
$\lambda=2t/\rho$ gives
$\operatorname{tr}\Sigma-\operatorname{tr}Q=2(\sigma_t-\lambda)^2/(\sigma_t\lambda^2)$;
substituting $\sigma_t=2tm$, $\lambda=2ta$ yields \eqref{namom:efficient}.
\end{proof}

\section{A second-order score remainder}\label{sec:secondscore}
The estimate is proved by a target-frozen Levi expansion. Complex
analyticity is used only for numerator estimates; quotient estimates use
the real near-Gaussian lower bound.

Let $f$ be a real trigonometric polynomial with Fourier support
$|k|\le\Lambda$, and abbreviate $\eps=\sA(f)$, $H=\sH(f)=\norm{D^2 f}_2$,
$F_4=\sG(f)$, $F_\infty=\sL(f)$. We work on the periodic plane with
$t\Lambda^2\le\theta$, $\theta<\infty$ fixed; spatial norms are on the
source period cell. For complex $z$, let $\rho_z=1+zf$, $a_z=\rho_z^{-1}$,
and let $r_z$ be the complex Levi kernel (for real positive coefficients,
the transition density of $a_z\Delta$ relative to Lebesgue measure).
Source derivatives hold the target fixed. With $w=y-x$ define
\[
 \Psi_z(x,w)=\frac{(2ww^T-|w|^2I)\nabla\rho_z(x)}{8t},\quad
 \mathcal N_z=\nabla_x r_z-\frac{\rho_z(x)w}{2t}r_z,\quad
 U_z=\mathcal N_z-r_z\Psi_z(x,w),
\]
all evaluated at $(t,x,y)$.
Mixed norms $\norm{\cdot}_{[p]}$ are those of Section~\ref{sec:framework}.
Choose $0<\gamma<1/11$, put $\kappa_\mp=(1\mp\gamma)/4$ and
$G_{\kappa_-}(t,w)=(\kappa_-/\pi)t^{-1}e^{-\kappa_-|w|^2/t}$.
Constants may depend on $\theta,\gamma$ and the fixed jet order, but not
on the Fourier cutoff, mode count or rescaled cell size.

\begin{theorem}[Second-order residual numerator]
\label{score:thm:score}
There is a fixed $r>0$, chosen sufficiently small in terms of $\gamma$,
such that for $|z|\eps\le r$,
\begin{equation}\label{score:eq:complex}
 \norm{U_z(t;w)}_{[2]}
 \le C_\theta\sqrt t\bigl(|z|H+|z|^2F_4^2\bigr)
             G_{\kappa_-}(t,w).
\end{equation}
For $\eps$ sufficiently small and real $|s|\le1$ this implies
\begin{align}
 \norm{U_s(t;w)}_{[2]}
   &\le C_\theta |s|\sqrt t H G_{\kappa_-}(t,w),\label{score:eq:zerojet}\\
 \norm{\partial_s^jU_s(t;w)}_{[2]}
   &\le C_{\theta,j}\sqrt t\eps^{j-1}H G_{\kappa_-}(t,w),
                      \qquad 1\le j\le4.\label{score:eq:jets}
\end{align}
These estimates hold on the entire resolved Fourier ball, without a
positive lower spectral cutoff.
\end{theorem}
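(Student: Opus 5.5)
Split $U_z$ along the Levi series $r_z=Z_z+\Xi_z$ with $\Xi_z=\sum_{n\ge1}Z_z*K_z^{*n}$, and bound the pieces separately. All work is in the rescaled variables of Theorem~\ref{thm:neargaussian}, Step~1, so that $L_\infty\sqrt t\le r\sqrt\theta$. Lemma~\ref{lem:markedbeta} is used with $G=G_{\kappa_1}$ and the marked count $m\in\{1,2\}$; the margin $\kappa_1-\kappa_-$ absorbs every displacement polynomial at the end.

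\emph{Leading frozen term.} The frozen residual is
$\nabla_xZ_z-\rho_z(x)wZ_z/(2t)=[\rho_z(y)-\rho_z(x)]wZ_z/(2t)$.
Expand $\rho_z(x+w)-\rho_z(x)$ by Taylor with integral remainder,
\[
 \rho_z(x+w)-\rho_z(x)=z\nabla f(x)\cdot w+z\int_0^1(1-\lambda)\,w^TD^2f(x+\lambda w)w\,d\lambda .
\]
Consider the first-order part $z(\nabla f\cdot w)wZ_z/(2t)$ and subtract $Z_z\Psi_z$. Then $(\nabla f\cdot w)w/(2t)-(2ww^T-|w|^2I)\nabla f/(8t)$ equals $(2ww^T+|w|^2I)\nabla f/(8t)$, so this combination does not vanish. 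I therefore expect the cancellation to be completed only together with the first Levi correction. The key point is that $Z_z*K_z$ contains exactly $-(a_z(x)-a_z(y))\Delta Z$ integrated, whose first-order Taylor part in $\nabla a_z=-z\nabla f+O(z^2)$ can be computed in closed form against the Gaussian.

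So the plan is an exact computation. Replace $a_z$ by its linearization $a_z(y)+\nabla a_z(y)\cdot(\xi-y)$ in the first correction $Z*K$, evaluate the resulting Gaussian convolution explicitly (a polynomial in $w$ times $Z$), and check that its source gradient plus the frozen first-order term minus $Z\Psi$ is zero. Only the isotropic parts need to cancel in the precise sense, since $\mathbb E[S\cdot\Psi]=0$ (Proposition~\ref{namom:identity}) is already built in. Everything left over after this cancellation is one of the following:
\begin{itemize}
\item[(i)] a Taylor remainder containing $D^2f$ at a shifted point, whose mixed $L^2$ norm after periodic translation and Minkowski is $\le C|z|H|w|^2\,t^{-1}\cdot\sqrt t\,G$ in scaled form;
\item[(ii)] terms in which $\nabla\rho_z(x)$ and $\nabla\rho_z(y)$ differ, bounded via $|\nabla f(x)-\nabla f(x+w)|$ in $L^2$ by $|w|H$;
\item[(iii)] products of two gradient factors, coming from the $O(z^2)$ part of $\nabla a_z$ or from $\nabla\rho\cdot$frozen-coefficient terms, bounded by H\"older in $L^4\times L^4$ as $|z|^2F_4^2$.
\end{itemize}

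\emph{Higher corrections.} For $n\ge2$, apply Lemma~\ref{lem:markedbeta} with $m=2$: two factors in $L^4$ give $L_4^2=|z|^2F_4^2$, and the time power $t^{(n-1)/2}$ in the gradient chain, summed, gives a factor $\sqrt t$ relative to $t^{-1/2}$ scaling. Combined with the $w$-polynomial multiplying $r_z\Psi_z$, this gives $C_\theta\sqrt t|z|^2F_4^2G_{\kappa_-}$. In the $n=1$ correction the defect $K_z$ is expanded as above. Its linear part is consumed by the cancellation, and its remainder is of type (i) or (iii). This proves \eqref{score:eq:complex}.

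\emph{Real jets.} Apply Lemma~\ref{lem:cauchyjets} in $\zeta=z\varepsilon$ with values in $L^2$ of the source, on $|\zeta|\le3r/4$. There $|z|H+|z|^2F_4^2\le C\varepsilon^{-1}H$, using $F_4^2\le\sqrt{10}\,\varepsilon H$ from Lemma~\ref{lem:interp}(a). This gives $M=C\sqrt t\varepsilon^{-1}HG$ and hence \eqref{score:eq:jets}. For \eqref{score:eq:zerojet} use the complex bound at $z=s$ directly, with $|s|^2F_4^2\le C|s|\varepsilon H$.

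I expect the main obstacle to be the explicit first-order cancellation in the $n=1$ term: showing that the linear-gradient content of $\nabla_x(Z*K)$ combines with the frozen residual to reproduce exactly $Z\Psi$. The Gaussian moment algebra there is delicate, and the 2D-specific trace-free structure must appear.
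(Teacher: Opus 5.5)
Your plan is essentially the paper's proof. It linearizes the one-defect Levi term at the target and evaluates it in closed form by a Gaussian bridge, cancels all first-order terms exactly against $Z_z\Psi_z$, leaves remainders that keep a Hessian in $L^2$ or two gradients in $L^4$, treats the $n\ge2$ tail by Lemma~\ref{lem:markedbeta} with $m=2$, and gets the real jets from Cauchy estimates using $F_4^2\le\sqrt{10}\,\eps H$. The step you flag does close exactly (Lemma~\ref{score:lem:bridge}): if you keep the exact $b=\nabla a_z(y)$ and also refreeze the left Gaussian at $\rho_z(y)$, the convolution equals $(g\cdot w)|w|^2Z_z/(8t)$ with $g=\nabla\rho_z(y)$, and its source residual $(\nabla_x-\rho_z(y)w/(2t))$ is $-Z_z(2ww^T+|w|^2I)g/(8t)$, which cancels your leftover pointwise in $w$, including the part parallel to $g$; so the appeal to $\mathbb E[S\cdot\Psi]=0$ is unnecessary, and you should not expand $\nabla a_z=-z\nabla f+O(z^2)$ and discard the $O(z^2)$ piece, since that would leave a single-gradient term the right-hand side cannot control.
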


\subsection{Gaussian comparison and marked spatial norms}
Fix $\kappa_0$ strictly between $\kappa_-$ and $(1-r)/4$, reserving
further fixed exponent gaps between these numbers; extra displacement
polynomials below use one of these gaps, and the final exponent is
$\kappa_-$. Let $Z_z,K_z$ be the frozen kernel and defect of
Section~\ref{subsec:levitools} for $\rho_z$, so that
\begin{equation}\label{score:eq:levi}
 r_z=Z_z+Z_z*K_z+\sum_{n\ge2}Z_z*K_z^{*n}.
\end{equation}
Since $|z|\eps\le r<1$ gives $\Re\rho_z\ge1-r$, Lemma~\ref{lem:frozen} applies with $G=G_{\kappa_0}$:
\begin{align}
 |\nabla_v^\ell\mathcal G_\varrho(u,v)|&\le C u^{-\ell/2}G_{\kappa_0}(u,v),
                      &&\ell=0,1,2,\label{score:eq:gcomp}\\
 |\nabla_v^\ell(\mathcal G_{\varrho_1}-\mathcal G_{\varrho_2})(u,v)|
   &\le C|\varrho_1-\varrho_2|u^{-\ell/2}G_{\kappa_0}(u,v),
                      &&\ell=0,1.\label{score:eq:gdiff}
\end{align}

For fixed endpoint displacement, write every intermediate point as $x$
plus a relative displacement; marked coefficient evaluations then have the
form $h(x+v)$ with $v$ independent of $x$ (the shifts may be correlated or
identical). Translation and H\"older give
\begin{equation}\label{score:eq:marked}
 \norm{h(x+v)k(x+v')}_2\le\norm h_4\norm k_4,
 \qquad \norm{h(x+v)}_2=\norm h_2.
\end{equation}
Hence two gradient markers in $L^4$, or one Hessian marker in $L^2$, may
be retained through all positive majorant path integrals (by Tonelli and
Minkowski; no independence is assumed). The needed coefficient estimates are
\begin{align}
 \norm{\rho_z(x+v)-\rho_z(x+v')}_{L^4_x}
 &\le |z||v-v'|F_4,\label{score:eq:rho-diff}\\
 \norm{a_z(x+v)-a_z(x+v')}_{L^4_x}
 &\le C|z||v-v'|F_4,\label{score:eq:a-diff}\\
 \norm{D^2a_z}_2
 &\le C\bigl(|z|H+|z|^2F_4^2\bigr)=:C\mathcal H_z,
 \label{score:eq:hessa}
\end{align}
using $D^2a_z=-z\rho_z^{-2}D^2f+2z^2\rho_z^{-3}\nabla f\otimes\nabla f$.
Second-order Taylor remainders, with coefficients evaluated on their
segments and $y=x+w$ after the source derivative, satisfy
\begin{align}
 \norm{a_z(y+v)-a_z(y)-\nabla a_z(y)\cdot v}_{L^2_x}
 &\le C|v|^2\mathcal H_z,\label{score:eq:taylora}\\
 \norm{\rho_z(y)-\rho_z(x)-\nabla\rho_z(y)\cdot w}_{L^2_x}
 &\le C|z||w|^2H,\label{score:eq:taylorrho}\\
 \norm{\nabla\rho_z(y)-\nabla\rho_z(x)}_{L^2_x}
 &\le |z||w|H.\label{score:eq:gradient-diff}
\end{align}

\subsection{An exact first-gradient convolution}
Fix the target $y$ and abbreviate $\varrho=\rho_z(y)$, $a=\varrho^{-1}$,
$g=\nabla\rho_z(y)$, $b=\nabla a_z(y)=-\varrho^{-2}g$. Let
$\mathcal G=\mathcal G_\varrho$, the exact frozen kernel
$\mathcal G(u,v)=(4\pi au)^{-1}e^{-|v|^2/(4au)}$, and define
\[
 K_{\rm lin}(u,\xi,y)=[b\cdot(\xi-y)]\Delta_\xi\mathcal G(u,\xi-y),\qquad
 \mathcal B_z(t,x,y)=\int_0^t\!\!\int_{\R^2}
       \mathcal G(t-u,\xi-x)K_{\rm lin}(u,\xi,y)\,d\xi\,du.
\]

\begin{lemma}[First-gradient identity at a fixed target]
\label{score:lem:bridge}
In dimension two,
\begin{equation}\label{score:eq:D}
 \mathcal B_z(t,x,y)=\frac{(g\cdot w)|w|^2}{8t}Z_z(t,x,y),
\end{equation}
and its fixed-target source residual is
\begin{equation}\label{score:eq:Dgradient}
 \left(\nabla_x-\frac{\varrho w}{2t}\right)\mathcal B_z
 =-Z_z\left[\frac{g|w|^2}{8t}
                  +\frac{(g\cdot w)w}{4t}\right].
\end{equation}
\end{lemma}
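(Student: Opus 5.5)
The plan is to identify $\mathcal B_z$ as the Duhamel solution of a constant-coefficient heat equation in the source variable, and then check that the explicit right-hand side of \eqref{score:eq:D} solves the same problem. Throughout, the target $y$ is fixed, so $\varrho$, $a=\varrho^{-1}$, $g$ and $b=-\varrho^{-2}g$ are constants. The kernel $\mathcal G(u,v)$ is the exact heat kernel of $a\Delta$ and is even in $v$. Hence, as a function of $(t,x)$,
\[
 \mathcal B_z(t,x,y)=\int_0^t\!\!\int_{\R^2}\mathcal G(t-u,x-\xi)K_{\rm lin}(u,\xi,y)\,d\xi\,du
\]
is the Duhamel integral for $(\partial_t-a\Delta_x)\mathcal B=K_{\rm lin}(t,x,y)$ with zero initial data. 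Since $K_{\rm lin}(u,\xi,y)$ is bounded by $Cu^{-3/2}(1+|\xi-y|^2/u)\,|\xi-y|\,\mathcal G(u,\xi-y)$, the source integral is $O(u^{-1/2})$ in $u$, which is integrable. So the Duhamel formula converges, defines a Gaussian-bounded classical solution, and has zero initial trace.

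Next I verify the candidate $\Phi=\phi Z_z$ with $w=y-x$ and $\phi=(g\cdot w)|w|^2/(8t)$, where $Z_z=\mathcal G(t,w)$. The following are routine.
\begin{itemize}
\item The Leibniz rule gives $(\partial_t-a\Delta_x)(\phi Z)=Z(\partial_t\phi-a\Delta\phi)-2a\nabla\phi\cdot\nabla Z$.
\item From $\nabla_wZ=-wZ/(2at)$ and Euler's relation $w\cdot\nabla_w\phi=3\phi$ (since $\phi$ is cubic in $w$), the cross term equals $3\phi Z/t$.
\item Differentiating in time gives $\partial_t\phi=-\phi/t$.
\item In dimension two, $\Delta_w[(g\cdot w)|w|^2]=4\,g\cdot w+(g\cdot w)\Delta|w|^2=8\,g\cdot w$.
\end{itemize}
Combining these,
\[
 (\partial_t-a\Delta_x)\Phi=Z\Bigl[\frac{(g\cdot w)|w|^2}{4t^2}-\frac{a\,g\cdot w}{t}\Bigr].
\]
On the other side, $K_{\rm lin}(t,x,y)=(-b\cdot w)\,\Delta\mathcal G(t,w)$, with $-b\cdot w=a^2\,g\cdot w$ and $\Delta\mathcal G=(-\varrho/t+\varrho^2|w|^2/(4t^2))\mathcal G$. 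Multiplying out gives exactly the same expression, so $\Phi$ solves the same inhomogeneous equation. The two-dimensionality enters only through $\Delta|w|^2=4$; in dimension $n$ a multiple of $(n-2)\,g\cdot w$ would survive, which mirrors \eqref{namom:orthogonality}.

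It remains to match initial data and invoke uniqueness. I expect this to be the only delicate point, though it is mild. We have $|\Phi|\le C|w|^3t^{-2}e^{-|w|^2/(4at)}$, so $\int|\Phi|\,dx=O(t^{1/2})\to0$, and $\Phi$ has zero initial trace. The difference $\mathcal B_z-\Phi$ then solves the homogeneous constant-coefficient heat equation, with zero initial trace and a Gaussian (in particular bounded) majorant on $(0,t]$. Uniqueness for the heat equation in this growth class (Tychonoff class) gives $\mathcal B_z=\Phi$, which is \eqref{score:eq:D}. As an alternative that avoids uniqueness altogether, one can verify the identity by direct computation. Integrating $\Delta_\xi$ by parts onto $(\xi-y)_i\mathcal G(t-u,\xi-x)$ reduces the integrand to products of Gaussians, the Chapman--Kolmogorov identity collapses the $\xi$-integral, and the remaining $u$-integral is elementary. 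I would keep this as a backup.

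Finally, \eqref{score:eq:Dgradient} follows by differentiating the closed form, using $\nabla_x=-\nabla_w$ at fixed target. We have
\[
 \nabla_w(\phi Z)=Z\Bigl[\frac{g|w|^2+2(g\cdot w)w}{8t}-\frac{\varrho w}{2t}\phi\Bigr],
\]
so $\nabla_x\mathcal B_z=-Z\bigl[g|w|^2/(8t)+(g\cdot w)w/(4t)\bigr]+\varrho w\phi Z/(2t)$. Subtracting $\varrho w\mathcal B_z/(2t)=\varrho w\phi Z/(2t)$ cancels the last term exactly, which gives \eqref{score:eq:Dgradient}.
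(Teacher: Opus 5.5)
Your proof is correct, but it takes a different route from the paper's.

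\textbf{The two routes.} The paper evaluates $\mathcal B_z$ directly, first for real $a>0$. Dividing the two frozen Gaussians by $\mathcal G(t,w)$ gives a bridge law for $v=\xi-y$, with mean $\mu=-uw/t$ and coordinate variance $\sigma^2=2au(t-u)/t$. The two-dimensional moment $\mathbb E[(b\cdot v)|v|^2]=(b\cdot\mu)(|\mu|^2+4\sigma^2)$ reduces the $\xi$-integral to
\[
-\tfrac ut(b\cdot w)\Bigl[\tfrac{|w|^2}{4a^2t^2}+\tfrac{t-2u}{aut}\Bigr],
\]
and the $(t-2u)$ part integrates to zero in $u$. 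The identity is then continued analytically to the complex coefficient disk.

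You instead identify $\mathcal B_z$ as the Duhamel solution of $(\partial_t-a\Delta_x)\mathcal B=K_{\rm lin}(t,x,y)$ with zero initial trace. You then check that the closed form satisfies the same equation and conclude by uniqueness. I checked your algebra and it is right. Your gradient step is the same as the paper's.

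What each route buys:
\begin{itemize}
\item Yours avoids the bridge moments and shows why the cubic profile works. Dimension two enters only through $\Delta|w|^2=4$, which makes the parallel with \eqref{namom:orthogonality} transparent.
\item The paper's is constructive. It needs neither the answer in advance nor any uniqueness theorem.
\end{itemize}

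\textbf{Three points to tighten.}
\begin{itemize}
\item \emph{Complex coefficients.} The lemma is used with complex $\varrho=\rho_z(y)$, and Tychonoff uniqueness is a real-coefficient statement. Either note that Duhamel and uniqueness persist for $\partial_t-a\Delta$ with $\Re a>0$, or do as the paper does: prove the real case and continue analytically in $\varrho$. For the first option, the Fourier transform gives $|\widehat D(t,\xi)|\le\|D(s)\|_{L^1}$ for $0<s<t$.
\item \emph{The uniqueness class.} The majorant is not bounded on $(0,t]$. Both $\mathcal B_z$ and $\Phi$ are of size $t^{-1/2}$ at $x=y$, so Tychonoff's theorem does not apply verbatim. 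The clean argument for $D=\mathcal B_z-\Phi$ is to write $D(t)=\mathcal G(t-s)*D(s)$ for $0<s<t$ (uniqueness on $[s,t]$). Then $\|D(t)\|_\infty\le C(t-s)^{-1}\|D(s)\|_{L^1}=O\bigl((t-s)^{-1}\sqrt s\bigr)\to0$ as $s\to0$.
\item \emph{A prefactor.} The source bound should read $|K_{\rm lin}(u,\xi,y)|\le Cu^{-1}|\xi-y|(1+|\xi-y|^2/u)\,\mathcal G(u,\xi-y)$. With your $u^{-3/2}$, the $\xi$-integral would be $O(u^{-1})$ rather than the integrable $O(u^{-1/2})$ you correctly state.
\end{itemize}
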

\begin{proof}
First take $a>0$ real. Dividing the product of the two frozen Gaussians
by $\mathcal G(t,w)$ gives a Gaussian bridge: $v=\xi-y$ has mean
$\mu=-uw/t$ and coordinate variance $\sigma^2=2au(t-u)/t$. In two
dimensions,
\[
 \mathbb E[(b\cdot v)|v|^2]
   =(b\cdot\mu)(|\mu|^2+4\sigma^2),\qquad
 \Delta \mathcal G(u,v)/\mathcal G(u,v)
   =|v|^2/(4a^2u^2)-1/(au),
\]
so the bridge integral divided by $\mathcal G(t,w)$ is
$-\frac{u}{t}(b\cdot w)\bigl[\frac{|w|^2}{4a^2t^2}+\frac{t-2u}{aut}\bigr]$.
Integrating over $0<u<t$, the second term vanishes and the first gives
$-(b\cdot w)|w|^2/(8a^2t)$; since $b=-a^2g$, this is \eqref{score:eq:D}.
Both sides are holomorphic for $a$ near $1$, and Gaussian domination
extends the identity to the coefficient disk by analytic continuation.
Finally $y,\varrho,g$ are fixed during the source derivative, so
$\nabla_x=-\nabla_w$ on the polynomial and $\nabla_xZ_z=\varrho wZ_z/(2t)$,
which proves \eqref{score:eq:Dgradient} without differentiating a
source-frozen coefficient.
\end{proof}

\subsection{The second-spatial-order kernel error}
Set $\mathcal T_z=r_z-Z_z-\mathcal B_z$. We claim (with $\kappa_0$ allowed
to shrink within its reserved interval in later uses)
\begin{align}
 \norm{\mathcal T_z(t;w)}_{[2]}&\le C_\theta t\mathcal H_zG_{\kappa_0}(t,w),
                                                  \label{score:eq:E}\\
 \norm{\nabla_x\mathcal T_z(t;w)}_{[2]}
   &\le C_\theta\sqrt t\mathcal H_zG_{\kappa_0}(t,w).
                                                  \label{score:eq:gradE}
\end{align}

\paragraph{The one-defect error.}
With $\mathcal G=\mathcal G_{\rho_z(y)}$, split the difference between the
one-defect integral and $\mathcal B_z$ exactly into
\begin{align*}
 &\int_0^t\!\!\int
 \bigl[Z_z(t-u,x,\xi)-\mathcal G(t-u,\xi-x)\bigr]
                         K_z(u,\xi,y)\,d\xi\,du,\tag{I}\\
 &\int_0^t\!\!\int \mathcal G(t-u,\xi-x)
       \bigl[K_z(u,\xi,y)-K_{\rm lin}(u,\xi,y)\bigr]\,d\xi\,du.
                                                        \tag{II}
\end{align*}
In (I), the Gaussian difference contributes $|\rho_z(\xi)-\rho_z(y)|$ and
the defect $|a_z(\xi)-a_z(y)|\,u^{-1}$; by two segment formulas and
\eqref{score:eq:marked} their product has source $L^2$ norm at most
$C|z|^2|\xi-y|^2F_4^2u^{-1}$. The factor $|\xi-y|^2/u$ is absorbed in the
right-hand Gaussian, the spatial convolution is a Gaussian with no time
singularity, and integration in $u$ contributes $t$. A source derivative
falls only on the leftmost Gaussian difference (not on coefficients at
$\xi$ or $y$); by \eqref{score:eq:gdiff} it costs $(t-u)^{-1/2}$, whose
integral contributes $2\sqrt t$.

In (II), since the right frozen Gaussian is already $\mathcal G$, the
remaining coefficient difference is exactly the Taylor remainder in
\eqref{score:eq:taylora}, with $L^2$ bound $C|\xi-y|^2\mathcal H_z$. The
same absorption cancels the $u^{-1}$ from the Laplacian, and the same time
integrals give $t\mathcal H_z$ and $\sqrt t\mathcal H_z$. No third
derivative is used.

\paragraph{At least two defects.}
By \eqref{eq:frozenK}, $\|K_z(u;w)\|_{[4]}\le C|z|F_4u^{-1/2}G_{\kappa_0}$
and $\|K_z(u;w)\|_{[\infty]}\le C|z|F_\infty u^{-1/2}G_{\kappa_0}$, with
$|z|F_\infty\sqrt t\le r\sqrt\theta$. Lemma~\ref{lem:markedbeta} with
$m=2$ and $G=G_{\kappa_0}$ bounds the sums over $n\ge2$ of
$Z_z*K_z^{*n}$ and $(\nabla_xZ_z)*K_z^{*n}$ by
$C_\theta t|z|^2F_4^2G_{\kappa_0}$ and $C_\theta\sqrt t|z|^2F_4^2G_{\kappa_0}$.
With (I)--(II) this proves \eqref{score:eq:E}--\eqref{score:eq:gradE}
(if $\nabla f=0$ the defect series vanishes). The case $m=1$ of
Lemma~\ref{lem:markedbeta} gives
\begin{equation}\label{score:eq:C4}
 \norm{r_z(t;w)-Z_z(t;w)}_{[4]}
       \le C_\theta\sqrt t|z|F_4G_{\kappa_0}(t,w).
\end{equation}

\subsection{The exact first-gradient cancellation in the score}
For a vector $v$, write
$\Psi(v,w)=(2ww^T-|w|^2I)v/(8t)=(v\cdot w)w/(4t)-v|w|^2/(8t)$.
Insert $r_z=Z_z+\mathcal B_z+\mathcal T_z$ into
$U_z=(\nabla_x-\rho_z(x)w/(2t))r_z-r_z\Psi(\nabla\rho_z(x),w)$. Since
$\nabla_xZ_z=\rho_z(y)wZ_z/(2t)$, the $Z_z$ term contributes
$(\rho_z(y)-\rho_z(x))wZ_z/(2t)$. By \eqref{score:eq:Dgradient}, the
$\mathcal B_z$ term contributes
$-Z_z[g|w|^2/(8t)+(g\cdot w)w/(4t)]
=Z_z[\Psi(g,w)-(g\cdot w)w/(2t)]$ plus
$(\rho_z(y)-\rho_z(x))w\mathcal B_z/(2t)$. Collecting terms gives exactly
\begin{align}
 U_z={}&Z_z\left[
 \frac{(\rho_z(y)-\rho_z(x)-g\cdot w)w}{2t}
       +\Psi(g,w)-\Psi(\nabla\rho_z(x),w)\right]\notag\\
 &+\frac{(\rho_z(y)-\rho_z(x))w}{2t}\mathcal B_z
   +\left(\nabla_x-\frac{\rho_z(x)w}{2t}\right)\mathcal T_z
   -(r_z-Z_z)\Psi_z(x,w).
 \label{score:eq:exactcancel}
\end{align}
Thus the leading first-spatial-order terms cancel exactly, including the
term parallel to $g$, not only the trace-free part. Each term has the
desired mixed $L^2$ estimate:
\begin{itemize}
\item The first line is at most $C|z|H(|w|^3/t)G_{\kappa_0}$ by
\eqref{score:eq:taylorrho} and \eqref{score:eq:gradient-diff}; absorption
gives $C\sqrt t|z|H G_{\kappa_-}$.
\item In the second line's first term, \eqref{score:eq:D} and two $L^4$
markers give $C|z|^2F_4^2(|w|^5/t^2)G_{\kappa_0}$, hence
$C\sqrt t|z|^2F_4^2G_{\kappa_-}$.
\item By \eqref{score:eq:E}--\eqref{score:eq:gradE} the $\mathcal T_z$
residual is at most $C_\theta\sqrt t\mathcal H_z(1+|w|/\sqrt t)G_{\kappa_0}$.
\item By \eqref{score:eq:C4}, H\"older and
$\norm{\Psi_z(x,w)}_4\le C|z|F_4|w|^2/t$, the last term is at most
$C_\theta\sqrt t|z|^2F_4^2(|w|^2/t)G_{\kappa_0}$.
\end{itemize}
This proves \eqref{score:eq:complex}; every error retains a Hessian in
$L^2$ or two gradients in $L^4$, and none uses a mode count or a lower
spectral bound.

\subsection{Amplitude jets and the squared-score contribution}
Lemma~\ref{lem:interp}(a) gives
\begin{equation}\label{score:eq:GN}
 F_4^2\le\sqrt{10}\norm f_\infty\norm{\Delta f}_2\le\sqrt{10}\,\eps H.
\end{equation}
For $\eps\le r/4$ and $|z|\eps\le3r/4$, the right side of
\eqref{score:eq:complex} is therefore at most
$C_\theta\sqrt t\eps^{-1}H G_{\kappa_-}$. The normally convergent Levi
series, its fixed-target source derivative, and its explicit polynomial
correction are holomorphic with values in the fixed-displacement source
$L^2$ space, so Lemma~\ref{lem:cauchyjets} in $\zeta=z\eps$, with this
bound as $M$, yields \eqref{score:eq:jets}. At real $s$,
\eqref{score:eq:complex} and \eqref{score:eq:GN} give
\eqref{score:eq:zerojet}; the case $\eps=0$ is trivial. This completes the
proof of Theorem~\ref{score:thm:score}.

\begin{corollary}[The completed score square]
\label{score:cor:square}
Define for real $|s|\le1$
\[
 \mathcal S_t(s)=t\int_{\mathbb T^2}\int_{\R^2}
             \frac{|U_s(t,x,y)|^2}{r_s(t,x,y)}\,dy\,dx.
\]
Using the established real lower bound
$r_s\ge ct^{-1}e^{-\kappa_+|w|^2/t}$ and real upper density jets
$|\partial_s^kr_s|\le C_k\eps^kG_{\kappa_-}$, $k\le4$, one has
\begin{equation}\label{score:eq:square4}
 \sup_{|s|\le1}|\mathcal S_t^{(j)}(s)|
       \le C_{\theta,j}t^2\eps^{j-2}\norm{\Delta f}_2^2,
       \qquad j=2,3,4.
\end{equation}
\end{corollary}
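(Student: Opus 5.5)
The plan is to differentiate the quotient $|U_s|^2/r_s$ in $s$ exactly as in the anisotropy estimate \eqref{naani:quotient}, now placing the two residual factors in the fixed-displacement source $L^2$ norm and all density factors in $L^\infty$. By the Leibniz rule, $\partial_s^j(|U_s|^2/r_s)$ is a finite combination of terms
\[
 r_s^{-m-1}\,(\partial_s^{\nu_1}U_s)\cdot(\partial_s^{\nu_2}U_s)\prod_{\ell=1}^m\partial_s^{k_\ell}r_s,
 \qquad \nu_1+\nu_2+\textstyle\sum k_\ell=j,\quad k_\ell\ge1,\quad 0\le m\le j.
\]
For fixed displacement $w$ we integrate over the source cell. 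The density factors are bounded pointwise: the lower bound \eqref{eq:nonzerolower} controls $r_s^{-m-1}$, and \eqref{eq:nonzerodensityjets} controls each $\partial_s^{k_\ell}r_s$ by $C\eps^{k_\ell}t^{-1}e^{-\kappa_-|w|^2/t}$. Cauchy--Schwarz in $x$ then bounds the product of the two $U$ factors by $\|\partial_s^{\nu_1}U_s(t;w)\|_{[2]}\|\partial_s^{\nu_2}U_s(t;w)\|_{[2]}$.

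These two norms are supplied by Theorem~\ref{score:thm:score}. Each is at most $C\sqrt t\,H\,G_{\kappa_-}(t,w)$, times $\eps^{\nu-1}$ when $\nu\ge1$ by \eqref{score:eq:jets}, or times $|s|\le1$ when $\nu=0$ by \eqref{score:eq:zerojet}.

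Collecting exponentials, the $x$-integrated term is bounded by
\[
 C\,t\,H^2\,\eps^{\,j-\mathbf 1_{\nu_1>0}-\mathbf 1_{\nu_2>0}}\;t^{-2}t^{-m}t^{\,m+1}\exp\bigl(-[(m+2)\kappa_--(m+1)\kappa_+]|w|^2/t\bigr).
\]
The powers of $t$ check as follows: two $G_{\kappa_-}$ factors give $t^{-2}$, the density jets give $t^{-m}$, and $r_s^{-m-1}$ gives $t^{m+1}$, for a net $t^{-1}$. For $m\le j\le4$ the exponent coefficient is at least $6\kappa_--5\kappa_+=(1-11\gamma)/4>0$, because $\gamma<1/11$. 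This is the same margin computation as in Section~\ref{naani:sec}, and it is the reason that choice of $\gamma$ was made.

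Integrating over $w\in\R^2$ absorbs the remaining $t^{-1}$ into an $O(1)$ Gaussian integral. After the outer factor $t$ in $\mathcal S_t$, each term is bounded by $C_{\theta,j}t^2\eps^{\,j-\mathbf 1_{\nu_1>0}-\mathbf 1_{\nu_2>0}}H^2$. Since the exponent of $\eps$ is at least $j-2\ge0$ and $\eps\le1$, this gives \eqref{score:eq:square4} with $H^2=\|\Delta f\|_2^2$, uniformly in $|s|\le1$. Differentiation under the integral is justified by the same dominating Gaussian envelopes together with the normal convergence and holomorphy already established.

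I expect the main obstacle to be the bookkeeping of the Gaussian exponents rather than any new estimate. The lower bound carries the larger exponent $\kappa_+$, which appears in the denominator raised to the power $m+1\le5$, against at most six upper envelopes with exponent $\kappa_-$. The argument closes only because $\gamma<1/11$ was fixed in advance and because the jets in Theorem~\ref{score:thm:score} were already stated with the final exponent $\kappa_-$. A secondary check is the case of undifferentiated residual factors, which is needed for $j=2$ to produce no negative power of $\eps$. There \eqref{score:eq:zerojet} gives $H$ with no $\eps^{-1}$, so the worst term, $\nu_1=\nu_2=1$ (or one $\nu=0$ with the other equal to $2$), has $\eps$-exponent exactly $j-2$.
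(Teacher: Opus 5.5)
Your proposal is correct and follows the paper's own proof essentially step for step. Both expand $\partial_s^j(|U_s|^2/r_s)$ by Leibniz, put the two residual jets in the mixed $L^2_x$ norm via Theorem~\ref{score:thm:score}, and put the density jets and $r_s^{-m-1}$ in sup norm. Both then use the Gaussian margin $6\kappa_--5\kappa_+=(1-11\gamma)/4>0$ and the count $\eps^{\,j-\mathbf 1_{\nu_1>0}-\mathbf 1_{\nu_2>0}}$ with exponent at least $j-2$. One harmless slip: a term with $\nu_1=0$, $\nu_2=2$ carries $\eps^{j-1}$, not $\eps^{j-2}$; the terms attaining exponent $j-2$ are exactly those in which both residual factors are differentiated.
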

\begin{proof}
For $j=2,3,4$, $\mathcal S_t^{(j)}$ is a sum of terms
$r_s^{-m-1}(\partial_s^{\nu_1}U_s)\cdot(\partial_s^{\nu_2}U_s)
\prod_{\ell=1}^m\partial_s^{k_\ell}r_s$ with
$\nu_1+\nu_2+\sum k_\ell=j$, $k_\ell\ge1$.
Put both $U$ jets in $L^2_x$ and density jets in sup norm. The total
amplitude power is $\eps^{j-\boldsymbol1_{\nu_1>0}-\boldsymbol1_{\nu_2>0}}$,
with bounded $|s|$ factors for zero-order residuals, hence at most
$C\eps^{j-2}$ for $\eps\le1$; the two $\sqrt t$ factors give $t$. The
Gaussian quotient has prefactor $t^{-1}$ and exponent
$(m+2)\kappa_--(m+1)\kappa_+\ge6\kappa_--5\kappa_+>0$, so its displacement
integral is constant, leaving $t\eps^{j-2}H^2$ before the exterior factor
$t$, which proves \eqref{score:eq:square4}. All quotients here are real; complex analyticity entered only through
numerator estimates.
\end{proof}

\section{Uniform cancellation of displacement moments}
\label{mom:section}
This section proves the full resolved-ball estimate for the compensated
displacement-moment term of the quadratic-score reduction, at nonconstant
backgrounds and with two spatial energy factors. It is independent of the
residual-square estimate; both combine with the quadratic clock bound in
Section~\ref{sec:closure}.

Let $F$ be a real trigonometric polynomial with $\|F\|_{\mathcal A}=1$ and
frequencies $|k|\le\Lambda$, and fix $t\Lambda^2\le\theta$. For complex
$z$ in a small disk, put
\[
 \rho=1+zF,\quad a=\rho^{-1},\quad L=a\Delta,\quad P_u=e^{uL},
 \quad g=\nabla\rho,\quad \mathfrak g=g\cdot g,\quad
 \mathcal I=\int_{\mathbb T^2} a^2\mathfrak g\,dx.
\]
Products in complex formulas are bilinear (no conjugation); estimates use
the complex absolute value. Write $\sH=\sH(F)$, $\sG=\sG(F)$.
Lemma~\ref{lem:interp}(a) gives
\begin{equation}\label{mom:GN}
 \sG^2\le\sqrt{10}\|F\|_\infty \sH\le\sqrt{10}\,\sH.
\end{equation}
Let $M_3=\mathbb E_x[|X_t-x|^2(X_t-x)]$ and $M_4=\mathbb E_x|X_t-x|^4$ (for
complex $z$, the same polynomial integrals against the complex lifted
kernel), and define
\begin{equation}\label{mom:defs}
 J_3(t,z)=\int_{\mathbb T^2}\rho g\cdot M_3\,dx,\qquad
 H_4(t,z)=\int_{\mathbb T^2}\mathfrak gM_4\,dx
\end{equation}
($H_4$ is a fourth displacement moment, not a fourth parameter
derivative), and the compensated moment functional
\begin{equation}\label{mom:Mdef}
 \mathfrak M_t(\rho)
 =-\frac{J_3(t,z)}{8t}-\frac{H_4(t,z)}{64t}-\frac t2\mathcal I.
\end{equation}

\begin{theorem}[Integrated moment cancellation on the full resolved ball]
\label{mom:main}
For every finite $\theta$ there are $r_\theta>0$ and $C_\theta<\infty$,
independent of the Fourier cutoff and mode count, such that
\begin{align}
 |J_3(t,z)+8t^2\mathcal I|&\le C_\theta t^3|z|^2\sH^2,\label{mom:J3bound}\\
 |H_4(t,z)-32t^2\mathcal I|&\le C_\theta t^3|z|^3\sH^2,\label{mom:H4bound}\\
 |\mathfrak M_t(1+zF)|&\le C_\theta t^2|z|^2\sH^2
                                  \label{mom:Mbound}
\end{align}
for $|z|\le r_\theta$ and $t\Lambda^2\le\theta$. The functionals are
holomorphic on a neighborhood of this closed disk after decreasing the
radius if necessary.

Consequently, for $f$ real, $\varepsilon=\|f\|_{\mathcal A}\le
r_\theta/4$, and $t\Lambda^2\le\theta$, each fixed integer $j\ge2$
satisfies
\begin{equation}\label{mom:derivatives}
 \sup_{|s|\le1}|\partial_s^j\mathfrak M_t(1+sf)|
 \le C_{\theta,j}t^2\varepsilon^{j-2}\|\Delta f\|_2^2.
\end{equation}
In particular the third derivative has bound
$C_{\theta,3}t^2\varepsilon\|\Delta f\|_2^2$, and the fourth derivative
has bound $C_{\theta,4}t^2\varepsilon^2\|\Delta f\|_2^2$.
\end{theorem}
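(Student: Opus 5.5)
The plan is to compute both displacement moments exactly by Dynkin's formula (the semigroup calculus of Lemma~\ref{naclock:lem:exact}), extract their leading $t^2$ terms, and check that these cancel against $\frac t2\mathcal I$ in \eqref{mom:Mdef}. The remainders are then estimated with two separate second-derivative factors, exactly as for the clock. I work on the complex disk of Lemma~\ref{naclock:lem:complexinput}, where $\|P_u\|_{2\to2},\|P_u\|_{\infty\to\infty}\le M_\theta$ and $P_u1=1$. All identities are first proved for real $z$ (positive coefficients, Gaussian moments justified by Theorem~\ref{thm:neargaussian}) and then continued holomorphically, as in Lemma~\ref{naclock:lem:exact}.

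\emph{Step 1 (Dynkin expansions).} In dimension two, $\Delta_y(|y-x|^2(y-x))=8(y-x)$ and $\Delta_y|y-x|^4=16|y-x|^2$. Hence
\[
 M_3(x)=8\int_0^tP_u\bigl[a(\cdot)(\cdot-x)\bigr](x)\,du,\qquad
 M_4(x)=16\int_0^tP_u\bigl[a(\cdot)|\cdot-x|^2\bigr](x)\,du .
\]
Applying Dynkin once more gives
\[
 L\bigl(a(y)(y-x)\bigr)=a(y)\bigl[\Delta a(y)(y-x)+2\nabla a(y)\bigr].
\]
The first term again has zero value at the diagonal and needs a third application. Using $\nabla a=-a^2g$ and $\rho a=1$, the leading part is $M_3\approx 8t^2a\nabla a(x)=-8t^2a^2g(x)$. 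This gives $\int\rho g\cdot M_3\approx-8t^2\mathcal I$. Similarly, since the lifted covariance is $2tm_t$ with $m_t\approx a$, one gets $M_4\approx32t^2a^2$ and $H_4\approx32t^2\mathcal I$. Substituting into \eqref{mom:Mdef} gives $t\mathcal I-\frac t2\mathcal I-\frac t2\mathcal I=0$. So \eqref{mom:Mbound} follows from \eqref{mom:J3bound}--\eqref{mom:H4bound} once $|z|^3\le r_\theta|z|^2$.

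\emph{Step 2 (remainders for $J_3$).} The remainder is a double or triple time integral. Its terms are of two kinds:
\begin{itemize}
\item $(P_v-I)$ acting on $a\nabla a$;
\item $P_v$ acting on $a\Delta a\cdot(\cdot-x)$.
\end{itemize}
For the first kind, I integrate against $\rho g$ and use the weighted symmetry \eqref{naclock:eq:symmetry}. This writes
\[
 \int\rho g\cdot(P_v-I)(a\nabla a)=\int_0^v\int\rho\,(P_{v'}L\text{-term})\cdot(\text{second factor}),
\]
so that each of the two factors carries one second derivative (or two gradients) in $L^2$, rather than a third derivative of $F$. This is the same mechanism that produced the two separate Hessian factors in \eqref{naclock:eq:exact}.

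For the second kind, a further Dynkin step produces $2\nabla(a\Delta a)$-type terms times $t^3$. I would integrate these by parts against $\rho g$, so that they again pair $\Delta F$ with $\Delta F$, or $\nabla F\otimes\nabla F$ with $\Delta F$, controlled by \eqref{mom:GN}. The $L^\infty$ and $L^2$ bounds of $P_u$ give the total $C_\theta t^3|z|^2\sH^2$. The resolution hypothesis $t\Lambda^2\le\theta$ enters only through the uniform semigroup bounds and through $t|\nabla F|^2\le\theta$, which absorbs any leftover gradient pair.

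\emph{Step 3 (remainder for $H_4$).} Here $\mathfrak g=g\cdot g$ is already $O(|z|^2)$ and equals $z^2\nabla F\cdot\nabla F$, with $L^2$ norm at most $|z|^2\sG^2\le\sqrt{10}|z|^2\sH$. It therefore suffices to show
\[
 \|M_4-32t^2a^2\|_{2}\le C_\theta t^3|z|\sH .
\]
I would obtain this by writing $M_4-32t^2a^2$ through the clock defect $d=m_t-a$ of \eqref{naclock:eq:d}, whose $L^2$ norm is $O(t|z|\sH)$, plus a Dynkin remainder in which one generator falls on $a$.

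\emph{Step 4 (derivatives).} The bounds hold uniformly on the closed disk. I then apply Lemma~\ref{lem:cauchyjets} to $\Phi(z)=\mathfrak M_t(1+zF)$ with $M=C_\theta t^2r_\theta^2\sH^2$ and $F=f/\varepsilon$. Since $\varepsilon^2\sH(F)^2=\|\Delta f\|_2^2$, this yields \eqref{mom:derivatives}.

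The main obstacle is Step~2. A naive Taylor expansion of $P_v(a\nabla a)$ costs a third derivative of $F$ and loses a power of $\Lambda$. The symmetry and integration-by-parts bookkeeping must be arranged so that every error term retains exactly two second-order factors. I would first try to mirror the structure of \eqref{naclock:eq:exact}: write $J_3+8t^2\mathcal I$ as
\[
 \int_0^t\!\!\int_0^u\!\!\int\rho\,(P_vL\rho)\,(\cdots)
\]
plus explicit quadratic clock-type terms in $d$.
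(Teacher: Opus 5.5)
Your leading-order bookkeeping is fine, and so is the reduction of \eqref{mom:Mbound} to \eqref{mom:J3bound}--\eqref{mom:H4bound}. Step~3 also works: for the even moment $M_4$, one Dynkin or Taylor step plus direct Gaussian bounds does give $\|M_4-32t^2a^2\|_2\le C_\theta t^3|z|\sH$. The Cauchy step matches the paper. One small correction: $a\nabla a=-a^3g$, not $-a^2g$, and it is the $a^3$ that yields $\int\rho g\cdot 8t^2a\nabla a=-8t^2\mathcal I$.

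\emph{The gap is Step~2, the remainder for the odd moment $M_3$.} This is the heart of the theorem, and your route does not close it.

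Forward Dynkin at a fixed source never terminates. The identity $L\bigl(a\Delta a\,(\cdot-x)\bigr)=a\,\Delta(a\Delta a)\,(\cdot-x)+2a\nabla(a\Delta a)$ regenerates a diagonal-vanishing term carrying two more derivatives of $F$. If you stop after one step and bound $P_v[a\Delta a\,(\cdot-x)]$ directly by the Gaussian, you get only $t^{5/2}|z|^2\sH\|\nabla F\|_2$. This is not $\le Ct^3|z|^2\sH^2$: for a single low mode $k_0$ the ratio is $(\sqrt t|k_0|)^{-1}\to\infty$. Gaining the missing $\sqrt t$ from the mean-zero displacement costs a third derivative of $F$.

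Your balancing device for the other terms also needs a tool you do not have. After weighted symmetry, $\int\rho g\cdot P_vL(a\nabla a)=\int(P_vg)\cdot\Delta(a\nabla a)$. Pairing two second-order factors requires moving one derivative onto $P_vg$, i.e.\ a bound $\|\nabla P_vh\|_2\le C\|\nabla h\|_2$ on the complex disk. The $L^2\to L^2$ and $L^\infty\to L^\infty$ bounds of Lemma~\ref{naclock:lem:complexinput} do not give this. The clock mechanism does not transfer either, because here $g$ and $\nabla a$ are first-order factors.

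The paper avoids the iteration with a source-side commutator identity. For $\mathscr X_i(u)h(x)=\E_x[(X_u^i-x_i)h(X_u)]$ one has $\partial_u\mathscr X_i(u)h=L\mathscr X_i(u)h+2a\,\partial_iP_uh$. Duhamel followed by Dynkin then gives the exact formula $M_3(t)=16\int_{u+v\le t}P_u[a\nabla P_va]\,du\,dv$, with no remainder and with the derivative on the smoothed $P_va$.

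Weighted transpose then gives $J_3=16\int_{u+v\le t}\mathcal U(u,v)$, where $\mathcal U=\int(P_ug)\cdot\nabla P_va$ and $\mathcal U(0,0)=-\mathcal I$. The two derivatives are $\partial_v\mathcal U=-\int\operatorname{div}(P_ug)\,P_vLa$ and, after transposing $P_u$, $\partial_u\mathcal U=-\int\Delta\rho\,\operatorname{div}P_u[a\nabla P_va]$. Both are $\le C_\theta|z|^2\sH^2$, using two semigroup estimates absent from your plan:
\begin{enumerate}
\item gradient propagation $\|\nabla P_uh\|_p\le C_\theta\|\nabla h\|_p$, from the source-gradient Gaussian bound and mass conservation;
\item $\|\Delta P_uh\|_2\le C_\theta\|\Delta h\|_2$, from $\Delta P_uh=\rho P_u(a\Delta h)$.
\end{enumerate}
Integrating $|\mathcal U+\mathcal I|\le C_\theta(u+v)|z|^2\sH^2$ over the triangle gives \eqref{mom:J3bound}, including the exact constant $8$.
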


\paragraph{Uniform semigroup input and its spatial consequences.}
Theorem~\ref{thm:neargaussian}(b), with $\gamma=1/2$ and
$r_\theta\le\bar r_\theta$, supplies holomorphy and, uniformly for
$0<u\le t$ and $|z|\le r_\theta$,
\begin{equation}\label{mom:kernelinput}
 |r_z(u,x,y)|\le C_\theta u^{-1}e^{-c|x-y|^2/u},\qquad
 |\nabla_xr_z(u,x,y)|\le C_\theta u^{-3/2}e^{-c|x-y|^2/u}.
\end{equation}
Periodization and the two Schur integrals give
\begin{equation}\label{mom:Lp}
 \|P_u h\|_p\le C_\theta\|h\|_p,\qquad 1\le p\le\infty.
\end{equation}
The identities $P_u1=1$ and the bilinear weighted symmetry
\begin{equation}\label{mom:transpose}
 \int_{\mathbb T^2}\rho h_1P_uh_2\,dx=\int_{\mathbb T^2}\rho(P_uh_1)h_2\,dx
\end{equation}
extend from real parameters by holomorphy. Kernel mass conservation and
\eqref{mom:kernelinput} give the gradient propagation estimate
\begin{equation}\label{mom:gradprop}
 \|\nabla P_uh\|_p\le C_\theta\|\nabla h\|_p,
 \qquad 1\le p\le\infty,
\end{equation}
for any smooth periodic scalar or vector $h$ (not necessarily with the
Fourier support of $F$). Indeed
$\nabla P_uh(x)=\int_{\mathbb R^2}\nabla_xr_z(u,x,x+w)[h(x+w)-h(x)]\,dw$
(source differentiation before setting $y=x+w$), and the segment formula,
Minkowski's inequality, and periodic translation bound its $L^p_x$ norm by
$C_\theta\|\nabla h\|_p\int_{\mathbb R^2}u^{-3/2}|w|e^{-c|w|^2/u}\,dw
\le C_\theta\|\nabla h\|_p$.
Commutation of $L$ with its semigroup and periodic Parseval (also for
complex functions) give
\begin{equation}\label{mom:H2}
 \Delta P_uh=\rho P_u(a\Delta h),\qquad
 \|D^2P_uh\|_2=\|\Delta P_uh\|_2
                         \le C_\theta\|\Delta h\|_2.
\end{equation}
These estimates extend to $u=0$. Uniformly on the complex disk, the
explicit derivatives of $a$ and \eqref{mom:GN} imply
\begin{align}
 \|a\|_\infty+\|\rho\|_\infty&\le C,&
 \|\nabla a\|_4&\le C|z|\sG,\notag\\
 \|\Delta a\|_2+\|La\|_2&\le C|z|\sH,&
 \|g\|_4&=|z|\sG,\label{mom:basic}\\
 \|Dg\|_2&=|z|\sH,&
 \|\mathfrak g\|_2&\le |z|^2\sG^2.\notag
\end{align}
In particular $\|\nabla P_ua\|_4\le C_\theta|z|\sG$ and
$\|D^2P_ua\|_2\le C_\theta|z|\sH$.

\paragraph{Exact commutator formulas for the lifted moments.}
For periodic $h$, define the source-periodic operators
$\mathscr X_i(u)h(x)=\mathbb E_x[(X_u^i-x_i)h(X_u)]$ and
$\mathscr R(u)h(x)=\mathbb E_x[|X_u-x|^2h(X_u)]$, both zero at $u=0$.
Expanding the generator on multiplication by a coordinate gives
\[
 \partial_u \mathscr X_i(u)h=L\mathscr X_i(u)h+2a\partial_iP_uh,\qquad
 \partial_u \mathscr R(u)h=L\mathscr R(u)h+
                   4a\textstyle\sum_i\partial_i\mathscr X_i(u)h+4aP_uh,
\]
where the coefficient $4$ in the last term uses dimension two. Duhamel's
formula gives, with all time variables nonnegative,
\begin{align*}
 \mathscr X_i(u)h&=2\int_{v_1+v_2=u}P_{v_1}[a\partial_iP_{v_2}h]\,dv_2,\\
 \mathscr R(u)h&=4\int_{v_1+v_2=u}P_{v_1}[aP_{v_2}h]\,dv_2\\
 &\quad+8\sum_i\int_{v_1+v_2+v_3=u}
       P_{v_1}[a\partial_iP_{v_2}(a\partial_iP_{v_3}h)]\,dv_2\,dv_3.
\end{align*}
Since $\Delta(|w|^2w_i)=8w_i$ and $\Delta|w|^4=16|w|^2$, Dynkin's formula
and one further time integration give the exact identities
\begin{align}
 M_3(t)&=16\int_{u+v\le t}P_u[a\nabla P_va]\,du\,dv,
                                                    \label{mom:M3exact}\\
 M_4(t)&=64\int_{u+v\le t}P_u[aP_va]\,du\,dv\notag\\
 &\quad+128\sum_i\int_{u+v+v'\le t}
       P_u[a\partial_iP_v(a\partial_iP_{v'}a)]\,du\,dv\,dv'.
                                                    \label{mom:M4exact}
\end{align}
For real parameters these are justified with smooth localizations of the
coordinate polynomials and Gaussian moment bounds; the complex versions
follow by holomorphy. They have no omitted remainder.

\paragraph{The integrated third moment.}
Weighted transpose symmetry in \eqref{mom:M3exact} yields
\[
 J_3(t,z)=16\int_{u+v\le t}\mathcal U(u,v)\,du\,dv,
 \qquad
 \mathcal U(u,v)=\int_{\mathbb T^2}(P_ug)\cdot\nabla P_va\,dx,
\]
with $\mathcal U(0,0)=\int g\cdot\nabla a=-\mathcal I$. Integration by parts
gives
$\partial_v\mathcal U=-\int_{\mathbb T^2}\operatorname{div}(P_ug)\,P_vLa\,dx$,
bounded by $C_\theta|z|^2\sH^2$ via \eqref{mom:gradprop} and
\eqref{mom:basic}. For $u$, use weighted transpose before integrating the
third derivative by parts:
\[
 \partial_u\mathcal U
 =\int_{\mathbb T^2}(P_uLg)\cdot\nabla P_va\,dx
 =\int_{\mathbb T^2}\nabla\Delta\rho\cdot P_u[a\nabla P_va]\,dx
 =-\int_{\mathbb T^2}\Delta\rho\,
                    \operatorname{div}P_u[a\nabla P_va]\,dx.
\]
The product rule, \eqref{mom:gradprop}, and \eqref{mom:H2} give
\begin{align*}
 \|\nabla P_u[a\nabla P_va]\|_2
 &\le C_\theta\|\nabla(a\nabla P_va)\|_2
 \le C_\theta\bigl(\|\nabla a\|_4\|\nabla P_va\|_4
                       +\|a\|_\infty\|D^2P_va\|_2\bigr)\\
 &\le C_\theta(|z|^2\sG^2+|z|\sH)
 \le C_\theta|z|\sH,
\end{align*}
so $|\partial_u\mathcal U|\le C_\theta|z|^2\sH^2$ as well. These
differentiations are identities for each fixed smooth polynomial, and the
bounds use no derivatives beyond the second energy norm. Consequently
$|\mathcal U(u,v)+\mathcal I|\le C_\theta(u+v)|z|^2\sH^2$. The triangle has
area $t^2/2$ and $\int_{u+v\le t}(u+v)\,du\,dv=t^3/3$, proving
\eqref{mom:J3bound}, including its sign and coefficient $8$.

\paragraph{The integrated fourth moment.}
Write the integrands in \eqref{mom:M4exact} as
\[
 \mathcal Y(u,v)=\int_{\mathbb T^2}\mathfrak gP_u[aP_va]\,dx,\qquad
 \mathcal K_i(u,v,v')=
      \int_{\mathbb T^2}\mathfrak gP_u[a\partial_iP_v(a\partial_iP_{v'}a)]\,dx.
\]
One has $\mathcal Y(0,0)=\int_{\mathbb T^2}\mathfrak ga^2=\mathcal I$, and
\[
 |\partial_v\mathcal Y|
   =\left|\int_{\mathbb T^2}\mathfrak gP_u[aP_vLa]\,dx\right|
   \le C_\theta\|\mathfrak g\|_2\|La\|_2
   \le C_\theta|z|^3\sG^2\sH
   \le C_\theta|z|^3\sH^2.
\]
For $u$, commute $L$ through $P_u$ and use
$L(aP_va)=a\bigl[(\Delta a)P_va+2\nabla a\cdot\nabla P_va+a\Delta P_va\bigr]$,
whose $L^2$ norm is at most $C_\theta|z|\sH$ by
\eqref{mom:Lp}--\eqref{mom:basic}. Thus
$|\partial_u\mathcal Y|\le C_\theta|z|^3\sH^2$ and
\begin{equation}\label{mom:Jdifference}
 |\mathcal Y(u,v)-\mathcal I|
            \le C_\theta(u+v)|z|^3\sH^2.
\end{equation}
For the triple term, transpose $P_u$ with weight $\rho$ and integrate by
parts once,
$\mathcal K_i
  =-\int\partial_iP_u(a\mathfrak g)\,P_v(a\partial_iP_{v'}a)\,dx$;
use H\"older with exponents $4/3$ and $4$. Gradient propagation and the
product rule give
\[
 \|\nabla P_u(a\mathfrak g)\|_{4/3}
 \le C_\theta\|\nabla(a\mathfrak g)\|_{4/3}
 \le C_\theta\bigl(\|\nabla a\|_4\|g\|_4^2
                  +\|a\|_\infty\|Dg\|_2\|g\|_4\bigr)
 \le C_\theta(|z|^3\sG^3+|z|^2\sH\sG),
\]
whereas $\|P_v(a\partial_iP_{v'}a)\|_4\le C_\theta|z|\sG$. By \eqref{mom:GN},
\begin{equation}\label{mom:Kbound}
 |\mathcal K_i(u,v,v')|
 \le C_\theta(|z|^4\sG^4+|z|^3\sH\sG^2)
 \le C_\theta|z|^3\sH^2.
\end{equation}
The triple simplex has volume $t^3/6$. Combining
\eqref{mom:Jdifference}--\eqref{mom:Kbound} with \eqref{mom:M4exact} proves
\eqref{mom:H4bound}, with leading coefficient $64\cdot t^2/2=32t^2$.

\paragraph{Compensation and parameter derivatives.}
The signs in \eqref{mom:Mdef} give the exact cancellation
$\mathfrak M_t=-(J_3+8t^2\mathcal I)/(8t)-(H_4-32t^2\mathcal I)/(64t)$,
so \eqref{mom:J3bound}--\eqref{mom:H4bound} imply \eqref{mom:Mbound}. All
integrals are holomorphic in $z$, with locally uniform absolute domination
from the complex Gaussian and the estimates above. For
$\varepsilon\le r_\theta/4$ and $F=f/\varepsilon$, Lemma~\ref{lem:cauchyjets}
with $\Phi(z)=\mathfrak M_t(1+zF)$ and, by \eqref{mom:Mbound},
$M=C_\theta r_\theta^2t^2N(F)$ gives
\[
 |\partial_s^j\mathfrak M_t(1+sf)|
 \le j!\left(\frac2{r_\theta}\right)^j
       C_\theta r_\theta^2t^2\varepsilon^jN(F)
 =C_{\theta,j}t^2\varepsilon^{j-2}N(f),
\]
which proves \eqref{mom:derivatives}; the zero perturbation is exact.

\section{Closure of the scalar estimate}\label{sec:closure}

\begin{theorem}[Full-ball compensated trace estimate]\label{thm:scalar}
Fix $\theta<\infty$. There are $\varepsilon_\theta>0$ and $C_\theta$
independent of the Fourier cutoff and mode count such that the following
holds. Let $f$ be a real trigonometric polynomial supported in
$|k|\le\Lambda$, with $\varepsilon=\|f\|_{\mathcal A}\le\varepsilon_\theta$
and $t\Lambda^2\le\theta$. Then
\begin{equation}\label{eq:scalarclosed}
 \sup_{|s|\le1}|\partial_s^j\mathfrak B_t(1+sf)|
 \le C_\theta t^2\varepsilon^{j-2}N(f),\qquad j=3,4.
\end{equation}
In particular,
\[
 \left|\int_0^1(1-s)^3\partial_s^4\mathfrak B_t(1+sf)\,ds\right|
 \le\frac{C_\theta}{4}t^2\varepsilon^2N(f).
\]
\end{theorem}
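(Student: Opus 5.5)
The plan is to decompose $\mathfrak B_t$ exactly, as in Figure~\ref{fig:closure}, into the score square $\mathcal S_t$, the compensated moments $\mathfrak M_t$, and the clock square $\mathcal J_t$. Each piece is then estimated with the results already proved.

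First integrate \eqref{namom:square} over the source and multiply by $t$. This gives
\[
 t\int_{\mathbb T^2}\operatorname{tr}\Sigma\,dx
 =\mathcal S_t-\frac{J_3}{8t}-\frac{H_4}{64t}.
\]
Subtracting $\frac t2\mathcal I(\rho)$ and using \eqref{mom:Mdef} turns the last two terms, together with $-\frac t2\mathcal I$, into exactly $\mathfrak M_t$. Here $\mathcal I=\int a^2\mathfrak g$ agrees with the $\mathcal I(\rho)$ of Section~\ref{sec:framework}. Next, \eqref{namom:efficient} gives
\[
 t\int\operatorname{tr}Q\,dx=t\int\operatorname{tr}\Sigma\,dx-\mathcal J_t,
 \qquad \mathcal J_t=\int_{\mathbb T^2}\frac{\rho^2d^2}{m_t}\,dx .
\]
Since $\sigma_t=2tm_t$, the $m$ there is the clock $m_t$ of Lemma~\ref{naclock:lem:exact}, and $d=m_t-a$. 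Hence
\[
 \mathfrak B_t=\mathcal S_t+\mathfrak M_t-\mathcal J_t
\]
holds pointwise in $s$ for $\rho=1+sf$. It then suffices to bound the third and fourth $s$-derivatives of each of the three terms by $C_\theta t^2\varepsilon^{j-2}N(f)$.

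For $\mathcal S_t$ this is Corollary~\ref{score:cor:square}, and for $\mathfrak M_t$ it is \eqref{mom:derivatives} of Theorem~\ref{mom:main}. For $\mathcal J_t$ I would repeat the complex argument in the proof of Theorem~\ref{naclock:thm:clockderivative}. On the disk $|z|\le r_\theta$ with $\rho_z=1+zF$, $m_t(z)^{-1}$ is holomorphic and bounded. Also $\|d\|_2\le M_\theta t\|L_za_z\|_2/2\le C t|z|\sH(F)$, so
\[
 |\mathcal J_t(1+zF)|\le b_+^2M_\theta^2t^2\|L_za_z\|_2^2\le C_\theta|z|^2t^2N(F).
\]
This is the second term of \eqref{naclock:eq:complexA}. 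Lemma~\ref{lem:cauchyjets} with $M=C_\theta r_\theta^2t^2N(F)$ then gives $|\partial_s^j\mathcal J_t|\le C_{\theta,j}t^2\varepsilon^{j-2}N(f)$.

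Take $\varepsilon_\theta$ to be the minimum of the amplitude radii from Corollary~\ref{score:cor:square}, Theorem~\ref{mom:main} and Theorem~\ref{naclock:thm:clockderivative}. Summing the three bounds proves \eqref{eq:scalarclosed}. The integral remainder estimate follows from the $j=4$ case and $\int_0^1(1-s)^3ds=1/4$.

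The main obstacle is bookkeeping, not analysis. One must confirm that the decomposition is exact at nonconstant backgrounds and that the three quantities refer to the same objects in each section. Specifically: the frozen-clock $\Sigma$ of Section~\ref{namom:section} is computed with the plane kernel of $\rho_s$, and it matches the $U_s$ of Section~\ref{sec:secondscore}, which is defined with $\Psi_s(x,w)$ using $\nabla\rho_s(x)$. The moments $M_3,M_4$ of Section~\ref{mom:section} are the same lifted moments at real $z=s\varepsilon$, after converting between the normalizations $1+zF$ and $1+sf$. I would check each identity at real parameters, where all the quantities are genuine expectations, before differentiating.
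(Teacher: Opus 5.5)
Your proposal is correct and follows the paper's proof essentially step for step. Both use the exact decomposition $\mathfrak B_t=\mathcal S_t+\mathfrak M_t-\mathcal J_t$ from Proposition~\ref{namom:identity}, Corollary~\ref{score:cor:square} and Theorem~\ref{mom:main} for the first two terms, and the complex clock bound (the second term of \eqref{naclock:eq:complexA}) with the Cauchy circle of Lemma~\ref{lem:cauchyjets} for $\mathcal J_t$, summed on the minimum of the amplitude radii and then integrated against $(1-s)^3$.
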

\begin{proof}
Proposition~\ref{namom:identity} gives the exact decomposition
\begin{equation}\label{eq:Bcloseddecomp}
 \mathfrak B_t(\rho)
 =\underbrace{t\int_{\mathbb T^2}\int_{\mathbb R^2}\frac{|U|^2}{r_t}\,dy\,dx}_{\mathcal S_t}
   +\mathfrak M_t(\rho)
   -\underbrace{\int_{\mathbb T^2}\frac{\rho^2(m_t-\rho^{-1})^2}{m_t}\,dx}_{\mathcal J_t}.
\end{equation}
Corollary~\ref{score:cor:square} and Theorem~\ref{mom:main} bound the third
and fourth derivatives of $\mathcal S_t$ and $\mathfrak M_t$. On the
normalized complex disk $\rho=1+zF$, the clock proof gives
$|\mathcal J_t|\le C_\theta t^2|z|^2\|\Delta F\|_2^2$ by \eqref{naclock:eq:d},
the uniform bound for $m_t^{-1}$, and the second term in
\eqref{naclock:eq:complexA}; the same Cauchy circle gives its $j$th bound
$C_{\theta,j}t^2\varepsilon^{j-2}N(f)$. Summing, on the common interval given by the minimum of the
finitely many cutoff-independent amplitude radii, proves
\eqref{eq:scalarclosed}; integrating $(1-s)^3$ proves the last assertion.
\end{proof}

\begin{lemma}[Geometric derivatives]\label{lem:geometric}
For $j=3,4$ and $\|f\|_{\mathcal A}\le1/4$,
\[
 \sup_{|s|\le1}|\partial_s^j\mathcal C(sf)|
 \le C_j\varepsilon^{j-2}N(f).
\]
\end{lemma}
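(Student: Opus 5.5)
For $g_s=\rho_s g_0$ with $\rho_s=1+sf$ on the flat torus, the conformal formula gives $K_{g_s}=-\tfrac12\rho_s^{-1}\Delta\log\rho_s$ and $dA_{g_s}=\rho_s\,dx$, so
\[
 \mathcal C(sf)=\frac14\int_{\mathbb T^2}\rho_s^{-1}\bigl(\Delta\log\rho_s\bigr)^2dx,
 \qquad
 \Delta\log\rho_s=\frac{s\Delta f}{\rho_s}-\frac{s^2|\nabla f|^2}{\rho_s^2}.
\]
This makes $\mathcal C(sf)$ an explicit rational expression in $s$, and I plan to estimate its $s$-derivatives directly, with no heat-kernel input. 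As in Sections~\ref{sec:compclock} and~\ref{mom:section}, I will pass to a complex disk and use Lemma~\ref{lem:cauchyjets}. Set $F=f/\varepsilon$, so $\|F\|_\infty\le\sA(F)=1$, and take the holomorphic continuation $\Phi(z)=\mathcal C(zF)$ (bilinear, no conjugation). Then for real $s$ we have $\mathcal C(sf)=\Phi(s\varepsilon)$.

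First I bound $\Phi$ on a fixed disk $|z|\le r$, say $r=1/2$. Since $\|zF\|_\infty\le1/2$, both $|\rho_z|$ and $|\rho_z|^{-1}$ are bounded by absolute constants. Pointwise,
\[
 |\Delta\log\rho_z|\le C\bigl(|z||\Delta F|+|z|^2|\nabla F|^2\bigr),
\]
so
\[
 |\Phi(z)|\le C\int\bigl(|z|^2|\Delta F|^2+|z|^4|\nabla F|^4\bigr)dx
 \le C\bigl(|z|^2\sH(F)^2+|z|^4\sG(F)^4\bigr).
\]
By Lemma~\ref{lem:interp}(a), $\sG(F)^4\le10\|F\|_\infty^2\sH(F)^2\le10\,\sH(F)^2$. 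Hence $|\Phi(z)|\le C|z|^2N(F)$, and on $|z|\le3r/4$ this gives $M=Cr^2N(F)$.

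Next I apply Lemma~\ref{lem:cauchyjets}. This requires $\varepsilon\le r/4=1/8$, whereas the statement only assumes $\varepsilon\le1/4$. The fix is to take a slightly larger radius: any $r$ with $3r/4<1$ and $r/4\ge1/4$ keeps $\rho_z$ away from zero, for instance $r=1$, for which $\|zF\|_\infty\le3/4$ on $|z|\le3r/4$. I will use this choice. The lemma then gives
\[
 |\partial_s^j\mathcal C(sf)|\le j!\,(2/r)^j\,\varepsilon^jM
 =C_j\,\varepsilon^{j-2}\,\varepsilon^2N(F)=C_j\,\varepsilon^{j-2}N(f),
\]
uniformly for $|s|\le1$.

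The only point needing care is holomorphy and the uniform bound on the disk. Because $F$ is a trigonometric polynomial, $\rho_z$ is smooth and nonvanishing there, and the integrand is a polynomial in $z$ and $\rho_z^{-1}$, so $\Phi$ is holomorphic. One could instead differentiate directly by Faà di Bruno, placing two $\Delta f$ factors in $L^2$, pairs of gradients in $L^4$, and all other factors of $f$ in $L^\infty$, which costs $\varepsilon$ each. The Cauchy route avoids that bookkeeping, and it produces constants independent of the cutoff and the lattice, since only Lemma~\ref{lem:interp}(a) is used.
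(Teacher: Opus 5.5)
Your proof is correct, but it takes a different route from the paper's.

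\textbf{The paper's route.} The paper works only with real $s$. It expands $\rho_s^{-1}(\Delta\log\rho_s)^2$ into $s^2(\Delta f)^2\rho_s^{-3}-2s^3\Delta f|\nabla f|^2\rho_s^{-4}+s^4|\nabla f|^4\rho_s^{-5}$ and differentiates each term directly. Every derivative not absorbed by the prefactor $s^k$ falls on $\rho_s^{-m}$ and costs a factor $\|f\|_\infty\le\varepsilon$. It then closes with $\int|\Delta f||\nabla f|^2\le\sqrt{10}\,\varepsilon N(f)$ and $\int|\nabla f|^4\le10\varepsilon^2N(f)$ from Lemma~\ref{lem:interp}(a).

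\textbf{Your route.} You prove only a value bound, $|\mathcal C(zF)|\le C|z|^2N(F)$ on $|z|\le3/4$, and let Lemma~\ref{lem:cauchyjets} generate the derivatives. This bound is correctly obtained: you use $\nabla\rho_z\cdot\nabla\rho_z=z^2|\nabla F|^2$ for real $F$, together with $|\rho_z|\ge1/4$.

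\textbf{What each buys.}
\begin{itemize}
\item Your argument handles every $j\ge2$ at once, with $C_j=C\,j!\,2^j$ and no Leibniz bookkeeping.
\item Your value bound is exactly the holomorphic estimate the paper needs anyway for $\mathcal C$ in Lemma~\ref{lem:multicomplex}(c), where it feeds Lemma~\ref{lem:tametwocauchy}.
\item The paper's real computation avoids complexification and shows which factor supplies each power of $\varepsilon$.
\item It also only needs $\rho_s$ bounded below, so it has slack in the amplitude. Lemma~\ref{lem:cauchyjets}, by contrast, reaches $\varepsilon\le1/4$ only with $r=1$.
\end{itemize}

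\textbf{A small correction.} You say any $r$ with $3r/4<1$ and $r\ge1$ would do. That choice does keep $\rho_z$ away from zero on $|z|\le3r/4$. But for $r>1$ and $\|F\|_\infty=1$ (e.g.\ $F=\cos(k\cdot x)$), $\rho_z$ vanishes at some point for some $|z|=1<r$. Near such a point the integrand is not integrable, so the holomorphy hypothesis of Lemma~\ref{lem:cauchyjets} on $|z|<r$ fails. Your actual choice $r=1$ is the right one, and the case $f=0$, excluded by $\varepsilon>0$ in that lemma, is trivial.
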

\begin{proof}
The exact formula is
\[
 \mathcal C(sf)=\frac14\int_{\mathbb T^2}
 \left[\frac{s^2(\Delta f)^2}{\rho_s^3}
       -\frac{2s^3\Delta f|\nabla f|^2}{\rho_s^4}
       +\frac{s^4|\nabla f|^4}{\rho_s^5}\right]dx.
\]
Differentiate the bounded rational coefficients. The first term has two
$\Delta f$ factors and the required remaining powers of $\varepsilon$; the
others satisfy $\int|\Delta f||\nabla f|^2\le\sqrt{10}\,\varepsilon N(f)$ and
$\int|\nabla f|^4\le10\varepsilon^2N(f)$ by \eqref{eq:GNbasic}.
\end{proof}

\begin{corollary}[Plane relative variations]\label{cor:planevariations}
For mean-zero $f$ in the same class,
\begin{equation}\label{eq:planevariations}
 \sup_{|s|\le1}\bigl(|D^3E_t^\infty(sf)[f^3]|+
                         |D^3R_t^\infty(sf)[f^3]|\bigr)
 \le C_\theta\varepsilon N(f),
\end{equation}
whereas both fourth variations are at most
$C_\theta\varepsilon^2N(f)$. The constants do not depend on any separation
between low and high Fourier frequencies.
\end{corollary}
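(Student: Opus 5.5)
The plan is to assemble Corollary~\ref{cor:planevariations} from the master identity \eqref{eq:master} and the term-by-term bounds already proved. We use $D^jE_t^\infty(sf)[f^j]=\partial_s^jE_t^\infty(sf)$, which holds since the path $s\mapsto sf$ is linear. Mean zero gives $\int\rho_s=A$ for every $s$, so \eqref{eq:master} holds along the whole segment:
\[
 E_t^\infty(sf)=\frac9{t^2}\bigl(-\mathfrak A_t-\mathfrak B_t+D_t\bigr)(1+sf).
\]
We then substitute the decomposition \eqref{eq:Bcloseddecomp}, $\mathfrak B_t=\mathcal S_t+\mathfrak M_t-\mathcal J_t$. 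We could also keep $\mathfrak B_t$ whole and cite Theorem~\ref{thm:scalar} directly, which is what I would do.

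Step one bounds the $j$th derivatives, $j=3,4$, of each piece, uniformly for $|s|\le1$:
\begin{itemize}
\item Theorem~\ref{naclock:thm:clockderivative} gives $|\partial_s^j\mathfrak A_t|\le C_{\theta,j}t^2\varepsilon^{j-2}N(f)$.
\item Theorem~\ref{thm:scalar} gives the same for $\mathfrak B_t$.
\item Theorem~\ref{naani:main} gives $|\partial_s^j(D_t/t^2)|\le C_\theta\|\nabla f\|_4^4\le10C_\theta\varepsilon^2N(f)$.
\end{itemize}
For $j=3$ the anisotropy bound is $\varepsilon^2N(f)$, which is at most $\varepsilon N(f)$ since $\varepsilon\le1$. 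Multiplying by $9/t^2$ cancels the $t^2$ factors and yields the bound for $E_t^\infty$ with constant $9(C_{\theta,j}+C'_{\theta,j}+10C_\theta)$. This constant is taken on the minimum of the finitely many amplitude radii $\varepsilon_\theta$ appearing in those theorems.

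Step two handles the remainder $R_t^\infty=E_t^\infty-\mathcal C$. Lemma~\ref{lem:geometric} gives $|\partial_s^j\mathcal C(sf)|\le C_j\varepsilon^{j-2}N(f)$ for $\varepsilon\le1/4$. The triangle inequality then yields the $R_t^\infty$ bounds of the same form, at both $j=3$ and $j=4$.

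Cutoff independence is inherited from the cited results, none of which uses a lower spectral cutoff or a mode count. The only point requiring care is the identification of the plane deficit with the regression decomposition at nonconstant backgrounds $1+sf$. That identification needs $\int\rho_s=A$, which is exactly where mean zero enters. It also needs the cited bounds to hold at all $|s|\le1$ rather than only at $s=0$, which each theorem explicitly states. I do not expect a genuine obstacle; the statement is bookkeeping once the four component estimates are in place.
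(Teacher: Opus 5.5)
Your proposal is correct and matches the paper's proof. The paper likewise uses the regression identity \eqref{eq:master} along the mean-zero segment, the clock bound of Theorem~\ref{naclock:thm:clockderivative}, the compensated-trace bound of Theorem~\ref{thm:scalar} with $\mathfrak B_t$ kept whole (the option you prefer), and the third- and fourth-order anisotropy bounds of Theorem~\ref{naani:main} with $\varepsilon^2\le\varepsilon$ at $j=3$, and then subtracts Lemma~\ref{lem:geometric} to obtain the $R_t^\infty$ bounds.
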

\begin{proof}
Use \eqref{eq:master}, Theorem~\ref{naclock:thm:clockderivative},
Theorem~\ref{thm:scalar}, and the third- and fourth-order anisotropy
bounds in Theorem~\ref{naani:main}, whose common bound
$C\|\nabla f\|_4^4\le10C\varepsilon^2N(f)$ is at most $10C\varepsilon N(f)$
since $\varepsilon\le1$. For $R_t^\infty$, subtract the geometric bounds of
Lemma~\ref{lem:geometric}.
\end{proof}

\paragraph{Why low--high interactions cause no loss here.}
All estimates are made in physical space before Fourier summation; every
surviving term retains two Hessian factors in $L^2$ or gradient factors
converted into such a pair by \eqref{eq:GNbasic}, with coefficients bounded
on a common Wiener disk. No division by an individual frequency occurs, so
no lower spectral cutoff is needed.

\section{The quadratic response and nonlinear stability}\label{sec:response}
The quadratic response depends on the scale $\tau=t|k|^2$ through the
scalar function (not to be confused with the compensated trace $\mathfrak B_t$)
\begin{equation}\label{eq:responseB}
 \mathscr B(\tau)=\int_0^1\int_0^1
 e^{-\tau(u+v-2uv)}(1+\tau uv)^2\,du\,dv.
\end{equation}

\begin{proposition}[Exact quadratic multiplier]\label{prop:response}
For real mean-zero trigonometric $f$,
$\frac12D^2E_t^\infty(0)[f,f]
 =A\sum_{k\ne0}|\widehat f_k|^2\frac9{2t^2}(t|k|^2)\mathscr B'(t|k|^2)$.
Equivalently the multiplier is $|k|^4q(t|k|^2)$, where
\begin{equation}\label{eq:qdef}
 q(\tau)=\frac{9\mathscr B'(\tau)}{2\tau}\quad(\tau>0),\qquad q(0)=\frac14,
\end{equation}
the value at $0$ being the continuous extension
(Subsection~\ref{subsec:closedform}).
\end{proposition}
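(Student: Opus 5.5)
The plan is to compute the second variation at $s=0$ directly from the regression identity \eqref{eq:master}:
\[
 E_t^\infty=\frac9{t^2}\bigl(-\mathfrak A_t-\mathfrak B_t+D_t\bigr).
\]
Only the quadratic parts of the three terms are needed. At $f=0$ the kernel is the exact Gaussian, $h_0=I$, and $Q=0$. Hence $D_t$ is quadratic in $\Sigma^{\mathrm{tf}}$, which is $O(s)$, and by \eqref{naani:identity} it contributes $4\int|\partial_s\Sigma^{\mathrm{tf}}|^2\cdot\frac14$ at second order. The clock $\mathfrak A_t$ is given exactly by \eqref{naclock:eq:exact}. Its second-order part, with $\rho=1+sf$, comes from $L\rho\approx s\Delta f$, $La\approx -s\Delta f$ and $P_v\approx e^{v\Delta}$:
\[
 \frac{s^2}{t}\int_0^t(t-u)\int_0^u\int(e^{v\Delta}\Delta f)\Delta f\,dv\,du
\]
plus $\int d^2$ with $d=-\frac{s}{t}\int_0^t(t-u)e^{u\Delta}\Delta f\,du$. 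By Plancherel both pieces are diagonal in $k$ with explicit multipliers in $\tau=t|k|^2$.

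For $\mathfrak B_t=t\int\operatorname{tr}Q-\frac t2\mathcal I$ I would linearize the kernel. Write $r_s=\mathcal G_1+s\,r^{(1)}+O(s^2)$, where Duhamel gives
\[
 r^{(1)}(t,x,y)=\int_0^t\!\!\int \mathcal G(t-u,\xi-x)\,(-f(\xi))\,\Delta_\xi\mathcal G(u,y-\xi)\,d\xi\,du.
\]
Then to first order the efficient residual is $S-w/(2tm_t)=s\bigl[\nabla_x(r^{(1)}/\mathcal G)-\partial_s(1/(2m_t))\,w/t\bigr]$. So $t\operatorname{tr}Q$ is $s^2t\,\mathbb E_0|\cdot|^2$, a Gaussian expectation of a quadratic form in $f$. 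For a single mode $f=\cos(k\cdot x)$ every quantity is a Gaussian bridge integral, since the product $\mathcal G(t-u)\mathcal G(u)/\mathcal G(t)$ is a bridge with mean $-uw/t$ and variance $2u(t-u)/t$, exactly as in Lemma~\ref{score:lem:bridge}. The $\xi$-integral of $e^{ik\cdot\xi}$ against the bridge produces $e^{ik\cdot(\text{mean})}e^{-|k|^2u(t-u)/t}$. Squaring and taking the Gaussian expectation in $w$ yields double time integrals over $u,v\in(0,t)$. After rescaling $u=tu'$ and $v=tv'$, the exponent becomes $e^{-\tau(u+v-2uv)}$, which comes from combining the two bridge variances with the cross covariance of the means. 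The polynomial prefactors become $(1+\tau uv)^2$ up to terms that integrate to derivatives. Cross terms between different modes vanish after $x$-integration, so the result is diagonal, and the factor $A$ comes from Parseval with the stated normalization.

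Summing the three contributions, I expect the pieces to assemble into $\frac{9}{2t^2}\tau\mathscr B'(\tau)|\widehat f_k|^2$. The $\frac t2\mathcal I$ compensators are there to cancel the $\tau$-linear terms consistently. As checks, at small $\tau$ we should have $\tau\mathscr B'(\tau)\approx\tau\cdot(-\tfrac{\tau}{\,\cdot\,}\dots)$, giving $q(0)=\frac14$, which matches Theorem~A since $N(f)\approx4\int K^2$ and $\frac{9}{t^2}\cdot\frac{t^2}{9}\int K^2$ gives the coefficient $\frac14|k|^4$. The main obstacle is the bookkeeping in this single-mode computation: identifying the combined exponent $\tau(u+v-2uv)$ and showing that the clock, trace and anisotropy prefactors recombine into the derivative $\mathscr B'$ rather than $\mathscr B$. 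I would first do it by computing the full second-order Fisher tensor $\partial_s^2 J$ directly, bypassing the split, since $\int\operatorname{tr}h$ and $\int\sqrt{\det h}$ agree to second order apart from $\frac14|h^{(1),\mathrm{tf}}|^2$. Then I would integrate by parts in $\tau$ to recognize $\tau\mathscr B'$, and finally verify the continuous extension $q(0)=\frac14$ by expanding $\mathscr B$ at $\tau=0$.
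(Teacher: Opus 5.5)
\textbf{There is a genuine gap.} Your regression split is exact, and your second-order clock terms are correct; for one mode they give elementary functions of $e^{-\tau}$. So a brute-force computation along these lines is not doomed. But the proposal stops exactly where the proposition begins. Its whole content is that the pieces add up to $\frac9{2t^2}\tau\mathscr B'(\tau)$, and you only say you expect this.

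The mechanism you sketch is also not right. The weight $(1+\tau uv)^2e^{-\tau(u+v-2uv)}$ is the Gaussian square of the first kernel variation itself, $\mathbb E_{\varphi_\tau}|\mathscr R_\tau|^2=\mathscr B(\tau)$, which is a $\chi^2$ quantity. Your trace term is instead the Gaussian square of the efficient residual. For $\rho=1+\eta\cos x_1$ at time $\tau$ that residual is $\eta\operatorname{Re}(e^{ix_1}\mathbf V)$ with
\[
 \mathbf V=ie_1\mathscr R_\tau-\nabla_w\mathscr R_\tau-\frac{(1-e^{-\tau})\,w}{2\tau^2},
\]
the source gradient of $r'/\varphi_\tau$ plus the first-order clock correction. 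This has consequences you have not addressed:
\begin{itemize}
\item Its polynomial prefactor has higher degree than $(1+\tau uv)^2$, and the clock correction produces extra single integrals.
\item Its $O(\tau)$ part must cancel the compensator $\frac t2\mathcal I$. At small $\tau$ one has $\mathbf V\approx\frac{i}{8\tau}(2ww^T-|w|^2I)e_1$, the $\Psi$ of Proposition~\ref{namom:identity}, and $\mathbb E|\mathbf V|^2\to\frac12$.
\item The rest must combine with the elementary clock terms into $\tau\mathscr B'$, which contains $\operatorname{Shi}$.
\end{itemize}
None of this is carried out, and nothing in your plan explains why a $\tau$-derivative should appear at all.

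\textbf{The missing idea is the de Bruijn identity,} which is how the paper gets the derivative for free. Use the weight $\rho(x)\,dx$, which cancels the $\rho^{-1}$ of the generator. Then $-\operatorname{Ent}'(\tau)=\int\!\!\int|\nabla_xr|^2/r\,dy\,dx=\frac1{2\tau}\int\operatorname{tr}h$, hence $V_\tau^\infty=-\tau\operatorname{Ent}'(\tau)-D_\tau$. Splitting $\log(r/\rho(y))$ and using $\int\rho\,m_\tau=A$ gives
\[
 \operatorname{Ent}/A=-\log(4\pi\tau)-1+\operatorname{Rel}-A^{-1}\int\rho\log\rho .
\]
Here $\operatorname{Rel}$ is half the $\chi^2$-size of $r'$, namely $\eta^2\mathscr B(\tau)/4+O(\eta^3)$, while $A^{-1}\int\rho\log\rho=\eta^2/4+O(\eta^3)$. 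Differentiating in $\tau$, justified by the near-Gaussian jets of $r$ and $\tau\partial_\tau r$, yields $V/A=1-\frac{\eta^2}4\tau\mathscr B'(\tau)+O(\eta^3)$. Only $r'$ and its plain Gaussian square are needed; no source gradient is squared and no clock is expanded.

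Your fallback of computing $\partial_s^2J$ directly does not bypass the split. It needs $r''$, which after integration in $y$ enters through $\partial_s^2\mathbb E_x|w|^2$, i.e.\ the second-order clock again.

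\textbf{Two smaller errors.}
\begin{itemize}
\item The anisotropy contributes nothing at second order. Since $\mathcal N_0=0$, one has $\Sigma_s=\int\mathcal N_s\otimes\mathcal N_s/r_s=O(s^2)$, so $\Sigma^{\mathrm{tf}}=Q^{\mathrm{tf}}=O(s^2)$ and $D_t=O(s^4)$. Your own statement $Q=O(s^2)$ already implies this. Your claim that $\Sigma^{\mathrm{tf}}=O(s)$ is therefore false, although the term you write, $\int|\partial_s\Sigma^{\mathrm{tf}}|^2$, does vanish.
\item The small-$\tau$ check has the wrong sign. Since $\mathscr B'(0)=0$ and $\mathscr B''(0)=\frac1{18}>0$, one has $\tau\mathscr B'(\tau)=\frac{\tau^2}{18}+O(\tau^3)$, which is positive, and $q(0)=\frac92\mathscr B''(0)=\frac14$.
\end{itemize}
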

\begin{proof}
\emph{Step 1: reduction to one mode.}
Source translation commutes with the construction, so
$\frac12D^2E_t^\infty(0)$ is a translation-invariant quadratic form,
diagonal in the Fourier basis: $\frac12D^2E_t^\infty(0)[f,f]=A\sum_{k\ne0}\mathsf m_t(k)
|\widehat f_k|^2$ with $\mathsf m_t(k)=\mathsf m_t(-k)$. Take
$f=\eta\cos(k\cdot x)$. The second-order part of $\sqrt{\det h}$ is a
function of $k\cdot x$ alone, so its average over $\mathbb T^2$ is its
average over one period of $k\cdot x$; this is the only place the period
of $\mathbb T^2$ enters the plane construction on $\mathbb R^2$. The plane
construction is rotation invariant, and $h$ is unchanged under
$x\mapsto\lambda x$, $t\mapsto\lambda^2t$. Hence the averaged second-order density for
$\eta\cos(k\cdot x)$ at time $t$ equals the one for
$\eta\cos x_1$ at time $\tau=t|k|^2$. It suffices to show that, for
$\rho=1+\eta\cos x_1$,
\begin{equation}\label{eq:onemodeV}
 \frac{V_\tau^\infty}{A}=1-\frac{\eta^2}{4}\tau\mathscr B'(\tau)+O(\eta^3),
\end{equation}
since then $E_t^\infty=9A\eta^2\tau\mathscr B'(\tau)/(4t^2)+O(\eta^3)$ and
$\sum_k|\widehat f_k|^2=\eta^2/2$.

\emph{Step 2: the first kernel variation.}
Write $\varphi_\tau(w)=(4\pi\tau)^{-1}e^{-|w|^2/(4\tau)}$ and $r'=\partial_\eta r|_{\eta=0}$.
Since $\partial_\eta(\rho^{-1}\Delta)|_{\eta=0}=-\cos x_1\,\Delta$,
Duhamel's formula in the source variable gives
\[
 r'(\tau,x,y)=-\int_0^\tau\!\!\int_{\mathbb R^2}
 \varphi_{\tau-s}(\xi-x)\cos\xi_1\,\Delta_\xi \varphi_s(y-\xi)\,d\xi\,ds .
\]
Put $w=y-x$ and $s=\tau(1-u)$. Divided by $\varphi_\tau(w)$, the product of the
two Gaussians is the law of a Brownian bridge point
$\xi=x+uw+\lambda Z$ with $Z$ standard normal and
$\lambda^2=2\tau u(1-u)$. Thus $y-\xi=(1-u)w-\lambda Z$ and
$|y-\xi|^2=(1-u)^2|w|^2-2\lambda(1-u)w\cdot Z+\lambda^2|Z|^2$. Using
$\Delta \varphi_s/\varphi_s=|y-\xi|^2/(4s^2)-1/s$ and the Gaussian identities
$\mathbb E e^{i\lambda Z_1}=e^{-\lambda^2/2}$,
$\mathbb E[Z_1e^{i\lambda Z_1}]=i\lambda e^{-\lambda^2/2}$,
$\mathbb E[|Z|^2e^{i\lambda Z_1}]=(2-\lambda^2)e^{-\lambda^2/2}$, one finds
\[
 \mathbb E\Bigl[e^{i\xi_1}\frac{\Delta \varphi_s(y-\xi)}{\varphi_s(y-\xi)}\Bigr]
 =e^{ix_1+iuw_1-\lambda^2/2}
 \Bigl(\frac{(1-u)^2|w|^2-2i\lambda^2(1-u)w_1+\lambda^2(2-\lambda^2)}{4s^2}
  -\frac1s\Bigr).
\]
With $s=\tau(1-u)$ and $\lambda^2/2=\tau u(1-u)$ this simplifies to
\[
 \mathbb E\Bigl[e^{i\xi_1}\frac{\Delta \varphi_s(y-\xi)}{\varphi_s(y-\xi)}\Bigr]
 =e^{ix_1+iuw_1-\tau u(1-u)}
 \Bigl(\frac{|w|^2}{4\tau^2}-\frac{iuw_1}{\tau}-\frac1\tau-u^2\Bigr).
\]
Since $ds=\tau\,du$, this yields
$r'(x,x+w)=\varphi_\tau(w)\operatorname{Re}(e^{ix_1}\mathscr R_\tau(w))$ with
\[
 \mathscr R_\tau(w)=\int_0^1e^{iuw_1-\tau u(1-u)}
 \left(1-\frac{|w|^2}{4\tau}+iuw_1+\tau u^2\right)du.
\]

\emph{Step 3: the Gaussian square.}
Under $\varphi_\tau$, $w$ is centered Gaussian with covariance $2\tau I$.
Write $\mathscr R_\tau=\int_0^1e^{iuw_1-\tau u(1-u)}\mathsf p_u(w)\,du$ with
$\mathsf p_u(w)=\mathsf c+\tau u^2+iuw_1$ and $\mathsf c=1-|w|^2/(4\tau)$. In
$\mathbb E_{\varphi_\tau}|\mathscr R_\tau|^2$ the parameters $u,v$ produce the
factor $e^{i(u-v)w_1}\mathsf p_u(w)\overline{\mathsf p_v(w)}$. For a polynomial $Y$,
$\mathbb E[e^{i\mu w_1}Y(w)]=e^{-\tau\mu^2}\mathbb E[Y(w+2i\tau\mu e_1)]$
under the covariance $2\tau I$. With $\mu=u-v$, the shift
$w_1\mapsto w_1+2i\tau(u-v)$ turns $\mathsf p_u$ into $\mathsf c+\tau v^2+ivw_1$ and
$\overline{\mathsf p_v}$ into $\mathsf c+\tau u^2-iuw_1$. Since
$\mathbb E\mathsf c=0$, $\mathbb E\mathsf c^2=1$ (using $\mathbb E|w|^2=4\tau$ and
$\mathbb E|w|^4=32\tau^2$), $\mathbb E[\mathsf cw_1]=0$ and
$\mathbb Ew_1^2=2\tau$, and odd moments vanish, the shifted expectation is
$1+\tau^2u^2v^2+2\tau uv=(1+\tau uv)^2$. The prefactors combine as
$e^{-\tau(u-v)^2-\tau u(1-u)-\tau v(1-v)}=e^{-\tau(u+v-2uv)}$. Thus
$\mathbb E_{\varphi_\tau}|\mathscr R_\tau|^2=\mathscr B(\tau)$.

\emph{Step 4: an entropy identity.}
Here $\tau$ is the heat-flow time for $\rho=1+\eta\cos x_1$ (the scale, by
Step~1); we differentiate in it at fixed $\rho$, with
$\partial_\tau r=L_xr$, $L_x=\rho(x)^{-1}\Delta_x$. Put
\[
 \operatorname{Ent}(\tau)=\int_{\mathbb T^2}\rho(x)\int_{\mathbb R^2}
 r\log\frac{r}{\rho(y)}\,dy\,dx .
\]
Mass conservation removes the derivative of the logarithm, and
integration by parts in $x$ (with respect to $dx$, since $\rho(x)$ cancels
the $\rho^{-1}$ of the generator) gives
$-\operatorname{Ent}'(\tau)=\int_{\mathbb T^2}\int|\nabla_xr|^2/r\,dy\,dx
=(2\tau)^{-1}\int_{\mathbb T^2}\operatorname{tr}h\,dx$. The integrand is
invariant under $(x,y)\mapsto(x+\ell,y+\ell)$ for every period $\ell$, so
$\int_{\mathbb T^2}dx\int_{\mathbb R^2}dy=\int_{\mathbb R^2}dx\int_{\mathbb T^2}dy$;
we integrate by parts over $x\in\mathbb R^2$ at fixed $y$ (Gaussian bounds
remove boundary terms) and fold back. Hence
\begin{equation}\label{eq:VEnt}
 V_\tau^\infty=-\tau\operatorname{Ent}'(\tau)-D_\tau .
\end{equation}
Decompose $\log(r/\rho(y))=\log(r/\varphi_\tau)+\log \varphi_\tau-\log\rho(y)$.
Invariance of $\rho\,dx$ gives
$\int_{\mathbb T^2}\rho(x)\int r\log\rho(y)=\int_{\mathbb T^2}\rho\log\rho$. The displacement
second moment $\mathbb E_x|w|^2=4\tau m_\tau(x)$ and weighted symmetry give
$\int_{\mathbb T^2}\rho m_\tau=\tau^{-1}\int_0^\tau\!\int_{\mathbb T^2}\rho P_u(\rho^{-1})=A$.
Therefore
\[
 \frac{\operatorname{Ent}}A=-\log(4\pi\tau)-1+\operatorname{Rel}
 -\frac1A\int_{\mathbb T^2}\rho\log\rho,\qquad
 \operatorname{Rel}=\frac1A\int_{\mathbb T^2}\rho(x)\int r\log\frac r{\varphi_\tau}\,dy\,dx .
\]
Since $\int r\,dy=\int \varphi_\tau=1$, the relative entropy equals
$\frac12\int(r-\varphi_\tau)^2/\varphi_\tau+O(\eta^3)$; averaging
$\operatorname{Re}(e^{ix_1}\mathscr R_\tau)^2$ over $x_1$ gives
$\frac12|\mathscr R_\tau|^2$, so $\operatorname{Rel}=\eta^2\mathscr B/4+O(\eta^3)$.
Also $A^{-1}\int_{\mathbb T^2}\rho\log\rho=\eta^2/4+O(\eta^3)$. Consequently
$\operatorname{Ent}/A=-\log(4\pi\tau)-1+\eta^2(\mathscr B(\tau)-1)/4+O(\eta^3)$,
and this expansion may be differentiated in $\tau$: $\operatorname{Ent}/A$
is jointly smooth in $(\eta,\tau)$ for small $\eta$, $\tau>0$, since
Theorem~\ref{thm:neargaussian} bounds the $\eta$-jets of
$r$ and of $\tau\partial_\tau r$ by Gaussians. Quantitatively, the cubic
Taylor remainder of $x\log x$ is bounded by a multiple of
$|r-\varphi_\tau|^3/\min(r,\varphi_\tau)^2$; by
Theorem~\ref{thm:neargaussian}, its numerator carries
$e^{-3\kappa_-|w|^2/\tau}$ and its denominator at most
$e^{2\kappa_+|w|^2/\tau}$, and $3\kappa_--2\kappa_+=(1-5\gamma)/4>0$, so
the remainder is integrable in $w$ uniformly in the amplitude; the same
bounds hold for its $\tau\partial_\tau$ derivative.

\emph{Step 5: the anisotropy is quartic.}
By Lemma~\ref{lem:regression}, $h=\alpha I+2\tau Q$, and $Q$ is quadratic in
a residual that vanishes at $\eta=0$. Thus $h^{\rm tf}=O(\eta^2)$ and
$D_\tau=\int|h^{\rm tf}|^2/(\operatorname{tr}h+2\sqrt{\det h})=O(\eta^4)$.
Inserting Steps 4 and 5 into \eqref{eq:VEnt} proves \eqref{eq:onemodeV};
the kernel bounds of Section~\ref{sec:kernel} justify the density
quotients, expansions and time derivative.
\end{proof}

\subsection{A closed form for the response}
\label{subsec:closedform}
Write $\mathscr F(\tau)=\mathscr B'(\tau)/\tau$, so that
$q=\frac92\mathscr F$ by \eqref{eq:qdef} and $\mathscr F(0)=\frac1{18}$;
the values at $0$ follow from
$\mathscr B'(0)=\int\!\!\int(2b-a)=\frac12-\frac12=0$ and
$\mathscr B''(0)=\int\!\!\int[(a-2b)^2-2b^2]=5/18-2/9$, where
$a=u+v-2uv$ and $b=uv$. All numerical constants used below follow from
$2.7182<e<2.7183$ and finitely many terms of the exponential series;
for $0<x\le1$ the tail of $e^x$ after the term $x^n/n!$ is at most
$2x^{n+1}/(n+1)!$, which gives the upper bounds for $e^x$ used below
(for example $e^{7/10}<2.014$ and $e^{1/2}<1.6488$). Throughout this
section $z=\tau/2$, $\operatorname{Shi}$ is the hyperbolic sine integral
\cite{DLMF}, $\mathscr S(z)=e^{-z}\operatorname{Shi}(z)$,
\[
 \operatorname{Shi}(z)=\int_0^z\frac{\sinh s}{s}\,ds
 =\sum_{k\ge0}\frac{z^{2k+1}}{(2k+1)\,(2k+1)!},\qquad
 p(z)=z^3-z^2+2z+2,
\]
$\mathsf a(z)=2z^4+2z^3+2z^2+4z+1$ and $\mathsf b(z)=2z^3-4z^2+2z+1$.
For $z>0$ one has $p(z)\ge2$, $\mathsf a(z)>0$ and $\mathsf b(z)=2z(z-1)^2+1>0$.

\begin{lemma}[Closed form]\label{lem:closedform}
For $z>0$,
\begin{equation}\label{eq:closedform}
 16z^3\mathscr F(2z)=\Upsilon(z):=\frac{z^2+3}2+e^{-2z}\frac{(z-3)(z+1)}2-p(z)\mathscr S(z).
\end{equation}
\end{lemma}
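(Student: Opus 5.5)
We plan to prove \eqref{eq:closedform} by direct computation: differentiate \eqref{eq:responseB} under the integral sign, and then change variables so that the exponent becomes bilinear. With $a=u+v-2uv$ and $b=uv$ as in the text, one has
\[
 \mathscr B'(\tau)=\int_0^1\!\!\int_0^1e^{-\tau a}\bigl[-a(1+\tau b)^2+2b(1+\tau b)\bigr]\,du\,dv .
\]
Substitute $u=(1+x)/2$, $v=(1+y)/2$ with $x,y\in[-1,1]$. Then
\[
 a=\tfrac12(1-xy),\qquad b=\tfrac14(1+x)(1+y),\qquad du\,dv=\tfrac14\,dx\,dy .
\]
With $z=\tau/2$ the exponential factor becomes $e^{-\tau a}=e^{-z}e^{zxy}$. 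Hence $16z^3\mathscr F(2z)=8z^2\mathscr B'(2z)$ equals $e^{-z}$ times a finite linear combination, with coefficients polynomial in $z$, of the moments
\[
 I_{m,n}(z)=\int_{-1}^1\!\!\int_{-1}^1x^my^ne^{zxy}\,dx\,dy,\qquad m,n\le 3 .
\]
This explains the overall factor $e^{-z}$ multiplying $\operatorname{Shi}$ in $\mathscr S$. The polynomial in the bracket has degree at most $3$ in each of $x$ and $y$, and it is symmetric under $x\leftrightarrow y$, so $I_{m,n}=I_{n,m}$ halves the work. Moreover, the substitution $(x,y)\mapsto(-x,-y)$ shows that $I_{m,n}=0$ whenever $m+n$ is odd.

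Next we evaluate the $I_{m,n}$ in closed form. Integrating in $y$ first gives $\int_{-1}^1y^ne^{zxy}\,dy$ as an explicit combination of $\cosh(zx)$ and $\sinh(zx)$ divided by powers of $zx$; for instance the case $n=0$ is $2\sinh(zx)/(zx)$. The remaining $x$-integral then involves only terms of the form $x^j\cosh(zx)$ and $x^j\sinh(zx)$ with $j\ge-1$. The single term with $j=-1$, namely $\sinh(zx)/x$, integrates to $2\operatorname{Shi}(z)$. Every other term is elementary and produces combinations of $e^{\pm z}$ and powers of $z$. Negative powers $(zx)^{-k}$ with $k\ge2$ can occur, but they cancel in the combination, since every $I_{m,n}$ is entire in $z$. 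We would handle them by integrating by parts in $x$, always reducing to $\operatorname{Shi}$.

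Multiplying by $e^{-z}$ then turns $e^{z}$ into $1$ and $e^{-z}$ into $e^{-2z}$, and $\operatorname{Shi}(z)$ into $\mathscr S(z)$. The expression therefore has the form $P_0(z)+e^{-2z}P_1(z)+P_2(z)\mathscr S(z)$ with polynomials $P_i$. It remains to identify these as $P_0=(z^2+3)/2$, $P_1=(z-3)(z+1)/2$ and $P_2=-p$.

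We expect the main obstacle to be the bookkeeping in this reduction, in particular collecting the coefficient of $\operatorname{Shi}$ correctly, together with the cancellation of the spurious negative powers of $z$. As independent checks we would use two facts.
\begin{itemize}
\item The $z\to0$ expansion of $\Upsilon$ must vanish to order $z^2$ and give $\Upsilon(z)=\tfrac{8}{9}z^3+O(z^4)$, which is equivalent to $\mathscr F(0)=\tfrac1{18}$. This can be verified from the power series of $\operatorname{Shi}$ stated above.
\item The large-$z$ behaviour of $\Upsilon$ can be compared with a Laplace-type asymptotic of $\mathscr B'$.
\end{itemize}
These checks would catch any error in the polynomial coefficients. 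An alternative route that we would use to cross-check the $\operatorname{Shi}$ coefficient is to integrate over the level sets of $s=xy$: the density of $s$ on the square is $\log(1/|s|)$, and its higher analogues carry polynomial weights. Integrating these against $e^{zs}$ and integrating by parts produces $\int_0^z\sinh(s)/s\,ds$ directly.
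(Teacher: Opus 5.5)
Your plan is correct and does produce the lemma, but it organizes the double integral differently from the paper.

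\textbf{The paper's route.} The paper pushes $b^j\,du\,dv$ forward under $(u,v)\mapsto a$ and computes the densities $g_0,g_1,g_2$, which carry $\log|1-2\sigma|$ weights, by an explicit fibre parametrization. It then integrates in one variable against $e^{zx}$. Everything reduces to the elementary $c_k$ and to $\mathscr L_k=\mathscr L_0^{(k)}$ with $\mathscr L_0=-2\operatorname{Shi}(z)/z$, and the coefficients are collected in a table.

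\textbf{Your route.} Your substitution makes the exponent $-z+zxy$ and replaces the push-forward by iterated elementary integration. Since $xy$ is exactly the paper's variable $1-2\sigma$, your proposed cross-check via the level sets of $xy$ is in fact the paper's proof. Your route becomes short once you note that $I_{m,n}=\partial_z^nI_{m-n,0}$ for $m\ge n$. With $m+n$ even, only $I_{0,0}=4\operatorname{Shi}(z)/z$ and $I_{2,0}=4(z\cosh z-\sinh z)/z^3$ are then needed. (Indeed $I_{n,n}=-2\mathscr L_n$, which is the counterpart of the paper's device.) Moreover, $\operatorname{Shi}$ enters only through the diagonal monomials $(xy)^n$ of the bracket.

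Carrying this out, the diagonal part of your bracket is $-\frac z4-\frac{z^2}8+(1+z-\frac{3z^2}8)xy+(\frac{3z}4+\frac{3z^2}8)(xy)^2+\frac{z^2}8(xy)^3$. Its $\operatorname{Shi}$-coefficient, using $4(-1)^nn!/z^{n+1}$ from $I_{n,n}$, is $-\frac z2+\frac12-\frac1z-\frac1{z^2}$. Multiplied by your prefactor $2z^2$, this is exactly $-p(z)$; the elementary part likewise gives $(z^2-z)\cosh z+(z+3)\sinh z$.

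\textbf{Comparison.} The paper's route buys the densities $g_j$, which it reuses for $\mathscr B(\tau)\to1$ in Proposition~\ref{prop:entropyresponse}. Yours is more self-contained for this lemma alone.

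\textbf{Points to tighten.} Three statements in your write-up need correcting.
\begin{enumerate}
\item The claim $j\ge-1$ holds only if you use $I_{m,n}=I_{n,m}$ to take $m\ge n$ and integrate first in the variable with the smaller exponent. In the other order you get individually non-integrable terms such as $x^{-3}\sinh(zx)$. That is where your integration by parts would be needed; with the right ordering it is unnecessary.
\item Entireness of $I_{m,n}$ does not make the negative powers of $z$ cancel separately in the coefficients of $\operatorname{Shi}z$, $\cosh z$ and $\sinh z$; already $I_{0,0}=4\operatorname{Shi}(z)/z$ shows this. So polynomiality of $P_0,P_1,P_2$ is an outcome of the computation with the factor $2z^2$, not something that ``therefore'' follows.
\item The $z\to0$ and $z\to\infty$ checks impose only a few linear conditions on about a dozen coefficients, so they cannot catch every error. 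The identification of $P_0,P_1,P_2$ is the actual content of the lemma and has to be carried out, for example as above.
\end{enumerate}
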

\begin{proof}
\emph{Densities.} Let $g_j$ be the density on $[0,1]$ of the push-forward
of $b^j\,du\,dv$ under $(u,v)\mapsto a$. For fixed $u\ne\frac12$, $a$ is
affine in $v$ with slope $1-2u$; for $0<\sigma<\frac12$ the constraint
$0\le v\le1$ leaves $u\in[0,\sigma]\cup[1-\sigma,1]$, and for
$\frac12<\sigma<1$ it leaves $u\in[0,1-\sigma]\cup[\sigma,1]$. On the
fibre $\{a=\sigma\}$ one has $v=(\sigma-u)/(1-2u)$, $b=u(\sigma-u)/(1-2u)$
and $dv=d\sigma/|1-2u|$. Substitute $s=1-2u$ and put $x=1-2\sigma$ and
$\ell=\log|x|$. Then $b=(1-s)(s-x)/(4s)$, the admissible $u$ correspond
exactly to $|x|\le|s|\le1$ in both cases, and
\[
 g_j(\sigma)=2^{-2j-1}\int_{|x|\le|s|\le1}
       \Bigl(\frac{(1-s)(s-x)}{s}\Bigr)^j\frac{ds}{|s|},\qquad
 \frac{(1-s)(s-x)}{s}=(1+x)-s-\frac xs .
\]
The integrand is a Laurent polynomial in $s$ divided by $|s|$. Odd powers
integrate to zero over the symmetric set, while
$\int_{|x|\le|s|\le1}ds/|s|=-2\ell$ and
$\int_{|x|\le|s|\le1}s^k\,ds/|s|=2(1-x^k)/k$ for even $k\ne0$. Expanding
the square for $j=2$ and keeping the powers $s^2,s^0,s^{-2}$ gives
\[
 g_0=-\ell,\quad g_1=-\frac{(1+x)\ell}4=-\frac{(1-\sigma)\ell}2,\quad
 g_2=\frac{(1-x^2)-(x^2+4x+1)\ell}{16}
 =-\frac{(2\sigma^2-6\sigma+3)\ell}8+\frac{\sigma(1-\sigma)}4 .
\]

\emph{Laplace form.} Differentiating \eqref{eq:responseB} and dividing by
$\tau$ gives
\[
 \mathscr F(\tau)=\int_0^1\!\!\int_0^1e^{-\tau a}\Bigl[\frac{2b-a}\tau+2b^2-2ab
 -\tau ab^2\Bigr]du\,dv
 =\int_0^1e^{-\tau\sigma}\Bigl[\frac{h_{-1}}\tau+h_0+\tau h_1\Bigr]d\sigma
\]
with $h_{-1}=2g_1-\sigma g_0$, $h_0=2g_2-2\sigma g_1$, $h_1=-\sigma g_2$.
In the variable $x=1-2\sigma\in[-1,1]$,
\[
 h_{-1}=-x\ell,\qquad
 h_0=\frac{(1-x^2)-(3x^2+4x-1)\ell}8,\qquad
 h_1=\frac{(1-x)\bigl[(x^2-1)+(x^2+4x+1)\ell\bigr]}{32},
\]
and $e^{-\tau\sigma}d\sigma=\frac12e^{-z}e^{zx}dx$.

\emph{Integration.} Put $c_k(z)=\int_{-1}^1e^{zx}x^k\,dx$ and
$\mathscr L_k(z)=\int_{-1}^1e^{zx}x^k\log|x|\,dx$. By the Laplace form,
$2e^z\mathscr F(2z)=\int_{-1}^1e^{zx}[h_{-1}/(2z)+h_0+2zh_1]\,dx$. Here
$2zh_1=\frac z{16}[(1-x)(x^2-1)+(1-x)(x^2+4x+1)\ell]$ with
$(1-x)(x^2-1)=-x^3+x^2+x-1$ and $(1-x)(x^2+4x+1)=-x^3-3x^2+3x+1$, so
termwise integration gives
\begin{align*}
 2e^z\mathscr F(2z)={}&\frac{c_0-c_2+\mathscr L_0-4\mathscr L_1-3\mathscr L_2}8-\frac{\mathscr L_1}{2z}\\
 &+\frac z{16}\bigl(-c_0+c_1+c_2-c_3+\mathscr L_0+3\mathscr L_1-3\mathscr L_2-\mathscr L_3\bigr).
\end{align*}
The $c_k$ are elementary, and $\mathscr L_k=(d/dz)^k\mathscr L_0$ with
$\mathscr L_0(z)=2\int_0^1\cosh(zx)\log x\,dx=-2\operatorname{Shi}(z)/z$
by one integration by parts. Differentiation gives
\begin{align*}
 \mathscr L_1&=\frac{2(\operatorname{Shi}z-\sinh z)}{z^2},\qquad
 \mathscr L_2=\frac{6\sinh z-2z\cosh z-4\operatorname{Shi}z}{z^3},\\
 \mathscr L_3&=\frac{12\operatorname{Shi}z-22\sinh z+10z\cosh z-2z^2\sinh z}{z^4},
\end{align*}
and $c_0=2\sinh z/z$, $c_1=2(z\cosh z-\sinh z)/z^2$,
$c_2=2((z^2+2)\sinh z-2z\cosh z)/z^3$,
$c_3=2((z^3+6z)\cosh z-(3z^2+6)\sinh z)/z^4$.
Substituting these expressions and collecting the coefficients of
$\operatorname{Shi}z$, $\cosh z$ and $\sinh z$ (group by group in
Table~\ref{tab:closedform} of Appendix~\ref{app:closedform}) gives
\[
 2e^z\mathscr F(2z)=-\frac{p(z)\operatorname{Shi}z}{8z^3}
   +\frac{(z^2-z)\cosh z+(z+3)\sinh z}{8z^3}.
\]
Multiplying by $8z^3e^{-z}$ and using
$e^{-z}\cosh z=\frac12(1+e^{-2z})$, $e^{-z}\sinh z=\frac12(1-e^{-2z})$
gives \eqref{eq:closedform}. (Check, not used below: the right side
begins $\frac89z^3-\frac43z^4$, matching $\mathscr F(0)=\frac1{18}$ and $\mathscr F'(0)=-\frac1{24}$.)
\end{proof}

\begin{lemma}[A first-order identity]\label{lem:omega}
Put $\mathscr V(z)=\mathsf a(z)-e^{2z}\mathsf b(z)$ and $\Omega(z)=e^z\Upsilon(z)/p(z)$. Then
$\Omega(0^+)=0$ and
\begin{equation}\label{eq:omegaprime}
 \Omega'(z)=\frac{2e^{-z}\mathscr V(z)}{z\,p(z)^2}\qquad(z>0).
\end{equation}
The Taylor coefficients of $\mathscr V$ are $0,0,0,\frac83,\frac83$ in degrees
$0,\dots,4$ and
\begin{equation}\label{eq:vcoeff}
 -v_n,\qquad v_n=\frac{2^{n-3}}{n!}\bigl[2n(n-1)(n-6)+8n+8\bigr]>0,
 \qquad n\ge5 .
\end{equation}
Consequently $\mathscr V$ has exactly one positive zero $z_1$, with
$\frac{27}{20}<z_1<\frac32$; $\mathscr V>0$ on $(0,z_1)$ and $\mathscr V<0$ on $(z_1,\infty)$.
Moreover $\mathscr V(z)\le\frac83z^3(1+z)$ for all $z\ge0$, and
$\mathscr V(z)\ge\frac83z^3$ for $0\le z\le1$.
\end{lemma}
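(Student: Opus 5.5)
The plan is to prove the four assertions in order: the derivative identity, the Taylor coefficients, the sign pattern with its unique zero, and the two bounds.

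\textbf{The identity \eqref{eq:omegaprime}.} Multiplying \eqref{eq:closedform} by $e^z$ gives
\[
 \Omega(z)=\frac{e^z(z^2+3)+e^{-z}(z-3)(z+1)}{2p(z)}-\operatorname{Shi}(z).
\]
At $z=0$ the numerator of the fraction is $3-3=0$ and $\operatorname{Shi}(0)=0$, so $\Omega(0^+)=0$. Next I differentiate, using $\operatorname{Shi}'(z)=\sinh z/z$ and the quotient rule with $p'=3z^2-2z+2$. I put everything over the common denominator $2zp^2$ and write $\sinh z=\frac12(e^z-e^{-z})$. The numerator then becomes a combination $e^{z}P_1(z)+e^{-z}P_2(z)$ with explicit polynomials $P_1,P_2$. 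The claim reduces to the polynomial identities $P_1=-4\mathsf b$ and $P_2=4\mathsf a$, so that the numerator equals $4e^{-z}(\mathsf a-e^{2z}\mathsf b)=4e^{-z}\mathscr V$. I will verify these by expanding both sides coefficient by coefficient. This is mechanical, and the degrees match: $p^2$ and $zp'p$ have degree $6$, and these are reduced against $zp$ times quadratics.

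\textbf{Taylor coefficients.} Since $\mathsf a$ has degree $4$, the coefficient of $z^n$ in $\mathscr V$ for $n\ge5$ is minus that of $e^{2z}\mathsf b$. With $\mathsf b=1+2z-4z^2+2z^3$ this coefficient is
\[
 \frac{2^n}{n!}-\dots=\frac{2^n}{n!}\Bigl[1+n-n(n-1)+\tfrac14n(n-1)(n-2)\Bigr]
 =\frac{2^{n-3}}{n!}\bigl[8+8n-8n(n-1)+2n(n-1)(n-2)\bigr],
\]
which is exactly $v_n$. The bracket equals $8$ at $n=5$ and is clearly positive for $n\ge6$. The coefficients in degrees $0,\dots,4$ come from the same convolution minus the coefficients of $\mathsf a$, by a direct check.

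\textbf{Unique zero and bounds.} Consider $\mathscr V(z)/z^4=\frac8{3z}+\frac83-\sum_{n\ge5}v_nz^{n-4}$. This is strictly decreasing on $(0,\infty)$, tends to $+\infty$ at $0^+$, and tends to $-\infty$ as $z\to\infty$ because $e^{2z}\mathsf b$ dominates. Hence there is exactly one zero $z_1$, with $\mathscr V>0$ before it and $\mathscr V<0$ after it.

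Dropping the nonnegative sum gives $\mathscr V\le\frac83z^3(1+z)$. For $0\le z\le1$, the terms satisfy $\sum v_nz^{n}\le z^4\sum v_n=z^4\bigl(\frac{16}3-\mathscr V(1)\bigr)$. Here $\mathscr V(1)=11-e^2>8/3$, so this sum is at most $\frac83z^4$, which gives $\mathscr V\ge\frac83z^3$.

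\textbf{Main obstacle.} The step I expect to be hardest is the certified localization $\frac{27}{20}<z_1<\frac32$. By monotonicity it suffices to show $\mathscr V(27/20)>0$ and $\mathscr V(3/2)<0$. Both amount to rational inequalities of the form $e^{2.7}<\mathsf a/\mathsf b$ and $e^{3}>\mathsf a/\mathsf b$ at the respective points. I would write $e^{2.7}=e^2e^{7/10}$ and $e^3=e^2e$, and use the stated bounds $2.7182<e<2.7183$ together with $e^{7/10}<2.014$. I will also need to check that the margins are not too tight for these truncations.
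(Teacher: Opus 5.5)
Your proposal is correct and follows the paper's proof essentially step by step. Your identities $P_1=-4\mathsf b$ and $P_2=4\mathsf a$ are the paper's identities $p(3z^2+z-2)-zp'(z^2+3)=-4\mathsf b$ and $p(3z^2+3z+2)-zp'(z-3)(z+1)=4\mathsf a$, rewritten via $zp(z^2+2z+3)-p^2=p(3z^2+z-2)$ and $zp(-z^2+4z+1)+p^2=p(3z^2+3z+2)$; both check out on expansion. The Taylor coefficients, the monotonicity of $\mathscr V/z^4$, the two bounds, and the endpoint tests at $\frac{27}{20}$ and $\frac32$ match the paper. The only cosmetic difference is that you obtain $\Omega(0^+)=0$ from the explicit form $\Omega=\frac{e^z(z^2+3)+e^{-z}(z-3)(z+1)}{2p}-\operatorname{Shi}(z)$, whereas the paper uses $\Upsilon(z)=16z^3\mathscr F(2z)\to0$. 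The numerical margins you were worried about are comfortable: $\mathsf a/\mathsf b>16>14.9>e^{27/10}$ at $\frac{27}{20}$, and $\mathsf a/\mathsf b=\frac{227}{14}<16.3<e^3$ at $\frac32$.
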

\begin{proof}
Since $\mathscr S'=-\mathscr S+(1-e^{-2z})/(2z)$, differentiation of
\eqref{eq:closedform} gives
\[
 \Upsilon'+\Upsilon=\frac{3z^2+z-2+e^{-2z}(3z^2+3z+2)}{2z}-p'(z)\mathscr S(z).
\]
In $p(\Upsilon'+\Upsilon)-p'\Upsilon$ the terms $\mp pp'\mathscr S$ cancel, and
the polynomial identities
\[
 p(3z^2+z-2)-zp'(z^2+3)=-4\mathsf b(z),\qquad
 p(3z^2+3z+2)-zp'(z-3)(z+1)=4\mathsf a(z)
\]
give $p(\Upsilon'+\Upsilon)-p'\Upsilon=2e^{-2z}\mathscr V/z$. Since
$\Omega'=e^z[p(\Upsilon'+\Upsilon)-p'\Upsilon]/p^2$, this is
\eqref{eq:omegaprime}.
$\Omega(0^+)=0$ because $\Upsilon(z)=16z^3\mathscr F(2z)\to0$.
The coefficients of $\mathsf a$ are $1,4,2,2,2$; those of $e^{2z}\mathsf b$ are
$1,4,2,-\frac23,-\frac23$ in degrees $0,\dots,4$, and for $n\ge5$ the
coefficient is the displayed $v_n$, as one sees from
$\frac{2^n}{n!}+\frac{2^n}{(n-1)!}-\frac{2^n}{(n-2)!}+\frac{2^{n-2}}{(n-3)!}$.
For $n=5$ the bracket in \eqref{eq:vcoeff} equals $8$; for $n\ge6$ every
term is nonnegative. Hence $\mathscr V(z)/z^4=\frac8{3z}+\frac83-\sum_{n\ge5}v_nz^{n-4}$
is strictly decreasing on $(0,\infty)$; it tends to $+\infty$ at $0$ and
to $-\infty$ at $\infty$, so $\mathscr V$ has exactly one positive zero. The same
expansion gives the two polynomial bounds, the second because
$(\mathscr V-\frac83z^3)/z^4$ is decreasing and
$\mathscr V(1)-\frac83=\frac{25}3-e^2>0$. Finally
$\mathscr V(\frac{27}{20})>0$ because $e^{27/10}=e^2e^{7/10}<7.39\cdot2.014<14.9$
while, with $\mathsf a(\frac{27}{20})=\frac{1728701}{80000}$ and
$\mathsf b(\frac{27}{20})=\frac{5323}{4000}$,
$\mathsf a(\frac{27}{20})/\mathsf b(\frac{27}{20})=\frac{1728701}{106460}>16$, and
$\mathscr V(\frac32)=\frac{227}8-\frac74e^3<0$ because $e^3>16.3$.
\end{proof}

\begin{proposition}[Global sign and size of the response]
\label{prop:responseglobal}
\begin{enumerate}
\item[(i)] $q$ has exactly one positive zero $\tau_*$, and
$4<\tau_*<5$. Moreover $q>0$ on $[0,\tau_*)$ and $q<0$ on $(\tau_*,\infty)$.
\item[(ii)] $-\frac1{40}\le q(\tau)\le\frac14$ for every $\tau\ge0$.
\item[(iii)] For $0\le\tau\le2$,
$q(\tau)\ge e^{-\tau}/(2p(\tau/2))$. In particular $q\ge\frac1{16}$ on
$[0,1]$ and $q\ge\frac1{60}$ on $[0,2]$.
\item[(iv)] $\tau^4q(\tau)\to-18$ as $\tau\to\infty$.
\end{enumerate}
\end{proposition}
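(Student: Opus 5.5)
By Lemma~\ref{lem:closedform} and \eqref{eq:qdef}, with $\tau=2z$,
\[
 q(2z)=\tfrac92\mathscr F(2z)=\frac{9\,\Upsilon(z)}{32z^3}
 =\frac{9\,p(z)e^{-z}\,\Omega(z)}{32z^3}.
\]
Since $p\ge2$, the sign of $q$ on $(0,\infty)$ is the sign of $\Omega$. I would read everything off Lemma~\ref{lem:omega}, together with one elementary monotonicity fact:
\[
 \frac{d}{dz}\bigl(p(z)e^{-z}\bigr)=(p'-p)e^{-z}=-z(z-2)^2e^{-z}\le0,
\]
so $p e^{-z}$ is nonincreasing, and $p'=3z^2-2z+2>0$, so $p$ is increasing.

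\emph{(i).} Since $\Omega(0^+)=0$ and, by \eqref{eq:omegaprime}, $\Omega'$ has the sign of $\mathscr V$, the function $\Omega$ increases strictly on $(0,z_1)$ and decreases strictly on $(z_1,\infty)$. As $z\to\infty$ we have $\mathscr V\sim-2z^3e^{2z}$, hence $\Omega'\sim-4e^{z}/z^4$ and $\Omega\to-\infty$. So $\Omega$ has exactly one positive zero $z_*>z_1$, and $\tau_*=2z_*$, with the stated signs of $q$ on either side; $q(0)=\frac14>0$ covers the endpoint. To locate $\tau_*$ I would check $\Upsilon(2)>0$ and $\Upsilon(5/2)<0$. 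This uses finitely many terms of the $\operatorname{Shi}$ series (the tail is dominated geometrically) and the stated bounds on $e$, and it is the main computational obstacle, since the two terms in $\Upsilon$ nearly cancel. I would bound $\mathscr S$ from above at $z=2$ and from below at $z=5/2$, keeping rational arithmetic as in the proof of Lemma~\ref{lem:omega}.

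\emph{(iii) and the upper bound in (ii).} For $0<z\le1$, Lemma~\ref{lem:omega} gives $\mathscr V(s)\ge\frac83s^3$. Using $e^{-s}\ge e^{-z}$ and $p(s)\le p(z)$,
\[
 \Omega(z)\ge\int_0^z\frac{16s^2e^{-s}}{3p(s)^2}\,ds\ge\frac{16z^3e^{-z}}{9p(z)^2}.
\]
Then $\Upsilon\ge16z^3e^{-2z}/(9p)$, i.e.\ $q(\tau)\ge e^{-\tau}/(2p(\tau/2))$, and evaluating at $\tau=1,2$ gives the two numerical bounds. For the upper bound, only $\Omega>0$ matters. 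There $\mathscr V\le\frac83s^3(1+s)$ and the monotonicity of $pe^{-z}$ give
\[
 \Upsilon(z)\le p(z)e^{-z}\!\int_0^z\frac{16s^2(1+s)e^{-s}}{3p(s)^2}\,ds
 \le\int_0^z\frac{16s^2(1+s)e^{-2s}}{3p(s)}\,ds\le\frac{8z^3}{9},
\]
using $p\ge2$ and $(1+s)e^{-2s}\le1$. This yields $q\le\frac14$.

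\emph{Lower bound in (ii) and (iv).} For (iv) I would use the asymptotic expansion
\[
 \mathscr S(z)=\frac1{2z}\Bigl(1+\frac1z+\frac2{z^2}+\frac6{z^3}+\frac{24}{z^4}+O(z^{-5})\Bigr)+O(e^{-2z}),
\]
which gives $p\mathscr S=\frac{z^2}2+\frac32+\frac4z+O(z^{-2})$. Hence $z\Upsilon\to-4$, and $\tau^4q=\frac92z\Upsilon\to-18$.

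For $-\frac1{40}\le q$, only $z>z_*$ matters. Writing $\Omega(z)\ge-\int_{z_*}^z2e^{s}\mathsf b(s)/(sp(s)^2)\,ds$ (since $\mathscr V\ge-e^{2s}\mathsf b$) gives an explicit lower bound for $\Upsilon/z^3$. I would bound it by splitting into a compact range, say $[z_*,4]$, handled by crude monotone estimates on a few subintervals, and a tail where the asymptotic $-4/z$ with an explicit remainder gives $|q|\le18\tau^{-4}(1+o(1))$, which is far below $\frac1{40}$ there. This step, along with locating $\tau_*$, is where I expect the real numerical labor to lie.
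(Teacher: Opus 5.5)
The gap is in the lower bound $q\ge-\frac1{40}$ of (ii); the other parts are sound. Parts (i) and (iii) follow the paper's argument, and the numerical checks in (i) have comfortable margins ($\Upsilon(2)\approx0.087$, $\Upsilon(5/2)\approx-0.15$). Your upper bound in (ii) moves the monotonicity of $pe^{-z}$ inside the integral rather than first proving $\Omega\le\frac49z^3$ and then using $e^{-z}p(z)\le2$; the two are equivalent. For (iv) you expand $\mathscr S$ through the asymptotic series of $\operatorname{Shi}$ (via $\operatorname{Ei}$), whereas the paper applies l'H\^opital to $\Omega(z)/(e^zz^{-4})$ using \eqref{eq:omegaprime}. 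Your route imports a standard external expansion and the paper's is self-contained; both give $z\Upsilon(z)\to-4$, hence $\tau^4q\to-18$.

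Your inequality $\Omega(z)\ge-2\int_{z_*}^ze^s\beta(s)\,ds$, with $\beta=\mathsf b/(sp^2)$, is correct, but what follows it is a plan rather than an estimate.
\begin{itemize}
\item The expansion of $\mathscr S$ you quote has unquantified $O$-terms. The claim ``$|q|\le18\tau^{-4}(1+o(1))$'' gives no explicit $\tau_0$ beyond which $q\ge-\frac1{40}$, so it certifies nothing on an unbounded range.
\item Explicit remainder bounds for the $\operatorname{Ei}$ series are not as immediate as for an alternating series.
\item The compact part is not carried out, and it is not automatic. Bounding $\mathsf b$ and $sp^2$ separately on all of $[2,4]$ gives only $q\gtrsim-0.26$, so a fine subdivision would be needed.
\end{itemize}

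The missing idea is that $\beta$ is decreasing. One has $\beta'=\Pi/(s^2p^3)$ with
\[
\Pi(s)=-8s^3(s-1)^3+9s^3-11s^2-6s-2 .
\]
The paper shows $\Pi<0$ on $[\frac{27}{20},\infty)$ by writing $\Pi=s^3[-8(s-1)^3+r(s)]$ with $r$ increasing and checking three subintervals. Then, for any $z_0$ with $\Omega(z_0)\ge0$:
\begin{itemize}
\item $\int_{z_0}^ze^s\beta(s)\,ds\le\beta(z_0)e^z$;
\item hence $q(2z)\ge-\frac9{16}\frac{p(z)}{z^3}\beta(z_0)$;
\item and $p(z)\le\frac54z^3$ for $z\ge2$ removes any need for a tail analysis.
\end{itemize}
The paper takes $z_0=z_1$, using $\Omega(z_1)>0$ and $\beta(\frac{27}{20})<\frac8{225}$, which yields exactly $-\frac1{40}$. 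With your own starting point $z_0=z_*>2$ and $\beta(2)=\frac1{40}$, the same two lines give the stronger bound $q\ge-\frac9{512}$.
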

\begin{proof}
By \eqref{eq:closedform},
\begin{equation}\label{eq:qomega}
 q(2z)=\frac{9\Upsilon(z)}{32z^3}=\frac{9e^{-z}p(z)\,\Omega(z)}{32z^3},
\end{equation}
so $q(2z)$ has the sign of $\Omega(z)$. By Lemma~\ref{lem:omega},
$\Omega$ increases strictly on $(0,z_1]$ from $\Omega(0^+)=0$ and decreases
strictly on $[z_1,\infty)$.

\emph{(i).} We evaluate $\Upsilon$ at two points. The series for
$\operatorname{Shi}$ has positive terms, so its partial sums are lower
bounds; at $z=2$ the ratio of consecutive terms after the sixth is below
$\frac1{40}$, so the remainder after six terms is at most $\frac{40}{39}$
times the seventh term $2^{13}/(13\cdot13!)<1.1\cdot10^{-7}$. At $z=\frac52$,
$p=\frac{131}8$ and $\Upsilon(\frac52)=\frac{37}8-\frac78e^{-5}-\frac{131}8\mathscr S(\frac52)$;
since $\operatorname{Shi}(\frac52)\ge\frac52+\frac{125}{144}+\frac{125}{768}
+\frac{15625}{903168}>3.548$ and $e^{5/2}<12.19$, we get
$\frac{131}8\mathscr S(\frac52)>4.76>\frac{37}8$, so $\Upsilon(\frac52)<0$. At $z=2$,
$p=10$ and $\Upsilon(2)=\frac72-\frac32e^{-4}-10\mathscr S(2)$; since $\operatorname{Shi}(2)<2+\tfrac49+\tfrac4{75}+\tfrac8{2205}+\tfrac4{25515}
 +\tfrac8{1715175}+2\cdot10^{-7}<2.5016$ and $e^{-2}<0.1354$, we get $10\mathscr S(2)<3.388$, while
$\frac72-\frac32e^{-4}>3.47$, so $\Upsilon(2)>0$. Since $\Omega>0$ on
$(0,z_1]$ and $\Omega$ is strictly decreasing afterwards, it has exactly
one zero $z_*$, which lies in $(2,\frac52)$. Put $\tau_*=2z_*$.

\emph{Upper bound in (ii).} By Lemma~\ref{lem:omega}, $\mathscr V(s)\le\frac83s^3(1+s)$;
with $e^{-s}(1+s)\le1$ and $p(s)\ge2$, \eqref{eq:omegaprime} gives
$\Omega(z)\le\frac{16}3\int_0^z\frac{s^2}4\,ds=\frac49z^3$. Hence
$q(2z)\le e^{-z}p(z)/8\le\frac14$, because
$\frac d{dz}(e^{-z}p(z))=-z(z-2)^2e^{-z}\le0$ and $p(0)=2$.

\emph{(iii).} For $0\le s\le z\le1$, Lemma~\ref{lem:omega} gives
$\mathscr V(s)\ge\frac83s^3$, while $e^{-s}\ge e^{-z}$ and $p(s)\le p(z)$, since $p$
is increasing. Thus $\Omega(z)\ge\frac{16}9e^{-z}z^3/p(z)^2$ and
$q(2z)\ge e^{-2z}/(2p(z))$. The right side is decreasing; at $\tau=1$ it
equals $4/(23e)>\frac1{16}$, and at $\tau=2$ it equals $e^{-2}/8>\frac1{60}$.

\emph{Lower bound in (ii).} Only $z\ge z_*>2$ matters. Put
$\beta(s)=\mathsf b(s)/(s\,p(s)^2)$. Since $\Omega(z_1)>0$ and
$\mathscr V\ge-e^{2s}\mathsf b$,
\[
 \Omega(z)\ge-2\int_{z_1}^ze^s\beta(s)\,ds\ge-2\beta(z_1)e^z
\]
provided $\beta$ is decreasing on $[z_1,\infty)$. Then \eqref{eq:qomega}
gives $q(2z)\ge-\frac9{16}\,\frac{p(z)}{z^3}\,\beta(z_1)$. For $z\ge2$,
$p(z)\le\frac54z^3$, since $\frac14z^3+z^2-2z-2$ vanishes at $2$ and is
increasing. Also $z_1>\frac{27}{20}$, and since
$p(\frac{27}{20})=\frac{42703}{8000}$ and
$\mathsf b(\frac{27}{20})=\frac{5323}{4000}$,
$\beta(\frac{27}{20})=\frac{1703360000}{49235747643}<\frac8{225}$. Hence
$q\ge-\frac{45}{64}\cdot\frac8{225}=-\frac1{40}$. It remains to check that
$\beta$ decreases on $[\frac{27}{20},\infty)$, i.e.\ that
\[
 \Pi(s)=s\,p\mathsf b'-p\mathsf b-2s\,p'\mathsf b=-8s^3(s-1)^3+9s^3-11s^2-6s-2<0 .
\]
Write $\Pi=s^3[-8(s-1)^3+r(s)]$ with $r(s)=9-11/s-6/s^2-2/s^3$, which is
increasing. For $\frac{27}{20}\le s\le\frac{17}{10}$, $r(s)\le r(\frac{17}{10})<0.3$
and $8(s-1)^3\ge8(\frac7{20})^3>0.34$. For $\frac{17}{10}\le s\le\frac{41}{20}$,
$r(s)\le r(\frac{41}{20})<2$ and $8(s-1)^3\ge8(\frac7{10})^3>2.7$. For
$s\ge\frac{41}{20}$, $8(s-1)^3\ge8(\frac{21}{20})^3>9>r(s)$.

\emph{(iv).} By \eqref{eq:omegaprime} and l'H\^opital's rule applied to
$\Omega(z)/(e^zz^{-4})$, whose denominator has derivative
$e^zz^{-4}(1-4/z)$,
$\Omega(z)z^4e^{-z}\to\lim2\mathscr V(z)e^{-2z}z^3/p(z)^2=-4$, since
$\mathscr V\sim-2z^3e^{2z}$ and $p\sim z^3$. Then \eqref{eq:qomega} gives
$z^4q(2z)\to-\frac98$, that is, $\tau^4q(\tau)\to-18$.
\end{proof}

\begin{proposition}[An explicit positive band up to four]
\label{prop:explicitfour}
The response satisfies
\begin{equation}\label{eq:explicitfour}
 q(\tau)\ge\frac3{1000}\qquad(0\le\tau\le4).
\end{equation}
The bound is analytic, not a numerical minimization of $q$.
\end{proposition}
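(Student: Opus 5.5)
By Proposition~\ref{prop:responseglobal}(iii), $q\ge\frac1{60}>\frac3{1000}$ on $[0,2]$, so only $2\le\tau\le4$, i.e.\ $1\le z\le2$ with $z=\tau/2$, remains. On this range I will work with the representation \eqref{eq:qomega},
\[
q(2z)=\frac{9e^{-z}p(z)\,\Omega(z)}{32z^3},
\]
and bound $\Omega(z)$ from below. By Lemma~\ref{lem:omega}, $\Omega$ increases on $(0,z_1]$ and decreases on $[z_1,\infty)$, where $\frac{27}{20}<z_1<\frac32$. Hence for $z\in[1,2]$,
\[
\Omega(z)\ge\min\{\Omega(1),\Omega(2)\},
\]
the minimum of a unimodal function on an interval being attained at an endpoint. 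Also $e^{-z}p(z)$ is decreasing (shown in part (ii) of the proof of Proposition~\ref{prop:responseglobal}) and $z^{-3}$ is decreasing. So on $[1,2]$,
\[
q(2z)\ge\frac{9\,e^{-2}p(2)}{32\cdot 8}\,\min\{\Omega(1),\Omega(2)\}
=\frac{90e^{-2}}{256}\,\min\{\Omega(1),\Omega(2)\}.
\]
This is crude, so a better plan is to split $[1,2]$ into two or three subintervals $[z_a,z_b]$. On each I will use
\[
q\ge\frac{9e^{-z_b}p(z_b)}{32z_b^3}\,\min\{\Omega(z_a),\Omega(z_b)\}.
\]

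The lower bounds on $\Omega$ at the nodes come from two sources. The first is the integral formula
\[
\Omega(z)=\int_0^z\frac{2e^{-s}\mathscr V(s)}{s\,p(s)^2}\,ds .
\]
For $z\le1$, Lemma~\ref{lem:omega} gives $\mathscr V\ge\frac83s^3$, which already yields $\Omega(1)\ge\frac{16}{9}e^{-1}/p(1)^2=\frac{16}{81e}$. For $z>1$ I use positivity of $\mathscr V$ up to $z_1$, and control the negative part beyond $z_1$ as in the proof of (ii), via $\mathscr V\ge-e^{2s}\mathsf b$. The second source is direct evaluation of $\Upsilon$, since $\Omega=e^z\Upsilon/p$. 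At $z=2$, the proof of Proposition~\ref{prop:responseglobal}(i) already gives
\[
\Upsilon(2)>3.47-3.388=0.082,
\]
so $q(4)=\frac{9\Upsilon(2)}{256}>\frac{9\cdot0.082}{256}\approx0.00288$. This is just below $3/1000$, so the bound there must be sharpened. I will take more terms of the $\operatorname{Shi}$ series and tighter bounds such as $e^{-2}<0.13534$ and $e^{-4}<0.01832$, which give
\[
10\mathscr S(2)<3.386,\qquad \tfrac72-\tfrac32e^{-4}>3.4725,\qquad \Upsilon(2)>0.0865,
\]
and hence $q(4)>0.00304$.

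For the interior, the simplest route avoids the lossy endpoint products. I will use the exact formula $q(2z)=9\Upsilon(z)/(32z^3)$ on subintervals. On $[z_1,2]$, $\Omega$ is decreasing and therefore $\Omega(z)\ge\Omega(2)$, which gives
\[
q(2z)\ge\frac{9\,e^{-z}p(z)\,e^{2}\Upsilon(2)}{32z^3\,p(2)}.
\]
The prefactor $e^{2-z}p(z)/(z^3p(2))$ is at least $1$ for $z\le2$: $e^{2-z}\ge1$, and $p(z)/z^3=1-z^{-1}+2z^{-2}+2z^{-3}$ is decreasing for $z\ge1$ (its derivative in $u=1/z$ is $-1+4u+6u^2>0$), so $p(z)/z^3\ge p(2)/8$. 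This gives $q(2z)\ge 9\Upsilon(2)/256$, i.e.\ the same bound $q\ge q(4)$-type estimate $>3/1000$, on all of $[z_1,2]$. On $[1,z_1]$, $\Omega$ is increasing, so $\Omega\ge\Omega(1)$. Then
\[
q(2z)\ge\frac{9e^{-z}p(z)}{32z^3}\cdot\frac{16}{81e}\ge\frac{9e^{-3/2}p(3/2)}{32(3/2)^3}\cdot\frac{16}{81e}\approx0.0158,
\]
comfortably above $3/1000$.

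The main obstacle is the endpoint $\tau=4$, where the margin is only about $1\%$. The whole proof hinges on certifying $\Upsilon(2)>\frac{256\cdot3}{9000}\approx0.08533$ by hand with rigorous bounds on $\operatorname{Shi}(2)$, $e^{-2}$ and $e^{-4}$. I will carry out that arithmetic with tail estimates as in Proposition~\ref{prop:responseglobal}(i).
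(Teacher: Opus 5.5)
Your proof is correct and is essentially the paper's, with one local variation.

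\textbf{What is shared.} Both arguments write $q(2z)=P(z)\Omega(z)$ with $P(z)=9e^{-z}p(z)/(32z^3)$ decreasing. Both use the unimodality of $\Omega$ to get $q(2z)\ge P(2)\Omega(2)=q(4)=\frac9{256}\Upsilon(2)$ on the decreasing branch. Both then reduce everything to the hand certification $\Upsilon(2)>\frac{32}{375}$.

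\textbf{Where you differ.} The difference is the increasing branch $[1,z_1]$.
\begin{itemize}
\item The paper evaluates $\Omega(1)=\frac e2-\frac{e^{-1}}2-\operatorname{Shi}(1)>\frac1{10}>\Omega(2)$. Hence $\Omega\ge\Omega(2)$ on all of $[1,2]$, and no split is needed.
\item You split at $z_1$. On $[1,z_1]$ you use $\Omega\ge\Omega(1)$, the lower bound for $\Omega(1)$ from part (iii), and $P(z)\ge P(3/2)$, which holds because $z_1<\frac32$.
\end{itemize}
Your variant avoids the two-sided evaluations of $\operatorname{Shi}(1)$ and $\operatorname{Shi}(2)$. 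Your sharper constants at $z=2$ give $\Upsilon(2)>0.0865$. That leaves more room than the paper's choice $e^{-2}<677/5000$, which clears $\frac{32}{375}$ by less than $10^{-6}$.

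\textbf{Arithmetic to fix on $[1,z_1]$.} These errors do not break the argument.
\begin{itemize}
\item Since $p(1)=4$, part (iii) gives $\Omega(1)\ge\frac{16}{9}e^{-1}/p(1)^2=\frac1{9e}$, not $\frac{16}{81e}$.
\item Since $P(3/2)=\frac{49}{96}e^{-3/2}$, the resulting bound is $q\ge\frac{49}{864}e^{-5/2}\approx0.0047$. Your $0.0158$ is wrong even with your own value of $\Omega(1)$, which would give about $0.0083$.
\item The corrected bound still exceeds $\frac3{1000}$, since that only requires $e^{5/2}<18.9$.
\end{itemize}
Note that the weak value $\frac1{9e}\approx0.041$ lies below $\Omega(2)\approx0.064$. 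So your unsplit endpoint bound would give only $P(2)/(9e)=\frac{10}{256}e^{-3}\approx0.0019$. The split at $z_1$, or the paper's direct evaluation of $\Omega(1)$, is therefore genuinely needed.

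\textbf{Two smaller slips.}
\begin{itemize}
\item The prefactor $e^{2-z}p(z)/(z^3p(2))$ is at least $\frac18$, not $1$. Your stated conclusion $q\ge\frac9{256}\Upsilon(2)$ is the correct one.
\item $p(z)/z^3$ decreases only for $z<2+\sqrt{10}$, not for all $z\ge1$. That is enough on $[1,2]$. The paper's observation that $e^{-z}p(z)$ and $z^{-3}$ are both positive and decreasing is the cleaner route to monotonicity of $P$.
\end{itemize}
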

\begin{proof}
For $0\le\tau\le2$, Proposition~\ref{prop:responseglobal}(iii) gives
$q(\tau)\ge1/60>3/1000$. For $\tau=2z$ with $1\le z\le2$: $\Omega$
increases up to its unique maximum at $z_1\in(27/20,3/2)$ and decreases
thereafter, so its minimum on $[1,2]$ is at an endpoint, where
\[
 \Omega(1)=\frac e2-\frac{e^{-1}}2-\operatorname{Shi}(1),\qquad
 \Omega(2)=\frac{7e^2}{20}-\frac{3e^{-2}}{20}
                                      -\operatorname{Shi}(2).
\]
The bounds $e>2.7182$, $e^{-1}<0.368$, $\operatorname{Shi}(1)<1.058$
(first four terms plus a geometric tail), $e^2<7.39$,
$\operatorname{Shi}(2)>2.5015$ give $\Omega(1)>0.1171>1/10$, whereas
$\Omega(2)<0.085<1/10$. Hence $\Omega(z)\ge\Omega(2)$ on $[1,2]$, and
$\Omega(2)>0$ because $\Upsilon(2)>0$, as shown below.
The factor $e^{-z}p(z)/z^3$ is decreasing (its positive numerator has derivative
$-z(z-2)^2e^{-z}\le0$), so \eqref{eq:qomega} yields
$q(2z)\ge q(4)=\frac9{256}\Upsilon(2)$. Finally, $e^{-2}<677/5000$ and
$\operatorname{Shi}(2)<3127/1250$ imply the exact rational bound
$\Upsilon(2)=\frac72-\frac32e^{-4}-10e^{-2}\operatorname{Shi}(2)
 >\frac72-\frac32(\frac{677}{5000})^2-10\frac{677}{5000}\frac{3127}{1250}$.
Here $(677/5000)^2=458329/25000000$, so the middle term is
$1374987/50000000$, and the last term is $2116979/625000=169358320/50000000$.
Hence the right side equals
\[
 \frac{175000000-1374987-169358320}{50000000}
 =\frac{4266693}{50000000}>\frac{4266666.\overline6}{50000000}
 =\frac{32}{375}.
\]
Multiplication by $9/256$ proves \eqref{eq:explicitfour}.
\end{proof}

\begin{proposition}[A simple resolution zero and its coercivity margin]
\label{prop:simplezero}
The zero $\tau_*$ is simple and $q'(\tau_*)<0$. If
$m_\theta=\min_{0\le\tau\le\theta}q(\tau)$, there are
$\delta_0,c_0,C_0>0$ such that
\begin{equation}\label{eq:linearmargin}
 m_{\tau_*-\delta}=q(\tau_*-\delta),\qquad
 c_0\delta\le m_{\tau_*-\delta}\le C_0\delta,
 \qquad 0<\delta\le\delta_0.
\end{equation}
\end{proposition}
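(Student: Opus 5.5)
The plan is to work in the variable $z=\tau/2$ through the factorization \eqref{eq:qomega}, $q(2z)=\tfrac{9}{32}e^{-z}p(z)z^{-3}\Omega(z)$, where the prefactor $e^{-z}p(z)/z^3$ is smooth and strictly positive for $z>0$. At $z_*=\tau_*/2$ we have $\Omega(z_*)=0$. Differentiating the product and evaluating at $z_*$ kills the term containing $\Omega$, so
\[
 2q'(\tau_*)=\tfrac{9}{32}e^{-z_*}p(z_*)z_*^{-3}\,\Omega'(z_*).
\]
By Lemma~\ref{lem:omega}, $\Omega'(z_*)=2e^{-z_*}\mathscr V(z_*)/(z_*p(z_*)^2)$. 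The proof of Proposition~\ref{prop:responseglobal}(i) gives $z_*\in(2,\tfrac52)$, and $z_1<\tfrac32$, so $z_*>z_1$ and hence $\mathscr V(z_*)<0$. Therefore $\Omega'(z_*)<0$ and $q'(\tau_*)<0$, which also shows that the zero is simple.

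For the margin statement, first locate where the minimum of $q$ on $[0,\tau_*-\delta]$ is attained.
\begin{enumerate}
\item[(1)] Split $[0,\tau_*-\delta]$ at $\tau=4$. On $[0,4]$, Proposition~\ref{prop:explicitfour} gives $q\ge 3/1000$.
\item[(2)] On $[4,\tau_*)$, i.e.\ $z\in[2,z_*)$, use that $\Omega$ is strictly decreasing on $[z_1,\infty)$ and that the prefactor $e^{-z}p(z)/z^3$ is decreasing (shown in the proof of Proposition~\ref{prop:explicitfour}). Both factors are positive there, so $q$ is strictly decreasing on $[4,\tau_*]$.
\item[(3)] Since $q$ is continuous and $q(\tau)\to0$ as $\tau\to\tau_*$, choose $\delta_0\in(0,\tau_*-4)$ with $q(\tau_*-\delta_0)<3/1000$. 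Then for $0<\delta\le\delta_0$ the minimum over $[0,\tau_*-\delta]$ is not attained in $[0,4]$. By monotonicity on $[4,\tau_*-\delta]$ it equals $q(\tau_*-\delta)$, which is the first claim of \eqref{eq:linearmargin}.
\end{enumerate}

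Next, the two-sided linear bound. By the mean value theorem, $q(\tau_*-\delta)=-\delta\,q'(\xi)$ for some $\xi\in(\tau_*-\delta,\tau_*)$. Since $q$ is smooth on $(0,\infty)$ (from the closed form) and $q'(\tau_*)<0$, shrink $\delta_0$ so that $q'\in[2q'(\tau_*),\tfrac12q'(\tau_*)]$ on $[\tau_*-\delta_0,\tau_*]$. This yields $c_0=\tfrac12|q'(\tau_*)|$ and $C_0=2|q'(\tau_*)|$.

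The one delicate point is confirming $z_*>z_1$ strictly, which is what makes $\mathscr V(z_*)$ nonzero. The earlier bounds $z_1<\tfrac32<2<z_*$ already supply this, so no further numerics are needed; the rest is routine calculus.
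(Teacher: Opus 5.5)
Your proof is correct and follows the paper's argument: the factorization $q(2z)=P(z)\Omega(z)$, the sign $\Omega'(z_*)<0$ from $z_*>z_1$, and two-sided bounds on $q'$ near $\tau_*$ for the linear margin. The only difference is how you locate the minimum. You use the explicit monotonicity of $q$ on $[4,\tau_*]$ together with $q\ge 3/1000$ on $[0,4]$. The paper instead argues by compactness: $q$ has a positive minimum on the interval left of a neighborhood of $\tau_*$ where $q'<0$. Both work, and yours has the small bonus of identifying $m_\theta=q(\theta)$ whenever $\theta\in[4,\tau_*)$ and $q(\theta)<3/1000$.
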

\begin{proof}
Write $P(z)=9e^{-z}p(z)/(32z^3)>0$, so $q(2z)=P(z)\Omega(z)$.
The zero $z_* =\tau_*/2$ lies beyond $z_1$, so
$\Omega'(z_*)<0$ by Lemma~\ref{lem:omega}, and since $\Omega(z_*)=0$,
$q'(\tau_*)=\tfrac12P(z_*)\Omega'(z_*)<0$.
By continuity $q'$ lies between two negative constants on an interval
left of $\tau_*$; $q$ has a positive minimum on the remaining compact
interval, and decreasing $\delta_0$ until $q(\tau_*-\delta_0)$ is below it
puts the minimum on $[0,\tau_*-\delta]$ at the right endpoint.
Integrating the derivative bounds from $\tau_*-\delta$ to $\tau_*$
proves \eqref{eq:linearmargin}.
\end{proof}

\begin{proposition}[$\tau_*$ is the scale of maximal entropy response]
\label{prop:entropyresponse}
The function $\mathscr B$ of \eqref{eq:responseB} satisfies $\mathscr B(0)=1$,
increases strictly on $[0,\tau_*]$, decreases strictly on $[\tau_*,\infty)$,
and $\mathscr B(\tau)\to1$ as $\tau\to\infty$. Hence $\mathscr B>1$ on
$(0,\infty)$ and $\tau_*$ is its unique maximum. For real mean-zero
trigonometric $f$ the fixed-time plane entropy
$\operatorname{Ent}_t=\operatorname{Ent}(t)$ of Step~4 of the proof of
Proposition~\ref{prop:response} with $\rho=1+f$ (see also Section~\ref{sec:entropy}) satisfies
$\tfrac12D^2\operatorname{Ent}_t(0)[f,f]
   =\frac A2\sum_{k\ne0}\bigl(\mathscr B(t|k|^2)-1\bigr)|\widehat f_k|^2>0$
for $f\ne0$, while $\frac12D^2E_t^\infty(0)$ has multiplier
$\frac9{2t^2}\,\tau\mathscr B'(\tau)$ at $\tau=t|k|^2$.
\end{proposition}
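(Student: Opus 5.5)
The monotonicity claims for $\mathscr B$ reduce to the sign of $q$. By \eqref{eq:qdef}, $\mathscr B'(\tau)=\frac{2\tau}{9}q(\tau)$ for $\tau>0$, so Proposition~\ref{prop:responseglobal}(i) gives $\mathscr B'>0$ on $(0,\tau_*)$ and $\mathscr B'<0$ on $(\tau_*,\infty)$. Hence $\mathscr B$ is strictly increasing on $[0,\tau_*]$, strictly decreasing on $[\tau_*,\infty)$, and $\tau_*$ is its unique maximizer. The value $\mathscr B(0)=1$ is read off from \eqref{eq:responseB}.

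For the limit at infinity, I would integrate the derivative. Proposition~\ref{prop:responseglobal}(iv) gives $\mathscr B'(\tau)\sim-4\tau^{-3}$, which is integrable, so $\mathscr B(\infty)$ exists. To identify it I would not compute the integral; instead I would pass to the limit directly in \eqref{eq:responseB}. The exponent $a=u+v-2uv=u(1-v)+v(1-u)$ vanishes on the unit square only at the corners $(0,0)$ and $(1,1)$.

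Near $(0,0)$ we have $a\approx u+v$ and $\tau uv\to$ a bounded quantity after the rescaling $u=\sigma/\tau$, $v=\sigma'/\tau$. The contribution there is
\[
 \tau^{-2}\iint e^{-(\sigma+\sigma')}\bigl(1+\sigma\sigma'/\tau\bigr)^2\,d\sigma\,d\sigma'\to0.
\]
Near $(1,1)$, set $u=1-\sigma/\tau$ and $v=1-\sigma'/\tau$. Then $a\approx(\sigma+\sigma')/\tau$ to leading order, while $(1+\tau uv)^2\approx\tau^2$. The product contributes
\[
 \iint e^{-(\sigma+\sigma')}\,d\sigma\,d\sigma'=1.
\]
Away from the corners the integrand is $O(\tau^2e^{-c\tau})$. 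Combining these by dominated convergence after the rescalings gives $\mathscr B(\tau)\to1$.

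Since $\mathscr B$ decreases strictly to $1$ on $[\tau_*,\infty)$, we get $\mathscr B>1$ there. On $(0,\tau_*]$, strict increase from $\mathscr B(0)=1$ gives the same conclusion. This establishes $\mathscr B>1$ on $(0,\infty)$.

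For the entropy Hessian, I would reuse Step~4 of the proof of Proposition~\ref{prop:response}. Take $\rho=1+\eta\cos(k\cdot x)$; by Step~1 (scaling and rotation invariance), this reduces to $\tau=t|k|^2$. That step gives
\[
 \operatorname{Ent}/A=-\log(4\pi\tau)-1+\frac{\eta^2}{4}\bigl(\mathscr B(\tau)-1\bigr)+O(\eta^3),
\]
and $\sum_k|\widehat f_k|^2=\eta^2/2$.

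For a general $f$, I would argue as in Step~1 that $D^2\operatorname{Ent}_t(0)$ is translation invariant and hence Fourier diagonal. The single-cosine case then determines the multiplier:
\[
 \tfrac12D^2\operatorname{Ent}_t(0)[f,f]=\frac A2\sum_{k\ne0}\bigl(\mathscr B(t|k|^2)-1\bigr)|\widehat f_k|^2 .
\]
Positivity for $f\ne0$ follows from $\mathscr B>1$. The multiplier for $\frac12D^2E_t^\infty(0)$ is Proposition~\ref{prop:response} restated.

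The main obstacle is the rigorous corner analysis for the limit $\mathscr B\to1$. A cleaner alternative I would try first is the closed form: by Lemma~\ref{lem:closedform}, $\mathscr B(\tau)=1+\int_0^\tau s\mathscr F(s)\,ds$. Computing $\int_0^\infty s\mathscr F$ through the Laplace form and checking that it vanishes is a direct but messier route, so I would rely on the corner rescaling instead.
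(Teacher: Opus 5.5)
Your proof is correct. The monotonicity, the positivity $\mathscr B>1$, and the entropy and energy multipliers go exactly as in the paper: you take the sign of $\mathscr B'=2\tau q/9$ from Proposition~\ref{prop:responseglobal}(i), then use Step~4 of Proposition~\ref{prop:response} together with translation invariance.

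The one step you handle differently is the limit $\mathscr B(\tau)\to1$. The paper pushes $\mathscr B$ forward to the single variable $\sigma=u+v-2uv$. It writes $\mathscr B(\tau)=\int_0^1e^{-\tau\sigma}(g_0+2\tau g_1+\tau^2g_2)\,d\sigma$, with the densities $g_j$ already computed in the proof of Lemma~\ref{lem:closedform}. Near $\sigma=0$ it uses $g_0\le4\sigma$, $g_1\le2\sigma$ and $|g_2-\sigma|\le C\sigma^2$, and integrable logarithmic bounds elsewhere. This gives $\mathscr B(\tau)=\tau^2\int_0^{1/4}\sigma e^{-\tau\sigma}\,d\sigma+O(\tau^{-1})=1+O(\tau^{-1})$.

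You instead run Laplace's method directly on the unit square at the two zeros $(0,0)$ and $(1,1)$ of $a$. Your route is self-contained, since it needs none of the density formulas. It also shows that the limiting value $1$ comes entirely from the corner $(1,1)$, where $uv\approx1$ and only the $\tau^2(uv)^2$ part of $(1+\tau uv)^2$ survives; in the paper's variables this is the statement $g_2(\sigma)\sim\sigma$ as $\sigma\to0$. The paper's route reuses existing formulas and gives the explicit rate $O(\tau^{-1})$ for free.

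The domination you left implicit is immediate. Because $a$ is invariant under $(u,v)\mapsto(1-u,1-v)$, on a rescaled $\delta$-corner one has exactly $\tau a=\sigma+\sigma'-2\sigma\sigma'/\tau\ge(1-\delta)(\sigma+\sigma')$, and this gives an integrable majorant. With the limit computed directly, your preliminary appeal to Proposition~\ref{prop:responseglobal}(iv) to show that the limit exists is unnecessary.
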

\begin{proof}
Since $q=9\mathscr B'/(2\tau)$, the monotonicity is
Proposition~\ref{prop:responseglobal}(i). For the limit write, by the
push-forward densities of the proof of Lemma~\ref{lem:closedform},
$\mathscr B(\tau)=\int_0^1e^{-\tau\sigma}(g_0+2\tau g_1+\tau^2g_2)\,d\sigma$.
For $0\le\sigma\le\frac14$ one has $0\le-\ell\le4\sigma$, hence
$0\le g_0\le4\sigma$ and $0\le g_1\le2\sigma$, and the expansion
$\ell=-2\sigma-2\sigma^2+O(\sigma^3)$ gives $|g_2-\sigma|\le C\sigma^2$. On
$[\frac14,1]$ the densities are bounded by $C(1+|\ell|)$, which is
integrable, and $e^{-\tau\sigma}\le e^{-\tau/4}$. Therefore
$\mathscr B(\tau)=\tau^2\int_0^{1/4}\sigma e^{-\tau\sigma}d\sigma+O(\tau^{-1})
=1+O(\tau^{-1})$. The entropy formula is the expansion
$\operatorname{Ent}/A=-\log(4\pi\tau)-1+\eta^2(\mathscr B(\tau)-1)/4+O(\eta^3)$
from Step~4 of the proof of Proposition~\ref{prop:response}, with
$\sum_k|\widehat f_k|^2=\eta^2/2$ and translation invariance; the last
assertion restates Proposition~\ref{prop:response}.
\end{proof}
Thus $\tau_*$ is where the $\chi^2$-size
$\mathscr B=\mathbb E|\mathscr R_\tau|^2$ of the first kernel variation stops
growing.

% Insert after Proposition entropyresponse, before the numerical remark.
\subsection{The observation scale as a statistical design parameter}
\label{subsec:statisticaldesign}
There are two distinct Fisher informations in this paper. The tensor
$J_t(x)$ estimates the unknown \emph{source location} for a known metric.
The following scalar information instead estimates one unknown
\emph{metric amplitude}, when both the source and the endpoint are observed.

\begin{proposition}[Information in one resolved curvature mode]
\label{prop:amplitudeinformation}
Fix a nonzero dual frequency $k$ and the family
$g_\eta=(1+\eta\cos(k\cdot x))g_0$. Choose $X$ uniformly in a fundamental
cell, independently of $\eta$, and, given $X=x$, observe the lifted endpoint
$Y\in\R^2$ with transition density $r_\eta(t,x,y)$. At $\eta=0$ the Fisher
information in the observed pair $(X,Y)$ for the parameter $\eta$ is
\begin{equation}\label{eq:amplitudeinformation}
 I_\eta(t;k):=\frac1A\int_{\mathbb T^2}\int_{\R^2}
 \left(\left.\partial_\eta\log r_\eta(t,x,y)\right|_{\eta=0}\right)^2
 r_0(t,x,y)\,dy\,dx
 =\frac12\mathscr B(t|k|^2).
\end{equation}
Consequently, for this local, one-mode, per-observation experiment,
$t=\tau_*/|k|^2$ is the unique information-maximizing observation time.
With $I(\tau)=\mathscr B(\tau)/2$,
\begin{equation}\label{eq:qinformation}
 I(0)=I(\infty)=\tfrac12,\qquad
 q(\tau)=\frac9\tau I'(\tau).
\end{equation}
If instead the parameter is the linearized curvature amplitude
$\kappa=\eta|k|^2/2$, its information is $I_\kappa(t;k)=2\mathscr B(t|k|^2)/|k|^4$.
\end{proposition}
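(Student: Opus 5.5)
The computation reduces to the first kernel variation already found in Step~2 of the proof of Proposition~\ref{prop:response}. Since the source $X$ is uniform and independent of $\eta$, the joint density of $(X,Y)$ is $A^{-1}r_\eta(t,x,y)$. The score of the pair is therefore $\partial_\eta\log r_\eta$, and the Fisher information at $\eta=0$ is exactly the double integral in \eqref{eq:amplitudeinformation}. The Gaussian bounds of Theorem~\ref{thm:neargaussian} justify differentiating under the integral.

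\textbf{Reduction to one profile.} At $\eta=0$ we have $r_0=\varphi_t$, so the integrand equals $(r')^2/\varphi_t$ with $r'=\partial_\eta r_\eta|_{\eta=0}$. I would use the scaling invariance from Step~1 of the proof of Proposition~\ref{prop:response}: rotation, together with $x\mapsto\lambda x$, $t\mapsto\lambda^2t$. Under this map the Lebesgue density changes by $\lambda^{-2}$, but the ratio $(r')^2/\varphi_t$ integrated in $dy$ is dimensionless, and averaging over the source cell equals averaging over one period of $k\cdot x$. This reduces the claim to the profile $\cos x_1$ at time $\tau=t|k|^2$.

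\textbf{The one-mode computation.} Step~2 gives $r'(x,x+w)=\varphi_\tau(w)\operatorname{Re}(e^{ix_1}\mathscr R_\tau(w))$. Then
\[
 \int\frac{(r')^2}{\varphi_\tau}\,dy=\mathbb E_{\varphi_\tau}\bigl[\operatorname{Re}(e^{ix_1}\mathscr R_\tau(w))^2\bigr].
\]
Write $\operatorname{Re}(e^{ix_1}\mathscr R)^2=\tfrac12|\mathscr R|^2+\tfrac12\operatorname{Re}(e^{2ix_1}\mathscr R^2)$. The average of the second term over $x_1$ vanishes, because $\mathscr R_\tau(w)$ does not depend on $x$. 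What remains is $\tfrac12\mathbb E|\mathscr R_\tau|^2=\tfrac12\mathscr B(\tau)$ by Step~3, which proves \eqref{eq:amplitudeinformation}. The only subtlety is the interchange of the $x$-average and the $w$-expectation, which holds by Fubini and the Gaussian domination of $\mathscr R_\tau$.

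\textbf{Consequences.} Proposition~\ref{prop:entropyresponse} shows that $\mathscr B(0)=1$, that $\mathscr B(\tau)\to1$ as $\tau\to\infty$, and that $\mathscr B$ increases strictly on $[0,\tau_*]$ and decreases strictly after. Hence $I(0)=I(\infty)=\tfrac12$, and $\tau_*$ is the unique maximizer of $I$, i.e.\ $t=\tau_*/|k|^2$. The identity $q=9I'/\tau$ is the definition \eqref{eq:qdef} rewritten, since $I'=\mathscr B'/2$. For the curvature amplitude, the reparametrization rule $I_\kappa=I_\eta(d\eta/d\kappa)^2$ with $\eta=2\kappa/|k|^2$ gives
\[
 I_\kappa=\frac{4}{|k|^4}\cdot\frac12\mathscr B=\frac{2\mathscr B}{|k|^4}.
\]

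\textbf{Main obstacle.} The main risk is careful bookkeeping rather than a hard estimate. The scaling normalization must be tracked so that no factor of $|k|$ or $A$ is lost. The $\tfrac12$ from the $x$-average must also be correctly separated from the $\tfrac12|\mathscr R|^2$ identity already used in Step~4 of the proof of Proposition~\ref{prop:response}.
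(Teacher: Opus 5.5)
Your proposal is correct and follows essentially the same route as the paper. The chain is: parabolic rescaling to the profile $\cos x_1$ at $\tau=t|k|^2$; the Step~2 score $\operatorname{Re}(e^{ix_1}\mathscr R_\tau(w))$; phase averaging to $\tfrac12|\mathscr R_\tau|^2$; the Gaussian square of Step~3; Proposition~\ref{prop:entropyresponse} for the unique maximum and the endpoint values; and the reparametrization factor $(d\eta/d\kappa)^2=4/|k|^4$.

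The paper adds two remarks that your argument does not need. It notes that the score has conditional mean zero since $\int r_\eta\,dy=1$. It also notes that $K_{g_\eta}=\eta|k|^2\cos(k\cdot x)/2+O(\eta^2)$, which justifies calling $\kappa$ the linearized curvature amplitude.
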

\begin{proof}
After rotation and parabolic rescaling, Step~2 of
Proposition~\ref{prop:response} gives the amplitude score
$\operatorname{Re}(e^{ix_1}\mathscr R_\tau(w))$ under the unperturbed
Gaussian transition law. Averaging its square over the uniform phase
$x_1$ gives $\tfrac12|\mathscr R_\tau(w)|^2$. The Gaussian square in Step~3
is exactly $\mathscr B(\tau)$, which proves
\eqref{eq:amplitudeinformation}. The score has conditional mean zero
because $\int r_\eta(t,x,y)\,dy=1$. Proposition~\ref{prop:entropyresponse}
therefore proves the asserted unique maximum and endpoint limits;
\eqref{eq:qinformation} is \eqref{eq:qdef}. Finally
$K_{g_\eta}=\eta |k|^2\cos(k\cdot x)/2+O(\eta^2)$, and information
transforms by $(d\eta/d\kappa)^2=4/|k|^4$.
\end{proof}

More generally, if the metric path is $g_\eta=(1+\eta f)g_0$ with
real mean-zero trigonometric $f$, translation invariance diagonalizes its
amplitude information:
\[
 I_f(t)=\sum_{k\ne0}|\widehat f_k|^2\mathscr B(t|k|^2),\qquad
 \frac12D^2E_t^\infty(0)[f,f]=\frac{9A}{2t}\frac{d}{dt}I_f(t).
\]
This identity follows from the one-mode calculation, polarization and
Proposition~\ref{prop:response}. It does not select a common optimal time
for a mixture of different frequencies.

Thus the sign of curvature coercivity has an observational meaning:
the quadratic heat-Fisher deficit is positive precisely while the
information about this metric mode is increasing with observation time.
At $\tau_*$ that increase stops; the information itself is still strictly
positive. Numerically $I(\tau_*)\simeq0.53290255$, about $6.58\%$ above its
endpoint value. These numbers illustrate the exact identities and are
not used in their proof. The two endpoint regimes retain different
information: at small time, local diffusivity; at long time, the
stationary phase density of the periodic medium. In particular,
$\tau_*$ is not an identifiability boundary.

\begin{figure}[tbp]
\centering
\includegraphics[width=\textwidth]{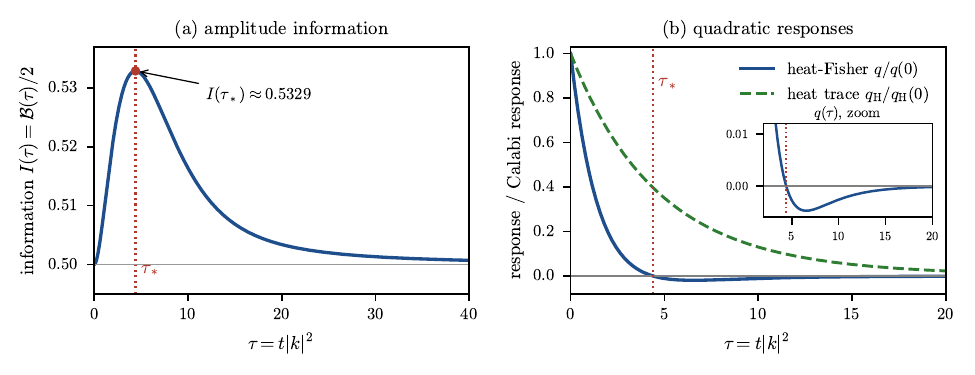}
\caption{Why $\tau_*$ is a statistical threshold. (a) The information
$I(\tau)=\mathscr B(\tau)/2$ about one metric mode in the lifted
experiment of Proposition~\ref{prop:amplitudeinformation}: it equals
$1/2$ at both ends and has a single interior maximum at $\tau_*$.
(b) The heat-Fisher quadratic response $q=9I'/\tau$, normalized by its
Calabi value $q(0)=1/4$, changes sign exactly at that maximum (inset:
the small negative lobe), whereas the heat-trace response $q_H/q_H(0)$
of Proposition~\ref{prop:heattrace} stays positive at every scale.
Curves are quadrature evaluations of proved formulas and are not
used in any proof.}
\label{fig:threeresponses}\label{fig:response}
\end{figure}

\begin{remark}[What is observed and what is optimized]
\label{rem:observationdesign}
The source design in Proposition~\ref{prop:amplitudeinformation} is fixed.
If $X$ is instead drawn from the unknown stationary density
$(1+\eta\cos(k\cdot x))/A$, its own score contributes $\cos(k\cdot X)$;
the joint information becomes $(1+\mathscr B(\tau))/2$, because the
cross term vanishes by conditional centering. If only the wrapped
endpoint $Y\bmod\mathcal L$ is recorded, its score is the conditional
expectation of the lifted score given the wrapped observation, so its
information is at most \eqref{eq:amplitudeinformation}; equality and the
same exact maximizing time are not asserted for that experiment.
Nor do we optimize information per unit running time, simultaneous
estimation of several modes, or nonzero-amplitude information. The
exact design statement is for the plane-lift experiment at the flat
metric. In the short-time, high-frequency regime the compact comparison
explains why this is the relevant local model.
\end{remark}

\begin{remark}[Numerical values]\label{rem:numerics}
$\tau_*\approx4.40822468$ and $\min q\approx-0.004741$ near
$\tau\approx6.55$ (not used in any proof).
\end{remark}

\begin{corollary}[Quantitative quadratic positivity]
\label{cor:largerband}
Let $f$ be a real mean-zero trigonometric polynomial supported in
$|k|\le\Lambda$. If $t\Lambda^2\le\Theta\le2$, then
$\frac{e^{-\Theta}}{2p(\Theta/2)}N(f)\le\frac12D^2E_t^\infty(0)[f,f]
\le\frac14N(f)$. In particular the lower constant is $\frac1{16}$ for $\Theta=1$ and
$\frac1{60}$ for $\Theta=2$. For $t\Lambda^2\le4$, one also has the explicit
lower constant $3/1000$ by Proposition~\ref{prop:explicitfour}. For every fixed $0<\Theta<\tau_*$ the same
holds with the positive lower constant $m_\Theta=\min_{[0,\Theta]}q$.
\end{corollary}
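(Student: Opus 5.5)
The corollary follows from the Fourier diagonalization in Proposition~\ref{prop:response} together with the sign and size bounds on $q$. By that proposition,
\[
 \tfrac12D^2E_t^\infty(0)[f,f]=A\sum_{k\ne0}|k|^4q(t|k|^2)|\widehat f_k|^2 .
\]
Parseval with the normalization $\widehat v_k=A^{-1}\int v e^{-ik\cdot x}$ gives
\[
 N(f)=\|\Delta f\|_2^2=A\sum_k|k|^4|\widehat f_k|^2 .
\]
So both inequalities reduce to pointwise bounds on the multiplier $q(t|k|^2)$ over the support of $\widehat f$. On that support $|k|\le\Lambda$, so $0<t|k|^2\le t\Lambda^2\le\Theta$. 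Any lower bound $m$ for $q$ on $[0,\Theta]$ therefore yields $\tfrac12D^2E_t^\infty(0)[f,f]\ge mN(f)$. The upper bound $q\le\tfrac14$ on all of $[0,\infty)$, from Proposition~\ref{prop:responseglobal}(ii), yields $\tfrac12D^2E_t^\infty(0)[f,f]\le\tfrac14N(f)$ with no band restriction.

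For the explicit lower constant with $\Theta\le2$, I use Proposition~\ref{prop:responseglobal}(iii): $q(\tau)\ge e^{-\tau}/(2p(\tau/2))$ on $[0,2]$. The right side is decreasing in $\tau$, since $e^{-\tau}$ decreases and $p$ is increasing on $[0,\infty)$. Its minimum over $[0,\Theta]$ is therefore attained at $\tau=\Theta$, giving the constant $e^{-\Theta}/(2p(\Theta/2))$. Evaluating it gives $4/(23e)>1/16$ at $\Theta=1$ and $e^{-2}/8>1/60$ at $\Theta=2$, as computed in (iii).

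For $t\Lambda^2\le4$, Proposition~\ref{prop:explicitfour} supplies $q\ge3/1000$ on $[0,4]$ directly. For a general $\Theta<\tau_*$, Proposition~\ref{prop:responseglobal}(i) shows $q>0$ on $[0,\tau_*)$. Since $q$ is continuous on the compact interval $[0,\Theta]$, with $q(0)=\tfrac14$ its continuous extension, the minimum $m_\Theta$ is attained and is strictly positive.

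There is no substantive obstacle here. The only points to check are the monotonicity of the explicit lower bound, so that the minimum really occurs at the endpoint $\Theta$, and the consistency of the Parseval normalization between $N(f)$ and the factor $A$ in the multiplier formula.
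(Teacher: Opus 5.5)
Your proposal is correct and follows essentially the same route as the paper's proof. Both use the exact Fourier multiplier of Proposition~\ref{prop:response}, Parseval with the normalization $N(f)=A\sum_k|k|^4|\widehat f_k|^2$, the bounds (i)--(iii) of Proposition~\ref{prop:responseglobal} together with Proposition~\ref{prop:explicitfour}, and continuity of $q$ on $[0,\Theta]$ to get $m_\Theta>0$.
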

\begin{proof}
Use the exact multiplier, Parseval and
Propositions~\ref{prop:responseglobal} and~\ref{prop:explicitfour};
$m_\Theta>0$ by Proposition~\ref{prop:responseglobal}(i) and continuity.
\end{proof}

\begin{corollary}[A sharp boundary for uniform resolved-band positivity]
\label{cor:responseobstruction}
For every $\Theta>\tau_*$ there are $t_M\downarrow0$ and mean-zero Fourier
modes $\phi_M(x)=\cos(Mk_0\cdot x)$, $0\ne k_0\in\mathcal L^\vee$,
with $t_MM^2|k_0|^2<\Theta$ for which
$D^2E_{t_M}^T(0)[\phi_M,\phi_M]<0$ for all sufficiently large $M$. At those
same times a fixed low-frequency mode $\psi$ has positive second variation,
so the flat metric is a saddle point of $E_{t_M}^T$ restricted to
$\operatorname{span}\{\psi,\phi_M\}$. One explicit choice, admissible
whenever $\Theta>5$, is $t_M=5/(M^2|k_0|^2)$.
Consequently a uniform positive quadratic coercivity
constant on the full band cannot extend to $\Theta\ge\tau_*$.
\end{corollary}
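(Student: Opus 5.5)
The plan is to derive the compact (periodized) second variation from the exact plane multiplier of Proposition~\ref{prop:response}, plus a winding correction that is exponentially small as $t\to0$. The sign information then comes from Proposition~\ref{prop:responseglobal}.

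\emph{Compact versus plane second variation.} First I would show that for a single mode $\phi=\cos(k\cdot x)$,
\[
\tfrac12D^2E_t^T(0)[\phi,\phi]=\tfrac12D^2E_t^\infty(0)[\phi,\phi]+O\!\left(t^{-2}e^{-c\ell_0^2/t}\,(1+t|k|^2)^{C}\right),
\]
with $c,C$ depending only on $A,\ell_0$. The periodized kernel is $r^T=\sum_{\ell\in\mathcal L}r(t,x,y+\ell)$. Its $\eta$-derivatives at $\eta=0$ are sums of the plane first and second variations over lattice translates. Those are given explicitly by the Duhamel/bridge formulas of Step~2 and are bounded by Gaussians times polynomials in $|w|^2/t$ and $t|k|^2$. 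The Fisher tensor and $\sqrt{\det h}$ are expanded to second order in $\eta$. The terms with $\ell\ne0$ carry factors like $e^{-|w+\ell|^2/(4t)}$ relative to the $\ell=0$ term. Integrated against $r_0$, each such term gains $e^{-c|\ell|^2/t}$, and summing over the lattice with systole at least $\ell_0$ gives the exponential error. I expect this bookkeeping to be the main obstacle, because quotients $|\nabla r|^2/r$ with the periodized denominator have to be expanded. I would handle it by writing $r_0^T=\varphi_t(w)(1+\epsilon_t)$ with $\epsilon_t$ exponentially small near the diagonal cell, and by bounding the far region through Gaussian tails. In the regime used, $t|k|^2$ stays bounded by $\Theta$, so the polynomial factors are harmless.

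\emph{Choice of modes and times.} Fix $0\ne k_0\in\mathcal L^\vee$ and $\tau_0\in(\tau_*,\Theta)$. I would take $t_M=\tau_0/(M^2|k_0|^2)$, so $t_MM^2|k_0|^2=\tau_0<\Theta$ and $t_M\downarrow0$. By Proposition~\ref{prop:response},
\[
\tfrac12D^2E^\infty_{t_M}(0)[\phi_M,\phi_M]=\tfrac A2 M^4|k_0|^4q(\tau_0).
\]
This equals $\tfrac{A\tau_0^2}{2t_M^2}q(\tau_0)$, and $q(\tau_0)<0$ by Proposition~\ref{prop:responseglobal}(i). The error is $O(t_M^{-2}e^{-c\ell_0^2/t_M})$, which is negligible relative to $t_M^{-2}|q(\tau_0)|$. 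Hence $D^2E^T_{t_M}(0)[\phi_M,\phi_M]<0$ for large $M$. For the explicit choice $\tau_0=5$ (admissible when $\Theta>5$), negativity follows from $\tau_*<5$.

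\emph{Low mode and saddle structure.} Let $\psi=\cos(k_0\cdot x)$. Its scale is $t_M|k_0|^2=\tau_0/M^2\to0$, so $q(t_M|k_0|^2)\ge\frac1{16}$ by Proposition~\ref{prop:responseglobal}(iii), and the same comparison gives $D^2E^T_{t_M}(0)[\psi,\psi]>0$. For the mixed term, the plane form is diagonal in Fourier space and $Mk_0\ne\pm k_0$ for $M\ge2$, so $D^2E^\infty(0)[\psi,\phi_M]=0$. Translation invariance of the compact construction gives the same diagonality for $E^T$ directly. The restricted Hessian on $\operatorname{span}\{\psi,\phi_M\}$ is therefore diagonal with entries of opposite signs, and the flat metric is a saddle.

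\emph{Final consequence.} Suppose a uniform positive coercivity constant $c$ held on the full band for some $\Theta'\ge\tau_*$. If $\Theta'>\tau_*$, the modes $\phi_M$ above contradict it directly. If $\Theta'=\tau_*$, take $\tau_0\downarrow\tau_*$: the ratio of $\tfrac12D^2E_t[\phi,\phi]$ to $N(\phi)$ tends to $q(\tau_0)\to0$ (Proposition~\ref{prop:simplezero}), which contradicts any fixed $c>0$.
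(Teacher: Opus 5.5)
Your argument follows the paper's proof: the plane multiplier gives $q(\tau_0)<0$ at $t_M=\tau_0/(M^2|k_0|^2)$, the periodization error is exponentially small, and $\psi=\cos(k_0\cdot x)$ supplies the positive direction.

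\begin{itemize}
\item The compact comparison you plan to build by hand is what the paper takes from Lemma~\ref{lem:compact} with $j=2$. The paper applies it to $\delta\phi_M$ for a small fixed $\delta$ and divides by $\delta^2$; this changes the ratio to $N(\phi_M)$ by at most $C t_M^{-3}e^{-c\ell_0^2/t_M}$.
\item Your use of translation invariance to make the compact cross term $D^2E^T_{t_M}(0)[\psi,\phi_M]$ vanish exactly is correct. It is slightly sharper than the paper's exponential bound on that term.
\item To call the flat metric a saddle you must also state that $DE^T_{t_M}(0)$ vanishes on the span. The same invariance gives this, since a translation-invariant linear functional is a multiple of $\int f$.
\end{itemize}

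The step that fails as written is the boundary case $\Theta'=\tau_*$. Taking $\tau_0\downarrow\tau_*$ uses modes with $t|k|^2=\tau_0>\tau_*$. These lie outside the band $t|k|^2\le\tau_*$, so they say nothing about coercivity on it. You need admissible modes, and either of two choices works:
\begin{itemize}
\item Take $\tau_0=\tau_*$ exactly, as the paper does. The plane ratio is then $q(\tau_*)=0$ and the compact ratio tends to $0$.
\item Let $\tau_0\uparrow\tau_*$, and for each $\tau_0$ choose $M$ so large that the compact error is below $q(\tau_0)$. The ratio is then at most $2q(\tau_0)\to0$.
\end{itemize}
Continuity of $q$ suffices for this; Proposition~\ref{prop:simplezero} is not needed.
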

\begin{proof}
Choose $\tau_0\in(\tau_*,\Theta)$ and put
$t_M=\tau_0/(M^2|k_0|^2)$. The plane multiplier identity and
Proposition~\ref{prop:responseglobal}(i) give
$\tfrac12D^2E_{t_M}^\infty(0)[\phi_M,\phi_M]/N(\phi_M)=q(\tau_0)<0$.
The $j=2$ part of Lemma~\ref{lem:compact} changes this ratio by at most
$C_{\tau_0}t_M^{-3}e^{-c\ell_0^2/t_M}\to0$ (apply it to $\delta\phi_M$ with a small
fixed $\delta>0$, then divide by $\delta^2$, so its small-amplitude
hypothesis is harmless). Thus the compact second variation is negative for large $M$.
For $\psi(x)=\cos(k_0\cdot x)$ the same normalized compact second variation
tends to $q(0)=1/4>0$. For $M\ge2$, $\psi,\phi_M$ have disjoint spectra, so
$D^2E_{t_M}^\infty(0)[\psi,\phi_M]=0$ and, by Lemma~\ref{lem:compact}, the compact
cross term is $O(t_M^{-3}e^{-c\ell_0^2/t_M})$. Source-translation invariance makes the flat metric
stationary against these mean-zero conformal directions, and smooth dependence
on each finite amplitude gives both increasing and decreasing nearby
metrics in $\operatorname{span}\{\psi,\phi_M\}$.

The value $5$ exceeds $\tau_*$ by
Proposition~\ref{prop:responseglobal}(i). At $\Theta=\tau_*$ take
$t_M=\tau_*/(M^2|k_0|^2)$: the plane normalized second variation is exactly zero
and its compact counterpart tends to zero, so no positive lower constant
uniform in $M$ holds. A nonlinear quadratic lower bound would imply
such a Hessian bound as the amplitude tends to zero, so it is ruled out as well.
\end{proof}

\begin{remark}[Small observation times and the normalization]
\label{rem:normalization}
The normalization $9/t^2$ makes $|k|^4q(t|k|^2)\to|k|^4/4$ as $t\to0$, the
multiplier of $\frac12D^2\mathcal C(0)$ (since $K_{g_f}=-\frac12\Delta f+O(f^2)$,
$dA_{g_f}=(1+f)dx$). The expected expansion
$V_t=A+ta_1\int K\,dA+t^2a_2\int K^2\,dA+o(t^2)$ (Gauss--Bonnet removes
the $O(t)$ term on the torus) is then forced to have $a_2=-1/9$.
Theorem~\ref{thm:fixedlimit} proves it, with $a_1=-1$ and $a_2=-1/9$, using
\cite{MPR}; Corollary~\ref{cor:Eclose} gives the uniform version. The
stability bounds of this paper do not use it.
\end{remark}

\subsection{Comparison with the fixed-time heat trace}
\label{subsec:heattrace}
On a closed surface $\operatorname{Tr}e^{t\Delta_g}=(4\pi t)^{-1}\bigl(A_g+\frac t3\int K\,dA
+\frac{t^2}{15}\int K^2\,dA+O(t^3)\bigr)$ \cite{MP,McKeanSinger,Gilkey}.
Conformal perturbations of heat kernels and traces via the Duhamel series
go back to Morpurgo \cite{Morpurgo,Morpurgo2002}, and heat-trace critical
metrics, including under conformal variations, were characterized in
\cite{ElSoufiIlias}; the flat metric is one, since the first variation
below vanishes for mean-zero $f$. We evaluate the second-order Duhamel term
there at fixed $t>0$ for every mode. Put
$\operatorname{Tr}_t(f)=\int_{\mathbb T^2}r_f(t,x,x)\,dx$, the local heat trace of the plane lift. The torus trace
differs from it by $\int_{\mathbb T^2}\sum_{0\ne n\in\mathcal L}r_f(t,x,x+n)\,dx$, which for
$t\Lambda^2\le\theta$ with any fixed $\theta<\infty$ (not requiring $\theta<\tau_*$) and small Wiener amplitude
is $O(t^{-1}e^{-c\ell_0^2/t})$ with its parameter jets, by \eqref{eq:nonzerodensityjets}
of Theorem~\ref{thm:neargaussian}, with constants depending on $\theta$.

\begin{proposition}[The heat-trace multiplier has no sign change]
\label{prop:heattrace}
For real mean-zero trigonometric $f$ and $t>0$,
\[
 \frac{60\pi}t\cdot\frac12D^2\operatorname{Tr}_t(0)[f,f]
   =A\sum_{k\ne0}|k|^4q_{\rm H}(t|k|^2)|\widehat f_k|^2,\qquad
 q_{\rm H}(\tau)=\frac{15}2\int_0^1s^2(1-s)^2e^{-\tau s(1-s)}\,ds .
\]
Thus $q_{\rm H}(0)=\frac14=q(0)$, and $q_{\rm H}$ is positive and strictly
decreasing on $[0,\infty)$ with $\tau^3q_{\rm H}(\tau)\to30$. The normalized
fixed-time heat trace has no resolution threshold, while its quadratic
coercivity relative to $N(f)$ degenerates like $(t\Lambda^2)^{-3}$.
\end{proposition}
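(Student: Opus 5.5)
The plan is to compute the second-order Duhamel term of the local heat trace exactly, one mode at a time, following Steps~1--2 of the proof of Proposition~\ref{prop:response}. By source translation invariance the form $\frac12D^2\operatorname{Tr}_t(0)$ is diagonal in the Fourier basis. Rotation and the parabolic scaling $x\mapsto\lambda x$, $t\mapsto\lambda^2t$ reduce the computation to $\rho=1+\eta\cos x_1$ at time $\tau=t|k|^2$; the factor $t^{-1}$ in the normalization accounts for the $(4\pi t)^{-1}$ scaling of the diagonal kernel. Since $L=\rho^{-1}\Delta=(1-\eta\cos x_1+\eta^2\cos^2x_1+\dots)\Delta$, the $\eta^2$ coefficient of $\int r(\tau,x,x)\,dx$ has two parts: a second-order Duhamel chain with two first-order insertions $-\cos\xi_1\Delta$, and a single first-order Duhamel insertion of $\cos^2\xi_1\Delta$.

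\textbf{Single insertion.} For the trace, cyclicity gives $\int_0^\tau\operatorname{Tr}(e^{(\tau-s)\Delta}V\Delta e^{s\Delta})\,ds=\tau\operatorname{Tr}(V\Delta e^{\tau\Delta})$. With $V=\cos^2\xi_1$ this is a constant ($\tau$-explicit, $\eta$-quadratic, frequency-independent) times the area. The same cyclicity reduces the double chain to
\[
 \tau\int_0^1\operatorname{Tr}\bigl(V\Delta e^{\sigma\tau\Delta}V\Delta e^{(1-\sigma)\tau\Delta}\bigr)(1-\ldots)\,d\sigma ,
\]
a standard second-order trace formula, $\frac12\int_0^\tau\!\int_0^{\tau}$ of a product of two resolvent-type factors evaluated in Fourier variables.

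\textbf{Double chain in Fourier space.} On the plane lift, $\cos\xi_1$ shifts frequencies by $\pm e_1$, so the trace density becomes an integral over $\xi\in\R^2$ of $|\xi|^2|\xi+e_1|^2e^{-s|\xi|^2-(\tau-s)|\xi+e_1|^2}$. I will Gaussian-integrate in $\xi$, completing the square with centre $-(1-\sigma)e_1$ for $s=\sigma\tau$. The exponent becomes $-\tau\sigma(1-\sigma)$, and the polynomial moments leave a quartic in $\sigma$. Combined with the single-insertion term, which supplies exactly the $k=0$ value, the difference should collapse, after an integration by parts in $\sigma$, to $\frac{15}{2}\int_0^1s^2(1-s)^2e^{-\tau s(1-s)}\,ds$ times $|k|^4$ and the normalizing constants. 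That the $k\to0$ constant terms cancel is forced because the first variation vanishes and mean-zero $f$ has no $k=0$ part.

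\textbf{Main obstacle and properties.} The main obstacle is this algebraic collapse: tracking the quartic polynomial and the constant cancellation carefully. I will check it against $q_{\rm H}(0)=\frac{15}{2}\cdot\frac1{30}=\frac14$, which matches the heat-trace coefficient $t^2\int K^2/15$ via $K\approx-\frac12\Delta f$ and the factor $60\pi/t$. Once the formula holds, the remaining properties are immediate. Positivity is clear from the integrand, and strict decrease follows because $\partial_\tau$ of the integrand is $-s(1-s)$ times a positive function. For the asymptotics, Laplace's method near $s=0$ and $s=1$ gives $2\cdot\frac{15}{2}\int_0^\infty s^2e^{-\tau s}\,ds=30\tau^{-3}$, so $\tau^3q_{\rm H}(\tau)\to30$. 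The degeneration $(t\Lambda^2)^{-3}$ of coercivity then follows from the asymptotic and Parseval.
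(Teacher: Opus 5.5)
Your plan is correct and is essentially the paper's proof: a second-order Duhamel expansion of $(1+f)^{-1}\Delta$; cyclicity reducing the single insertion to $-\eta^2/(8\pi t)$ and the double chain to $\frac{t^2}{2}\int_0^1$ of a frequency-shifted Gaussian integral (so your garbled weight $(1-\ldots)$ should become this prefactor); completing the square to get $e^{-\tau s(1-s)}$ and the moments $(\tau^2\sigma^2+\tau(1-2s)^2+2)/t^2$; and an integration by parts in $s$ that cancels the constant coming from the single insertion. Two things need adjusting. First, add the justification that the $\eta^2$ coefficient of $\operatorname{Tr}_t$ really is this formal Duhamel term; the paper obtains it from holomorphy in $\eta$ via Theorem~\ref{thm:neargaussian}(b). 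Second, the $k\to0$ cancellation is not explained by the vanishing first variation: it reflects the diagonal value $\rho/(4\pi t)$ being linear in a constant conformal factor, though the explicit computation makes that heuristic unnecessary.
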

\begin{proof}
By translation invariance the form is diagonal, so take
$f=\eta\cos(k\cdot x)$. Write $L=(1+f)^{-1}\Delta=\Delta+V_1+V_2+O(\eta^3)$
with $V_1=-f\Delta$ and $V_2=f^2\Delta$. The first-order Duhamel term has
diagonal average $t\,\overline f\cdot(4\pi t^2)^{-1}=0$. To second order,
the per-area trace density is the sum of
$t\,\overline{f^2}\int(-|\xi|^2)e^{-t|\xi|^2}\frac{d\xi}{4\pi^2}
=-\eta^2/(8\pi t)$ and, by cyclicity,
\[
 \frac{t^2}2\int_0^1\!\!\int\frac{\eta^2}{4}\sum_\pm|\xi|^2|\xi\pm k|^2
 e^{-t[(1-s)|\xi|^2+s|\xi\pm k|^2]}\frac{d\xi}{4\pi^2}\,ds .
\]
Here $V_1$ maps $e^{i\xi\cdot x}$ to $\frac\eta2|\xi|^2(e^{i(\xi+k)\cdot x}
+e^{i(\xi-k)\cdot x})$. With $\zeta=\xi+sk$ one has
$(1-s)|\xi|^2+s|\xi+k|^2=|\zeta|^2+s(1-s)|k|^2$, and the moments
$\mathbb E\zeta_i^2=1/(2t)$, $\mathbb E|\zeta|^4=2/t^2$ of the weight
$e^{-t|\zeta|^2}$ give
$\mathbb E\,|\zeta-sk|^2|\zeta+(1-s)k|^2
 =(\tau^2\sigma^2+\tau(1-2s)^2+2)/t^2$ with $\sigma=s(1-s)$, $\tau=t|k|^2$.
Since $\int e^{-t|\zeta|^2}d\zeta/(4\pi^2)=(4\pi t)^{-1}$, the second-order
density equals $\eta^2(16\pi t)^{-1}T(\tau)$ with
$T(\tau)=-2+\int_0^1e^{-\tau\sigma}(\tau^2\sigma^2+\tau(1-2s)^2+2)\,ds$.
Integration by parts, using $\frac d{ds}e^{-\tau\sigma}=-\tau(1-2s)e^{-\tau\sigma}$,
gives $\tau\int_0^1(1-2s)^2e^{-\tau\sigma}ds=2-2\int_0^1e^{-\tau\sigma}ds$,
hence $T(\tau)=\tau^2\int_0^1\sigma^2e^{-\tau\sigma}ds$. Multiplying by
$A$ and $60\pi/t$ and using $\sum|\widehat f_k|^2=\eta^2/2$ gives the
formula. Positivity and monotonicity are immediate, $\int_0^1\sigma^2ds=1/30$,
and near the endpoints $\sigma\sim s$ gives
$\tau^3\int_0^1\sigma^2e^{-\tau\sigma}ds\to4$. The terms of order $\eta^3$
do not contribute to the second variation. The expansion is justified by
Theorem~\ref{thm:neargaussian}(b): for $f=\eta\cos(k\cdot x)$ the plane
kernel is holomorphic in $\eta$ with Gaussian bounds uniform up to the
diagonal, so $\eta\mapsto \operatorname{Tr}_t(\eta\cos(k\cdot x))$ is analytic and its
$\eta^2$ coefficient is the second-order Duhamel term computed above.
\end{proof}
As $t\to0$ the formula reproduces the coefficient $t^2/15$ above. Unlike
$q$, the fixed-time heat trace and (Proposition~\ref{prop:entropyresponse})
entropy respond positively at every scale; the sign change at $\tau_*$
comes from differentiating the entropy in the observation scale
(Figure~\ref{fig:threeresponses}).

\section{Periodization and the compact theorem}\label{sec:compact}\label{subsec:flatlattices}
Periodization identifies endpoints differing by a lattice vector; the
resulting loss and its parameter jets are exponentially small here.

\begin{lemma}[Differentiated compact--plane comparison]\label{lem:compact}
Fix $\theta$ and $0<\gamma<1/11$, and let $\varepsilon\le\varepsilon_0(\theta,\gamma)$,
the amplitude radius of Theorem~\ref{thm:neargaussian}. There are
$t_0,C,c>0$ such that, for $0<t<t_0$, $t\Lambda^2\le\theta$, and
$0\le j\le4$,
\begin{align}
 \sup_{x,|s|\le1}|\partial_s^j(h_s^T-h_s)|
       &\le C\varepsilon^jt^{-1}e^{-c\ell_0^2/t},\label{eq:compacth}\\
 \sup_{|s|\le1}|\partial_s^j(V_t^T-V_t^\infty)|
       &\le CA\varepsilon^jt^{-1}e^{-c\ell_0^2/t}.
                                                        \label{eq:compactV}
\end{align}
For mean-zero $f$ and $j=2,3,4$, the resulting energy bound is
\begin{equation}\label{eq:compactE}
 |\partial_s^j(E_t^T-E_t^\infty)(sf)|
 \le C_\theta t^{-3}e^{-c\ell_0^2/t}\varepsilon^{j-2}N(f).
\end{equation}
\end{lemma}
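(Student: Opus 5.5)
The plan is to write the compact transition density as the periodization of the plane kernel, $r_s^T(t,x,y)=\sum_{n\in\mathcal L}r_s(t,x,y+n)$ with $y$ in a fundamental cell, and to treat every quantity built from $r_s^T$ as a perturbation of the corresponding plane quantity by the off-diagonal images $n\neq0$. For $t<t_0$ the bounds \eqref{eq:nonzerodensityjets} and \eqref{eq:nonzerosourcejets} of Theorem~\ref{thm:neargaussian} give, for $j\le4$,
\[
 \sum_{n\ne0}\bigl(|\partial_s^jr_s(t,x,y+n)|+\sqrt t\,|\nabla_x\partial_s^jr_s(t,x,y+n)|\bigr)
 \le C\varepsilon^jt^{-1}e^{-\kappa_-|y-x|^2/t}\,\eta_t(y-x),
\]
where $\eta_t$ collects the ratio of the image Gaussians to the central one. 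The image sum is controlled by the lattice counting bound of Lemma~\ref{lem:latticeWiener}, applied to $\mathcal L$ rather than $\mathcal L^\vee$.

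The Fisher tensor $h_s^T=2t\int_{\text{cell}}\nabla_xr^T\otimes\nabla_xr^T/r^T\,dy$ is then compared with the plane tensor $h_s$, which equals the same integral over all of $\R^2$ of the unperiodized kernel. The comparison splits into two pieces.

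\emph{Far region.} Split the plane integral into the region where $y$ lies in the Voronoi cell of $x$ and its complement. On the complement $|y-x|\ge\ell_0/2$, so the integrand with all $s$-jets is bounded, via the real lower bound \eqref{eq:nonzerolower}, by $C\varepsilon^jt^{-2}e^{-((m+2)\kappa_--(m+1)\kappa_+)|w|^2/t}$. Since $\gamma<1/11$, the exponent stays positive, exactly as in \eqref{naani:quotient}. Integrating over $|w|\ge\ell_0/2$ gives $C\varepsilon^jt^{-1}e^{-c\ell_0^2/t}$.

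\emph{Near region.} In the Voronoi cell, write $r^T=r+\delta r$ and expand the quotient. For $|w|\le$ half the systole, every image satisfies $|w+n|^2\ge|w|^2+c\ell_0^2$; more precisely $|w+n|\ge|n|/2$ and $|w+n|^2-|w|^2\ge|n|(|n|-2|w|)$. Hence $\delta r$ and its jets are at most $C\varepsilon^jt^{-1}e^{-\kappa_-|w|^2/t}e^{-c\ell_0^2/t}$ after slightly shrinking the exponent. Since $r^T\ge r$ is positive, the quotient differences are estimated by the same Leibniz-type expansion as in \eqref{naani:quotient}. This gives \eqref{eq:compacth}.

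For \eqref{eq:compactV}, both $h_s^T$ and $h_s$ are uniformly elliptic and bounded: Lemma~\ref{lem:regression} gives $h_s\succeq(1-\varepsilon)I$, and for small $t_0$ the perturbation keeps $h_s^T$ in a compact set of positive matrices. The map $h\mapsto\sqrt{\det h}$ is smooth there, so the chain rule in $s$ transfers \eqref{eq:compacth} to the integrand. Integrating over $\mathbb T^2$ gives the factor $A$. For \eqref{eq:compactE}, the difference $E^T_t-E^\infty_t$ equals $-9t^{-2}(V_t^T-V_t^\infty)$ and the geometric term cancels. The remaining task is to upgrade $\varepsilon^j$ to $\varepsilon^{j-2}N(f)$.

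The main obstacle is this conversion. Two routes are available.

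\emph{First route: holomorphic scaling.} Redo the $V$-bound along a complex disk $1+zF$ using Theorem~\ref{thm:neargaussian}(b). Because of the quotient, only real lower bounds are available, so this route has to go through the real-jet bound.

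\emph{Second route: two $s$-derivatives carry $\nabla f$.} Note that $\varepsilon^2\le C_W^2N(f)$ by Lemma~\ref{lem:interp}(c) together with Lemma~\ref{lem:latticeWiener}, where $C_W$ depends only on $A,\ell_0$. This gives directly
\[
 \varepsilon^j\le C(A,\ell_0)\,\varepsilon^{j-2}N(f),\qquad j\ge2.
\]

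I would use the second route, since it is immediate. With the factor $9t^{-2}$ from the normalization, it yields $C_\theta t^{-3}e^{-c\ell_0^2/t}\varepsilon^{j-2}N(f)$.

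The step most likely to need care is the near-region comparison, specifically the uniformity of the image sum over lattices with systole at least $\ell_0$ and the choice of $t_0$ depending only on $\ell_0$. I would handle it with the disjoint-disk counting argument of Lemma~\ref{lem:latticeWiener}: it gives $\#\{n:|n|\le R\}\le C(1+R/\ell_0)^2$, and summing $e^{-c|n|^2/t}$ against this count is dominated by its first shell.
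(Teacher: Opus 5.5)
Your outline is the paper's. You periodize, compare on the centered Voronoi cell, use the quotient margin $6\kappa_--5\kappa_+=(1-11\gamma)/4$ with disk packing, pass to $\sqrt{\det}$ on a compact set of positive matrices, and get \eqref{eq:compactE} from $\varepsilon^2\le C_W^2N(f)$. That last step is a one-line step, not the main obstacle.

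The gap is in your near region. The inequality $|w+n|^2\ge|w|^2+c\ell_0^2$ for $|w|$ at most half the systole is false. For a shortest vector $\ell$ and $w=-\ell/2$ one has $|w+\ell|=|w|$; your own lower bound $|n|(|n|-2|w|)$ vanishes there. At the flat metric $\delta r\ge r$ at that point, which is exactly the midpoint overlap behind Theorem~\ref{thm:systolicloss}.

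The Voronoi cell is also strictly larger than the disk of radius half the systole. Your far region only treats the plane integrand outside the cell, so the compact integrand on the outer part of the cell is never estimated.

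Shrinking $\kappa_-$ to some $\kappa'<\kappa_-$ does give $\delta r$-jets $\lesssim\varepsilon^jt^{-1}e^{-\kappa'|w|^2/t}e^{-c\ell_0^2/t}$ on the whole cell, from $|w+n|\ge\max(|w|,|n|/2)$. It does not rescue the perturbative expansion there. The telescoped denominator term is bounded, via $r^T\ge r$, by the plane numerator product times $(m+1)\,\delta r\,r^{-m-2}$. That gives $m+3\le7$ decaying factors against $r^{m+2}$, so it needs the margin $7\kappa_--6\kappa_+=(1-13\gamma)/4$. This margin is not available for $1/13\le\gamma<1/11$, and the small constant $c$ gained from shrinking $\kappa_-$ cannot absorb the resulting exponential growth in $|w|^2/t$ near the cell boundary.

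The repair, which is what the paper does, splits the Voronoi cell into two parts.
\begin{itemize}
\item \emph{Inner ball.} Use the perturbative comparison only on a small ball, say $|w|<\ell_0/8$. There $|w+n|\ge7|n|/8$ for $n\ne0$, and even crude bounds leave a factor $e^{-(49\kappa_--6\kappa_+)\ell_0^2/(64t)}$ after dividing by $r^6$.
\item \emph{Rest of the cell.} Bound the compact and plane quotient jets separately rather than their difference. The paper writes each compact jet as a product of averages $\sum_n\omega_nU_{i,n}/r_n$ with probability weights $\omega_n=r_n/r_{\rm per}$ and applies H\"older, getting $C\varepsilon^jt^{-2}\sum_ne^{-(6\kappa_--5\kappa_+)|w+n|^2/t}$.
\end{itemize}
Your own ingredients also suffice for the outer part. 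Combining $r^T\ge r$ with the image-sum bound $Ce^{-\kappa'|w|^2/t}$ for the numerators costs only the margin $6\kappa'-5\kappa_+$, which is positive for $\kappa'$ close to $\kappa_-$. Since $|w+n|\ge|w|\ge\ell_0/8$ on that part of the cell, integration gives $C\varepsilon^jt^{-1}e^{-c\ell_0^2/t}$. With this split, the rest of your argument goes through.
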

\begin{proof}
Write $\kappa_\pm=(1\pm\gamma)/4$, so
$\kappa_\sharp=6\kappa_--5\kappa_+=(1-11\gamma)/4>0$.
For $n\in\mathcal L$ put $r_n=r_s(t,x,x+w+n)$ and
$r_{\rm per}=\sum_n r_n$.
Source derivatives are taken at fixed target $y$, before
substituting $y=x+w$. The compact Fisher quotient is $(\nabla_xr_{\rm per})^{\otimes2}/r_{\rm per}$ (the
factor $\rho_s(y)$ cancels), and Gaussian
bounds justify periodizing all parameter and source jets.
A $j$th quotient derivative is a finite sum of terms
$r^{1-\ell}\prod_{i=1}^\ell U_i$, $2\le \ell\le j+2\le6$, where exactly two
$U_i$ carry a source derivative and the parameter orders $\nu_i$
sum to $j$. As jets of $r_{\rm per}$ are sums of jets of $r_n$,
the quotient is a product of averages
$\sum_n\omega_nU_{i,n}/r_n$ with probability weights $\omega_n=r_n/r_{\rm per}$, and
H\"older gives
$|r_{\rm per}\prod_i\sum_n\omega_n U_{i,n}/r_n|
 \le\prod_i(\sum_n r_n|U_{i,n}/r_n|^\ell)^{1/\ell}$.
If $\alpha_i\in\{0,1\}$ records a source derivative, the $i$th
sum is at most
$C\varepsilon^{\ell\nu_i}t^{-1-\ell\alpha_i/2}
      \sum_ne^{-[\ell\kappa_--(\ell-1)\kappa_+]|w+n|^2/t}$.
Thus compact and folded-plane quotient jets are at most
$C\varepsilon^jt^{-2}\sum_ne^{-\kappa_\sharp|w+n|^2/t}$, and
on the centered Voronoi cell outside $|w|<\ell_0/8$,
$|w+n|\ge|w|$, so integration and unfolding give
$C\varepsilon^jt^{-1}e^{-c\ell_0^2/t}$ (before normalization by $2t$).

Inside $|w|<\ell_0/8$, $|w+n|\ge7|n|/8$ for $n\ne0$.
Packing disjoint disks of radius $\ell_0/2$ bounds the lattice sums
uniformly for $t\le\ell_0^2$, so the nonprincipal numerator jets sum to at most
$C\varepsilon^\nu t^{-1-\alpha/2}e^{-49\kappa_-\ell_0^2/(64t)}$, whereas
$r_0\ge ct^{-1}e^{-\kappa_+\ell_0^2/(64t)}$.
Telescoping the numerator factors, using
$|r_{\rm per}^{-k}-r_0^{-k}|\le k(r_{\rm per}-r_0)r_0^{-k-1}$, $k=\ell-1$ (as $r_{\rm per}\ge r_0>0$),
and noting the largest inverse power is $r_0^{-\ell}$, $\ell\le6$, bounds the compact
quotient minus its principal-plane term by
$C\varepsilon^jt^{-2}e^{-(49\kappa_--6\kappa_+)\ell_0^2/(64t)}$.
The remaining nonprincipal folded-plane terms on the inner ball are
at most $C\varepsilon^jt^{-2}\sum_{n\ne0}e^{-\kappa_\sharp|w+n|^2/t}$ by the earlier estimate,
with exponentially small integral. This proves \eqref{eq:compacth}, with constants uniform in the
source; no source-dependent cell was differentiated.
The plane tensor is uniformly positive (Lemma~\ref{lem:regression})
with jets bounded by $C_j\varepsilon^j$ by the same estimates, so after reducing $t_0$ both tensors lie in a
fixed compact set of positive matrices, where the determinant has bounded
derivatives through order five; the chain rule, telescoping and source integration
give \eqref{eq:compactV}.
Multiplying by $9/t^2$ and using $\varepsilon^2\le C_W^2N(f)$ for mean-zero $f$
(Lemma~\ref{lem:interp}(c), cutoff independent) proves \eqref{eq:compactE}; $f=0$ is immediate.
\end{proof}

\begin{theorem}[Uniform nonlinear stability on the full resolved ball]
\label{thm:stability}\label{thm:flatlattices}
Fix $A,\ell_0>0$ and $0<\theta<\tau_*$, where $\tau_*$ is the unique positive response zero
of Proposition~\ref{prop:responseglobal}. There are $\varepsilon_*,t_*,c_*,C_*>0$
such that every real mean-zero trigonometric polynomial with
$\operatorname{supp}\widehat f\subset\{|k|\le\Lambda\}$,
$\|f\|_{\mathcal A}\le\varepsilon_*$, and $t\Lambda^2\le\theta$
satisfies
\begin{align}
 c_*\|\Delta f\|_2^2&\le E_t^\infty(f)\le C_*\|\Delta f\|_2^2,
                                                        \label{eq:planestability}\\
 c_*\|\Delta f\|_2^2&\le E_t^T(f)-E_t^T(0)
                           \le C_*\|\Delta f\|_2^2,
                  \qquad 0<t<t_* .                    \label{eq:torusstability}
\end{align}
All constants are uniform over lattices of covolume $A$ and systole at
least $\ell_0$, independent of the cutoff and number of modes, and
require no lower spectral cutoff; the band
$t\Lambda^2\le4$ is admissible. The constants may deteriorate as
$\theta\uparrow\tau_*$; no uniform positive lower constant extends to
$\theta\ge\tau_*$ by Corollary~\ref{cor:responseobstruction}. The radial third and fourth
relative remainder bounds also hold on the torus. One may take
$c_*=m_\theta/2$ and $C_*=1/2$, where $m_\theta=\min_{[0,\theta]}q>0$.
\end{theorem}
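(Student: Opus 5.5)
Taylor's formula with integral remainder, applied along the radial path $s\mapsto E_t^\infty(sf)$, $s\in[0,1]$, gives the plane estimate \eqref{eq:planestability}. The flat metric gives $E_t^\infty(0)=0$, since on the plane $h_t=I$ exactly. Source-translation invariance makes the first variation in mean-zero directions vanish. Hence
\[
 E_t^\infty(f)=\tfrac12D^2E_t^\infty(0)[f,f]+\tfrac16D^3E_t^\infty(0)[f^3]+\int_0^1\frac{(1-s)^3}{6}\,\partial_s^4E_t^\infty(sf)\,ds .
\]
The quadratic term is $A\sum_k|k|^4q(t|k|^2)|\widehat f_k|^2$ by Proposition~\ref{prop:response}. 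Because $t|k|^2\le\theta$ on the support, Parseval ($N(f)=A\sum|k|^4|\widehat f_k|^2$) together with Proposition~\ref{prop:responseglobal}(ii) and the definition of $m_\theta$ gives $m_\theta N(f)\le\frac12D^2E_t^\infty(0)[f,f]\le\frac14N(f)$, with $m_\theta>0$ since $\theta<\tau_*$. Corollary~\ref{cor:planevariations} bounds the cubic and quartic remainders by $C_\theta\varepsilon N(f)$ and $C_\theta\varepsilon^2N(f)$, uniformly in the cutoff. Choosing $\varepsilon_*$ with $C_\theta(\varepsilon_*+\varepsilon_*^2)\le\min(m_\theta/2,1/4)$ then yields $c_*=m_\theta/2$ and $C_*=1/2$.

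For the torus statement \eqref{eq:torusstability}, apply the same Taylor expansion to $s\mapsto E_t^T(sf)-E_t^T(0)$. The first variation vanishes for the same translation-invariance reason. Lemma~\ref{lem:compact}, in the form \eqref{eq:compactE} with $j=2,3,4$, shows that each derivative of order $j\ge2$ differs from its plane counterpart by at most $C_\theta t^{-3}e^{-c\ell_0^2/t}\varepsilon^{j-2}N(f)$. Choose $t_*\le t_0$ so that $C_\theta t^{-3}e^{-c\ell_0^2/t}$ is at most a small fraction of $m_\theta$ for $0<t<t_*$. The compact quadratic, cubic and quartic terms then satisfy the same bounds up to an absorbable error, and shrinking $\varepsilon_*$ once more preserves the constants $m_\theta/2$ and $1/2$ (or slightly worse ones). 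The same comparison transfers the radial third- and fourth-order remainder bounds to the torus.

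Next comes uniformity over lattices. Every constant above depends only on $\theta,\gamma$; the one exception is the comparison in Lemma~\ref{lem:compact}, which depends on $A,\ell_0$ through the lattice packing bound and through $C_W$ of Lemma~\ref{lem:latticeWiener}, and $C_W$ is bounded in terms of $A,\ell_0$ alone. The admissibility of $t\Lambda^2\le4$ follows from Proposition~\ref{prop:explicitfour}, which gives $m_4\ge3/1000$, together with $4<\tau_*$. The sharpness statement is Corollary~\ref{cor:responseobstruction}.

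The main obstacle is the bookkeeping needed to ensure that the amplitude radii and time thresholds of all the earlier sections can be taken simultaneously, independent of $\Lambda$. The first step is to take $\varepsilon_*$ as the minimum of the finitely many radii (Theorem~\ref{thm:neargaussian}, clock, anisotropy, score, moments, compact comparison, and $\le1/4$ for Lemma~\ref{lem:geometric}). A second point to check is that the third-order term at $s=0$ is covered by the $\sup_{|s|\le1}$ bound of Corollary~\ref{cor:planevariations}. Finally, $f$ has mean zero, so $\int\rho=A$ and the master identity \eqref{eq:master} applies along the whole path.
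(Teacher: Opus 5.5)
Your proposal is correct and follows essentially the paper's route: the first variation vanishes by translation invariance, the exact multiplier is bounded below by $m_\theta$ on the resolved band, the cutoff-independent variation bounds of Corollary~\ref{cor:planevariations} enter Taylor's formula, and the result transfers to the torus through \eqref{eq:compactE} with $t_*$ chosen small. The paper is slightly more economical---it stops at the third-order remainder $\tfrac16\sup_{|s|\le1}|D^3E_t^\infty(sf)[f^3]|$ and applies only the $j=2$ case of \eqref{eq:compactE} to $\mathcal D(s)=(E_t^T-E_t^\infty)(sf)$, giving $m_\theta/4$ to the amplitude error and $m_\theta/4$ to the exponential error; your plane choice spends $m_\theta/2$ on the amplitude error alone, so you need that re-splitting to keep exactly $c_*=m_\theta/2$ and $C_*=1/2$ on the torus.
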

\begin{proof}
At the flat metric the first plane and compact variations vanish for
mean-zero $f$ by source-translation invariance, and $E_t^\infty(0)=0$.
Corollaries~\ref{cor:largerband} and~\ref{cor:planevariations}
and Taylor's formula give
$|E_t^\infty(f)-\tfrac12D^2E_t^\infty(0)[f^2]|\le\tfrac16C_\theta\varepsilon N(f)$.
With $m_\theta$ as in Corollary~\ref{cor:largerband}, choose $\varepsilon_*$ so the right side is at most
$m_\theta N(f)/4$; this gives the plane assertion with constants
$\frac34m_\theta$ and $\frac14+\frac14m_\theta\le\frac12$.
For the torus, $\mathcal D(s)=(E_t^T-E_t^\infty)(sf)$ has
$\mathcal D'(0)=0$ by translation invariance, and
\eqref{eq:compactE} with $j=2$ holds uniformly in $|s|\le1$, so
$|E_t^T(f)-E_t^T(0)-E_t^\infty(f)|=|\mathcal D(1)-\mathcal D(0)|
\le\frac12C_\theta t^{-3}e^{-c\ell_0^2/t}N(f)$. Choose $t_*$ so this is at most
$m_\theta N(f)/4$. Since $m_\theta\le q(0)=\frac14$, the compact
comparison holds with constants $c_*=\frac12m_\theta$ and
$\frac14+\frac12m_\theta\le\frac12=C_*$. The $j=3,4$ transfer bounds and the fact that
both remainders subtract the same $\mathcal C$ prove the last assertion.
\end{proof}

\section{Spectral tails without a hard Fourier cutoff}
\label{sec:soft}
The cutoff above controls the diffusion-scale Lipschitz norm and keeps the
quadratic response positive; we separate these roles.
For smooth, real, mean-zero periodic $f$ (with $\varepsilon=\sA(f)$,
$N(f)=\sH(f)^2$; the zero function is included in every class below), fix
$L_*\in(0,\infty)$ and impose
$\sqrt t\,\|\nabla f\|_\infty\le L_*\varepsilon$ of \eqref{eq:HL}, which
allows unboundedly many frequencies.

\begin{lemma}[Uniform estimates from a diffusion-scale Lipschitz bound]
\label{lem:softjets}
For fixed $L_*$ there is a common small amplitude radius such that
\eqref{eq:HL} implies
\begin{equation}\label{eq:softjets}
 \sup_{|s|\le1}\bigl(|D^jE_t^\infty(sf)[f^j]|
                  +|D^jR_t^\infty(sf)[f^j]|\bigr)
 \le C_{L_*}\varepsilon^{j-2}N(f),\qquad j=3,4.
\end{equation}
The compensated scalar estimate of Theorem~\ref{thm:scalar} holds with
$C_{L_*}$ in place of $C_\theta$. For sufficiently small $t$, the same energy
and remainder estimates hold on the compact torus. All these assertions
apply to smooth periodic functions without a finite Fourier support.
\end{lemma}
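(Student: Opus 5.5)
The plan is to use the observation, recorded after \eqref{eq:HL} in Section~\ref{sec:framework}, that the resolution hypothesis $t\Lambda^2\le\theta$ enters Sections~\ref{sec:kernel}--\ref{sec:closure} only through the Lipschitz consequence $\sqrt t\|\nabla f\|_\infty\le\sqrt\theta\,\varepsilon$. I would check this claim at each point of use and then rerun the chain with $L_*$ in place of $\sqrt\theta$.

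In Step~1 of Theorem~\ref{thm:neargaussian}, the rescaled profile $\phi(\xi)=f(x_0+\sqrt t\xi)/\varepsilon$ satisfies $\|\phi\|_\infty\le1$, since $\|f\|_\infty\le\sA(f)$, and $\|\nabla\phi\|_\infty\le L_*$ by \eqref{eq:HL}. These are the only inputs to \eqref{eq:paramlip}, to the Levi bounds \eqref{eq:levibounds}, to the chaining lower bound, and to part~(b). Smoothness of $f$ replaces trigonometric-polynomial smoothness in the identification of the Levi series with the transition density. Lemma~\ref{naclock:lem:complexinput} then follows verbatim with $M_{L_*}$. The marked beta sums of Lemma~\ref{lem:markedbeta} use only $L_\infty\sqrt t\le rL_*$. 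The interpolation inequalities of Lemma~\ref{lem:interp} hold for all smooth periodic functions, so $F_4^2\le\sqrt{10}\varepsilon H$ is unchanged. With these inputs I would rerun Theorems~\ref{naclock:thm:clockderivative}, \ref{naani:main}, \ref{score:thm:score}, Corollary~\ref{score:cor:square}, Theorem~\ref{mom:main} and Theorem~\ref{thm:scalar}, taking the minimum of the finitely many amplitude radii. Lemma~\ref{lem:geometric} needs only $\varepsilon\le1/4$. Summing through \eqref{eq:master} as in Corollary~\ref{cor:planevariations} then gives \eqref{eq:softjets} for both $E_t^\infty$ and $R_t^\infty$.

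The compact statement follows by repeating Lemma~\ref{lem:compact}. Its proof uses only the Gaussian bounds of Theorem~\ref{thm:neargaussian}, the uniform positivity of $h_s$ from Lemma~\ref{lem:regression}, and $\varepsilon^2\le C_W^2N(f)$ from Lemma~\ref{lem:interp}(c) together with Lemma~\ref{lem:latticeWiener}. None of these involves a cutoff. Hence the difference of the $j$th jets is $O(t^{-3}e^{-c\ell_0^2/t}\varepsilon^{j-2}N(f))$, which is absorbed into $C_{L_*}\varepsilon^{j-2}N(f)$ for $t$ small.

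The main obstacle is infinite Fourier support. Several steps in the paper are phrased for trigonometric polynomials. Examples are the holomorphy and normal convergence in Theorem~\ref{thm:neargaussian}(b); the identities \eqref{mom:M3exact}--\eqref{mom:M4exact}, which are justified ``for each fixed smooth polynomial''; and Parseval in \eqref{mom:H2}. I would first verify directly that each of these uses only smoothness of $f$ and $\sA(f)<\infty$. If some step genuinely needs finite support, I would instead apply the estimates to Fej\'er means $f_n$ of $f$. Fej\'er means have $\sA(f_n)\le\sA(f)$ and $\|\nabla f_n\|_\infty\le\|\nabla f\|_\infty$ (positive kernel), so \eqref{eq:HL} is preserved with the same $L_*$. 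The functionals and their parameter jets converge as $f_n\to f$ in $C^\infty$, by the uniform Gaussian bounds and dominated convergence, and $N(f_n)\to N(f)$. The bounds, being uniform in $n$, then pass to the limit.
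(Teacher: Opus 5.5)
Your proposal is correct and follows the paper's proof. The paper likewise audits each place where $t\Lambda^2\le\theta$ enters Sections~\ref{sec:kernel}--\ref{sec:closure} and replaces $\sqrt\theta$ by $L_*$ (in \eqref{eq:paramlip}, the Levi and marked-beta sums, the semigroup bounds and interpolation), reruns the chain, and transfers to the torus through the same Gaussian jets as in Lemma~\ref{lem:compact}; it treats infinite Fourier support directly, since only smoothness and the Wiener and Lipschitz norms enter. Your Fej\'er fallback is therefore unnecessary, and as written it has a small slip: $\sA(f_n)\le\sA(f)$ goes the wrong way for the normalization in \eqref{eq:HL}, so you only get $\sqrt t\|\nabla f_n\|_\infty\le L_*\sA(f)$, and \eqref{eq:HL} for $f_n$ holds with, say, $L_*+1$ once $n$ is large (using $\sA(f_n)\to\sA(f)$), exactly as the paper does with Fourier partial sums in Theorem~\ref{thm:softstability}.
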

\begin{proof}
For $f\ne0$ and $F=f/\varepsilon$, $\|F\|_\infty\le1$,
$\sqrt t\|\nabla F\|_\infty\le L_*$, and
$|\zeta|\sqrt t\|\nabla F\|_\infty\le rL_*$ for $\rho_\zeta=1+\zeta F$, $|\zeta|\le r$. By the paragraph containing
\eqref{eq:HL}, these are the only inputs of the resolution
hypothesis in Sections~\ref{sec:kernel}--\ref{sec:closure}, now with $L_*$ for $\sqrt\theta$:
in Theorem~\ref{thm:neargaussian}(a),(b), $L_\zeta=|\zeta|L_*/(1-r_0)^2$ in \eqref{eq:paramlip}
(the Levi series converges normally for any finite Lipschitz constant; chaining does
not involve it); Section~\ref{sec:compclock} uses
Lemma~\ref{naclock:lem:complexinput}, which follows from these kernel bounds,
and Lemma~\ref{lem:interp}(a); Sections~\ref{naani:sec} and~\ref{sec:secondscore} use the kernel bounds,
interpolation, periodic translation, and $L_\infty\sqrt t\le rL_*$; and
Section~\ref{mom:section} uses the semigroup bounds,
$\Delta P_uh=\rho P_u(\rho^{-1}\Delta h)$ and Lemma~\ref{lem:interp}(a).
Hence Theorems~\ref{thm:scalar} and~\ref{naani:main} and
Corollary~\ref{cor:planevariations} (that is, \eqref{eq:softjets}) hold under
\eqref{eq:HL} with constants $C_{L_*}$; higher norms of the smooth $f$ do not enter.
The compact--plane comparison uses the same Gaussian
jets, giving energy jets at most
$CA\varepsilon^jt^{-3}e^{-c\ell_0^2/t}$; with
$\varepsilon^2\le C_W^2N(f)$ (Lemma~\ref{lem:interp}(c)), for $j=2,3,4$,
\begin{equation}\label{eq:softtransfer}
 \sup_{|s|\le1}|D^j(E_t^T-E_t^\infty)(sf)[f^j]|
 \le C_{L_*}t^{-3}e^{-c\ell_0^2/t}\varepsilon^{j-2}N(f),
\end{equation}
which proves the compact assertion, since energy and geometric remainder
subtract the same curvature functional.
\end{proof}

With $q$ as in \eqref{eq:qdef}, the relative quadratic margin
\begin{equation}\label{eq:spectralmargin}
 \mathcal Q_t(f)=A\sum_{k\ne0}|k|^4q(t|k|^2)|\widehat f_k|^2
\end{equation}
converges absolutely for $f\in H^2$ by Proposition~\ref{prop:responseglobal}.

\begin{theorem}[Nonlinear stability with a spectral energy budget]
\label{thm:softstability}
Fix $L_*<\infty$ and $0<\mu\le\frac14$; for $\mu>\frac14$ only $f=0$
qualifies, since $q\le\frac14$ gives $\mathcal Q_t(f)\le N(f)/4$. There are
$\varepsilon_*,t_*>0$, depending only
on $L_*,\mu$ (and, for the compact statement, on $A,\ell_0$), such that, with $c_*=\mu/2$ and $C_*=1/2$,
every smooth real mean-zero periodic $f$ satisfying
$\|f\|_{\mathcal A}\le\varepsilon_*$,
$\sqrt t\|\nabla f\|_\infty\le L_*\|f\|_{\mathcal A}$ and
$\mathcal Q_t(f)\ge\mu N(f)$ obeys
\begin{align}
 c_*N(f)&\le E_t^\infty(f)\le C_*N(f),\label{eq:softplane}\\
 c_*N(f)&\le E_t^T(f)-E_t^T(0)\le C_*N(f),
                         \qquad 0<t<t_* .\label{eq:softcompact}
\end{align}
In particular, if
$q(\tau)\ge m>0$ on $[0,\Theta]$ and $\delta_*=1/40$, it suffices that
\begin{equation}\label{eq:tailbudget}
 A\sum_{t|k|^2>\Theta}|k|^4|\widehat f_k|^2\le\vartheta N(f),
 \qquad 0\le\vartheta<\frac{m}{m+\delta_*},
\end{equation}
and then one can take $\mu=m(1-\vartheta)-\delta_*\vartheta>0$.
\end{theorem}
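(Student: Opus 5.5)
The plan follows the proof of Theorem~\ref{thm:stability}, with the hard cutoff replaced by the two hypotheses it used to supply.

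\emph{Plane estimate.} Lemma~\ref{lem:softjets} provides the third-order jet bound \eqref{eq:softjets} under \eqref{eq:HL}. The first variation of $E_t^\infty$ at the flat metric vanishes for mean-zero $f$ by source-translation invariance, and $E_t^\infty(0)=0$. Taylor's formula with third-order remainder then gives
\[
 \bigl|E_t^\infty(f)-\tfrac12D^2E_t^\infty(0)[f,f]\bigr|\le\tfrac16C_{L_*}\varepsilon N(f).
\]
Next I identify $\tfrac12D^2E_t^\infty(0)[f,f]=\mathcal Q_t(f)$ for smooth $f$ without finite Fourier support. Proposition~\ref{prop:response} states this only for trigonometric polynomials, so this needs justification.
\begin{itemize}
\item I would approximate $f$ by Fej\'er means $f_n$. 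These keep $\sA(f_n)\le\sA(f)$ and $\|\nabla f_n\|_\infty\le\|\nabla f\|_\infty$, so \eqref{eq:HL} is preserved.
\item Along this sequence, $\mathcal Q_t(f_n)\to\mathcal Q_t(f)$ because $|q|\le\frac14$.
\item The second variation should also converge, by continuity of the quadratic form in $H^2$. That continuity follows from the $j=2$ analogue of the kernel jet estimates, i.e.\ the complex Cauchy bounds with $|z|^2$, which are uniform in $L_*$.
\end{itemize}
With the identification in hand, the hypothesis $\mathcal Q_t\ge\mu N$, the bound $q\le\frac14$, and a choice of $\varepsilon_*$ with $\frac16C_{L_*}\varepsilon_*\le\mu/4$ give \eqref{eq:softplane} with constants $\frac34\mu$ and $\frac14+\frac\mu4\le\frac12$.

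\emph{Compact estimate.} Put $\mathcal D(s)=(E_t^T-E_t^\infty)(sf)$. Then $\mathcal D'(0)=0$ by translation invariance, and \eqref{eq:softtransfer} with $j=2$ gives $|\mathcal D(1)-\mathcal D(0)|\le\frac12C_{L_*}t^{-3}e^{-c\ell_0^2/t}N(f)$. I choose $t_*$, depending on $A,\ell_0,L_*,\mu$, so that this is at most $\mu N(f)/4$. This yields $c_*=\mu/2$ and $C_*=\frac14+\frac\mu2\le\frac12$. The remark about $\mu>\frac14$ is immediate from $\mathcal Q_t\le N/4$.

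\emph{Tail criterion.} Split $\mathcal Q_t$ into the modes with $t|k|^2\le\Theta$ and the tail. On the low part $q\ge m$. On the tail Proposition~\ref{prop:responseglobal}(ii) gives $q\ge-\frac1{40}=-\delta_*$. With $T$ the tail energy, $T\le\vartheta N$, this gives
\[
 \mathcal Q_t(f)\ge m(N-T)-\delta_*T\ge\bigl(m(1-\vartheta)-\delta_*\vartheta\bigr)N,
\]
which is positive exactly when $\vartheta<m/(m+\delta_*)$.

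\emph{Main obstacle.} The delicate step is the passage from trigonometric polynomials to general smooth $f$, both in the quadratic identity and in the validity of the jet bounds. The jet bounds are covered by the remark after \eqref{eq:HL} and Lemma~\ref{lem:softjets}. For the quadratic identity I would rely on Fej\'er approximation together with $H^2$ continuity of the second variation at $0$.
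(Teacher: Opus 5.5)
Your proposal is correct and follows the paper's proof essentially step for step. It uses the same third-order Taylor estimate from Lemma~\ref{lem:softjets}, giving $\frac34\mu N(f)\le E_t^\infty(f)\le(\frac14+\frac\mu4)N(f)$; the same $j=2$ compact transfer \eqref{eq:softtransfer}, giving $c_*=\mu/2$ and $C_*=\frac12$; and the same split of $\mathcal Q_t$ at $t|k|^2=\Theta$ with $q\ge-\frac1{40}$. The only variation is that you use Fej\'er means instead of Fourier partial sums to extend $\frac12D^2E_t^\infty(0)[f,f]=\mathcal Q_t(f)$ to smooth $f$.

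That limit step needs two small fixes. First, \eqref{eq:HL} for $f_n$ holds only eventually and with a slightly larger constant: $\sA(f_n)\le\sA(f)$ points the wrong way, so you need $\sA(f_n)\to\sA(f)$. Second, the $j=2$ Cauchy bounds have $L_*$-dependent constants and hold only on the Lipschitz--Wiener cone, so they give boundedness of $D^2E_t^\infty(0)[f_n,f_n]$, not $H^2$-continuity. The limit is better taken as the paper does, by pointwise convergence of the Levi-series jets under a common Gaussian quotient majorant.
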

\begin{proof}
The identity $\tfrac12D^2E_t^\infty(0)[f,f]=\mathcal Q_t(f)$ extends to smooth $f$ via Fourier partial sums,
which converge in every fixed smooth norm and in $\mathcal A$ with
normalized Lipschitz bounds eventually at most $L_*+1$;
normal convergence of the Levi series and a common Gaussian quotient bound
for jets through order two permit the limit. (Alternatively, the first kernel variation is a linear Fourier series, and
source orthogonality and Parseval give the quadratic form.)
The first variation vanishes on mean-zero functions and
$E_t^\infty(0)=0$, so Lemma~\ref{lem:softjets} and Taylor's formula give
$|E_t^\infty(f)-\mathcal Q_t(f)|\le(C_{L_*}/6)\varepsilon N(f)\le\mu N(f)/4$ for suitable $\varepsilon_*$. Hence
$E_t^\infty(f)\ge\frac34\mu N(f)$, and $q\le1/4$
(Proposition~\ref{prop:responseglobal}) gives
$E_t^\infty(f)\le(\frac14+\frac\mu4)N(f)$.
Integrating the second derivative along $sf$, using
the vanishing first variation, \eqref{eq:softtransfer} at $j=2$ bounds the
difference of flat-subtracted energies by
$C_{L_*}t^{-3}e^{-c\ell_0^2/t}N(f)/2\le\mu N(f)/4$ for suitable $t_*$; the compact difference then lies between $\frac12\mu N(f)$
and $(\frac14+\frac12\mu)N(f)\le\frac38N(f)$, proving both inequalities with
$c_*=\mu/2$ and $C_*=1/2$.
Finally, splitting \eqref{eq:spectralmargin} at $t|k|^2=\Theta$ and using
$q\ge-\delta_*$ (Proposition~\ref{prop:responseglobal}) gives
$\mathcal Q_t(f)\ge[m(1-\vartheta)-\delta_*\vartheta]N(f)$.
\end{proof}

\begin{remark}[An explicit family excluded by every fixed cutoff]
On the standard square torus $\R^2/(2\pi\mathbb Z)^2$, for $f_M=a\cos x_1+(ac/M^2)\cos(Mx_1)$, $M\ge2$,
$\|f_M\|_{\mathcal A}=|a|(1+|c|/M^2)$ and
$\|\nabla f_M\|_\infty\le|a|(1+|c|/M)$, so one $L_*$ serves all
$M$ at bounded times; when only the first mode is resolved, the unresolved
fraction of $N(f_M)$ is $c^2/(1+c^2)$. For $\Theta=1$, $m=1/16$, \eqref{eq:tailbudget} allows $c^2<5/2$, i.e.\ any
$\vartheta<5/7$, with
$\mu=(5-7\vartheta)/80$, so one amplitude interval controls the family
while $tM^2\to\infty$. Proposition~\ref{prop:explicitfour} also permits
$\Theta=4$, $m=3/1000$, giving $\vartheta<3/28$ with $\mu=(3-28\vartheta)/1000$. The two fractions
refer to different resolved bands.
\end{remark}

\begin{corollary}[A sufficient amplitude scale near the sharp boundary]
\label{cor:boundaryamplitude}
There are positive constants $\delta_0,c_1,c_2,c_3,C,t_0$, independent
of the Fourier cutoff, such that the following holds. Let
$0<\delta\le\delta_0$, $\theta=\tau_*-\delta$, and let $f$ be real,
mean zero, and supported in $|k|\le\Lambda$. If
$t\Lambda^2\le\theta$ and $\mathsf A(f)\le c_1\delta$, then
$c_2\delta N(f)\le E_t^\infty(f)\le C N(f)$.
For the compact functional the same bounds hold for
$E_t^T(f)-E_t^T(0)$ provided also
$0<t<t_0$ and $t^{-3}e^{-c_3\ell_0^2/t}\le c_1\delta$,
after reducing $c_1,c_2$ if necessary. The linear amplitude restriction is
a sufficient choice from the relative third-variation estimate; its
optimality is not asserted.
\end{corollary}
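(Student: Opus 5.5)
The plan is to repeat the proof of Theorem~\ref{thm:stability}, but with the amplitude tied to the distance $\delta$ from the threshold. The quadratic part comes from Proposition~\ref{prop:simplezero}. For $0<\delta\le\delta_0$, the minimum of $q$ on $[0,\tau_*-\delta]$ is $m_{\tau_*-\delta}$, and it satisfies $c_0\delta\le m_{\tau_*-\delta}\le C_0\delta$. By Proposition~\ref{prop:response} and Parseval, every mean-zero $f$ supported in $t|k|^2\le\theta=\tau_*-\delta$ then obeys
\[
 c_0\delta N(f)\le\tfrac12D^2E_t^\infty(0)[f,f]\le\tfrac14N(f),
\]
where the upper bound uses $q\le\frac14$ from Proposition~\ref{prop:responseglobal}(ii).

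The nonlinear remainder needs the third-variation bound to hold uniformly as $\theta$ varies near $\tau_*$. I will fix $\Theta_0=\tau_*$ (or $5$) once and for all. Every $\theta=\tau_*-\delta\le\Theta_0$ then satisfies $t\Lambda^2\le\Theta_0$, so Corollary~\ref{cor:planevariations} applies with the fixed constant $C_{\Theta_0}$ and amplitude radius $\varepsilon_{\Theta_0}$, neither depending on $\delta$. The earlier sections allow any finite $\theta$ for these estimates. The first variation vanishes at the flat metric and $E_t^\infty(0)=0$, so Taylor's formula with third-order remainder gives
\[
 \bigl|E_t^\infty(f)-\tfrac12D^2E_t^\infty(0)[f,f]\bigr|\le\tfrac16C_{\Theta_0}\varepsilon N(f).
\]
Choose $c_1$ with $\frac16C_{\Theta_0}c_1\le c_0/2$ and also $c_1\delta_0\le\varepsilon_{\Theta_0}$. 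Then $\mathsf A(f)\le c_1\delta$ gives $E_t^\infty(f)\ge\frac{c_0}2\delta N(f)$, so $c_2=c_0/2$ works. On the upper side, $E_t^\infty(f)\le(\frac14+\frac{c_0\delta_0}2)N(f)$, which gives the constant $C$.

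For the compact functional I will follow the transfer step of Theorem~\ref{thm:stability}. Set $\mathcal D(s)=(E_t^T-E_t^\infty)(sf)$. Translation invariance gives $\mathcal D'(0)=0$. Lemma~\ref{lem:compact} at $j=2$, again with $\Theta_0$ fixed, gives $|\mathcal D(1)-\mathcal D(0)|\le\frac12C_{\Theta_0}t^{-3}e^{-c_3\ell_0^2/t}N(f)$ for $0<t<t_0$. The extra hypothesis $t^{-3}e^{-c_3\ell_0^2/t}\le c_1\delta$ makes this at most $\frac12C_{\Theta_0}c_1\delta N(f)$. After shrinking $c_1$ so that $\frac12C_{\Theta_0}c_1\le c_0/4$, the compact defect lies between $\frac{c_0}4\delta N(f)$ and $CN(f)$, so $c_2$ is reduced to $c_0/4$. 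Here $c_3$ is the constant $c$ of Lemma~\ref{lem:compact}, and $t_0$ is its $t_0$ for $\theta=\Theta_0$.

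The main obstacle is checking that nothing used from Sections~\ref{sec:kernel}--\ref{sec:compact} secretly depends on $\theta<\tau_*$, beyond the coercivity supplied by $m_\theta$. I expect it does not: by the paragraph on the role of the resolution hypothesis, the kernel, clock, anisotropy, score and moment estimates use only $\sqrt t\|\nabla f\|_\infty\le\sqrt\theta\,\varepsilon$. For $\theta\le\Theta_0$ this is implied by $L_*=\sqrt{\Theta_0}$, so Lemma~\ref{lem:softjets} supplies uniform constants. The only $\delta$-dependence therefore enters through $m_{\tau_*-\delta}$, which is exactly why the admissible amplitude becomes linear in $\delta$.
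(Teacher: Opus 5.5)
Your proposal is correct and follows the paper's proof essentially step by step. You fix the analytic estimates at a resolution bound $\Theta_0\ge\tau_*$, so the amplitude radius and the third-variation constant do not depend on $\delta$, and you take the linear margin $c_0\delta$ from Proposition~\ref{prop:simplezero}. You then absorb the Taylor remainder by choosing $c_1$ small, and transfer to the torus through the $j=2$ case of Lemma~\ref{lem:compact} together with the hypothesis $t^{-3}e^{-c_3\ell_0^2/t}\le c_1\delta$; the only differences from the paper are which fractions of $c_0$ you use to absorb the errors.
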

\begin{proof}
Take all analytic estimates at the resolution
bound $t\Lambda^2\le\tau_*$, so their amplitude radius and constants are
fixed as $\delta\downarrow0$; positivity is used only in the
quadratic response. Taylor's formula gives
$|E_t^\infty(f)-\mathcal Q_t(f)|\le C_1\mathsf A(f)N(f)$
with $C_1$ independent of $\delta$, and
Proposition~\ref{prop:simplezero} gives
$c_0\delta N(f)\le\mathcal Q_t(f)\le N(f)/4$. Choosing $c_1$ with $C_1c_1\le c_0/4$ and
$c_1\delta_0$ in the analytic amplitude interval
proves the plane assertion. The first variation
of $E_t^T-E_t^\infty$ vanishes at the flat metric, so integrating
\eqref{eq:compactE} with $j=2$ twice bounds the
flat-subtracted compact difference by $C_2t^{-3}e^{-c_3\ell_0^2/t}N(f)$;
reducing $c_1$ so $C_2c_1\le c_0/4$ proves the compact assertion.
All upper bounds are uniform in $0<\delta\le\delta_0$.
\end{proof}

\section{The entropy-production deficit and the role of the area}
\label{sec:entropy}
The determinant in $V_t$ does not enter the quadratic analysis; we
identify the part of $E_t$ that does with the
entropy production of the heat flow.
For positive smooth periodic $\rho$ let
$\operatorname{Ent}_t(\rho)=\int_{\mathbb T^2}\rho(x)\int_{\mathbb R^2}
     r_t(x,y)\log(r_t(x,y)/\rho(y))\,dy\,dx$,
the source-averaged relative entropy of the heat distribution with respect
to $\rho\,dy$ ($\operatorname{Ent}_t^T$: periodic kernel).

\begin{lemma}[A de Bruijn identity]\label{lem:debruijn}
$-\frac d{dt}\operatorname{Ent}_t=\frac1{2t}\int_{\mathbb T^2}\operatorname{tr}h_t\,dx$,
and the same holds on the torus. Consequently, with
$D_t=\int_{\mathbb T^2}(\frac12\operatorname{tr}h_t-\sqrt{\det h_t})\,dx\ge0$,
$A-V_t=(A+t\tfrac d{dt}\operatorname{Ent}_t)+D_t$.
\end{lemma}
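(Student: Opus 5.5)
The plan is to reproduce the argument already sketched in Step~4 of the proof of Proposition~\ref{prop:response}, now at a general positive smooth periodic $\rho$ and for both the plane lift and the periodized kernel. Write $\operatorname{Ent}_t=\int_{\mathbb T^2}\rho(x)\int r\log r\,dy\,dx-\int_{\mathbb T^2}\rho(x)\int r\log\rho(y)\,dy\,dx$. The second term is independent of $t$. Since $\rho\,dx$ is invariant for $L=\rho^{-1}\Delta$, the weighted symmetry \eqref{mom:transpose} gives $\int\rho(x)\int r(t,x,y)\log\rho(y)\,dy\,dx=\int_{\mathbb T^2}\rho\log\rho$ after folding the lift. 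Hence only the first term needs to be differentiated.

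For the first term, differentiate in $t$ at fixed $\rho$ using the backward equation $\partial_tr=\rho(x)^{-1}\Delta_xr$. The contribution $\int\rho\int\partial_tr\,dy\,dx$ vanishes by mass conservation $\int r\,dy=1$, so
\[
 \frac d{dt}\operatorname{Ent}_t=\int\!\!\int\Delta_xr\,\log r\,dy\,dx .
\]
The factor $\rho(x)$ has cancelled the $\rho^{-1}$ of the generator, which is exactly why $\rho\,dx$ is the right source weight. Next use the diagonal invariance $(x,y)\mapsto(x+\ell,y+\ell)$ to rewrite $\int_{\mathbb T^2}dx\int_{\mathbb R^2}dy$ as $\int_{\mathbb R^2}dx\int_{\mathbb T^2}dy$. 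Then integrate by parts in $x\in\mathbb R^2$ at fixed target, and fold back. This gives $-\int\!\!\int|\nabla_xr|^2/r\,dy\,dx=-\int_{\mathbb T^2}\operatorname{tr}J_t\,dx=-(2t)^{-1}\int\operatorname{tr}h_t\,dx$.

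Justification is needed at three points:
\begin{itemize}
\item differentiation under the integral;
\item the absence of boundary terms;
\item integrability of $|\nabla_xr|^2/r$ and of $r|\log r|$.
\end{itemize}
I would obtain all three from Gaussian upper bounds for $r,\nabla_xr,\partial_tr$ together with a Gaussian lower bound. In the small-amplitude class these are Theorem~\ref{thm:neargaussian}. For a general smooth positive $\rho$ they are the classical Aronson-type two-sided bounds, which suffice because only integrability (not sharp exponents) is needed: $|\log r|$ then grows at most like $|w|^2/t+|\log t|$ and is dominated. The main obstacle is this justification step, specifically checking that the exponents keep $|\nabla_xr|^2/r$ integrable. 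In the resolved class this is guaranteed by the exponent margin $2\kappa_--\kappa_+>0$; in the general case I would fall back on standard parabolic estimates.

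The torus version is the same computation with the periodized kernel $r^T$ on $\mathbb T^2\times\mathbb T^2$. There no unfolding is needed, since integration by parts on the closed torus has no boundary. The Fisher trace again uses $\nabla_xr^T$, and the conformal density factor cancels as noted for $h^T_s$.

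For the consequence, the two-dimensional identity $\sqrt{\det h}=\frac12\operatorname{tr}h-(\frac12\operatorname{tr}h-\sqrt{\det h})$, integrated over $\mathbb T^2$, gives
\[
 V_t=\int\tfrac12\operatorname{tr}h_t-D_t=-t\frac d{dt}\operatorname{Ent}_t-D_t ,
\]
and therefore $A-V_t=(A+t\frac d{dt}\operatorname{Ent}_t)+D_t$. Finally, $D_t\ge0$ follows from AM--GM for the eigenvalues of the positive semidefinite $h_t$, or equivalently from \eqref{naani:identity}.
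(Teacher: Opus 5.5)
Your proposal is correct and is essentially the paper's proof: mass conservation removes the time derivative of the logarithm, the source weight $\rho(x)$ cancels the $\rho^{-1}$ of the generator, one unfolds by diagonal invariance, integrates by parts in $x$ at fixed target and folds back (no unfolding on the torus), and $D_t\ge0$ is the AM--GM inequality $\frac12\operatorname{tr}h\ge\sqrt{\det h}$. The one point to tighten is your claim that Aronson-type bounds alone suffice for a general $\rho$. When the upper and lower exponents differ, those bounds do not make $|\nabla_xr|^2/r$ integrable. Outside the resolved class you should instead use a logarithmic gradient bound $|\nabla_x\log r|\le C(|w|/t+t^{-1/2})$, of the kind invoked in the proof of Lemma~\ref{lem:fisherexpansion}. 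On the torus nothing more is needed, since $r^T$ is strictly positive and smooth on the compact $\mathbb T^2\times\mathbb T^2$.
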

\begin{proof}
As in \eqref{eq:VEnt}, which used only
positivity and smoothness of $\rho$, mass conservation removes the
derivative of the logarithm, and integration by parts with weight
$\rho(x)$, after unfolding, gives $-\frac d{dt}\operatorname{Ent}_t=\int_{\mathbb T^2}\int|\nabla_xr_t|^2/r_t
\,dy\,dx=\int_{\mathbb T^2}\operatorname{tr}J_t\,dx$. Also $D_t\ge0$ as
$\frac12\operatorname{tr}h\ge\sqrt{\det h}$ for positive $2\times2$ matrices.
\end{proof}

Define the \emph{entropy-production deficit}
\[
 \mathcal W_t(f)=\frac9{t^2}\Bigl(A-\frac12\int_{\mathbb T^2}\operatorname{tr}h_t\,dx\Bigr)
 =\frac9{t^2}\Bigl(A+t\frac d{dt}\operatorname{Ent}_t\Bigr),
 \qquad E_t=\mathcal W_t+\frac{9D_t}{t^2},
\]
on the plane and torus; it compares averaged heat Fisher information with its flat value.
Put
$\widetilde F(f)=F(f)-F(0)$ (as in Theorem~\ref{thm:radialselection}) and
$\beta_{\mathcal L}(t)=C_{A,\ell_0,\theta}t^{-3}e^{-c\ell_0^2/t}$ on the family of
Theorem~\ref{thm:flatlattices}.

\begin{theorem}[The results hold for entropy production]
\label{thm:entropy}
Let $f$ be real and mean zero. Say that $f$ satisfies the \emph{resolved
hypotheses} if it satisfies those of Theorem~\ref{thm:flatlattices}
(with $\varepsilon=\mathsf A_{\mathcal L}(f)$), and the
\emph{soft hypotheses} if it satisfies those of
Theorem~\ref{thm:softstability} . Under either, \eqref{eq:EminusW}--\eqref{eq:EminusWT} below hold, with constants
depending on $\theta$, resp.\ $L_*$, and on $A,\ell_0$.
On the plane the second variations of $\mathcal W_t^\infty$ and
$E_t^\infty$ at the flat metric coincide, with response $q$ and threshold
$\tau_*$, for every mean-zero trigonometric $f$. Moreover,
\begin{equation}\label{eq:EminusW}
 0\le E_t^\infty(f)-\mathcal W_t^\infty(f)
       \le C_\theta\varepsilon^2N_{\mathcal L}(f),
\end{equation}
and the relative third and fourth variation bounds of
Corollary~\ref{cor:planevariations} hold for $\mathcal W_t^\infty$.
On every compact flat torus of the family, for
$0<t<t_0(A,\ell_0,\theta)$,
\begin{align}
 \left|D^2(E_t^{T_{\mathcal L}}-\mathcal W_t^{T_{\mathcal L}})(0)[f,f]\right|
       &\le\beta_{\mathcal L}(t)N_{\mathcal L}(f),\label{eq:EminusWtorus}\\
 -\beta_{\mathcal L}(t)N_{\mathcal L}(f)
 \le\widetilde E_t^{T_{\mathcal L}}(f)
       -\widetilde{\mathcal W}_t^{T_{\mathcal L}}(f)
 &\le\bigl(C_\theta\varepsilon^2+\beta_{\mathcal L}(t)\bigr)N_{\mathcal L}(f).\label{eq:EminusWT}
\end{align}
Always $E_t^{T_{\mathcal L}}-\mathcal W_t^{T_{\mathcal L}}=9D_t^{T_{\mathcal L}}/t^2\ge0$. If the flat reference Fisher tensor is isotropic, as for
the square and hexagonal lattices (but not, for example, a rectangular one), then additionally
$D^2(E_t^{T_{\mathcal L}}-\mathcal W_t^{T_{\mathcal L}})(0)[f,f]\ge0$ and
$\widetilde E_t^{T_{\mathcal L}}(f)-\widetilde{\mathcal W}_t^{T_{\mathcal L}}(f)\ge0$,
with the same displayed upper bounds.
Consequently the stability, spectral-margin, radial, mixed-Hessian,
and finite-mode-flow results hold with $E_t$ replaced by $\mathcal W_t$,
with the same response $q$ and the same sharp uniform-band threshold
$\tau_*$, compact value comparisons using the
flat-subtracted energies; constants are uniform on the fixed-area
noncollapsing families of Theorem~\ref{thm:flatlattices}.
\end{theorem}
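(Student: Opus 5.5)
The plan rests on the exact identity $E_t-\mathcal W_t=9D_t/t^2$, which follows from Lemma~\ref{lem:debruijn} on the plane and on the torus. Nonnegativity is then the $2\times2$ inequality $\frac12\operatorname{tr}h\ge\sqrt{\det h}$. It remains to bound $D_t$ and its variations at the relative energy scale.

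\emph{Plane.} Theorem~\ref{naani:main} controls the third and fourth $s$-derivatives of $D_t^\infty(sf)/t^2$ by $C_\theta\|\nabla f\|_4^4\le10C_\theta\varepsilon^2N(f)$. For the quadratic part, the identity \eqref{naani:identity} together with \eqref{naani:Tjets} ($\|\Sigma_s\|_2\le Cs^2F_4^2$) shows that $D_t^\infty(sf)/t^2$ vanishes to fourth order at $s=0$. Hence its second variation at the flat metric is zero, so $\mathcal W^\infty_t$ and $E^\infty_t$ share the multiplier $q$ and the threshold $\tau_*$. The direct bound
\[
D_t^\infty(f)/t^2\le 4\|\mathfrak b\|_\infty\|\Sigma^{\rm tf}\|_2^2\le CF_4^4\le C\varepsilon^2N(f)
\]
gives \eqref{eq:EminusW}. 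The relative third and fourth variations of $\mathcal W_t^\infty=E_t^\infty-9D_t^\infty/t^2$ then follow from Corollary~\ref{cor:planevariations} and Theorem~\ref{naani:main} by subtraction. Under the soft hypotheses I would use the same argument, with $L_*$ in place of $\sqrt\theta$ as in Lemma~\ref{lem:softjets}.

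\emph{Torus.} Lemma~\ref{lem:compact} bounds $\partial_s^j(h^T_s-h_s)$ by $C\varepsilon^jt^{-1}e^{-c\ell_0^2/t}$. Both tensors lie in a compact set of positive matrices on which $h\mapsto\frac12\operatorname{tr}h-\sqrt{\det h}$ is smooth, so the chain rule gives
\[
|\partial_s^j(D^T_t-D^\infty_t)|\le C\varepsilon^j t^{-1}e^{-c\ell_0^2/t}.
\]
Multiplying by $9/t^2$ and using $\varepsilon^2\le C_W^2N(f)$ (Lemma~\ref{lem:latticeWiener}) for $j=2$ yields \eqref{eq:EminusWtorus}, because the plane second variation of $D$ vanishes. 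For \eqref{eq:EminusWT} I would write
\[
\widetilde E^T-\widetilde{\mathcal W}^T=\tfrac9{t^2}\bigl(D^T(f)-D^T(0)\bigr).
\]
This equals $\frac9{t^2}D^\infty(f)$ plus the difference $\mathcal D(1)-\mathcal D(0)$ of $\mathcal D(s)=\frac9{t^2}(D^T-D^\infty)(sf)$. The first variation $\mathcal D'(0)$ vanishes by source-translation invariance, so integrating the $j=2$ bound twice gives $|\mathcal D(1)-\mathcal D(0)|\le\frac12\beta_{\mathcal L}N$. Combined with $0\le\frac9{t^2}D^\infty(f)\le C_\theta\varepsilon^2N$, this proves both sides of \eqref{eq:EminusWT}.

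\emph{Isotropic lattices.} If $h^T(0)$ is a multiple of $I$, the flat value is $D^T(0)=0$. Then $\widetilde E^T-\widetilde{\mathcal W}^T=9D^T(f)/t^2\ge0$, and since $D^T\ge0$ attains its minimum at $0$, its Hessian there is nonnegative. For the square and hexagonal lattices, isotropy comes from the rotational symmetry of order $4$ or $6$, which forces a symmetric $2\times2$ invariant tensor to be scalar.

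\emph{Consequences.} The transfer of the stability, spectral-margin, radial, mixed-Hessian and flow results is then mechanical. Each of those proofs uses only the flat quadratic form, which is unchanged, and relative jet bounds, which change by $O(\varepsilon^2N)$ and $O(\beta_{\mathcal L})$ terms absorbed by shrinking $\varepsilon_*,t_*$.

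\emph{Main obstacle.} I expect the hardest step to be the quartic vanishing of $D^\infty$ at the flat metric, uniformly in the cutoff, i.e.\ extracting the vanishing second variation from \eqref{naani:Tjets} rather than only a bound. The factor $s^2$ in $\|\Sigma_s\|_2$ resolves this: it makes $\Sigma^{\rm tf}_s=O(s^2)$, so $D/t^2=O(s^4)$.
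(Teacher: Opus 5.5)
Your proposal is correct and follows the paper's proof essentially step by step. You use the identity $E_t-\mathcal W_t=9D_t/t^2$, quartic vanishing of $D_t^\infty$ at the flat metric, and subtraction of the anisotropy jets of Theorem~\ref{naani:main}. On the torus you use the differentiated comparison for $D_t^{T}-D_t^\infty$, integrated twice with the translation-invariant vanishing of the first variation, and nonnegativity in the isotropic case.

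The only deviation is minor. You read off the quartic vanishing and the value bound \eqref{eq:EminusW} directly from $\|\Sigma_s\|_2\le Cs^2F_4^2$ in \eqref{naani:Tjets}. The paper instead argues that $Q^{\rm tf}$ is quadratic in a residual vanishing at $s=0$, and bounds the value by Taylor's formula with the fourth-variation estimate.
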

\begin{proof}
On the plane, along $\rho_s=1+sf$, the efficient residual covariance
$Q$ is quadratic in a residual vanishing at $s=0$, so
$h^{\rm tf}=2tQ^{\rm tf}=O(s^2)$ and
$D_t=\int|h^{\rm tf}|^2/(\operatorname{tr}h+2\sqrt{\det h})=O(s^4)$;
the first three parameter derivatives of $D_t^\infty(sf)$ vanish at zero.
The fourth-variation estimate of Theorem~\ref{naani:main}, Taylor's
formula, and interpolation give
$0\le D_t^\infty(f)/t^2
 \le\frac{C_\theta}{24}\|\nabla f\|_4^4
 \le\frac{10C_\theta}{24}\varepsilon^2N_{\mathcal L}(f)$,
and the same theorem supplies the third and fourth relative variations
(under the soft hypotheses via its $L_*$-version from
Lemma~\ref{lem:softjets}, with $C_{L_*}$ for $C_\theta$).
Subtracting them from those of $E_t^\infty$ proves the plane assertions.

On a compact flat torus, $h_t^{T_{\mathcal L}}(0)$ is constant
but possibly anisotropic, so $D_t^{T_{\mathcal L}}(0)$ need not
vanish. The trace and determinant parts of the compact comparison give
\begin{equation}\label{eq:Dlatticejets}
 \sup_{|s|\le1}\left|\partial_s^j
       (D_t^{T_{\mathcal L}}-D_t^\infty)(sf)\right|
 \le CA\varepsilon^jt^{-1}e^{-c\ell_0^2/t},\qquad 0\le j\le4.
\end{equation}
Both first variations vanish on mean-zero
functions by translation invariance, without isotropy, so
integrating \eqref{eq:Dlatticejets} at $j=2$ twice yields
$\widetilde D_t^{T_{\mathcal L}}(f)
       =D_t^\infty(f)+O(A\varepsilon^2t^{-1}e^{-c\ell_0^2/t})$.
Multiplying by $9/t^2$ and using Lemma~\ref{lem:latticeWiener} proves
\eqref{eq:EminusWT}; the $j=2$ estimate at zero with
$D^2D_t^\infty(0)=0$ proves \eqref{eq:EminusWtorus}.
If the flat compact tensor is isotropic, $D_t^{T_{\mathcal L}}(0)=0$ and
nonnegativity gives the one-sided statements.
The absolute exponential error suffices in the compact stability and
sharpness proofs after shrinking the small-time interval;
the radial proofs use only the relative third variation
and flat stationarity; the mixed proof estimates the
scalar and anisotropy terms separately, so applies to
$\mathcal W_t$, with compact transfer via the mixed
versions of \eqref{eq:Dlatticejets}; the finite-mode flow follows from
the Hessian bounds; and the soft argument is
unchanged. None needs a sign for the compact
Hessian difference in \eqref{eq:EminusWtorus}.
\end{proof}

\paragraph{What the area adds.}
The area deficit adds to entropy production the nonnegative term $9D_t/t^2$, quartic on the plane.
We keep $V_t$ primary because it needs no reference metric, unlike the trace;
because $D_t$ vanishes exactly when $h_t$ is pointwise conformal to $g$;
and because the distance of $h_t$ from a multiple of $g$ involves $V_t$ and $D_t$ jointly.
Conversely, the theorems control $\mathcal W_t$ uniformly at fixed time, complementing
the small-time entropy asymptotics of \cite{MPR}.
\section{Curvature energy and statistical uniformization}
\label{sec:radialconsequences}
The third-variation estimate controls the Hessian in the base direction,
which gives more than value coercivity (but no mixed-direction positivity).

\begin{proposition}[Comparison with exact curvature energy]
\label{prop:curvatureequivalence}
Let $f$ be real and smooth, and put $\delta=\|f\|_\infty\le1/5$.
For $g=(1+f)g_0$ on any flat two-torus, with $N(f)=\|\Delta f\|_2^2$,
\[
 \frac{(1-(1+\sqrt{10})\delta)^2}{4(1-\delta)^2(1+\delta)^3}N(f)
 \le\mathcal C(f)\le\frac{(1+(\sqrt{10}-1)\delta)^2}{4(1-\delta)^5}N(f).
\]
In particular, $\delta\le1/8$ implies
$N(f)/32\le\mathcal C(f)\le N(f)$. Thus the plane and compact comparisons
of Theorem~\ref{thm:stability} are also comparisons with the exact
curvature energy, with cutoff-independent constants.
\end{proposition}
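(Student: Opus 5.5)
We start from the exact formula already used in Lemma~\ref{lem:geometric}. For $g=\rho g_0$ with $\rho=1+f$ on a flat torus we have $K_g=-\frac12\rho^{-1}\Delta\log\rho$ and $dA_g=\rho\,dx$. Hence
\[
 \mathcal C(f)=\frac14\int_{\mathbb T^2}\frac{(\Delta\log\rho)^2}{\rho}\,dx,
 \qquad
 \Delta\log\rho=\frac{\Delta f}{\rho}-\frac{|\nabla f|^2}{\rho^2}.
\]
Put $u=\Delta f/\rho$ and $v=|\nabla f|^2/\rho^2$, and use $1-\delta\le\rho\le1+\delta$ pointwise. The approach is a two-sided triangle inequality in $L^2$ for $\|u-v\|_2$, followed by the weight $\rho^{-1}$.

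\emph{Key step (Gagliardo--Nirenberg).} The only nontrivial term is $\|v\|_2\le(1-\delta)^{-2}\|\nabla f\|_4^2$. Lemma~\ref{lem:interp}(a) gives $\|\nabla f\|_4^2\le\sqrt{10}\,\|f\|_\infty\|\Delta f\|_2=\sqrt{10}\,\delta\,\sH(f)$, so $\|v\|_2\le\sqrt{10}\,\delta(1-\delta)^{-2}\sH(f)$. We expect this to be the main (and only) obstacle: the constant $\sqrt{10}$ in the statement shows that exactly this inequality is intended. Note that it uses $\|f\|_\infty$ and not the Wiener norm.

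\emph{Upper bound.} Since $\rho^{-1}\le(1-\delta)^{-1}$,
\[
 \mathcal C\le\frac{1}{4(1-\delta)}\bigl(\|u\|_2+\|v\|_2\bigr)^2,
 \qquad
 \|u\|_2\le\frac{\sH}{1-\delta}.
\]
Adding the two bounds gives $\|u\|_2+\|v\|_2\le\sH\,\bigl[(1-\delta)+\sqrt{10}\,\delta\bigr](1-\delta)^{-2}=\sH\,\bigl(1+(\sqrt{10}-1)\delta\bigr)(1-\delta)^{-2}$. Squaring and multiplying by $\frac1{4(1-\delta)}$ produces exactly the stated factor $(1-\delta)^{-5}$.

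\emph{Lower bound.} We use $\rho^{-1}\ge(1+\delta)^{-1}$, $\|u\|_2\ge\sH/(1+\delta)$ and the reverse triangle inequality $\|u-v\|_2\ge\|u\|_2-\|v\|_2$. To match the stated factor we bound
\[
 \|u\|_2-\|v\|_2\ge\frac{\sH}{(1+\delta)(1-\delta)}\bigl[(1-\delta)-\sqrt{10}\,\delta\,(1+\delta)(1-\delta)^{-1}\bigr].
\]
The cleaner version replaces the bracket by $1-(1+\sqrt{10})\delta$. For this we need $\sqrt{10}\,\delta(1+\delta)/(1-\delta)\le\sqrt{10}\,\delta+O(\delta^2)$, which will likely require slight bookkeeping. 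Alternatively, write the lower estimate as $\|u\|_2\ge\sH(1-\delta)/(1-\delta^2)$ and bound $\|v\|_2\le\sqrt{10}\,\delta\,\sH/(1-\delta)^{2}$, then compare the two factors. Whichever normalization reproduces $\frac{(1-(1+\sqrt{10})\delta)^2}{4(1-\delta)^2(1+\delta)^3}$, positivity of the bracket for $\delta\le1/5$ ($1-5\cdot 0.2<0$?) must be checked. Since $(1+\sqrt{10})/5>0.83$, the bracket stays positive and the bound is meaningful. Squaring and weighting by $(1+\delta)^{-1}$ then completes it.

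\emph{Numerical corollary.} At $\delta=1/8$ we evaluate both constants. The upper constant is $\frac{(1+2.16/8)^2}{4(7/8)^5}\approx0.79\le1$. For the lower constant, $1-(4.162)/8\approx0.48$, and $\frac{0.48^2}{4(0.875)^2(1.125)^3}\approx0.053\ge1/32$. The last sentence of the statement then follows by combining this with Theorem~\ref{thm:stability}: $\|f\|_\infty\le\sA(f)\le\varepsilon_*$, and $\varepsilon_*$ may be reduced below $1/8$. No cutoff enters anywhere.
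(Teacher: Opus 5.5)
Your upper bound is correct and is exactly the paper's argument: the paper writes $4\mathcal C(f)=\|\rho^{-3/2}(\Delta f-\rho^{-1}|\nabla f|^2)\|_2^2$ and uses \eqref{eq:GNbasic} the same way. The lower bound, however, has a genuine gap, and it is not a matter of bookkeeping. The stated constant is exact, so an $O(\delta^2)$ slack is not available, and the comparison you need, $\sqrt{10}\,\delta(1+\delta)/(1-\delta)\le\sqrt{10}\,\delta$, is false for every $\delta>0$.

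The problem is the order of operations. You split $\Delta\log\rho=u-v$ first, then bound $\|u\|_2\ge\sH/(1+\delta)$ from below and $\|v\|_2\le\sqrt{10}\,\delta(1-\delta)^{-2}\sH$ from above. This applies opposite bounds to the same common factor $\rho^{-1}$. What your route actually proves is only
\[
 \mathcal C(f)\ge\frac1{4(1+\delta)}\Bigl[\frac1{1+\delta}-\frac{\sqrt{10}\,\delta}{(1-\delta)^2}\Bigr]^2N(f),
\]
and only while the bracket is nonnegative. This is strictly weaker than the stated constant for every $\delta>0$, with three consequences:
\begin{itemize}
\item The bracket becomes negative at $\delta\approx0.18$, so you get no lower bound on part of the range $\delta\le1/5$.
\item At $\delta=1/8$ the bound is about $0.0309<1/32$, so even the coarse claim $N(f)/32\le\mathcal C(f)$ is not reached.
\item Your numerical value $\approx0.053$ evaluates the stated formula, which your argument has not established.
\end{itemize}

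The fix is to bound the common factor once, before the reverse triangle inequality. Since $\Delta\log\rho=\rho^{-1}(\Delta f-\rho^{-1}|\nabla f|^2)$,
\[
 2\sqrt{\mathcal C(f)}=\|\rho^{-3/2}(\Delta f-\rho^{-1}|\nabla f|^2)\|_2\ge(1+\delta)^{-3/2}\|\Delta f-\rho^{-1}|\nabla f|^2\|_2 .
\]
Now apply the reverse triangle inequality with $\|\Delta f\|_2$ unweighted and only the inner $\rho^{-1}$ bounded by $(1-\delta)^{-1}$:
\[
 \|\Delta f-\rho^{-1}|\nabla f|^2\|_2\ge\Bigl(1-\frac{\sqrt{10}\,\delta}{1-\delta}\Bigr)\|\Delta f\|_2=\frac{1-(1+\sqrt{10})\delta}{1-\delta}\,\|\Delta f\|_2 .
\]
This is positive for $\delta<1/(1+\sqrt{10})$, in particular for $\delta\le1/5$. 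Squaring gives exactly $\frac{(1-(1+\sqrt{10})\delta)^2}{4(1-\delta)^2(1+\delta)^3}N(f)$. With this change the rest of your proposal goes through: the numerical values at $\delta=1/8$, and the transfer to Theorem~\ref{thm:stability} via $\|f\|_\infty\le\sA(f)$.
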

\begin{proof}
With $\rho=1+f$ and $b=|\nabla f|^2$ the conformal curvature formula gives
$4\mathcal C(f)=\|\rho^{-3/2}(\Delta f-\rho^{-1}b)\|_2^2$, and
\eqref{eq:GNbasic} gives $\|b\|_2\le\sqrt{10}\,\delta\|\Delta f\|_2$. By the
triangle and reverse triangle inequalities,
\[
 (1+\delta)^{-3/2}\Bigl(1-\frac{\sqrt{10}\,\delta}{1-\delta}\Bigr)\|\Delta f\|_2
 \le2\sqrt{\mathcal C(f)}\le(1-\delta)^{-3/2}\Bigl(1+\frac{\sqrt{10}\,\delta}{1-\delta}\Bigr)\|\Delta f\|_2,
\]
and the lower coefficient is positive for $\delta<1/(1+\sqrt{10})$. Squaring
proves the assertion; the coarse constants follow for $\delta\le1/8$.
\end{proof}

\begin{corollary}[Quantitative closeness to the curvature energy]
\label{cor:Eclose}
Under the hypotheses of Theorem~\ref{thm:stability},
$|E_t^\infty(f)-\mathcal C(f)|\le C_\theta(t\Lambda^2+\varepsilon)N(f)$,
and the same bound holds for the entropy-production deficit
$\mathcal W_t^\infty$ of Section~\ref{sec:entropy}. In particular
$E_t^\infty\to\mathcal C$ relative to $N(f)$ as $t\Lambda^2\to0$ and
$\varepsilon\to0$, uniformly in the number of modes. On the fixed-area noncollapsing flat
families of Theorem~\ref{thm:flatlattices}, the compact version is
\begin{equation}\label{eq:compactCalabi}
 |E_t^{T_{\mathcal L}}(f)-E_t^{T_{\mathcal L}}(0)-\mathcal C(f)|
 \le C_{A,\ell_0,\theta}
   \bigl(t\Lambda^2+\varepsilon+t^{-3}e^{-c\ell_0^2/t}\bigr)N_{\mathcal L}(f),
\end{equation}
and the same holds for the flat-subtracted $\mathcal W_t^{T_{\mathcal L}}$.
(Convergence at a fixed metric is Theorem~\ref{thm:fixedlimit} below.)
\end{corollary}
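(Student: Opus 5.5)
The plan is to compare both $E_t^\infty(f)$ and $\mathcal C(f)$ with their common-looking quadratic parts and then control the nonlinear remainders. I use Taylor's formula along $s\mapsto sf$. For the energy, $E_t^\infty(0)=0$, the first variation vanishes for mean-zero $f$ by translation invariance, and
\[
 E_t^\infty(f)=\tfrac12D^2E_t^\infty(0)[f,f]+\tfrac12\int_0^1(1-s)^2\partial_s^3E_t^\infty(sf)\,ds .
\]
The third-order remainder is at most $C_\theta\varepsilon N(f)$ by Corollary~\ref{cor:planevariations}. The same expansion for the curvature functional uses the exact formula in the proof of Lemma~\ref{lem:geometric}. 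There $\mathcal C(0)=0$, the first variation is zero, and $\tfrac12D^2\mathcal C(0)[f,f]=\tfrac14N(f)$. The cubic remainder is bounded by $C\varepsilon N(f)$, either from Lemma~\ref{lem:geometric} with $j=3$ or directly from the $s^3,s^4$ terms and the interpolation bound \eqref{eq:GNbasic}. It therefore suffices to compare the two quadratic forms.

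By Proposition~\ref{prop:response}, $\tfrac12D^2E_t^\infty(0)[f,f]-\tfrac14N(f)=A\sum_k|k|^4\bigl(q(t|k|^2)-q(0)\bigr)|\widehat f_k|^2$. I need a Lipschitz bound $|q(\tau)-q(0)|\le C_\theta\tau$ on $[0,\theta]$. This follows from the smoothness of $q$, which is real analytic since $\mathscr B$ is an entire function of $\tau$ by \eqref{eq:responseB}. Concretely, $\mathscr F=\mathscr B'/\tau$ is smooth at $0$ with $\mathscr F'(0)=-\frac1{24}$, as noted after Lemma~\ref{lem:closedform}, so $\sup_{[0,\theta]}|q'|$ is finite. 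With $t|k|^2\le t\Lambda^2$ on the support, the difference of the quadratic forms is at most $C_\theta t\Lambda^2 N(f)$. Adding the three errors gives $|E_t^\infty(f)-\mathcal C(f)|\le C_\theta(t\Lambda^2+\varepsilon)N(f)$.

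For $\mathcal W_t^\infty$ I apply \eqref{eq:EminusW} of Theorem~\ref{thm:entropy}, which gives $|E_t^\infty-\mathcal W_t^\infty|\le C_\theta\varepsilon^2N(f)$, a smaller error.

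For the compact statement \eqref{eq:compactCalabi}, set $\mathcal D(s)=(E_t^T-E_t^\infty)(sf)$. Then $\mathcal D'(0)=0$ by translation invariance, and integrating \eqref{eq:compactE} with $j=2$ twice gives $|\mathcal D(1)-\mathcal D(0)|\le C t^{-3}e^{-c\ell_0^2/t}N(f)$, exactly as in the proof of Theorem~\ref{thm:stability}. Combined with the plane bound, this gives \eqref{eq:compactCalabi}. The analogue for $\mathcal W_t^{T}$ follows from \eqref{eq:EminusWT}, after absorbing $\varepsilon^2\le\varepsilon$ and $\beta_{\mathcal L}(t)$ into the stated error.

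The only step I regard as a real obstacle is the uniformity of the Lipschitz constant of $q$ on $[0,\theta]$. It is mild, because the constant may depend on $\theta$ and $q$ is analytic. Everything else is bookkeeping with results already established.
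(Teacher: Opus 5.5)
Your proposal is correct and follows the paper's proof essentially step for step. It uses a Taylor expansion at the flat metric with the cutoff-independent third-variation bounds of Corollary~\ref{cor:planevariations}, the multiplier comparison $|q(\tau)-q(0)|\le C_\theta\tau$ on $[0,\theta]$, \eqref{eq:EminusW} for $\mathcal W_t^\infty$, and integration of \eqref{eq:compactE} at $j=2$ twice for the compact version; the only cosmetic difference is that you expand $E_t^\infty$ and $\mathcal C$ separately (via Lemma~\ref{lem:geometric}) where the paper expands $R_t^\infty=E_t^\infty-\mathcal C$ directly, which is the same estimate.
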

\begin{proof}
Put $R=E_t^\infty-\mathcal C$. Both functionals vanish with their first
variations at $f=0$, and by Proposition~\ref{prop:response} and
Remark~\ref{rem:normalization},
$\frac12D^2R(0)[f,f]=A\sum_{k\ne0}|k|^4(q(t|k|^2)-\frac14)|\widehat f_k|^2$.
Since $q$ is smooth on $[0,\theta]$ with $q(0)=\frac14$,
$|q(\tau)-\frac14|\le C_\theta\tau\le C_\theta t\Lambda^2$ there. Taylor's
formula with the third-variation bound for $R$ in
Corollary~\ref{cor:planevariations} adds at most $C_\theta\varepsilon N(f)$;
for $\mathcal W_t^\infty$ use also \eqref{eq:EminusW}.
Integrating the $j=2$ estimate \eqref{eq:compactE} twice and using
the vanishing first variation proves \eqref{eq:compactCalabi}; the entropy
version follows from Theorem~\ref{thm:entropy}.
\end{proof}

\subsection{Fixed metrics: the small-time limit}\label{subsec:fixedlimit}
For a single fixed metric the amplitude error of Corollary~\ref{cor:Eclose}
disappears as $t\to0$. Let $(\mathcal M,g)$ be a closed smooth surface of area $A$ and Euler
characteristic $\chi$, and let $p_t(x,y)$ be its heat kernel
($\partial_t=\Delta_g$, density with respect to $dA$). For $x\in\mathcal M$ and
$v\in T_x\mathcal M$ put
$J_t(x)(v,v)=\int_{\mathcal M}(\partial_v\log p_t(x,y))^2p_t(x,y)\,dA(y)$,
$h_t=2tJ_t$, and define $V_t=\int_{\mathcal M}\sqrt{\det\nolimits_gh_t}\,dA$,
$\operatorname{Ent}_t=\int_{\mathcal M}\int_{\mathcal M}p_t\log p_t\,dA\,dA$,
$D_t=\int_{\mathcal M}(\frac12\operatorname{tr}_gh_t-\sqrt{\det\nolimits_gh_t})\,dA$,
$E_t(g)=9t^{-2}(A-V_t)$ and
$\mathcal W_t(g)=9t^{-2}(A-\frac12\int_{\mathcal M}\operatorname{tr}_gh_t\,dA)$.
For $g=(1+f)g_0$ on a flat torus these are the functionals used above
($r_t(x,y)=p_t(x,y)\rho(y)$, the Fisher form does not depend on the reference
measure in $y$, and $\sqrt{\det h}\,dx=\sqrt{\det_gh}\,dA$,
$\operatorname{tr}h\,dx=\operatorname{tr}_gh\,dA$). The proof
of Lemma~\ref{lem:debruijn} works on every closed surface (mass
conservation and integration by parts in the target give
$\frac d{dt}\int p_t\log p_t\,dA(y)=-\int|\nabla_yp_t|^2/p_t\,dA(y)$, and
$p_t(x,y)=p_t(y,x)$ turns the integral over $x$ into the source
Fisher information), so
\begin{equation}\label{eq:debruijnM}
 -\frac d{dt}\operatorname{Ent}_t=\frac1{2t}\int_{\mathcal M}\operatorname{tr}_gh_t\,dA,
 \qquad E_t=\mathcal W_t+\frac{9D_t}{t^2},\qquad D_t\ge0 .
\end{equation}

\begin{lemma}[Small-time expansion of the heat-Fisher tensor]
\label{lem:fisherexpansion}
There are smooth symmetric $2$-tensors $H_1,H_2$ on $\mathcal M$ and $C<\infty$ with
$\sup_{x\in\mathcal M}|h_t-g-tH_1-t^2H_2|_g\le Ct^3$ for $0<t\le1$. Moreover $H_1=c_{\rm H}Kg$ for a universal constant $c_{\rm H}$, and $\operatorname{tr}_gH_j$
is a universal polynomial in the curvature and its covariant derivatives,
of weight $2j$.
\end{lemma}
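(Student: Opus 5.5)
We will derive the expansion from the Minakshisundaram--Pleijel parametrix on a closed surface. Fix $x$ and work in normal coordinates centred at $x$. For $0<t\le1$ write
\[
p_t(x,y)=(4\pi t)^{-1}e^{-d(x,y)^2/4t}\Bigl(\sum_{j=0}^{3}t^ju_j(x,y)\Bigr)+R_t(x,y),
\]
with $u_0=\Delta^{-1/2}$ the van Vleck factor and $u_j$ smooth. Standard estimates give $R_t$ and its first source derivative bounded by $Ct^{3}\cdot t^{-1}e^{-cd^2/t}$ and $Ct^{3}\cdot t^{-3/2}e^{-cd^2/t}$. These hold uniformly in $x$ by compactness and the injectivity-radius bound.

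\textbf{Step 1: the source score.} We have
\[
\nabla_x\log p_t=-\nabla_x d^2/(4t)+\nabla_x\log\Bigl(\sum_j t^j u_j\Bigr)+(\text{error}).
\]
The first term equals $\exp_x^{-1}(y)/(2t)$ up to sign.

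\textbf{Step 2: the Fisher integral.} Substitute $y=\exp_x(\sqrt t\,\xi)$ in $J_t(x)(v,v)=\int(\partial_v\log p_t)^2p_t\,dA$. The volume density and $u_0$ expand in powers of $\sqrt t\,\xi$ with coefficients polynomial in the curvature jets at $x$. The integral then becomes
\[
\int(\text{polynomial in }\xi,\sqrt t)\,e^{-|\xi|^2/4}\,d\xi
\]
plus remainders. By the $\xi\mapsto-\xi$ symmetry, odd powers of $\sqrt t$ vanish. Hence
\[
2tJ_t=g+tH_1+t^2H_2+O(t^3).
\]
Here $H_j(x)$ is a universal polynomial in the Taylor coefficients of the metric in normal coordinates up to order $2j+2$, which are polynomials in $\nabla^mR$.

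The quotient by $p_t$ and the tails $d>\mathrm{inj}/2$ are handled as in Lemma~\ref{lem:compact}. Off the diagonal ball, a Gaussian upper bound and a lower bound with nearly matching exponents make the contribution $O(e^{-c/t})$. Near the diagonal, the ratio $p_t/((4\pi t)^{-1}e^{-d^2/4t}u_0)$ is $1+O(t)$ uniformly. The coefficients are smooth in $x$ because the $u_j$ are, and smoothness in $x$ of the remainder follows from the uniform bounds.

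\textbf{Step 3: the structure of $H_1$.} The only weight-two curvature invariant in dimension two is $K$, and the only natural symmetric $2$-tensors of weight two are $Kg$. The tensor $\mathrm{Ric}=Kg$ in dimension two, and no $\nabla$ terms occur at this weight. Therefore $H_1=c_{\rm H}Kg$. We can check the constant against Remark~\ref{rem:normalization}: taking the trace and integrating should match $a_1=-1$, so $c_{\rm H}=-1$. This is consistent with the figure's $h_t=(1-tK)g$. Similarly, $\operatorname{tr}_gH_2$ is a scalar invariant of weight four, hence a combination of $K^2$ and $\Delta K$.

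\textbf{Main obstacle.} The hardest step is justifying uniform control of the quotient $|\nabla p|^2/p$ to order $t^3$. This needs the expansion of $\nabla_x p_t$ to sufficient order together with a near-diagonal lower bound. The lower bound comes from $u_0>0$ on the injectivity ball, and far away one uses Li--Yau type two-sided bounds. Cancellation of the half-integer powers also requires care, because the $\nabla_x$ derivative of $u_j$ contributes odd terms. The parity argument must therefore be applied to the full product integrand, not to each factor separately.
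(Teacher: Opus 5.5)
Your route is the paper's: parametrix, the substitution $y=\exp_x(\sqrt t\,\xi)$, parity applied to the whole product integrand, and invariant theory for $H_1$. However, the step you flag as the main obstacle is not closed, and with the estimates you state it fails.

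Take a remainder bound $|R_t|\le Ct^{3}\,t^{-1}e^{-cd^2/t}$ with a non-sharp $c<\tfrac14$, which is the kind of bound Duhamel's formula with Gaussian upper bounds produces. It is not small relative to $k_t\asymp t^{-1}e^{-d^2/4t}$ on a fixed ball $d\le\operatorname{inj}/2$: the ratio is $t^{3}e^{(1/4-c)d^2/t}$, which is unbounded once $d^2\gg t\log(1/t)$. So the uniform near-diagonal ratio you assert does not follow from what you have. Moreover, accuracy $1+O(t)$ would be far too weak. Since $|\nabla_x\log k_t|\sim t^{-1/2}$ where the Gaussian mass sits, an $O(t^3)$ error in $h_t$ requires $p_t=k_t(1+O(t^3))$ and $\nabla_x\log p_t=\nabla_x\log k_t+O(t^{5/2})$ uniformly there. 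Getting this on a whole fixed ball would need a relative expansion with the sharp exponent $\tfrac14$ for both $p_t$ and $\nabla_x p_t$ off the cut locus. That is a considerably deeper input than ``standard estimates''.

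The missing idea is to localize at the shrinking scale $\Omega_t(x)=\{y: d(x,y)^2\le mt\log(1/t)\}$ instead of at a fixed radius.
\begin{itemize}
\item \emph{Off $\Omega_t$:} the Gaussian upper bound $p_t\le Ct^{-1}e^{-d^2/(5t)}$ and the logarithmic gradient bound $|\nabla_x\log p_t|\le C(d/t+t^{-1/2})$ make the contribution to $h_t$ of order $t^{m/10}$. No lower bound on $p_t$ is needed, though your two-sided bounds would also work here.
\item \emph{On $\Omega_t$:} one has $k_t\ge ct^{m/4-1}$. So the ordinary unweighted parametrix remainder $\|R_N\|_{C^1}\le C_Nt^{N-3/2}$ is $O(t^5)\,k_t$ once $N$ is large; the paper takes $m=30$, $N=14$. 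This gives $p_t=k_t(1+O(t^5))$ and $\nabla_x\log p_t=\nabla_x\log k_t+O(t^4)$ there.
\item Laplace's method on $\Omega_t$, extended to $\mathbb R^2$ at cost $O(t^{m/4-1})$, then completes your Step~2 together with your parity argument.
\end{itemize}

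Two smaller points. First, you use that $H_1$ has weight two to conclude it depends only on the $2$-jet of $g$ in normal coordinates, leaving $Kg$ as the only $O(2)$-equivariant option. That weight $2j$ should be derived, for example from the parabolic scaling $g\mapsto\lambda^2g$, $t\mapsto\lambda^2t$, under which $h_t\mapsto\lambda^2h_t$. With it, the relevant Taylor coefficients have order at most $2j$, not $2j+2$. Second, the value $c_{\rm H}=-1$ is not part of the statement, so matching it against the normalization remark is only a consistency check.
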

\begin{proof}
Fix $\delta$ below a quarter of the injectivity radius and $m=30$. For
$t\in[t_0,1]$ with a fixed small $t_0$ the claim is trivial, because $h_t$
is bounded there by the gradient bound below; so let $0<t\le t_0$, with
$t_0$ so small that $mt_0\log(1/t_0)\le\delta^2$. For $x\in\mathcal M$ use
normal coordinates $y=\exp_x(v)$, $|v|=d(x,y)$, $dA(y)=\theta_x(v)\,dv$, with
$\theta_x$ smooth and $\theta_x(0)=1$, and put
$\Omega_t(x)=\{y:d(x,y)^2\le mt\log(1/t)\}\subset B_\delta(x)$.

\emph{Step 1: localization.} On a compact surface
$p_t(x,y)\le Ct^{-1}e^{-d(x,y)^2/(5t)}$ and
$|\nabla_x\log p_t(x,y)|\le C(d(x,y)/t+t^{-1/2})$ for $0<t\le1$
\cite{Davies,Hsu}. Off $\Omega_t(x)$ one has
$e^{-d^2/(5t)}\le t^{m/10}e^{-d^2/(10t)}$, hence
\[
 \int_{\mathcal M\setminus\Omega_t(x)}|\nabla_x\log p_t|^2p_t\,dA
 \le Ct^{m/10}\int_{\mathcal M}\Bigl(\frac{d^2}{t^2}+\frac1t\Bigr)
       \frac{e^{-d^2/(10t)}}t\,dA\le Ct^{m/10-1}.
\]
After multiplication by $2t$ this part of $h_t(x)$ is $O(t^{m/10})=O(t^3)$,
uniformly in $x$.

\emph{Step 2: parametrix.} By \cite[Theorem~2.30]{BGV},
$p_t=k_t+R_N$ with $k_t(x,y)=(4\pi t)^{-1}e^{-d(x,y)^2/(4t)}\Phi(t,x,y)$,
$\Phi=\sum_{j=0}^Nt^j\Phi_j$, where the $\Phi_j$ are smooth on
$\{d<2\delta\}$, $\Phi_0(x,x)=1$, and $\|R_N\|_{C^1(\mathcal M\times\mathcal M)}\le
C_Nt^{N-3/2}$ (BGV's cutoff equals $1$ on $\{d\le\delta\}$). Shrinking $\delta$
and $t_0$, $\frac12\le\Phi\le2$ and $|\nabla_x\log\Phi|\le C$ on
$\{d\le\delta\}$. On $\Omega_t(x)$ therefore $k_t\ge ct^{m/4-1}$ and $|\nabla_x\log k_t|\le C(d/t+1)\le
Ct^{-1/2}\log(1/t)^{1/2}$. Take $N=14$, so that $t^{N-3/2}\le t^6\,t^{m/4-1}$.
Then $|R_N|+|\nabla_xR_N|\le Ct^5k_t$ on $\Omega_t(x)$, so $p_t=k_t(1+O(t^5))$ and
$\nabla_x\log p_t=\nabla_x\log k_t+O(t^4)$ there. Since
$\int_{\Omega_t(x)}k_t\,dA\le C$ and $\int|\nabla_x\log k_t|^ik_t\,dA\le Ct^{-i/2}$ for
$i=1,2$, we obtain, uniformly in $x$,
\[
 h_t(x)=2t\int_{\Omega_t(x)}(\nabla_x\log k_t\otimes\nabla_x\log k_t)\,
          k_t\,dA(y)+O(t^3).
\]

\emph{Step 3: Laplace's method.} By the Gauss lemma
$\nabla_xd(x,y)^2=-2v$ exactly, so
$\nabla_x\log k_t=v/(2t)+B(t,x,v)$ with $B=\nabla_x\log\Phi(t,x,\exp_xv)$
smooth in $(t,x,v)$, using a local orthonormal frame to identify $T_x\mathcal M$
with $\mathbb R^2$ (the $H_j$ below are frame independent by $O(2)$-equivariance). Substitute $v=\sqrt t\,w$. Since
$(4\pi t)^{-1}e^{-|v|^2/(4t)}dv=\gamma(w)\,dw$ with
$\gamma(w)=(4\pi)^{-1}e^{-|w|^2/4}$,
\[
 h_t(x)=\int_{|w|^2\le m\log(1/t)}\Bigl(\tfrac12w\otimes w
   +\sqrt t\,(w\otimes B+B\otimes w)+2t\,B\otimes B\Bigr)
   \Phi\,\theta_x(\sqrt tw)\,\gamma(w)\,dw+O(t^3),
\]
with $B$ and $\Phi$ evaluated at $(t,x,\exp_x(\sqrt tw))$. Expand
$B$, $\Phi$ and $\theta_x$ in Taylor series in $(t,v)$ at $(0,0)$ to total
order five. The coefficients are smooth in $x$, and by compactness the
Taylor remainder is at most $C(t+|v|)^6=Ct^3(\sqrt t+|w|)^6$ on
$[0,t_0]\times\mathcal M\times\{|v|\le\delta\}$, so the total remainder of the
integrand is at most $Ct^3(1+|w|)^{20}$, uniformly in $x$, which is
$\gamma$-integrable. The
expanded integrand is a finite sum of terms $t^{a/2}P_a(x,w)$, where
$P_a$ is a polynomial in $w$ with smooth coefficients. A monomial
$t^\ell v^\alpha$ becomes $t^{\ell+|\alpha|/2}w^\alpha$, whose $t^{1/2}$-exponent
$2\ell+|\alpha|$ has the parity of its $w$-degree $|\alpha|$; the three
prefactors have $t^{1/2}$-exponent and $w$-degree $(0,2)$, $(1,1)$ and $(2,0)$.
Hence $P_a$ has the parity of $a$. Extending the integrals to
$\mathbb R^2$ costs $O(t^{m/4-1})=O(t^3)$, and odd polynomials integrate to
zero against $\gamma$. Therefore
\[
 h_t(x)=H_0(x)+tH_1(x)+t^2H_2(x)+O(t^3),\qquad
 H_j(x)=\int_{\mathbb R^2}P_{2j}(x,w)\,\gamma(w)\,dw,
\]
uniformly in $x$, with $H_j$ smooth. The leading term is
$H_0=\frac12\int w\otimes w\,\gamma\,dw=g$, since $\int w\otimes w\,\gamma=2I$;
this is the flat computation of Section~\ref{sec:framework}.

\emph{Step 4: the coefficients.} The Taylor coefficients of $d^2$, of
$\theta_x$ and of the $\Phi_j$ in normal coordinates are universal
polynomials in the curvature and its covariant derivatives
\cite{BGV,Gilkey}; since $\operatorname{tr}_gH_j$ is $O(2)$-invariant, Weyl's
invariant theory \cite{Gilkey} makes it a contraction polynomial in the
covariant derivatives of the curvature. Moreover $H_j(x)$ is a universal polynomial in the Taylor
coefficients of $g$ at $x$ in normal coordinates, in which $\partial^kg$
has weight $k$ and the first-order coefficients vanish. Under
$g\mapsto\lambda^2g$, $t\mapsto\lambda^2t$ the heat kernel scales by
$\lambda^{-2}$ and $h\mapsto\lambda^2h$, so the coefficient of $t^j$ has
weight $2j$. Thus $H_1(x)$ depends only on the $2$-jet of the metric at
$x$ in normal coordinates, which on a surface is
$g_{ij}(v)=\delta_{ij}-\frac{K(x)}3(|v|^2\delta_{ij}-v_iv_j)+O(|v|^3)$,
rotation invariant and linear in $K(x)$. Normal coordinates are defined up
to $O(2)$, so $H_1(x)$ is $O(2)$-equivariant, and a rotation-invariant
symmetric $2\times2$ form is a multiple of the identity. Hence
$H_1=c_{\rm H}Kg$. The proof of Theorem~\ref{thm:fixedlimit} gives
$c_{\rm H}\int K\,dA=-\int K\,dA$; since $c_{\rm H}$ is universal, a round sphere
shows $c_{\rm H}=-1$.
\end{proof}

\begin{theorem}[The fixed-metric limit]\label{thm:fixedlimit}
For every smooth metric $g$ on a closed surface,
\[
 \mathcal W_t(g)=\frac{18\pi\chi}t+\int_{\mathcal M}K_g^2\,dA_g+O(t),\qquad
 E_t(g)=\mathcal W_t(g)+O(t^2)\qquad(t\to0).
\]
In particular, on a torus $E_t(g)\to\int K_g^2\,dA_g=\operatorname{Ca}(g)$ and
$\mathcal W_t(g)\to\operatorname{Ca}(g)$ for every fixed smooth metric, with
error $O(t)$.
\end{theorem}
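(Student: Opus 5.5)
The plan is to prove the statement for $\mathcal W_t$ through the de Bruijn identity \eqref{eq:debruijnM}, and to treat $E_t$ separately through the anisotropy term $D_t$. For $\mathcal W_t$ the key input is the small-time expansion of the heat entropy from \cite{MPR}. On a closed surface it reads
\[
\operatorname{Ent}_t=-A\log(4\pi t)-A+a_1t\int K\,dA+a_2t^2\int K^2\,dA+O(t^3),
\]
with universal constants $a_1,a_2$. The expansion must hold in a form that may be differentiated in $t$. If \cite{MPR} only gives an undifferentiated expansion, I will obtain the derivative a different way: Lemma~\ref{lem:fisherexpansion} shows that $\int\operatorname{tr}_gh_t\,dA$ itself has an expansion $2A+t\int\operatorname{tr}H_1+t^2\int\operatorname{tr}H_2+O(t^3)$. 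An expansion of this form can be integrated in $t$. Comparing the integrated version with the entropy expansion then identifies the coefficients, and no differentiation of an asymptotic series is needed.

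The first step is to substitute into $-\frac{d}{dt}\operatorname{Ent}_t=\frac1{2t}\int\operatorname{tr}_gh_t$. This gives
\[
\tfrac12\int\operatorname{tr}_gh_t=A-a_1t\int K-2a_2t^2\int K^2+O(t^3).
\]
Gauss--Bonnet, $\int K=2\pi\chi$, then yields
\[
\mathcal W_t=\frac{18\pi\chi a_1}{t}+18a_2\int K^2+O(t).
\]
The constants come from \cite{MPR}; I expect $a_1=1$ and $a_2=1/18$. As independent checks I will use two facts. The first is the trace identity $\operatorname{tr}H_1=2c_{\rm H}K$ from Lemma~\ref{lem:fisherexpansion}, which forces $a_1=-c_{\rm H}$. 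The second is the flat response computation: by Remark~\ref{rem:normalization} and Proposition~\ref{prop:response}, $q(0)=1/4$, and this forces the $\int K^2$ coefficient to equal $1$.

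For consistency I also need the normalization of $a_1$. On the round sphere the constant $c_{\rm H}$ can be computed from the fact that the heat kernel is a function of distance alone. Combined with the localized Laplace-method computation of Lemma~\ref{lem:fisherexpansion}, Step~3, applied to the $t$-coefficient, this gives $c_{\rm H}=-1$ and hence $a_1=1$. So the $1/t$ term is $18\pi\chi/t$.

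For $E_t$, Lemma~\ref{lem:fisherexpansion} gives $h_t=g+tc_{\rm H}Kg+t^2H_2+O(t^3)$. The trace-free part is therefore $h_t^{\rm tf}=t^2H_2^{\rm tf}+O(t^3)$. The identity $\tfrac12\operatorname{tr}h-\sqrt{\det h}=|h^{\rm tf}|^2/(\operatorname{tr}h+2\sqrt{\det h})$ then shows $D_t=O(t^4)$ uniformly. Consequently $9D_t/t^2=O(t^2)$, which proves $E_t=\mathcal W_t+O(t^2)$. On a torus $\chi=0$, so $E_t$ and $\mathcal W_t$ both tend to $\int K^2=\operatorname{Ca}(g)$. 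This uses that $\bar K=0$ makes $\int K^2$ equal to the Calabi energy.

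The main obstacle is the exact value of the $t^2$ coefficient, $18a_2=1$, together with justifying the derivative of the entropy expansion. I will attack this first through the integration trick above. The MPR coefficient combined with the flat check $q(0)=1/4$ then pins the value down without a direct curved-space computation.
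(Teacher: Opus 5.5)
Your architecture matches the paper's: the de Bruijn identity \eqref{eq:debruijnM}, integrating in $t$ the expansion of $\frac12\int\operatorname{tr}_gh_t\,dA$ from Lemma~\ref{lem:fisherexpansion} so that no asymptotic series is differentiated, matching against the entropy expansion of \cite{MPR}, Gauss--Bonnet, and $D_t=O(t^4)$ from $h_t^{\rm tf}=O(t^2)$. The gap is in the step that carries the content of the theorem, namely the numerical values of the coefficients, above all $18a_2=1$.

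You never evaluate the coefficients of \cite{MPR}. You say you \emph{expect} $a_1=1$ and $a_2=1/18$, and you propose to pin $a_2$ down by the flat check $q(0)=\frac14$ ``without a direct curved-space computation''. As you set it up, through Proposition~\ref{prop:response} and Remark~\ref{rem:normalization}, that check identifies $\lim_{t\to0}\tfrac12D^2E_t$ at the flat metric with the second variation of the torus limit $\lim_{t\to0}E_t(g_f)=18a_2\int K_{g_f}^2\,dA_{g_f}$. In other words, it assumes that the small-time limit commutes with the second conformal variation. The fixed-metric remainder is $O_g(t)$ with no control of its $g$-derivatives, so this interchange is unjustified. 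The paper explicitly declines to use the check as an input for exactly this reason (Remark~\ref{rem:twonormalizations}).

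The paper computes the coefficients directly.
\begin{itemize}
\item The generator in \cite{MPR} is $\frac12\Delta$, so their time is $s=2t$, and their pointwise expansion becomes $-\log(4\pi t)-1+2\mathsf c_1t+4\mathsf c_2t^2+o(t^2)$.
\item On a surface one has $\operatorname{Ric}_{ij}\operatorname{Ric}_{ij}=2K^2$, $R_{iujv}R_{iujv}=4K^2$ and $R_{iujv}R_{juiv}=2K^2$, and the divergence terms integrate to zero.
\item This gives $\mathsf C_1=\pi\chi$ and $\mathsf C_2=(-\frac1{24}+\frac1{30}+\frac1{45})\int K^2\,dA=\frac1{72}\int K^2\,dA$, hence $b_1=-2\mathsf C_1$ and $b_2=-8\mathsf C_2=-\frac19\int K^2\,dA$.
\end{itemize}
Forgetting the time change puts you off by a factor $4$, so this must be done explicitly.

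If you prefer the flat route, replace the Hessian check by a value estimate uniform in $t$. Take a fixed mean-zero trigonometric polynomial $f_0$ and apply the compact form of Corollary~\ref{cor:Eclose} to $\eta f_0$. Letting $t\to0$ first at fixed $\eta$ gives $|18a_2-1|\,\mathcal C(\eta f_0)\le C\eta^3\mathsf A(f_0)N(f_0)$. Proposition~\ref{prop:curvatureequivalence} then gives $\mathcal C(\eta f_0)\ge\eta^2N(f_0)/32$, and letting $\eta\to0$ forces $18a_2=1$. This rests on the uniform third-variation bounds of Corollary~\ref{cor:planevariations}, which is precisely what your check lacks.

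That route fixes only $a_2$, since the $1/t$ term vanishes on tori. The value $a_1=1$ then comes from $\mathsf c_1=\frac14\operatorname{Sc}$, which makes your round-sphere computation of $c_{\rm H}$ superfluous. You also may not quote $c_{\rm H}=-1$ from Lemma~\ref{lem:fisherexpansion}, because the paper derives it from this theorem.

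Finally, \cite{MPR} is pointwise in the source point, so integrating it over $\mathcal M$ needs uniformity. The paper gets this by rerunning the localization--parametrix--Laplace argument on $\int p_t\log p_t\,dA(y)$ and invoking uniqueness of asymptotic expansions. In that way \cite{MPR} supplies only the coefficient values.
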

\begin{proof}
Put $I(t)=-\frac d{dt}\operatorname{Ent}_t$. By \eqref{eq:debruijnM} and
Lemma~\ref{lem:fisherexpansion},
$tI(t)=\frac12\int\operatorname{tr}_gh_t\,dA=A+b_1t+b_2t^2+O(t^3)$ with
$b_j=\frac12\int\operatorname{tr}_gH_j\,dA$. Integrating $I$ from $t$ to $1$,
\begin{equation}\label{eq:Entexp1}
 \operatorname{Ent}_t=-A\log t+c_{\rm Ent}-b_1t-\tfrac{b_2}2t^2+O(t^3)
\end{equation}
for a constant $c_{\rm Ent}$. On the other hand, Menkovski, Portegies and
Ravelonanosy \cite[Theorems~3.1, 4.4 and~4.5]{MPR} expand the relative
entropy of the heat kernel at each point; their generator is
$\frac12\Delta$, so their time is $s=2t$, and their reference measure is
normalized. In dimension two their expansion reads
$\int_{\mathcal M}p_t(z,y)\log p_t(z,y)\,dA(y)=-\log(4\pi t)-1+2\mathsf c_1(z)t+4\mathsf c_2(z)t^2+o(t^2)$
for each $z$, with $\mathsf c_1=\frac14\operatorname{Sc}$ and (MPR's formula at dimension two)
\[
 \mathsf c_2=\tfrac{16}{480}\operatorname{Sc}_{;ii}-\tfrac1{48}\operatorname{Ric}_{ij}\operatorname{Ric}_{ij}
 +\tfrac{8}{80}\operatorname{Ric}_{ij;ij}
 +\tfrac{12}{1440}R_{iujv}R_{iujv}+\tfrac{8}{720}R_{iujv}R_{juiv}.
\]
This expansion is pointwise in $z$. The
localization, parametrix and Laplace argument of the proof of
Lemma~\ref{lem:fisherexpansion}, applied to $\int p_t(z,y)\log p_t(z,y)\,dA(y)$
(off $\Omega_t(z)$ use $p_t|\log p_t|\le C(p_t\log(1/t)+\sqrt{p_t})$, which
follows from $p_t\le Ct^{-1}$, together with the Gaussian upper bound and a
larger fixed $m$), gives
an expansion $-\log(4\pi t)-1+e_1(z)t+e_2(z)t^2+O(t^3)$ uniform in
$z$, with $e_1,e_2$ continuous; uniqueness of asymptotic expansions at
each $z$ gives $e_1=2\mathsf c_1$ and $e_2=4\mathsf c_2$. Thus \cite{MPR} is used only
for the values of the coefficients, and integration over $z$ gives
\begin{equation}\label{eq:Entexp2}
 \operatorname{Ent}_t=-A\log(4\pi t)-A+2\mathsf C_1t+4\mathsf C_2t^2+o(t^2),\qquad
 \mathsf C_j=\int_{\mathcal M}\mathsf c_j\,dA .
\end{equation}
Comparing the coefficients of $t$ and $t^2$ in \eqref{eq:Entexp1} and
\eqref{eq:Entexp2} gives $b_1=-2\mathsf C_1$ and $b_2=-8\mathsf C_2$.

On a surface $R_{ijkl}=K(g_{ik}g_{jl}-g_{il}g_{jk})$, so $\operatorname{Ric}=Kg$,
$\operatorname{Sc}=2K$, $\operatorname{Ric}_{ij}\operatorname{Ric}_{ij}=2K^2$,
$R_{iujv}R_{iujv}=4K^2$ and $R_{iujv}R_{juiv}=K^2(4-2-2+2)=2K^2$. The terms
$\operatorname{Sc}_{;ii}$ and $\operatorname{Ric}_{ij;ij}=\frac12\Delta\operatorname{Sc}$
integrate to zero. Hence, by Gauss--Bonnet,
$\mathsf C_1=\frac12\int_{\mathcal M}K\,dA=\pi\chi$ and
\[
 \mathsf C_2=\Bigl(-\frac2{48}+\frac{12\cdot4}{1440}+\frac{8\cdot2}{720}\Bigr)\int_{\mathcal M}K^2\,dA
 =\Bigl(-\frac1{24}+\frac1{30}+\frac1{45}\Bigr)\int_{\mathcal M}K^2\,dA
 =\frac1{72}\int_{\mathcal M}K^2\,dA .
\]
Therefore
$A-tI(t)=-b_1t-b_2t^2+O(t^3)=2\pi\chi\,t+\frac{t^2}9\int_{\mathcal M}K^2\,dA+O(t^3)$, and
$\mathcal W_t=9(A-tI(t))/t^2$ has the stated expansion.

For $E_t$, Lemma~\ref{lem:fisherexpansion} gives $h_t=(1+c_{\rm H}tK)g+O(t^2)$, so
$|h_t^{\rm tf}|_g\le Ct^2$ and $h_t\ge\frac12g$
for small $t$. Since
$\frac12\operatorname{tr}h-\sqrt{\det h}=|h^{\rm tf}|^2/(\operatorname{tr}h+2\sqrt{\det h})$,
we get $0\le D_t\le CAt^4$, and \eqref{eq:debruijnM} gives
$E_t-\mathcal W_t=9D_t/t^2=O(t^2)$.
\end{proof}

\begin{remark}[Two independent determinations of the normalization]
\label{rem:twonormalizations}
The constant $1/72$ above comes from the entropy coefficients of
\cite{MPR}; the factor $9/t^2$ was fixed independently in
Remark~\ref{rem:normalization} by the flat quadratic response
$q(0)=\frac14$ (Proposition~\ref{prop:response}). They agree:
$9/t^2$ times $-b_2=8\mathsf C_2=\frac8{72}\int K^2\,dA$ gives
coefficient $9\cdot8\cdot\frac1{72}=1$ on $\int K^2\,dA$, and
$\int K_{g_f}^2\,dA_{g_f}=\frac14\|\Delta f\|_2^2+O(f^3)$ has the flat
quadratic multiplier $|k|^4/4=|k|^4q(0)$. This consistency check is not an input (it presumes that $t\to0$ commutes with the
second variation). A numerical evaluation on the unit sphere ($K=1$)
independently reproduces the per-point coefficients
$\frac12\operatorname{tr}h_t=1-tK-\frac{t^2}9K^2+O(t^3)$. Theorem~\ref{thm:fixedlimit} also identifies the
coefficients of Remark~\ref{rem:normalization}:
$V_t=A-t\int K\,dA-\frac{t^2}9\int K^2\,dA+O(t^3)$, so $a_1=-1$ and
$a_2=-\frac19$. With Corollary~\ref{cor:Eclose}, $E_t$
converges to the Calabi energy for each fixed metric, uniformly relative
to $N(f)$ on resolved bands of small amplitude.
\end{remark}

\subsection{Uniform statistical uniformization on bounded geometry classes}
\label{subsec:uniformstatistical}
The fixed-metric limit becomes a variational statement only after its
remainder is uniform on the admissible metrics.  The following formulation
makes the compactness requirement explicit.  Fix a closed connected oriented smooth surface
$M$ and a smooth reference metric $g_*$, and let $\mathscr G$ be a family
of smooth metrics satisfying
\begin{equation}\label{eq:boundedgeometryclass}
 L^{-1}g_*\le g\le Lg_*,\qquad
 \|g\|_{C^{80}(g_*)}\le L,\qquad
 \operatorname{inj}(M,g)\ge\iota>0.
\end{equation}
Here $C^{80}$ can be replaced by a sufficiently large finite regularity
order; the displayed order is a convenient sufficient choice, and is not
asserted to be optimal.  Dependence on the fixed reference atlas and metric
is understood.  Write
\[
 \widehat E_t(g)=E_t(g)-\frac{18\pi\chi(M)}t,
 \qquad \mathcal K_2(g)=\int_M K_g^2\,dA_g.
\]

\begin{theorem}[Uniform Calabi limit]\label{thm:uniformCalabi}
There are $C,t_0>0$, depending only on $M,g_*,L,\iota$, such that
\begin{equation}\label{eq:uniformCalabilimit}
 \sup_{g\in\mathscr G}
 \left|\widehat E_t(g)-\mathcal K_2(g)\right|\le Ct
 \qquad(0<t\le t_0).
\end{equation}
The same conclusion holds with $\mathcal W_t$ in place of $E_t$; moreover
$0\le E_t(g)-\mathcal W_t(g)\le Ct^2$ uniformly on $\mathscr G$.
\end{theorem}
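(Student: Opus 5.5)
The plan is to rerun the proofs of Lemma~\ref{lem:fisherexpansion} and Theorem~\ref{thm:fixedlimit} with every constant tracked, and to check that each one depends only on the data in \eqref{eq:boundedgeometryclass}. The whole argument hinges on the parametrix-based pointwise expansion
\[
 h_t(x)=g+tH_1(x)+t^2H_2(x)+O(t^3),
\]
and we aim to show its $O(t^3)$ remainder is bounded uniformly in $x\in M$ and in $g\in\mathscr G$. Once that holds, the remaining steps are algebraic identities. The de Bruijn identity \eqref{eq:debruijnM} integrates $tI(t)=\frac12\int\operatorname{tr}_gh_t\,dA$. Gauss--Bonnet together with the values $b_1=-2\pi\chi$ and $b_2=-\frac19\mathcal K_2(g)$ then give
\[
 A-tI(t)=2\pi\chi\,t+\tfrac{t^2}9\mathcal K_2(g)+O(t^3),
\]
with a uniform remainder.

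\textbf{Step 1 (uniform analytic inputs).} The ellipticity bound, the $C^{80}$ bound and the injectivity-radius bound give uniform control of the following quantities over $\mathscr G$:
\begin{itemize}
 \item the Gaussian upper bound $p_t\le Ct^{-1}e^{-d^2/(5t)}$ and the gradient bound $|\nabla_x\log p_t|\le C(d/t+t^{-1/2})$, uniformly by the Davies/Hsu estimates, whose constants depend only on curvature bounds, injectivity radius and area;
 \item the normal-coordinate data $\theta_x$, $d^2$ and the Minakshisundaram--Pleijel coefficients $\Phi_j$ for $j\le14$, together with their Taylor remainders to order five, since these are universal expressions in finitely many derivatives of $g$;
 \item the $C^1$ bound on the parametrix remainder $R_N$ from \cite[Theorem~2.30]{BGV}, whose constants depend on finitely many derivatives of the metric and on a lower bound for the injectivity radius.
\end{itemize}
We fix $\delta=\iota/4$ and choose $t_0$ depending only on $(M,g_*,L,\iota)$. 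With these choices, Steps~1--3 of Lemma~\ref{lem:fisherexpansion} give $\sup_x|h_t-g-tH_1-t^2H_2|_g\le Ct^3$ uniformly on $\mathscr G$. The order $80$ is comfortably enough to bound $\Phi_{14}$ and the Taylor remainders.

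\textbf{Step 2 (identification of coefficients without uniform MPR).} Theorem~\ref{thm:fixedlimit} used \cite{MPR} only to identify the coefficients. Those identifications are pointwise algebraic identities between universal invariants: $\frac12\operatorname{tr}H_1=-K$, and after integration $\frac12\int\operatorname{tr}_gH_2\,dA=-\frac19\int K^2\,dA$. Because they are identities, they hold for every $g$, and no uniformity is needed. Integrating the uniform pointwise expansion of Step~1 therefore gives
\[
 \Bigl|\tfrac12\int\operatorname{tr}_gh_t\,dA-A+2\pi\chi t+\tfrac{t^2}9\mathcal K_2(g)\Bigr|\le CAt^3,
\]
and $A$ is bounded on $\mathscr G$. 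Multiplying by $9/t^2$ proves \eqref{eq:uniformCalabilimit} for $\mathcal W_t$.

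\textbf{Step 3 (the anisotropy term).} Step~1 gives $h_t=(1-tK)g+O(t^2)$ uniformly, so $|h_t^{\rm tf}|_g\le Ct^2$ and $h_t\ge\frac12g$ once $t\le t_0$. The identity
\[
 \tfrac12\operatorname{tr}h-\sqrt{\det h}=\frac{|h^{\rm tf}|^2}{\operatorname{tr}h+2\sqrt{\det h}}
\]
then yields $0\le D_t\le CAt^4$, so $0\le E_t-\mathcal W_t=9D_t/t^2\le Ct^2$. This transfers the $\mathcal W_t$ bound to $E_t$.

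The main obstacle is Step~1, specifically the uniformity of the parametrix remainder $R_N$ and of the localization across the class. I would handle it by noting that each constant in the BGV construction is produced by a Duhamel iteration whose kernel bounds depend only on $C^k$ norms of $g$ in the reference atlas and on $\iota$. A cleaner alternative is a compactness argument: by Arzel\`a--Ascoli, $\mathscr G$ is precompact in $C^{79}$, and on a compact set the constants depend continuously on the metric, hence are bounded. The compactness argument needs continuity of the heat kernel and its parametrix in $g$, which follows from Duhamel's formula.
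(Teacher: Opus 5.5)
Your proposal is correct and follows the paper's proof essentially step for step. The common steps are: uniform tracking of the localization, parametrix and Laplace-method constants of Lemma~\ref{lem:fisherexpansion} over the bounded-geometry class, giving a uniform tensor expansion; the observation that the coefficient identities $b_1=-2\pi\chi$ and $b_2=-\frac19\mathcal K_2(g)$ from Theorem~\ref{thm:fixedlimit} are exact for each metric, so no remainder is differentiated; and the uniform bound $|h_t^{\rm tf}|_g\le Ct^2$ giving $0\le D_t\le Ct^4$.

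The paper takes your first route for the main obstacle, tracking the transport recursion and the heat-kernel bound constants, rather than your compactness alternative. That alternative would in any case need local uniformity of the constants near each limit metric, not mere continuity.
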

\begin{proof}
We track the constants in Lemma~\ref{lem:fisherexpansion}.  The bounds
\eqref{eq:boundedgeometryclass} give a common normal-coordinate radius
$\delta<\iota/4$, uniform ellipticity, and uniform bounds for the metric,
volume density, and their derivatives on these coordinate balls.  The
transport recursion for the coefficients $\Phi_j$ and the cutoff
parametrix therefore have uniform bounds through order $N=14$, including
the one source derivative and the six Taylor derivatives used in that
lemma.  Indeed each transport step applies a differential operator of
order two, while the final differentiated defect and Taylor estimates
require only finitely many further derivatives; fewer than $80$
coordinate derivatives suffice.  The constants in the Gaussian upper
bound and the logarithmic gradient bound are uniform as well: their
constants depend only on lower bounds for the Ricci curvature and the
injectivity radius, upper bounds for finitely many covariant derivatives of
the curvature, the area and the diameter
\cite{LiYau,StroockTuretsky,Hsu}; their local estimates use these same ellipticity and coefficient bounds, and
the fixed finite atlas controls the global patching constants.  Thus the
choice of $m=30$, the localization region
$d_g(x,y)^2\le30t\log(1/t)$, and a single $t_0$ are valid for the entire
family.  The localized Gaussian moments have uniform integrable
majorants.  This proves the strengthened conclusion
\begin{equation}\label{eq:uniformtensor}
 \sup_{g\in\mathscr G}\sup_{x\in M}
 \bigl|h_t-g+tK_gg-t^2H_2(g)\bigr|_g\le Ct^3,
 \qquad \sup_{g\in\mathscr G}\|H_2(g)\|_\infty\le C.
\end{equation}
The coefficient identities in Theorem~\ref{thm:fixedlimit} are exact for
each metric, so, without differentiating an asymptotic remainder,
\[
 \frac12\int_M\operatorname{tr}_g h_t\,dA_g
 =A_g-2\pi\chi(M)t-\frac{t^2}{9}\mathcal K_2(g)+O(t^3).
\]
The remainder is now uniform because \eqref{eq:uniformtensor} is uniform
and $A_g\le L A_{g_*}$.  Multiplication by $9/t^2$ proves the assertion
for $\mathcal W_t$.  Finally \eqref{eq:uniformtensor} gives
$\|h_t^{\rm tf}\|_\infty\le Ct^2$ and $h_t\ge g/2$ uniformly.  The exact
trace--determinant identity in Theorem~\ref{thm:fixedlimit} yields
$D_t\le Ct^4$, proving both remaining assertions.
\end{proof}

For clarity, the geometric input and its role can be recorded as follows:
\begin{center}
\begin{tabular}{p{.34\textwidth}p{.55\textwidth}}
\hline
Uniform input & Use in the proof\\ \hline
Fixed $M,g_*$ and metric ellipticity & A finite common atlas, volume and
 global patching bounds\\
Injectivity radius $\iota$ & A common normal-coordinate and cutoff radius\\
Finite high derivative bound & Transport coefficients, differentiated
 parametrix remainder and Taylor remainder\\
Two-dimensionality & $H_1=-Kg$, hence the trace-free term starts at $t^2$\\
Fixed area and conformal class & Identification of the minimizing
 constant-curvature metric\\
Precompactness of admissible metrics & Passage from a curvature-defect
 estimate to convergence of metrics\\ \hline
\end{tabular}
\end{center}

\begin{corollary}[Approximate minimizers recover uniformization]
\label{cor:statisticaluniformization}
Assume that every $g\in\mathscr G$ is conformal to one fixed smooth
metric and has the same area $A$.  Suppose that $\mathscr G$ contains an
area-$A$ constant-curvature metric $g_c$.  If $g_t\in\mathscr G$ satisfies
\[
 \widehat E_t(g_t)\le\inf_{g\in\mathscr G}\widehat E_t(g)+\epsilon_t,
 \qquad \epsilon_t\ge0,\qquad\epsilon_t\longrightarrow0,
\]
then, with $\bar K=2\pi\chi(M)/A$,
\begin{equation}\label{eq:uniformizationdefect}
 \int_M(K_{g_t}-\bar K)^2\,dA_{g_t}\le\epsilon_t+2Ct.
\end{equation}
Every $C^2$ cluster point is a constant-curvature metric of area $A$ in
the prescribed conformal class.  Such cluster points exist under
\eqref{eq:boundedgeometryclass}.  If $\chi(M)\le0$, then $g_t\to g_c$ in
$C^2$.  If $M$ is a sphere, the conclusion is convergence in $C^2$ to the
set of constant-curvature metrics; convergence to one representative
requires a normalization of the conformal automorphism freedom.
\end{corollary}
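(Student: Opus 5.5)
Using Theorem~\ref{thm:uniformCalabi}, the plan is to compare $g_t$ with the constant-curvature competitor $g_c\in\mathscr G$. By minimality, $\widehat E_t(g_t)\le\widehat E_t(g_c)+\epsilon_t$. The uniform estimate \eqref{eq:uniformCalabilimit} applied to both metrics gives $\mathcal K_2(g_t)\le\widehat E_t(g_t)+Ct\le\widehat E_t(g_c)+\epsilon_t+Ct\le\mathcal K_2(g_c)+\epsilon_t+2Ct$. Since every metric in $\mathscr G$ has area $A$, Gauss--Bonnet gives $\int K_g\,dA_g=2\pi\chi(M)=\bar KA$. Hence
\[
 \int_M(K_g-\bar K)^2\,dA_g=\mathcal K_2(g)-\frac{(2\pi\chi(M))^2}{A},
\]
and this vanishes at $g_c$, whose curvature is $\bar K$. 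Subtracting the same constant from both sides yields \eqref{eq:uniformizationdefect}.

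For cluster points, the bounds \eqref{eq:boundedgeometryclass} (a $C^{80}$ bound together with uniform ellipticity) make $\mathscr G$ precompact in $C^{79}$, and in particular in $C^2$, by Arzel\`a--Ascoli. So any sequence $t_n\to0$ has a subsequence with $g_{t_n}\to g_\infty$ in $C^2$. The limit satisfies $L^{-1}g_*\le g_\infty\le Lg_*$, so it is a smooth (in fact $C^{79}$) metric. Curvature and area are continuous under $C^2$ convergence, so $g_\infty$ has area $A$ and $\int(K_{g_\infty}-\bar K)^2dA=0$, i.e.\ constant curvature $\bar K$.

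It remains to check that the conformal class is preserved in the limit. Write $g_t=e^{2u_t}g_0$ for the fixed background $g_0$. The $C^2$ convergence of $g_t$ gives $C^2$ convergence of $u_t=\frac12\log(g_t/g_0)$, computed in any fixed component. Hence $g_\infty=e^{2u_\infty}g_0$ lies in the prescribed class.

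\textbf{Identifying the limit.} For $\chi(M)\le0$, the constant-curvature metric of given area in a conformal class is unique. For $\chi<0$ this follows from the maximum principle applied to $\Delta_{g_c}w=\bar K(e^{2w}-1)$. For $\chi=0$, $w$ is harmonic, hence constant, and the area constraint forces $w=0$. Therefore every $C^2$ cluster point equals $g_c$. Since the family is precompact, the whole family $g_t$ converges to $g_c$ in $C^2$: otherwise some sequence would stay a fixed $C^2$ distance away from $g_c$ yet have a cluster point, and that cluster point must be $g_c$, a contradiction.

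On the sphere, each cluster point is a round metric of area $A$. The same contradiction argument gives $\operatorname{dist}_{C^2}(g_t,\{\text{round metrics of area }A\})\to0$, which is the stated conclusion.

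The main obstacle is the step that upgrades vanishing curvature defect to actual $C^2$ convergence. The defect estimate alone gives only $L^2$ control of $K-\bar K$, so the precompactness built into $\mathscr G$ is essential. Care is also needed to check that the limit remains inside the conformal class, and that the uniqueness statement is the correct one in each genus.
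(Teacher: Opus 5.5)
Your proposal is correct and follows the paper's proof step for step. It uses the competitor comparison with $g_c$ through \eqref{eq:uniformCalabilimit} and the Gauss--Bonnet identity, Arzel\`a--Ascoli precompactness from \eqref{eq:boundedgeometryclass}, passage of area, conformality and curvature to $C^2$ limits, uniqueness of the area-normalized constant-curvature representative for $\chi(M)\le0$, and the compactness contradiction giving convergence of the whole family; you simply supply the uniqueness and closedness details that the paper only asserts.

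One small correction: with the paper's convention $\Delta=\partial_1^2+\partial_2^2$, the equation for $w$ reads $\Delta_{g_c}w=\bar K(1-e^{2w})$. Your form is the one for the positive Laplacian, and this sign is exactly what makes the maximum-principle argument close when $\bar K<0$.
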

\begin{proof}
Gauss--Bonnet and the area constraint give the exact identity
\[
 \mathcal K_2(g)-\frac{(2\pi\chi(M))^2}{A}
 =\int_M(K_g-\bar K)^2\,dA_g\ge0.
\]
Apply \eqref{eq:uniformCalabilimit} twice, using $g_c$ as a competitor,
to obtain \eqref{eq:uniformizationdefect}.  The common ellipticity and
$C^{80}$ bounds make the family precompact in $C^2$ by
Arzel\`a--Ascoli.  Area, conformality and curvature pass to a $C^2$
limit, which therefore has $K\equiv\bar K$.  For $\chi\le0$ the
area-normalized constant-curvature representative in a fixed conformal
class is unique.  Compactness then gives convergence of the full
sequence: any sequence staying a positive $C^2$ distance away would
have a cluster point with that same positive distance, a contradiction.
On the sphere the same argument gives distance to the set of such
representatives tending to zero.  If the admissible family is also
$C^2$-closed, fixed-$t$ continuity gives exact minimizers; existence of
exact minimizers is not needed for the stated approximate-minimizer
conclusion.
\end{proof}

\begin{remark}[What this variational statement does not claim]
The geometry bounds are an assumption on the admissible class, not a
consequence of a bound on $\widehat E_t$.  The theorem does not establish
equicoercivity on the unrestricted space of metrics, exclude bubbling
or conformal degeneration there, or construct a new proof of classical
uniformization.  Its content is that finite-time diffusion statistics
select the classical uniformizer in each bounded, precompact conformal
class that contains it.  A single scalar objective need not determine a
metric uniquely away from its minimizing set.
\end{remark}

\begin{corollary}[A torus rate]\label{cor:uniformizationtorusrate}
Let $g_c=g_0$ be a fixed flat metric and write
$g_t=(1+f_t)g_0$, $\int_M f_t\,dA_{g_0}=0$.  Under the hypotheses of
Corollary~\ref{cor:statisticaluniformization}, if
$\|f_t\|_\infty\le1/8$, then
\[
 \|f_t\|_{H^2(g_0)}\le C_{g_0}\bigl(\epsilon_t+t\bigr)^{1/2}.
\]
If in addition $0<t<t_*$ and each $f_t$ is a trigonometric polynomial with
spectrum in $|k|\le\Lambda_t$, $t\Lambda_t^2\le\theta<\tau_*$ and
$\sA(f_t)\le\varepsilon_*$ (the hypotheses of
Theorem~\ref{thm:flatlattices}), its flat-subtracted coercivity improves
this to $\|f_t\|_{H^2(g_0)}\le C\epsilon_t^{1/2}$, with $C$ depending on
$A,\ell_0,\theta$, whenever the approximate
minimality is relative to an admissible class containing $g_0$.
\end{corollary}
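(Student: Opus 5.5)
The first bound will follow from the uniformization defect estimate \eqref{eq:uniformizationdefect} combined with the curvature comparison of Proposition~\ref{prop:curvatureequivalence}. The second bound will come from applying Theorem~\ref{thm:flatlattices} directly to the approximate minimality.

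\emph{Step 1 (general rate).} On a torus $\chi=0$, so $\bar K=0$. Then \eqref{eq:uniformizationdefect} reads
\[
 \mathcal C(f_t)=\int_M K_{g_t}^2\,dA_{g_t}\le\epsilon_t+2Ct .
\]
Since $\|f_t\|_\infty\le1/8$, Proposition~\ref{prop:curvatureequivalence} gives
\[
 N(f_t)=\|\Delta f_t\|_2^2\le32\,\mathcal C(f_t)\le32(\epsilon_t+2Ct).
\]
Because $f_t$ has mean zero, Parseval together with $|k|\ge2\pi\ell_0/A$ on nonzero dual frequencies (Lemma~\ref{lem:latticeWiener}) controls $\|f_t\|_2$ and $\|\nabla f_t\|_2$ by $C_{g_0}\|\Delta f_t\|_2$. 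Hence $\|f_t\|_{H^2(g_0)}\le C_{g_0}\|\Delta f_t\|_2$, which gives the first inequality.

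\emph{Step 2 (improved rate).} Under the resolved hypotheses, \eqref{eq:torusstability} gives
\[
 c_*N(f_t)\le E_t^T(g_t)-E_t^T(g_0).
\]
On a torus $\widehat E_t=E_t$, so the topological subtraction is harmless. Since $g_0$ belongs to the admissible class, approximate minimality yields $E_t(g_t)\le E_t(g_0)+\epsilon_t$. Therefore $N(f_t)\le\epsilon_t/c_*$, with $c_*=m_\theta/2$ depending only on $A,\ell_0,\theta$. The Poincaré-type step from Step~1 then converts this into the $H^2$ bound $\|f_t\|_{H^2(g_0)}\le C\epsilon_t^{1/2}$.

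The main point to check is the identification of the functionals. The $E_t^T$ in Theorem~\ref{thm:flatlattices} is defined through the periodized plane kernel with Lebesgue reference, while $E_t(g)$ is the intrinsic functional on the surface. These agree by the remark preceding Lemma~\ref{lem:fisherexpansion}: the Fisher form is independent of the reference measure in $y$, and $\sqrt{\det h}\,dx=\sqrt{\det_g h}\,dA$. A second point is that the flat-subtracted comparison needs $E_t^T(g_0)$ taken at the same time $t$, which holds because both $g_t$ and $g_0$ are compared at a common fixed $t$. Beyond these two checks the argument is routine.
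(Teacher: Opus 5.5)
Your proposal is correct and follows the paper's own argument. The first bound comes from \eqref{eq:uniformizationdefect} with $\chi=0$, Proposition~\ref{prop:curvatureequivalence}, and the mean-zero elliptic estimate; the second comes from the flat-subtracted coercivity of Theorem~\ref{thm:flatlattices} combined with the competitor inequality $E_t(g_t)-E_t(g_0)\le\epsilon_t$. Your explicit checks, that $\widehat E_t=E_t$ on the torus and that the intrinsic and periodized functionals agree, are exactly the points the paper leaves implicit.
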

\begin{proof}
For $\chi=0$, \eqref{eq:uniformizationdefect} bounds
$\mathcal K_2(g_t)$.  Proposition~\ref{prop:curvatureequivalence}
gives $\|\Delta f_t\|_2^2\le32\mathcal K_2(g_t)$; the mean-zero
elliptic estimate on the fixed flat torus proves the first bound.
For the second bound, use the strictly positive, cutoff-independent
lower bound for $E_t(g_t)-E_t(g_0)$ in
Theorem~\ref{thm:flatlattices} and the competitor inequality
$E_t(g_t)-E_t(g_0)\le\epsilon_t$.  In this resolved class the flat metric
is the unique minimizer, so no $O(t)$ error is necessary.
\end{proof}

\subsection{A curved finite-amplitude consistency theorem}
\label{subsec:fixedmodecurved}
The uniform value estimate already gives a rigorous fixed-mode test on
curved backgrounds, without interchanging a small-time limit and a
second variation.  Let $K_{g_c}=K_c$ and
$g_\eta=(1+\eta f)g_c$, where $f$ is a fixed smooth function with
$\int f\,dA_{g_c}=0$.  For sufficiently small fixed $\eta_0>0$, the
family $|\eta|\le\eta_0$ satisfies \eqref{eq:boundedgeometryclass}.
Define its central finite-amplitude response by
\[
 Q_{t,\eta}(f)=\frac{\widehat E_t(g_\eta)+\widehat E_t(g_{-\eta})
                         -2\widehat E_t(g_c)}{2\eta^2},\qquad\eta\ne0.
\]

\begin{proposition}[Curved finite-amplitude response]
\label{prop:curvedfiniteamplitude}
There is $C_f<\infty$ such that
\begin{equation}\label{eq:curvedfiniteamplitude}
 \left|Q_{t,\eta}(f)
 -\frac14\int_M(\Delta_{g_c}f+2K_cf)^2\,dA_{g_c}\right|
 \le C_f\left(\eta^2+\frac{t}{\eta^2}\right)
\end{equation}
for $0<|\eta|\le\eta_0$ and $0<t\le t_0$.
In particular $\eta=t^{1/4}$ gives convergence with error $O_f(t^{1/2})$.
For $-\Delta_{g_c}f=\lambda f$, the limiting response is
$(\lambda-2K_c)^2\|f\|_2^2/4$.
\end{proposition}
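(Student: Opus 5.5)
The plan is to split $Q_{t,\eta}(f)$ into an exact curvature-energy part plus a uniform error, using Theorem~\ref{thm:uniformCalabi}. For $|\eta|\le\eta_0$ the metrics $g_{\pm\eta}$ and $g_c$ lie in one bounded geometry class \eqref{eq:boundedgeometryclass}. The estimate \eqref{eq:uniformCalabilimit} then gives $\widehat E_t(g)=\mathcal K_2(g)+\mathcal E_t(g)$ with $|\mathcal E_t(g)|\le Ct$, uniformly for $g\in\{g_\eta,g_{-\eta},g_c\}$. Substituting into the central difference quotient yields
\[
 Q_{t,\eta}(f)=\frac{\mathcal K_2(g_\eta)+\mathcal K_2(g_{-\eta})-2\mathcal K_2(g_c)}{2\eta^2}+O\Bigl(\frac{t}{\eta^2}\Bigr).
\]
This already accounts for the term $t/\eta^2$ in \eqref{eq:curvedfiniteamplitude}. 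The key point is that we only difference values and never differentiate the asymptotic remainder in $\eta$.

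It remains to show that the curvature quotient equals $\frac14\int(\Delta_{g_c}f+2K_cf)^2\,dA_{g_c}+O(\eta^2)$. The map $\eta\mapsto\mathcal K_2(g_\eta)$ is an explicit smooth, indeed real-analytic, function of $\eta$ near $0$. With $\rho=1+\eta f$, the conformal formula gives $K_{g_\eta}=\rho^{-1}\bigl(K_c-\tfrac12\Delta_{g_c}\log\rho\bigr)$ and $dA_{g_\eta}=\rho\,dA_{g_c}$. Hence $\mathcal K_2(g_\eta)=\int\rho^{-1}(K_c-\frac12\Delta\log\rho)^2\,dA_{g_c}$, whose integrand is a smooth function of $\eta$ with all $\eta$-derivatives bounded by norms of $f$. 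An even central difference kills the odd Taylor terms, so the quotient equals $\frac12\frac{d^2}{d\eta^2}\mathcal K_2(g_\eta)|_{0}+O(\eta^2)$. The $O(\eta^2)$ comes from the fourth derivative, which is bounded on $|\eta|\le\eta_0$ by a constant $C_f$.

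The second derivative is a short computation. Write $\log\rho=\eta f-\frac12\eta^2f^2+O(\eta^3)$ and $\rho^{-1}=1-\eta f+\eta^2f^2$. Then
\[
 K_{g_\eta}=K_c+\eta\bigl(-K_cf-\tfrac12\Delta f\bigr)+\eta^2\bigl(K_cf^2+\tfrac12 f\Delta f+\tfrac14\Delta(f^2)\bigr)+O(\eta^3).
\]
Expanding $\int K_{g_\eta}^2\rho\,dA_{g_c}$ to order $\eta^2$, the coefficient consists of the square of the linear term, $\frac14\int(\Delta f+2K_cf)^2$, together with cross terms $2K_c\cdot(\eta^2\text{-coefficient})$ and $\rho$-weighted terms. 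Because $K_c$ is constant, these remaining terms combine, after integrating $\int\Delta(f^2)=0$ by parts, into
\[
 K_c\int\bigl(2K_cf^2+f\Delta f\bigr)-2K_c\int f\bigl(K_cf+\tfrac12\Delta f\bigr)+\dots=0.
\]
I expect this bookkeeping to be the only delicate algebraic point: one must check that all extra terms cancel exactly, using constancy of $K_c$, $\int f=0$, and $\int\Delta(f^2)=0$. As a consistency check, the flat case $K_c=0$ recovers $\frac14\|\Delta f\|_2^2$ from Lemma~\ref{lem:geometric}. A cleaner route is to use Gauss--Bonnet and the fact that $g_c$ is critical for the Calabi energy: $\mathcal K_2=\mathrm{Ca}+(2\pi\chi)^2/A_{g_\eta}$, and the area term is $\eta$-independent to second order because $\int f=0$ and $A_{g_\eta}=A+\eta\int f$ exactly. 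Then only the Hessian of $\int(K-\bar K)^2\rho$ matters, and since $K_c-\bar K=0$ this Hessian is exactly the square of the linearized curvature, $\frac14\int(\Delta f+2K_cf)^2$.

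Combining the two parts gives \eqref{eq:curvedfiniteamplitude}. Choosing $\eta=t^{1/4}$ balances the two errors $\eta^2$ and $t/\eta^2$, each of size $t^{1/2}$. For an eigenfunction $-\Delta f=\lambda f$, one has $\Delta f+2K_cf=(2K_c-\lambda)f$, which gives $(\lambda-2K_c)^2\|f\|_2^2/4$.
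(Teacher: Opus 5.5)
Your proposal is correct and follows the paper's proof: Theorem~\ref{thm:uniformCalabi} replaces the central second difference of $\widehat E_t$ by that of the exact $\mathcal K_2(g_\eta)$ up to $O(t/\eta^2)$, and the fourth-order Taylor remainder of the smooth function $\eta\mapsto\mathcal K_2(g_\eta)$ gives the $O(\eta^2)$ term. The only difference is how you compute $\frac12\frac{d^2}{d\eta^2}\mathcal K_2(g_\eta)|_0=\int\dot K^2\,dA_{g_c}$: the paper differentiates the Gauss--Bonnet constraint $\int K\rho\,dA_{g_c}=2\pi\chi$ twice to cancel the $K_c\ddot K$ and $K_c\dot Kf$ terms, which is equivalent both to your direct expansion (the two extra terms there cancel exactly, with nothing hidden in the ``$+\dots$'') and to your Calabi-energy route using $K_c=\bar K$.
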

\begin{proof}
Area is exactly fixed along this path.  Write $\rho_\eta$ for the conformal
factor and $F=K^2\rho$. The conformal curvature formula gives
$\dot K=-\frac12(\Delta_{g_c}+2K_c)f$ and
$\ddot F=2\dot K^2+2K_c\ddot K+4K_c\dot Kf$ at $\eta=0$. Differentiating the
Gauss--Bonnet constraint $\int K\rho\,dA_{g_c}=2\pi\chi$ twice gives
$\int\ddot K=-2\int\dot Kf$, which cancels the last two terms after
integration. Hence
\[
 \frac12\frac{d^2}{d\eta^2}\bigg|_0\mathcal K_2(g_\eta)
 =\int_M\dot K^2\,dA_{g_c}
 =\frac14\int_M(\Delta_{g_c}f+2K_cf)^2\,dA_{g_c}.
\]
The exact function $\eta\mapsto\mathcal K_2(g_\eta)$ is smooth on
$[-\eta_0,\eta_0]$.  Its fourth derivative is bounded, so its central
second difference divided by $2\eta^2$ differs from the displayed
quadratic coefficient by at most $C_f\eta^2$.  Theorem~\ref{thm:uniformCalabi}
changes the numerator of that second difference by at most $4Ct$,
and hence its quotient by at most $2Ct/\eta^2$.  This proves the claim.
\end{proof}

The sphere's $\lambda=2K_c$ first spherical harmonics are the classical
conformal automorphism null directions.  At the next eigenvalue
$\lambda=6K_c$, division by $\lambda^2\|f\|_2^2$ gives the limiting
finite-amplitude coefficient $1/9$, rather than the flat value $q(0)=1/4$.
The curvature-corrected low-frequency operator must therefore be retained.
This is a statement about a specified joint limit $t/\eta^2\to0$,
not a proof of convergence of the actual Hessian at $\eta=0$.

\begin{remark}[The remaining differentiated estimate]
If one additionally proves
$\|\widehat E_t(g_s)-\mathcal K_2(g_s)\|_{C^2_s}\le Ct$
for a compact smooth metric family, then the same algebra yields
\[
 \tfrac12D^2\widehat E_t(g_c)[f,f]
 =\tfrac14\int_M(\Delta_{g_c}f+2K_cf)^2\,dA_{g_c}+O_f(t).
\]
We do not infer this differentiated estimate from uniform convergence
of values.  In particular, differentiating the source-Fisher quotient
twice introduces global terms with denominators $p_t^2$ and $p_t^3$;
these require parameter-jet Gaussian bounds and a compatible positive
lower bound on $p_t$, beyond the undifferentiated localization argument.
Even such a fixed-mode Hessian estimate would not establish the
semiclassical regime $\lambda\asymp t^{-1}$ or the universality of
$\tau_*$ on curved surfaces.
\end{remark}

\begin{theorem}[Radial strict convexity and absence of nonflat critical points]
\label{thm:radialselection}
Shrink the amplitude and compact-time constants in
Theorem~\ref{thm:stability}, if necessary. There are $a_*,b_*>0$,
independent of the Fourier cutoff, such that the following holds for either
$\mathcal E=E_t^\infty$ or $\mathcal E=E_t^T$ (with $0<t<t_*$ in the compact
case). Every $f$ satisfying the hypotheses of Theorem~\ref{thm:stability}
satisfies \eqref{eq:radialhessian} and consequently, with
$\widetilde{\mathcal E}(f)=\mathcal E(f)-\mathcal E(0)$,
\eqref{eq:radialvalue}--\eqref{eq:radialgradient}:
\begin{align}
 a_*N(f)&\le D^2\mathcal E(sf)[f,f]\le b_*N(f),\qquad |s|\le1,\label{eq:radialhessian}\\
 \tfrac{a_*}{2}N(f)&\le\widetilde{\mathcal E}(f)\le\tfrac{b_*}{2}N(f),\label{eq:radialvalue}\\
 a_*N(f)&\le D\mathcal E(f)[f]\le b_*N(f).\label{eq:radialgradient}
\end{align}
Every nonconstant ray has a strictly increasing energy for positive radial
parameter. The flat metric is the only point of this class at which the
first variation vanishes in all permitted mean-zero Fourier directions $|k|\le\Lambda$.
\end{theorem}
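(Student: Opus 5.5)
The plan is to work along the ray $s\mapsto\mathcal E(sf)$, $|s|\le1$, and to derive all three statements from the Hessian bound \eqref{eq:radialhessian}, obtained by a Taylor expansion about the flat metric. Put $\phi(s)=\mathcal E(sf)$. Then $\phi''(s)=D^2\mathcal E(sf)[f,f]$ and $\phi'''(s)=D^3\mathcal E(sf)[f^3]$. In the plane case $\phi''(0)=2\mathcal Q_t(f)$ by Proposition~\ref{prop:response}. Corollary~\ref{cor:largerband}, applied with $m_\theta=\min_{[0,\theta]}q$ and the bound $q\le\frac14$, gives
\[
2m_\theta N(f)\le\phi''(0)\le\tfrac12N(f).
\]
Corollary~\ref{cor:planevariations} gives $\sup_{|s|\le1}|\phi'''(s)|\le C_\theta\varepsilon N(f)$. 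Therefore
\[
|\phi''(s)-\phi''(0)|\le C_\theta\varepsilon N(f)\qquad\text{for }|s|\le1.
\]
After shrinking $\varepsilon_*$ so that $C_\theta\varepsilon_*\le m_\theta$, this yields \eqref{eq:radialhessian} with $a_*=m_\theta$ and $b_*=\frac12+m_\theta$.

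In the compact case I would add the transfer estimate \eqref{eq:compactE} with $j=2$, which is uniform in $|s|\le1$:
\[
|\phi_T''(s)-\phi_\infty''(s)|\le C_\theta t^{-3}e^{-c\ell_0^2/t}N(f).
\]
I would then shrink $t_*$ so that this error is at most $m_\theta N(f)/2$, and halve $a_*$ accordingly. This transfer step is the only place that needs care. The difficulty is that \eqref{eq:compactE} is stated for $D^j$ along the ray at $sf$ with $|s|\le1$, and I have to check that it is valid at every point of the ray, not only near $0$. Lemma~\ref{lem:compact} asserts exactly this, since it gives a supremum over $|s|\le1$, so I will invoke it as stated. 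I expect the main obstacle to be confirming that the amplitude radius in Lemma~\ref{lem:compact} and the one in Corollary~\ref{cor:planevariations} can be taken common; I would take their minimum.

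With \eqref{eq:radialhessian} in hand, the remaining claims follow from one-variable calculus. In both settings translation invariance gives $\phi'(0)=0$ for mean-zero $f$. Integrating $\phi''$ gives
\[
a_*sN(f)\le\phi'(s)\le b_*sN(f)\quad\text{for }0\le s\le1.
\]
At $s=1$ this is \eqref{eq:radialgradient}, because $\phi'(1)=D\mathcal E(f)[f]$. Integrating once more gives \eqref{eq:radialvalue}, since $\widetilde{\mathcal E}(f)=\phi(1)-\phi(0)$.

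If $f\ne0$, then $N(f)>0$, because $f$ is mean-zero and nonconstant. Hence $\phi'>0$ on $(0,1]$, and the energy strictly increases along the ray. For the uniqueness of critical points, suppose $f$ is in the class and $D\mathcal E(f)$ vanishes in every permitted Fourier direction. The direction $f$ itself is permitted, because it is mean-zero with spectrum in $|k|\le\Lambda$. Then \eqref{eq:radialgradient} forces $N(f)=0$, so $f=0$ and the metric is flat.
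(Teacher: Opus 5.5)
Your proposal is correct and follows essentially the paper's proof: you bound the flat Hessian by the quadratic response, control its variation along the ray with the third-variation estimate of Corollary~\ref{cor:planevariations}, integrate once and twice using the vanishing first variation, and test the critical-point condition against the direction $f$ itself. The only difference is in the compact case. You transfer the Hessian along the whole ray with the $j=2$ bound of \eqref{eq:compactE}, whereas the paper transfers the third variation with $j=3$ and compares the two Hessians only at $s=0$. Lemma~\ref{lem:compact} supplies both bounds uniformly in $|s|\le1$, so either route works.
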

\begin{proof}
The quadratic response and compact comparison give common $a_0,b_0>0$ with
$a_0N(f)\le D^2\mathcal E(0)[f,f]\le b_0N(f)$. The third-variation
estimates of Corollary~\ref{cor:planevariations} and Lemma~\ref{lem:compact}
give $|D^2\mathcal E(sf)[f,f]-D^2\mathcal E(0)[f,f]|\le M\varepsilon|s|N(f)$
for $|s|\le1$, with a common cutoff-independent constant $M$; choose
$\varepsilon_*$ with $M\varepsilon_*\le a_0/2$.
Then \eqref{eq:radialhessian} holds with $a_*=a_0/2$ and
$b_*=b_0+a_0/2$. Integrate once and twice in $s$, using
$D\mathcal E(0)[f]=0$, to obtain \eqref{eq:radialvalue}--\eqref{eq:radialgradient}.
If all permitted first variations vanish, the direction $f$
itself gives $0\ge a_*N(f)$, and mean zero forces $f=0$.
\end{proof}

\begin{corollary}[A dual-gradient inequality and stability of forced critical points]
\label{cor:radialPL}
Let $X_\Lambda$ be the real space of mean-zero
trigonometric polynomials supported in $|k|\le\Lambda$, with
$\|h\|_{\dot H^2}=\|\Delta h\|_2$, and for linear $\ell$ on $X_\Lambda$ let
$\|\ell\|_{-2,\Lambda}=\sup_{h\in X_\Lambda\setminus\{0\}}|\ell[h]|/\|\Delta h\|_2$.
Under the hypotheses of Theorem~\ref{thm:radialselection},
\begin{equation}\label{eq:radialPL}
 \|D\mathcal E(f)\|_{-2,\Lambda}^2
   \ge\frac{2a_*^2}{b_*}\,\widetilde{\mathcal E}(f).
\end{equation}
If $D\mathcal E(f)=\ell$ on $X_\Lambda$, then
$\|\Delta f\|_2\le\|\ell\|_{-2,\Lambda}/a_*$ and
$\widetilde{\mathcal E}(f)\le\frac{b_*}{2a_*^2}\|\ell\|_{-2,\Lambda}^2$;
for amplitudes at most $1/8$, also
$\mathcal C(f)\le a_*^{-2}\|\ell\|_{-2,\Lambda}^2$. The same
bounds hold with $\|\ell\|_{-2,\Lambda}$ replaced by any upper bound for
$\|D\mathcal E(f)\|_{-2,\Lambda}$.
\end{corollary}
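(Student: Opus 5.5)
The plan is to derive all three assertions from the radial bounds of Theorem~\ref{thm:radialselection}: the Hessian bound \eqref{eq:radialhessian} along the segment $\{sf\}$, the value bound \eqref{eq:radialvalue}, and the gradient bound \eqref{eq:radialgradient}. The dual norm is bounded below by testing against the admissible direction $h=f\in X_\Lambda$:
\[
 \|D\mathcal E(f)\|_{-2,\Lambda}\ge\frac{D\mathcal E(f)[f]}{\|\Delta f\|_2}\ge a_*\|\Delta f\|_2 .
\]
Here the second inequality is the lower half of \eqref{eq:radialgradient}, since $N(f)=\|\Delta f\|_2^2$. If $f=0$, every statement is trivial.

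For \eqref{eq:radialPL}, I square this bound to get $\|D\mathcal E(f)\|_{-2,\Lambda}^2\ge a_*^2N(f)$. The upper half of \eqref{eq:radialvalue} gives $N(f)\ge\frac{2}{b_*}\widetilde{\mathcal E}(f)$, and combining the two yields \eqref{eq:radialPL}.

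For forced critical points, suppose $D\mathcal E(f)=\ell$ on $X_\Lambda$. The displayed lower bound then reads $a_*\|\Delta f\|_2\le\|\ell\|_{-2,\Lambda}$, which is the first claim. Inserting it into $\widetilde{\mathcal E}(f)\le\frac{b_*}{2}N(f)$ gives $\widetilde{\mathcal E}(f)\le\frac{b_*}{2a_*^2}\|\ell\|_{-2,\Lambda}^2$. For the curvature bound, I note that $\|f\|_\infty\le\sA(f)$, so an amplitude of at most $1/8$ puts us in the regime of Proposition~\ref{prop:curvatureequivalence}. That proposition gives $\mathcal C(f)\le N(f)\le a_*^{-2}\|\ell\|_{-2,\Lambda}^2$.

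The final sentence about replacing $\|\ell\|_{-2,\Lambda}$ by an upper bound holds because every inequality above is monotone in that quantity. The only point needing care is that $f$ itself lies in $X_\Lambda$, so that it is an admissible test direction. This holds because the hypotheses of Theorem~\ref{thm:stability} require $f$ to be mean zero with spectrum in $|k|\le\Lambda$, and the same holds on the torus with $\mathcal E=E_t^T$. With this checked, there is no real obstacle; the corollary is a direct rearrangement of the radial bounds.
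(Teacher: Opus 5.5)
Your proof is correct and follows the paper's argument: you test the dual norm against $h=f\in X_\Lambda$ using the lower half of \eqref{eq:radialgradient}, combine with the upper half of \eqref{eq:radialvalue} for \eqref{eq:radialPL} and the forced bounds, and invoke Proposition~\ref{prop:curvatureequivalence} (via $\|f\|_\infty\le\sA(f)\le1/8$) for the curvature estimate. Your explicit check that $f$ is an admissible test direction in $X_\Lambda$ is a detail the paper leaves implicit.
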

\begin{proof}
Testing the dual norm against $h=f$, \eqref{eq:radialgradient}
gives $\|D\mathcal E(f)\|_{-2,\Lambda}\ge a_*\sqrt{N(f)}$; with the upper
bound in \eqref{eq:radialvalue} this gives \eqref{eq:radialPL}. The forced equation gives
$a_*N(f)\le|\ell[f]|\le\|\ell\|_{-2,\Lambda}\sqrt{N(f)}$;
the rest follows from the value and curvature comparisons.
\end{proof}
Under the hypotheses of Theorem~\ref{thm:softstability} (preserved
along $sf$) the same proofs apply, using the margin $\mu$ and
Lemma~\ref{lem:softjets}, after decreasing the amplitude and compact-time
constants, with $X_\Lambda$ replaced by any finite-dimensional space of
smooth mean-zero functions containing $f$ (no mixed-direction bound is asserted).

\section{A tame Hessian estimate at nonconstant backgrounds}
\label{sec:mixedhessian}
We now control a variation $u$ at an independent background $f$, in terms of the norms
$\mathsf A,\mathsf H,\mathsf G$ of Section~\ref{subsec:notation}. All Fourier supports lie in one resolved ball
$|k|\le\Lambda$, $t\Lambda^2\le\theta$. For intermediate formulas we extend the plane energy by
$E_t(g)=9t^{-2}(\int_{\mathbb T^2}(1+g)-V_t(g))$; this equals the previous functional
on the mean-zero slice, and the linear area term does not affect Hessians.

\begin{theorem}[Tame background comparison]\label{thm:mixedhessian}
There are $\varepsilon_\theta,C_\theta>0$, independent of the cutoff and
mode count, such that real trigonometric $f,u$ with
$\mathsf A(f)\le\varepsilon_\theta$ satisfy
\begin{equation}\label{eq:mixedhessian}
 |D^2E_t^\infty(f)[u,u]-D^2E_t^\infty(0)[u,u]|
 \le C_\theta\bigl[
 \mathsf A(f)\mathsf H(u)^2+
 \mathsf A(u)\mathsf H(f)\mathsf H(u)+
 \mathsf A(u)^2\mathsf H(f)^2\bigr].
\end{equation}
The same estimate holds for $R_t^\infty=E_t^\infty-\mathcal C$.
For mean-zero $f,u$, it also holds for the compact functionals at all
sufficiently small times, with uniform constants. The direction $u$ need
not have small amplitude.
\end{theorem}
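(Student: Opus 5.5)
We reduce \eqref{eq:mixedhessian} to the same regression decomposition \eqref{eq:master} used for the radial bounds, now along a two-parameter family. Put $\rho_{s,\sigma}=1+sf+\sigma u$. For $\sigma=0$ the coefficient is a real profile with $\mathsf A(f)\le\varepsilon_\theta$. For the $u$ direction we normalize: since the Hessian is quadratic in $u$, we may replace $u$ by $u/\mathsf A(u)$ (when $u\ne0$) and so assume $\mathsf A(u)=1$. Both sides of \eqref{eq:mixedhessian} scale like $\mathsf A(u)^2$, because $\mathsf H(u)^2$, $\mathsf A(u)\mathsf H(u)$ and $\mathsf A(u)^2$ are all of degree two. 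This is why $u$ need not be small.

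The quantity to bound is
\[
 D^2E(f)[u,u]-D^2E(0)[u,u]=\int_0^1\partial_s\,\partial_\sigma^2 E(\rho_{s,\sigma})\big|_{\sigma=0}\,ds,
\]
a mixed third variation $D^3E(sf)[f,u,u]$. We split $E$ into the four compensated pieces $-\mathfrak K_t-\mathcal S_t-\mathfrak M_t+D_t$ of Figure~\ref{fig:closure}. For each piece we work with complex parameters $(z,\zeta)$ in $\rho=1+zF+\zeta u$, where $F=f/\mathsf A(f)$, $|z|\le r$ and $|\zeta|\le r$. Theorem~\ref{thm:neargaussian}(b) applies because the combined coefficient has Wiener norm at most $2r$ and diffusion-scale Lipschitz constant at most $2r\sqrt\theta$.

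The complex bounds are to be redone bilinearly, keeping the two factors distinct instead of collapsing them to $\mathsf H(F)^2$:
\begin{enumerate}
\item[(i)] \emph{Clock.} From \eqref{naclock:eq:exact}, $L\rho$ and $La$ each split into an $F$-part and a $u$-part, so $|\mathfrak A_t|\le Ct^2\|z\Delta F+\zeta\Delta u+\text{gradient products}\|_2^2$.
\item[(ii)] \emph{Score remainder.} Theorem~\ref{score:thm:score} gives $\|U\|_{[2]}\le C\sqrt t\,\mathcal H$ with $\mathcal H=\|D^2a\|_2$. Here $D^2a$ involves $zD^2F+\zeta D^2u$ together with products $\nabla(zF+\zeta u)^{\otimes2}$.
\item[(iii)] \emph{Moments.} Theorem~\ref{mom:main} is redone with the same polarization.
\item[(iv)] \emph{Anisotropy.} Theorem~\ref{naani:main} gives $\|\nabla(zF+\zeta u)\|_4^4$.
\end{enumerate}
After polarization, each piece is bounded on the bidisk by $Ct^2\bigl(|z|\mathsf H(F)+|\zeta|\mathsf H(u)\bigr)^2$ plus quartic gradient terms. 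The quartic terms are converted by Lemma~\ref{lem:interp}(a) in its mixed form $\|\nabla F\|_4^2\|\nabla u\|_4^2\le10\,\mathsf H(F)\mathsf H(u)$, using $\|u\|_\infty\le\mathsf A(u)=1$.

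We then extract the coefficient of $z\zeta^2$ by Cauchy's formula on the bidisk, as in Lemma~\ref{lem:cauchyjets}. For the quotient terms $\mathcal S_t$ and $D_t$ we use the real lower bound \eqref{eq:nonzerolower} and differentiate the quotient as in Corollary~\ref{score:cor:square}, following the structure \eqref{naani:quotient}. The extraction yields terms of three types:
\[
 \mathsf A(f)\,\mathsf H(u)^2,\qquad \mathsf H(f)\mathsf H(u)\cdot\mathsf A(u),\qquad \mathsf H(f)^2\,\mathsf A(u)^2.
\]
The first comes from one $z$ derivative landing on a bounded coefficient while both Hessian factors are carried by $u$. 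The second and third come from $f$ carrying one or two Hessians. Restoring $\mathsf A(u)$ gives \eqref{eq:mixedhessian}. The bound for $R_t^\infty$ follows by subtracting the explicit mixed third variation of the formula in Lemma~\ref{lem:geometric}, which has the same three-term structure. The compact version follows from the mixed form of Lemma~\ref{lem:compact}. Its error $t^{-3}e^{-c\ell_0^2/t}\mathsf A^j$ is absorbed using Lemma~\ref{lem:latticeWiener} for the mean-zero $f$ and $u$, since $\mathsf A(u)^2\le C\,\mathsf H(u)^2$.

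\textbf{Main obstacle.} The hard step is the Cauchy extraction itself. A naive bidisk bound produces the uniform quantity $(\mathsf H(F)+\mathsf H(u))^2$, and extracting the coefficient of $z\zeta^2$ from it does not separate the three types above. In particular it does not force the factor $\mathsf A(f)$ in front of $\mathsf H(u)^2$. To fix this, one uses radii adapted to each direction: $|z|\le r/\mathsf A(f)$ in the unnormalized parameter, and separately a small radius in $\zeta$. One must also verify that every estimate above, notably the exact cancellations in Lemma~\ref{score:lem:bridge} and \eqref{mom:J3bound}, stays quadratic in the combined Hessian norm $\|D^2(zF+\zeta u)\|_2$ with coefficients depending only on Wiener sizes. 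Once that holds, expanding $(|z|\mathsf H(F)+|\zeta|\mathsf H(u))^2$ and reading off the mixed degree gives exactly the three terms.
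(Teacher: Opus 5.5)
Your overall architecture matches the paper's. You use the decomposition $E_t^\infty=\frac9{t^2}(-\mathfrak K_t-\mathcal S_t-\mathfrak M_t+D_t)$, the mixed third variation $D^3E(sf)[f,u,u]$, real quotient differentiation for $\mathcal S_t$ and $D_t$, and compact transfer through $\mathsf A(u)\le C_W\mathsf H(u)$. The gap is the step you yourself flag as the main obstacle: your proposed fix does not close it.

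For a holomorphic piece $\Phi$ (that is, $\mathfrak K_t/t^2$, $\mathfrak M_t/t^2$, $\mathcal C$, or your $\mathfrak A_t/t^2$), the only input is a quadratic majorant $|\Phi(zf+\zeta u)|\le M(|z|X+|\zeta|Y)^2$, with $e=\mathsf A(f)$, $b=\mathsf A(u)$, $X=\mathsf H(f)$, $Y=\mathsf H(u)$. A Cauchy estimate for the $z\zeta^2$ coefficient with radius $R$ in $z$ and $S$ in $\zeta$ gives
\[
 \frac{M(RX+SY)^2}{RS^2}=M\Bigl[\frac{RX^2}{S^2}+\frac{2XY}{S}+\frac{Y^2}{R}\Bigr].
\]
With your radii $R=r/e$ and $S\lesssim r/b$, this is of order $b^2X^2/e+bXY+eY^2$.

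So the factor $\mathsf A(f)$ in front of $\mathsf H(u)^2$ is in fact produced; your diagnosis is off. What fails is the last term, which comes out as $\mathsf A(u)^2\mathsf H(f)^2/\mathsf A(f)$. That is unbounded relative to the right side of \eqref{eq:mixedhessian} as $\mathsf A(f)\to0$ with $\mathsf H(u)\lesssim\mathsf A(u)\mathsf H(f)$. Shrinking $S$ only enlarges the first two terms. Taking $R=O(1)$ recovers $b^2X^2$ but loses the factor $e$ on $Y^2$. No fixed pair of radii works. ``Reading off the mixed degree'' of $(|z|\mathsf H(F)+|\zeta|\mathsf H(u))^2$ is also not a valid step, since the majorant controls the $z\zeta^2$ coefficient only through such Cauchy estimates.

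The missing idea is a profile-dependent radius, which is the paper's Lemma~\ref{lem:tametwocauchy}.
\begin{itemize}
\item Fix $S=r/(4b)$ and put $a_2(z)=\frac12D^2\Phi(zf)[u,u]$.
\item Write $a_2(1)-a_2(0)=\frac1{2\pi i}\oint_{|z|=R}a_2(z)\,dz/(z(z-1))$ with $R\ge2$. This also handles all backgrounds $sf$, $0\le s\le1$, at once.
\item This yields $C[Y^2/R+bXY+Rb^2X^2]$.
\item Choose $R=Y/(bX)$ clipped to $[2,r/(4e)]$. The three regimes give $eY^2$ at the upper clip, $bXY$ in between, and $b^2X^2$ at the lower clip.
\end{itemize}
Optimizing the $z$-radius separately for each Taylor coefficient would achieve the same thing.

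Your treatment of $\mathcal S_t$ and $D_t$ is sound, but for a different reason. There the holomorphic objects are the numerators $U$ and $\mathcal N$, whose bounds (e.g.\ $\|U(g)\|_{[2]}\lesssim\sqrt t\,\mathsf H(g)$) are \emph{linear}. For linear bounds, maximal directional radii lose nothing, and the three terms arise as products of two linear jets. This must be done at the level of numerator jets; $\mathcal S_t$ and $D_t$ themselves have no complex lower bound and admit no holomorphic quadratic majorant.

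Two smaller points:
\begin{itemize}
\item Theorem~\ref{thm:neargaussian}(b) covers only $1+zF$ with $F$ real. For $1+zF+\zeta u$ with independent complex $z,\zeta$ you must rerun the Levi argument. You must also extend $P_u1=1$, the weighted symmetry and the moment identities by the identity theorem, as in Lemma~\ref{lem:multicomplex}.
\item Handling $R_t^\infty$ by differentiating the explicit formula of Lemma~\ref{lem:geometric} is fine, since that formula is local and its terms factor.
\end{itemize}
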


\subsection{Two-parameter Cauchy estimation for the scalar terms}
Let $W_\Lambda$ be the space of complex-valued trigonometric
polynomials supported in $|k|\le\Lambda$ (no symmetry imposed on the
coefficients), with the norms $\mathsf A,\mathsf H,\mathsf G$ given by the
same formulas, and $V_\Lambda\subset W_\Lambda$ its real-valued subspace;
$W_\Lambda=V_\Lambda\oplus iV_\Lambda$, as $\operatorname{Re}g$,
$\operatorname{Im}g$ have spectrum in $|k|\le\Lambda$.

\begin{lemma}[A holomorphic tame Hessian lemma]\label{lem:tametwocauchy}
Let $\Phi$ be holomorphic on $\{g\in W_\Lambda:\mathsf A(g)<r\}$ and
suppose $|\Phi(g)|\le M_\Phi\mathsf H(g)^2$ there.
Then, for $\mathsf A(f)\le r/8$,
$|D^2\Phi(f)[u,u]-D^2\Phi(0)[u,u]|\le C_rM_\Phi[\mathsf A(f)\mathsf H(u)^2+
\mathsf A(u)\mathsf H(f)\mathsf H(u)+\mathsf A(u)^2\mathsf H(f)^2]$. The constant is independent of the dimension of the Fourier space.
\end{lemma}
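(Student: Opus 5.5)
The plan is to reduce everything to a two-variable holomorphic function and read the Hessian difference off its Taylor coefficients, using Cauchy estimates on polydisks whose radii are chosen separately for different coefficients. If $u=0$ there is nothing to prove. If $f=0$ the left side vanishes. Otherwise put $a=\mathsf A(f)$, $b=\mathsf A(u)$, $F=f/a$, $U=u/b$, so that $\mathsf A(F)=\mathsf A(U)=1$, and write $H_F=\mathsf H(F)=\mathsf H(f)/a$ and $H_U=\mathsf H(u)/b$. Define
\[
 \varphi(z,w)=\Phi(zF+wU),\qquad |z|,|w|<r/2 .
\]
Since $\mathsf A(zF+wU)\le|z|+|w|<r$, $\varphi$ is holomorphic on this bidisk (composition of $\Phi$ with an affine map into $W_\Lambda$). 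The hypothesis and the triangle inequality for $\mathsf H$ give
\[
 |\varphi(z,w)|\le M_\Phi(|z|H_F+|w|H_U)^2 .
\]
By the chain rule, $D^2\Phi(zF)[u,u]=b^2\partial_w^2\varphi(z,0)$. Expanding $\varphi=\sum c_{jk}z^jw^k$, the quantity to bound is
\[
 b^2\bigl(\partial_w^2\varphi(a,0)-\partial_w^2\varphi(0,0)\bigr)=2b^2\sum_{j\ge1}c_{j2}a^j .
\]
No dimension of $W_\Lambda$ enters, only the two scalar variables.

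The key step is to bound the coefficients $c_{j2}$ by Cauchy's inequality on the torus $|z|=\rho_1$, $|w|=\rho_2$:
\[
 |c_{j2}|\le M_\Phi\frac{(\rho_1H_F+\rho_2H_U)^2}{\rho_1^j\rho_2^2}
 \le2M_\Phi\Bigl(\frac{H_F^2}{\rho_1^{j-2}\rho_2^2}+\frac{H_U^2}{\rho_1^{j}}\Bigr),
\]
with radii chosen differently for different $j$, and always $\rho_2=r/4$.

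For $j\ge2$, take $\rho_1=r/4$. The hypothesis $a\le r/8$ makes $\sum_{j\ge2}(a/\rho_1)^j$ geometric with ratio at most $\tfrac12$, bounded by $Ca^2/r^2$. The contribution is then at most $C_rM_\Phi b^2a^2(H_F^2+H_U^2)=C_rM_\Phi(\mathsf A(u)^2\mathsf H(f)^2+a^2\mathsf H(u)^2)$. Since $a\le r/8$, we have $a^2\le C_r a$, and this fits the target.

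The term $j=1$ is where I expect the main obstacle. With a fixed $\rho_1$ it produces $b^2aH_F^2=\mathsf A(u)^2\mathsf H(f)^2/\mathsf A(f)$, which carries an unacceptable factor $1/\mathsf A(f)$. The remedy is to optimize $\rho_1$: take $\rho_1=\min(r/4,\ \rho_2H_U/H_F)$.
\begin{itemize}
\item If the minimum is $\rho_2H_U/H_F$, the Cauchy bound gives $|c_{12}|\le C_rM_\Phi H_FH_U$.
\item Otherwise $H_U\gtrsim H_F$, and it gives $|c_{12}|\le C_rM_\Phi H_U^2$.
\end{itemize}
Hence $2b^2a|c_{12}|\le C_rM_\Phi(\mathsf A(u)\mathsf H(f)\mathsf H(u)+\mathsf A(f)\mathsf H(u)^2)$.

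Adding the two cases gives the stated tame bound, with $C_r$ depending only on $r$ and no dependence on $\dim W_\Lambda$. This matters for the application: Lemma~\ref{lem:tametwocauchy} will be fed the scalar functionals (clock, moments, heat-trace-type terms), whose holomorphic quadratic bounds on Wiener balls come from Theorem~\ref{naclock:thm:clockderivative} and Theorem~\ref{mom:main} in exactly the form $|\Phi(g)|\le M_\Phi\mathsf H(g)^2$.
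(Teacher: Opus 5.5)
Your proof is correct and is essentially the paper's argument. Both use two-variable Cauchy estimates for $\Phi$ along $zf+\eta u$, with the $u$-radius at the maximal Wiener scale and the $f$-radius chosen to balance $\mathsf H(u)^2/R$ against $R\,\mathsf A(u)^2\mathsf H(f)^2$. The only difference is bookkeeping. The paper uses a single contour integral for $a_2(1)-a_2(0)$ with one radius clipped to $[2,r/(4\mathsf A(f))]$. You instead bound the Taylor coefficients individually and optimize the radius only for the linear one, which works equally well.

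One correction to your closing remark on the application. Theorems~\ref{naclock:thm:clockderivative} and~\ref{mom:main} give bounds only along one-parameter families $1+zF$ with $F$ real. The hypothesis on the full ball $\{g\in W_\Lambda:\mathsf A(g)<r\}$ is supplied in the paper by Lemma~\ref{lem:multicomplex}(c), and the lemma is applied to $\mathfrak K_t/t^2$, $\mathfrak M_t/t^2$ and $\mathcal C$, not to heat-trace terms.
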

\begin{proof}
Put $e=\mathsf A(f)$, $b=\mathsf A(u)$, $X=\mathsf H(f)$,
$Y=\mathsf H(u)$; zero directions are immediate. Choose
$2\le R\le R_{\max}=r/(4e)$ and $S=r/(4b)$.
On $|z|=R$, $|\eta|=S$, the argument $zf+\eta u$ lies in
$\mathsf A\le r/2$ and $|\Phi(zf+\eta u)|\le M_\Phi(RX+SY)^2$, so the
$\eta^2$ coefficient $a_2(z)=D^2\Phi(zf)[u,u]/2$ satisfies
$|a_2(z)|\le M_\Phi(RX+SY)^2/S^2$. Cauchy's formula
$a_2(1)-a_2(0)=\frac1{2\pi i}\int_{|z|=R}a_2(z)\,dz/(z(z-1))$ and $R\ge2$ give
\begin{equation}\label{eq:tameoptimize}
 |D^2\Phi(f)[u,u]-D^2\Phi(0)[u,u]|
 \le C_rM_\Phi\left[Y^2/R+bXY+Rb^2X^2\right].
\end{equation}
Choose $R$ to be $Y/(bX)$ clipped to $[2,R_{\max}]$.
If this ratio is above $R_{\max}$, then $Y^2/R_{\max}=4eY^2/r$, and
$bX<Y/R_{\max}$ gives $R_{\max}b^2X^2<Y^2/R_{\max}$, so the first and
third terms are bounded by $C_reY^2$; if it is inside the interval they
equal $bXY$; if it is below $2$, they are bounded by $Cb^2X^2$.
For $X=0$ choose $R_{\max}$, and for $Y=0$ choose $R=2$.
\end{proof}

Lemma~\ref{lem:multicomplex} supplies the complex bounds on a ball of
$W_\Lambda$ (not only on the families $1+zF$, $F$ real, of
Sections~\ref{sec:kernel}--\ref{mom:section}) that Lemma~\ref{lem:tametwocauchy} needs. By Lemma~\ref{lem:interp}(b), for $g\in W_\Lambda$,
\begin{equation}\label{eq:complexGNmixed}
 \mathsf G(g)^2\le2\sqrt5\,\|g\|_\infty\mathsf H(g)
              \le2\sqrt5\,\mathsf A(g)\mathsf H(g).
\end{equation}

\begin{lemma}[Several complex coefficient parameters]\label{lem:multicomplex}
Fix $\theta<\infty$ and $t\Lambda^2\le\theta$. There are $r_W>0$ and
$C_\theta<\infty$, independent of $\Lambda$, $t$ and the mode count, with
the following properties on the open ball
$B_W=\{g\in W_\Lambda:\mathsf A(g)<r_W\}$. Put $\rho=1+g$, $a=\rho^{-1}$,
$L=a\Delta$. In (b)--(d), the vectors called $g$ in
Section~\ref{mom:section} ($\nabla\rho$) and Section~\ref{sec:secondscore}
($\nabla\rho_z(y)$) become $\nabla\rho=\nabla g$.
\begin{enumerate}
\item[(a)] The Levi series of Section~\ref{sec:kernel} for $\rho$ defines
a plane kernel $r_g(u,x,y)$, holomorphic in the coefficients of $g$, with
$|r_g|\le C_\theta u^{-1}e^{-\kappa_-|x-y|^2/u}$ and
$|\nabla_xr_g|\le C_\theta u^{-3/2}e^{-\kappa_-|x-y|^2/u}$ for $0<u\le t$.
\item[(b)] Its periodization $P_u$ satisfies \eqref{mom:Lp},
\eqref{mom:gradprop}, $P_u1=1$, the weighted transpose identity
\eqref{mom:transpose}, the commutation \eqref{mom:H2}, and the exact
moment identities \eqref{mom:M3exact} and \eqref{mom:M4exact}, all with
$zF$ replaced by $g$.
\item[(c)] The functionals $\mathfrak K_t$ and $\mathfrak M_t$ defined
below and in \eqref{mom:Mdef}, and the curvature energy $\mathcal C$ written
with bilinear products, are holomorphic on $B_W$ and satisfy
$|\mathfrak K_t(1+g)|+|\mathfrak M_t(1+g)|\le C_\theta t^2\mathsf H(g)^2$ and
$|\mathcal C(g)|\le C\mathsf H(g)^2$.
\item[(d)] The residual numerators of Sections~\ref{naani:sec}
and~\ref{sec:secondscore}, with $zf$ replaced by $g$, satisfy
$\|\mathcal N(g;t,w)\|_{[4]}\le C_\theta\mathsf G(g)t^{-1}e^{-\kappa_-|w|^2/t}$ and
$\|U(g;t,w)\|_{[2]}\le C_\theta\sqrt t\,\mathsf H(g)G_{\kappa_-}(t,w)$.
\end{enumerate}
\end{lemma}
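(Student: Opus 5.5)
The plan is to redo the one-parameter arguments of Sections~\ref{sec:kernel}--\ref{mom:section} with the single complex parameter $z$ and real profile $F$ replaced by an arbitrary complex coefficient $g\in B_W$. The inputs of those sections are only $\|g\|_\infty\le\mathsf A(g)$ and a diffusion-scale Lipschitz bound. Since every $g\in W_\Lambda$ satisfies $\sqrt t\|\nabla g\|_\infty\le\sqrt t\Lambda\mathsf A(g)\le\sqrt\theta\,r_W$, the second input is available, and all estimates will be uniform in $\Lambda$ and the mode count.

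\emph{Part (a).} Rescale by $\sqrt t$ as in Step~1 of Theorem~\ref{thm:neargaussian}. With $\rho=1+g$ and $|g|\le r_W$ we get $\Re\rho\ge1-r_W$ and $|\rho/\omega-1|\le(r_W+\eta)/(1-\eta)$. The Lipschitz bound \eqref{eq:paramlip} becomes $L\le\sqrt\theta\,r_W/(1-r_W)^2$. Lemma~\ref{lem:frozen} then gives \eqref{eq:levibounds}, and Lemma~\ref{lem:markedbeta} with $m=0$ makes the Levi series converge normally to bounds $C_\theta G_{\kappa_-}$ and $C_\theta u^{-1/2}G_{\kappa_-}$. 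Holomorphy in the finitely many Fourier coefficients follows from normal convergence of holomorphic terms; by Hartogs we only need separate holomorphy, and local uniform bounds give joint holomorphy. Choosing $r_W\le\bar r_\theta$ reuses the exponent choices already made.

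\emph{Part (b).} Schur bounds for the periodized kernel give \eqref{mom:Lp}, and the argument written there gives \eqref{mom:gradprop}. For $P_u1=1$, \eqref{mom:transpose}, \eqref{mom:H2} and the moment formulas \eqref{mom:M3exact}--\eqref{mom:M4exact}, I would prove each for real $g\in V_\Lambda$ as in the paper. I would then extend to $B_W$ by the identity theorem: both sides are holomorphic on the connected ball $B_W$, and $V_\Lambda\cap B_W$ is a totally real slice of full dimension, so agreement there forces agreement on $B_W$.

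\emph{Part (c).} Here I would reproduce \eqref{naclock:eq:complexA} and \eqref{mom:Mbound} with $|z|\mathsf H(F)$ replaced by $\mathsf H(g)$. The required inputs are $\|L\rho\|_2,\|La\|_2\le C\mathsf H(g)$, $\|\nabla a\|_4\le C\mathsf G(g)$ and $\|\mathfrak g\|_2\le\mathsf G(g)^2$. Every appearance of $\mathsf G^2$ would be converted using the complex interpolation inequality \eqref{eq:complexGNmixed}, which is Lemma~\ref{lem:interp}(b). The only genuine change is keeping $m_t^{-1}$ bounded: $\|m_t-1\|_\infty\le M_\theta r_W/(1-r_W)\le\frac12$ after shrinking $r_W$. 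For $\mathcal C$, the bilinear formula of Lemma~\ref{lem:geometric} at $s=1$ together with \eqref{eq:complexGNmixed} gives $|\mathcal C(g)|\le C\mathsf H(g)^2$.

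\emph{Part (d).} I would repeat \eqref{naani:Ncomplex} and the proof of \eqref{score:eq:complex} verbatim, with $|z|F_4$ replaced by $\mathsf G(g)$ and $\mathcal H_z$ replaced by $\mathsf H(g)+\mathsf G(g)^2\le C\mathsf H(g)$. The first-gradient identity of Lemma~\ref{score:lem:bridge} is pointwise at a fixed target, so it extends by continuation in $\rho(y)$ exactly as before.

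\emph{Main obstacle.} I expect the hardest step to be checking that nothing in Sections~\ref{sec:compclock}--\ref{sec:secondscore} secretly used that $g=zF$ is a scalar multiple of a real function. Examples would be symmetry of $\rho$ or real-valuedness inside the H\"older and marked-translation steps \eqref{score:eq:marked}. My approach is to audit each estimate to confirm that only $|g|$, $|\nabla g|$ and $|D^2g|$ appear, with bilinear products bounded by absolute values, and that all identities are obtained by continuation from the real slice rather than proved directly.
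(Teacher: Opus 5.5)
Your proposal is correct and takes essentially the paper's route. You rerun Steps~1--2 of Theorem~\ref{thm:neargaussian} using only $\|g\|_\infty\le\mathsf A(g)$ and $\sqrt t\|\nabla g\|_\infty\le\sqrt\theta\,\mathsf A(g)$, extend the semigroup, commutation and moment identities from the real slice $V_\Lambda$ by the identity theorem on the connected ball, and redo Sections~\ref{sec:compclock}--\ref{sec:secondscore} with $|z|\sG(F)\mapsto\mathsf G(g)$, $|z|\sH(F)\mapsto\mathsf H(g)$ and \eqref{eq:complexGNmixed}. One simplification: $\mathfrak K_t$ is only the triple-integral part of the clock identity, since the reciprocal-clock square $\mathcal J_t$ cancels in \eqref{eq:mixedmaster}, so the bound on $m_t^{-1}$ that you call the genuine change is not needed for (c), though it does no harm.
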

\begin{proof}
\emph{(a)} Steps~1 and~2 of the proof of Theorem~\ref{thm:neargaussian}
use the coefficient only through $|\rho-1|\le r_0$, which gives
$\Re(\rho(y)/\omega)\ge1-\gamma/4$, and the Lipschitz bound
\eqref{eq:paramlip} for $\rho^{-1}$ in rescaled variables. For
$g\in W_\Lambda$, $\|g\|_\infty\le\mathsf A(g)$ and
$\sqrt t\|\nabla g\|_\infty\le\sqrt\theta\,\mathsf A(g)$, hence
$\sqrt t\|\nabla\rho^{-1}\|_\infty\le\sqrt\theta\,\mathsf A(g)/(1-r_0)^2$.
This is \eqref{eq:paramlip} with $\mathsf A(g)$ for $|\zeta|$,
so for $r_W\le r_0$ the bounds \eqref{eq:levibounds}--\eqref{eq:levicorrection}
and the stated kernel bounds hold uniformly on $B_W$, as in the
proof of Theorem~\ref{thm:neargaussian}(b). The Levi series converges normally on $B_W$ and its terms are holomorphic
in the finitely many coefficients of $g$, so it is holomorphic on this open
subset of $\mathbb C^M$.

\emph{(b)} The bounds follow from (a) by periodization and the Schur and
segment arguments after \eqref{mom:kernelinput}, which use
only the kernel bounds. Each identity has the form $\Psi(g)=0$ with
$\Psi$ holomorphic on $B_W$ by normal convergence, and holds for
real-valued $g\in B_W\cap V_\Lambda$, where $P_u$ is the Markov semigroup
of a positive coefficient. Since $W_\Lambda=V_\Lambda\oplus iV_\Lambda$,
every complex-multilinear Taylor form of $\Psi$ at $0$ is determined by
its values on $V_\Lambda$; so the Taylor series at $0$
vanishes, and the identity theorem on the connected ball $B_W$ gives $\Psi\equiv0$.

\emph{(c)} For $\mathfrak K_t$, the formulas
$L\rho=a\Delta g$ and $La=-a^3\Delta g+2a^4\nabla g\cdot\nabla g$, with
$|a|\le(1-r_W)^{-1}$ and \eqref{eq:complexGNmixed}, give
$\|L\rho\|_2+\|La\|_2\le C\mathsf H(g)$; with \eqref{mom:Lp} at $p=2$ and
$t^{-1}\int_0^t(t-u)u\,du=t^2/6$ this gives
$|\mathfrak K_t|\le C_\theta t^2\mathsf H(g)^2$. For $\mathfrak M_t$, the proof of
Theorem~\ref{mom:main} uses (b), the bounds
$\|a\|_\infty+\|\rho\|_\infty\le C$, the explicit derivative formulas for
$a$, and \eqref{mom:GN}. There a factor $|z|\sH(F)$ is
$\mathsf H(zF)$ and $|z|^2\sG(F)^2$ is $\mathsf G(zF)^2$; replacing
$zF$ by $g$ and \eqref{mom:GN} by \eqref{eq:complexGNmixed} changes only
constants (each extra factor $|z|$ becomes $\mathsf A(g)\le r_W$), giving the analogue of \eqref{mom:Mbound},
$|\mathfrak M_t(1+g)|\le C_\theta t^2\mathsf H(g)^2$. Finally
$\mathcal C(g)=\frac14\int_{\mathbb T^2}a^3(\Delta g-a\nabla g\cdot\nabla g)^2dx$,
so $|\mathcal C(g)|\le C(\mathsf H(g)+\mathsf G(g)^2)^2\le C\mathsf H(g)^2$.
Holomorphy follows from (a), (b) and normal convergence.

\emph{(d)} The proofs of \eqref{naani:Ncomplex} and
\eqref{score:eq:complex} use the Levi representation, the marked
translation estimates \eqref{score:eq:marked}, and the coefficient bounds
\eqref{score:eq:rho-diff}--\eqref{score:eq:gradient-diff}. With $zf$
replaced by $g$, these hold with $|z|F_4\mapsto\mathsf G(g)$, $|z|H\mapsto\mathsf H(g)$, since only
$\|g\|_\infty\le r_W$, the Lipschitz bound of (a) and the pointwise
derivative formulas enter; the bridge identity of
Lemma~\ref{score:lem:bridge} is pointwise in the target and holds for
complex $\rho(y)$ near $1$. Hence the two bounds hold with
$|z|F_4\mapsto\mathsf G(g)$ and $|z|H+|z|^2F_4^2\mapsto
\mathsf H(g)+\mathsf G(g)^2\le C\mathsf H(g)$, by
\eqref{eq:complexGNmixed}.
\end{proof}

Write the clock identity as $\mathfrak A_t=\mathfrak K_t+\mathcal J_t$ with
$\mathfrak K_t=-\frac1t\int_0^t(t-u)\int_0^u
\int_{\mathbb T^2}\rho(P_vL\rho)La\,dx\,dv\,du$.
Since $\mathfrak B_t=\mathcal S_t+\mathfrak M_t-\mathcal J_t$, the
reciprocal-clock square cancels exactly:
\begin{equation}\label{eq:mixedmaster}
 E_t^\infty=\frac9{t^2}
       (-\mathfrak K_t-\mathcal S_t-\mathfrak M_t+D_t).
\end{equation}
By Lemma~\ref{lem:multicomplex}(c), $\mathfrak K_t/t^2$, $\mathfrak M_t/t^2$
and $\mathcal C$ satisfy the hypotheses of Lemma~\ref{lem:tametwocauchy}
with $r=r_W$, which gives the Hessian comparison for these three terms.

\subsection{Mixed score jets and real quotient derivatives}
For fixed displacement write $G_{\kappa_-}(t,w)=(\kappa_-/\pi)t^{-1}e^{-\kappa_-|w|^2/t}$,
so that Lemma~\ref{lem:multicomplex}(d) bounds $\|U(g;t,w)\|_{[2]}$ by
$C_\theta\sqrt t\,\mathsf H(g)G_{\kappa_-}(t,w)$ on $B_W$.
At the real background $g=sf$, $0\le s\le1$, apply the Banach-valued
Cauchy formula on a polydisc with directional radii
$r_W/(4m\mathsf A(v_i))$; shrinking the background ball once keeps all
arguments in the coefficient domain. For $1\le m\le3$ this gives
\begin{equation}
 \|D^mU(g)[v_1,\ldots,v_m]\|_{[2]}
 \le C_m\sqrt t\,G_{\kappa_-}
 \Bigl[\mathsf H(g)\prod_i\mathsf A(v_i)
 +\sum_i\mathsf H(v_i)\prod_{j\ne i}\mathsf A(v_j)\Bigr].
 \label{eq:Umixedjets}
\end{equation}
Zero directions are treated directly. For $\mathcal N$ replace
$\mathsf H$ by $\mathsf G$ and omit $\sqrt t$; mixed density jets are bounded by
$C_mG_{\kappa_-}\prod_i\mathsf A(v_i)$. These Cauchy arguments concern analytic numerators only.

With $e,b,X,Y$ as in the proof of Lemma~\ref{lem:tametwocauchy}, and
suppressing the factor $C\sqrt tG_{\kappa_-}$, \eqref{eq:Umixedjets} bounds
$U,U_f,U_u,U_{fu},U_{uu},U_{fuu}$ at $sf$ (with $e\le1$) by
$X,\ X,\ Y+bX,\ eY+bX,\ bY+b^2X,\ ebY+b^2X$ respectively.
For real coefficients use the positive lower Gaussian bound for $r$ and
expand the finite real derivative of $\mathcal S_t=t\int |U|^2/r$.
In $D^3\mathcal S_t(sf)[f,u,u]$, representative products are
$U_fU_{uu}$, $U_uU_{fu}$, $UU_{fuu}$,
$r_fU_u^2/r$, and $r_uU_fU_u/r$; the remaining terms contain two or three
density derivatives. These bounds and the amplitude factors bound every term by
$\Xi=eY^2+bXY+b^2X^2$; for example $X(bY+b^2X)\le \Xi$ and
$(Y+bX)(eY+bX)\le C \Xi$, and extra density derivatives supply precisely the
missing factors $e$ or $b$ when fewer derivatives land on $U$.
With score numerators in $L^2_x$, density jets in supremum norm, and the
Gaussian quotient exponent positive for derivative order $\le3$ under the margin
$6\kappa_--5\kappa_+>0$, displacement integration, the two $\sqrt t$ factors, and the exterior
factor $t$ give
\begin{equation}\label{eq:Smixedthird}
 |D^3\mathcal S_t(sf)[f,u,u]|\le C_\theta t^2\Xi,
\end{equation}
and integration in $s$ proves the Hessian comparison for
$\mathcal S_t/t^2$.

For anisotropy use
$D_t/t^2=4\int \mathfrak b_t\,|\Sigma^{\rm tf}|^2$, with $\Sigma=\int\mathcal N\mathcal N/r$ and $\mathfrak b_t=(\operatorname{tr}h+2\sqrt{\det h})^{-1}$.
Mixed kernel quotient estimates and uniform positivity of $h$ give
$\|D^m\mathfrak b_t[v_1,\ldots,v_m]\|_\infty
\le C_m\prod_i\mathsf A(v_i)$ for $m\le3$.
The expression has four $\mathcal N$ numerator factors and two inverse densities
in independent displacement variables. After three derivatives in the directions $f,u,u$, at least one numerator
remains a background factor; the $f$ derivative contributes a
$\mathsf G(f)$ marker or a factor $e$, and each $u$ derivative
a $\mathsf G(u)$ marker or a factor $b$.
The mixed version of \eqref{eq:Umixedjets} also covers repeated derivatives
on a single numerator. H\"older with four $L^4_x$ numerator factors and
real Gaussian integration give
$|D^3(D_t/t^2)(sf)[f,u,u]|\le C_\theta[\mathsf G(f)^2\mathsf G(u)^2
+b\mathsf G(f)^3\mathsf G(u)+b^2\mathsf G(f)^4]$. The middle term is at most half the sum of the other two, which
interpolation bounds by $10ebXY$ and $10e^2b^2X^2$, hence by $C\Xi$.
Integration in $s$ proves the anisotropy comparison; with
\eqref{eq:mixedmaster} and Lemma~\ref{lem:tametwocauchy} this proves the
plane assertions of Theorem~\ref{thm:mixedhessian}.

\subsection{Compact transfer and the scope of convexity}
\label{subsec:compacttransfer}
The differentiated compact comparison extends to mixed parameter labels,
each density or source jet contributing the product of the Wiener norms
of its directions; the finite real quotient-product proof is otherwise
unchanged. It gives, uniformly in $0\le s\le1$,
$|D^3(E_t^T-E_t^\infty)(sf)[f,u,u]|\le Ct^{-3}e^{-c\ell_0^2/t}\mathsf A(f)\mathsf A(u)^2$
(polynomial powers of $t^{-1}$ being absorbed by reducing $c$). For mean-zero $u$,
$\mathsf A(u)\le C_W\mathsf H(u)$ (Lemma~\ref{lem:interp}(c)) bounds
the compact correction by a constant times
$\mathsf A(f)\mathsf H(u)^2$ for small times.
This completes Theorem~\ref{thm:mixedhessian}.

In particular, the tame error on the mean-zero slice is at most
$C_\theta[\mathsf A(f)+C_W\mathsf H(f)+C_W^2\mathsf H(f)^2]\mathsf H(u)^2$.
A positive flat response band therefore gives a uniform positive Hessian
on a small convex $H^2$ background ball in each resolved mean-zero Fourier
space, with cutoff-independent radius; since a small Wiener amplitude does
not bound $\mathsf H(f)$, nothing is claimed on the whole small Wiener ball.

\subsection{Consequence: an invariant Galerkin gradient flow}
\label{sec:galerkin}
Fix $t$; let $\varsigma$ be an optimization time and define
$\operatorname{grad}_{\dot H^2}\mathcal E$ on $X_\Lambda$ by
$\int_{\mathbb T^2}\Delta(\operatorname{grad}_{\dot H^2}\mathcal E(f))\Delta h\,dx
=D\mathcal E(f)[h]$ for all $h\in X_\Lambda$.

\begin{corollary}[Uniform contraction of the finite-dimensional flow]
\label{cor:galerkinflow}
Fix $0<\theta<\tau_*$ and a real mean-zero Fourier space $X_\Lambda$ with
$t\Lambda^2\le\theta$; let $\mathcal E=E_t^\infty$ or, for small $t$,
$\mathcal E=E_t^T$. There are $R,\lambda,\bar\lambda>0$, independent of
cutoff and dimension (possibly deteriorating as $\theta\uparrow\tau_*$),
with $\lambda\|h\|_{\dot H^2}^2\le D^2\mathcal E(f)[h,h]
\le\bar\lambda\|h\|_{\dot H^2}^2$ for $f,h\in X_\Lambda$,
$\|f\|_{\dot H^2}\le R$. Hence the flow
$df/d\varsigma=-\operatorname{grad}_{\dot H^2}\mathcal E(f)$ started in the
radius-$R$ ball stays there for all $\varsigma\ge0$,
\begin{equation}\label{eq:flowdecay}
 \|f(\varsigma)\|_{\dot H^2}\le e^{-\lambda\varsigma}\|f(0)\|_{\dot H^2},\qquad
 \mathcal E(f(\varsigma))-\mathcal E(0)\le e^{-2\lambda\varsigma}
 [\mathcal E(f(0))-\mathcal E(0)],
\end{equation}
and two solutions contract at rate $e^{-\lambda\varsigma}$. If
$\ell\in X_\Lambda^*$ has $\|\ell\|_{-2,\Lambda}<\lambda R$, then $D\mathcal E(f)=\ell$ has exactly one
solution $f_\ell$ in the closed ball, $\|f_\ell\|_{\dot H^2}\le
\lambda^{-1}\|\ell\|_{-2,\Lambda}$, $\ell\mapsto f_\ell$ is
$\lambda^{-1}$-Lipschitz, and the flow of $\mathcal E-\ell$ preserves the
closed ball and converges to $f_\ell$ at rate $e^{-\lambda\varsigma}$.
\end{corollary}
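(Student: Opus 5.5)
The plan is to obtain the uniform Hessian bounds first and then deduce the flow statements from them by standard finite-dimensional convex analysis.

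\emph{Hessian bounds.} At $f=0$ the quadratic form $D^2\mathcal E(0)[h,h]$ equals $2\mathcal Q_t(h)$ on the plane. Corollary~\ref{cor:largerband} therefore gives
\[
 2m_\theta\|h\|_{\dot H^2}^2\le D^2\mathcal E(0)[h,h]\le\tfrac12\|h\|_{\dot H^2}^2 .
\]
For $\mathcal E=E_t^T$, the $j=2$ case of Lemma~\ref{lem:compact}, applied to the mixed form $h$ (polarized and rescaled as in Corollary~\ref{cor:responseobstruction}), moves these constants by at most $Ct^{-3}e^{-c\ell_0^2/t}$. For $t$ small this costs at most $m_\theta/2$ in the lower constant.

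For $f\neq0$ we apply Theorem~\ref{thm:mixedhessian} with $u=h$, using the mean-zero slice bound from Subsection~\ref{subsec:compacttransfer}. The perturbation is at most
\[
 C_\theta\bigl[\mathsf A(f)+C_W\mathsf H(f)+C_W^2\mathsf H(f)^2\bigr]\mathsf H(h)^2 .
\]
By Lemma~\ref{lem:latticeWiener}, $\mathsf A(f)\le C_W\|f\|_{\dot H^2}$, with $C_W$ uniform in the cutoff. Choosing $R$ small enough that $\mathsf A(f)\le\varepsilon_\theta$ and the bracket is at most $m_\theta/(2C_\theta)$ gives $\lambda=m_\theta/2$ and a finite $\bar\lambda$, uniformly in $\Lambda$ and in $\dim X_\Lambda$. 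The ball $\{\|f\|_{\dot H^2}\le R\}$ is convex in $X_\Lambda$, so $\mathcal E$ is $\lambda$-strongly convex on it in the $\dot H^2$ inner product.

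\emph{Flow estimates.} Write $\langle\cdot,\cdot\rangle$ for the $\dot H^2$ pairing. The gradient flow is a smooth finite-dimensional ODE. By translation invariance $D\mathcal E(0)=0$, so the integral form of the mean value theorem gives
\[
 \langle\operatorname{grad}\mathcal E(f),f\rangle=\int_0^1D^2\mathcal E(sf)[f,f]\,ds\ge\lambda\|f\|^2 ,
\]
where the segment $\{sf\}$ stays in the ball. Hence $\frac{d}{d\varsigma}\|f\|^2\le-2\lambda\|f\|^2$. This gives invariance of the ball, global existence, and the first decay bound. For two solutions the same computation, applied to the difference with the Hessian along the connecting segment, gives contraction at rate $e^{-\lambda\varsigma}$.

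For the energy decay we combine the Polyak--Łojasiewicz inequality $\|\operatorname{grad}\mathcal E\|^2\ge2\lambda(\mathcal E-\mathcal E(0))$, which follows from strong convexity on a convex set containing the minimizer $0$, with $\frac{d}{d\varsigma}\mathcal E=-\|\operatorname{grad}\mathcal E\|^2$. Grönwall then gives the rate $e^{-2\lambda\varsigma}$.

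\emph{Forced equation.} Let $g_\ell$ be the Riesz representative of $\ell$, so $\|g_\ell\|=\|\ell\|_{-2,\Lambda}$. Strong monotonicity gives
\[
 \langle\operatorname{grad}\mathcal E(f)-g_\ell,f\rangle\ge\lambda\|f\|^2-\|\ell\|\,\|f\| ,
\]
which is positive on the sphere $\|f\|=R$ when $\|\ell\|<\lambda R$. So the flow of $\mathcal E-\ell$ preserves the closed ball, and that functional is $\lambda$-strongly convex there. Its unique minimizer $f_\ell$ lies in the interior, because the flow points strictly inward on the boundary, and is the unique critical point in the ball. Testing $D\mathcal E(f_\ell)-D\mathcal E(0)=\ell$ against $f_\ell$ gives $\|f_\ell\|\le\|\ell\|/\lambda$. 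Testing the difference of two such equations against $f_\ell-f_{\ell'}$ gives the $\lambda^{-1}$-Lipschitz bound. Convergence of the flow to $f_\ell$ follows from the contraction estimate, using $f_\ell$ as one of the two solutions.

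The main obstacle is the first step: keeping the constants independent of the cutoff and of the dimension. This rests on two facts: the error in Theorem~\ref{thm:mixedhessian} is controlled by $\mathsf H(f)$ rather than by a mode count, and $\mathsf A(f)\le C_W\mathsf H(f)$ uniformly on the noncollapsing lattice family. Both facts are already available from the earlier results.
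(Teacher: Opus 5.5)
Your proposal is correct and follows essentially the same route as the paper. Both arguments get the Hessian bounds from Theorem~\ref{thm:mixedhessian}, the cutoff-independent embedding $\sA(f)\le C_W\sH(f)$, the flat response and the exponentially small compact correction. Both then use strong monotonicity along segments, the dissipation identity with the Polyak--\L ojasiewicz inequality, and an interior-minimizer argument for the forced equation; your inward-pointing-gradient step is equivalent to the paper's positive radial derivative $\lambda R^2-\|\ell\|_{-2,\Lambda}R$ on the boundary sphere.
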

\begin{proof}
The Hessian bounds follow from Theorem~\ref{thm:mixedhessian}, the embedding $\sA(f)\le C_W\sH(f)$ and the positive flat response (in the
compact case after shrinking $t$ so that the compact correction is below the
flat margin). Integrating along segments shows that
$\mathscr G=\operatorname{grad}_{\dot H^2}\mathcal E$, with $\mathscr G(0)=0$,
is $\lambda$-strongly monotone on the ball:
$\langle\mathscr G(f_1)-\mathscr G(f_2),f_1-f_2\rangle_{\dot H^2}
\ge\lambda\|f_1-f_2\|_{\dot H^2}^2$. Writing $\|\cdot\|=\|\cdot\|_{\dot H^2}$,
monotonicity gives $\frac d{d\varsigma}\frac12\|f\|^2\le-\lambda\|f\|^2$
(with $f_2=0$), hence invariance of the ball, global existence by
finite-dimensional continuation and decay of $\|f\|$, and
$\frac d{d\varsigma}\frac12\|f_1-f_2\|^2\le-\lambda\|f_1-f_2\|^2$ for two
solutions. The dissipation identity
$\frac d{d\varsigma}\mathcal E(f)=-\|\mathscr G(f)\|^2$ and the
Polyak--\L ojasiewicz inequality
$\mathcal E(f)-\mathcal E(0)\le(2\lambda)^{-1}\|\mathscr G(f)\|^2$
give the energy decay. In the forced problem the radial derivative
$\lambda R^2-\|\ell\|_{-2,\Lambda}R>0$ on the boundary sphere puts the
minimizer of $\mathcal E-\ell$ on the closed ball in the interior, so it solves
$D\mathcal E(f)=\ell$; strong monotonicity gives uniqueness,
$\|f_\ell\|\le\lambda^{-1}\|\ell\|$,
$\|f_\ell-f_{\ell'}\|\le\lambda^{-1}\|\ell-\ell'\|$, invariance of the ball
under the forced flow
($\frac d{d\varsigma}\frac12\|f\|^2\le-\lambda\|f\|^2+\|\ell\|\|f\|<0$ on the
sphere), and convergence at rate $e^{-\lambda\varsigma}$ by comparison with
the stationary solution $f_\ell$.
\end{proof}

\begin{remark}[The gradient metric matters]\label{rem:Calabiflow}
This finite-dimensional evolution at fixed observation time is a
$\dot H^2$-preconditioned descent for a heat-regularized Calabi-type energy.
On the plane lift its linearization at the flat metric is
$d\widehat f_\xi/d\varsigma=-2q(t|\xi|^2)\widehat f_\xi$, whereas the
$\dot H^2$ descent for $\mathcal C$ has multiplier $1/2$; the two agree in
the low-resolution limit. On a compact flat torus the Hessian (hence the
linearized gradient) has an extra operator error at most
$C_{A,\ell_0,\theta}t^{-3}e^{-c\ell_0^2/t}$ in the normalized $\dot H^2$
norm. The surface Calabi flow $\partial_\varsigma g=(\Delta_gK_g)g$
linearizes as $\partial_\varsigma f=-\frac12\Delta^2f$, with
multiplier $|\xi|^4/2$, and uses a different gradient metric
\cite{StruweCalabi}; we do not construct an approximation to it or an
infinite-dimensional limit, which would require further uniform estimates,
a specified gradient metric and compatible observation scales. The same
finite-mode results hold for the flat families of Theorem~\ref{thm:flatlattices}.
\end{remark}

\subsection{Intrinsic genus-one interpretation}
\label{subsec:latticetransfer}
\begin{corollary}[Uniformization and fixed smooth near-flat metrics]
\label{cor:genusone}
Let $g$ be a smooth metric of area $A$ on a closed oriented surface $\mathcal M$ of
genus one, and let $\bar g$ be the flat metric of area $A$ in its conformal
class. Identify $(\mathcal M,\bar g)$ isometrically with $T_{\mathcal L}$ and write
$g=(1+f)\bar g$. Then $f$ has mean zero. If $f$ satisfies the amplitude
and resolved-band hypotheses of Theorem~\ref{thm:flatlattices} (so
$f$ is a trigonometric polynomial with small Wiener norm and $t\Lambda^2\le\theta$),
its intrinsic heat-Fisher deficit satisfies the comparison there.
The same holds with the soft Lipschitz and
spectral-margin conditions of Theorem~\ref{thm:softstability}.
Moreover there is a Wiener radius, depending only on $A$ and a lower bound
for $\operatorname{sys}(\mathcal L)$, such that every
\emph{fixed} smooth $f$ in that radius satisfies
$\frac1{16}N_{\mathcal L}(f)\le E_t(g)-E_t(\bar g)\le\frac12N_{\mathcal L}(f)$
for $0<t<t_f$, for some profile-dependent $t_f>0$, where
$E_t(g)=9(A-\operatorname{Vol}(\mathcal M,2tJ_t^g))/t^2$; no common time
interval is asserted over arbitrary oscillatory profiles, and Hessian
positivity is in the conformal variable, not in moduli-changing metric
directions. Without any amplitude restriction,
Theorem~\ref{thm:fixedlimit} gives $E_t(g)-E_t(\bar g)\to\operatorname{Ca}(g)$ as
$t\to0$ for every smooth metric $g$ of genus one.
\end{corollary}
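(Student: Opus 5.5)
The corollary has four parts, and I would prove them in order: the mean-zero claim, the transfer of the flat-lattice theorems, the fixed-profile bound with constants $1/16$ and $1/2$, and the unrestricted limit.

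\emph{Mean zero.} Since $\bar g$ and $g=(1+f)\bar g$ both have area $A$ and $dA_g=(1+f)\,dA_{\bar g}$, we get $\int f\,dA_{\bar g}=0$.

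\emph{Transfer of the flat-lattice theorems.} The intrinsic functional should be identified with the periodized one. The heat kernel of $g$ on $\mathcal M\cong T_{\mathcal L}$ is the periodized transition density of $(1+f)^{-1}\Delta$, divided by $\rho(y)$. As noted in Section~\ref{sec:framework} and Subsection~\ref{subsec:fixedlimit}, the source Fisher form does not depend on the reference measure in $y$, and $\sqrt{\det_g h}\,dA=\sqrt{\det h}\,dx$. Hence $E_t(g)=E_t^{T_{\mathcal L}}(f)$ and $E_t(\bar g)=E_t^{T_{\mathcal L}}(0)$. The first two claims are then Theorem~\ref{thm:flatlattices} and Theorem~\ref{thm:softstability} applied on $T_{\mathcal L}$, with constants uniform in $A$ and $\ell_0\le\operatorname{sys}(\mathcal L)$.

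\emph{Fixed smooth profile.} For a fixed smooth $f$, a bounded number of frequencies need not hold. I would use the soft route instead. Since $\|\nabla f\|_\infty<\infty$ is fixed, \eqref{eq:HL} holds with $L_*=1$ once $t\le t_1(f)=\mathsf A(f)^2/\|\nabla f\|_\infty^2$.

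For the spectral margin, I would split $\mathcal Q_t(f)$ at $t|k|^2=1$, where $q\ge1/16$ by Proposition~\ref{prop:responseglobal}(iii). As $t\to0$, the tail $A\sum_{t|k|^2>1}|k|^4|\widehat f_k|^2$ tends to $0$ because $f\in H^2$. So for $t<t_2(f)$ the tail fraction $\vartheta$ is as small as we like, and \eqref{eq:tailbudget} gives $\mathcal Q_t(f)\ge\mu N(f)$ with $\mu$ arbitrarily close to $1/16$. I would take $\mu$ slightly above $1/8$ only if possible; it is not, so this has to be corrected below.

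Theorem~\ref{thm:softstability} as stated only gives the lower constant $c_*=\mu/2$, about $1/32$. To get $1/16$ I would redo its Taylor step for the fixed profile. The cubic error is at most $(C_{L_*}/6)\mathsf A(f)N(f)$, and the compact transfer error is at most $C t^{-3}e^{-c\ell_0^2/t}N(f)$. Taking the Wiener radius small enough that $(C_{L_*}/6)\mathsf A(f)\le\mu/10$, this radius depends only on $A$ and $\ell_0$ through $C_{L_*}$. Then for $t<t_f$, also smaller than $t_1$ and $t_2$, the exponential term is below $\mu/10$. This gives $E_t(g)-E_t(\bar g)\ge(\mu-\mu/5)N(f)\ge\frac1{16}N(f)$ once $\mu$ is chosen close enough to $q$ near $0$. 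To make that work I would instead split at a smaller threshold $\Theta$, where $q\ge1/4-C\Theta$, so that $\mu$ is close to $1/4$. The upper bound $\frac12 N(f)$ follows from $q\le1/4$ together with the same error bounds.

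The main obstacle is keeping the Wiener radius uniform in the profile while $t_f$ depends on $f$. The point is that the amplitude condition enters only through $C_{L_*}$ with $L_*=1$ fixed. All $f$-dependence is pushed into $t_1$, $t_2$ and the tail decay, and these only constrain $t$.

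\emph{Unrestricted limit.} Apply Theorem~\ref{thm:fixedlimit} to $g$ and to $\bar g$ with $\chi=0$: $E_t(g)\to\int K_g^2\,dA_g$ and $E_t(\bar g)\to0$. Since $\chi=0$, $\int K_g^2\,dA_g=\operatorname{Ca}(g)$.
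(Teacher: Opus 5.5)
Your argument is correct and follows the paper's route: you identify the intrinsic deficit with $E_t^{T_{\mathcal L}}$ by naturality, then apply Theorem~\ref{thm:softstability} with $L_*=1$. The Lipschitz condition holds for $t$ small depending on $f$, and the spectral margin $\mathcal Q_t(f)/N_{\mathcal L}(f)$ tends to $q(0)=\frac14$; the paper shows this by dominated convergence, and your small-$\Theta$ split with \eqref{eq:tailbudget} is equivalent. Your only misstep is the claim that $\mu\ge\frac18$ is unavailable: your own correction shows the margin tends to $\frac14$, so $\mu=\frac18$ is admissible, and the theorem's constants $c_*=\mu/2=\frac1{16}$, $C_*=\frac12$ give the bound directly, which makes your re-derived Taylor step redundant, though valid.
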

\begin{proof}
Uniformization supplies $\bar g$, the area constraint gives
$\int f\,dA_{\bar g}=0$, and the asserted quantities (source Fisher tensors,
their volume, curvature energy) are natural under pullback by the isometry.
For the quantitative fixed-profile statement take $f\ne0$ (the case $f=0$ is immediate), use Theorem~\ref{thm:softstability}
with $L_*=1$ and $\mu=1/8$, and choose its fixed small amplitude radius.
For this one smooth profile,
$\sqrt t\|\nabla f\|_\infty\le\mathsf A_{\mathcal L}(f)$ for all small $t$, and
boundedness of $q$ with dominated convergence in the summable
curvature-energy Fourier series gives
$A\sum_{\xi\ne0}|\xi|^4q(t|\xi|^2)|\widehat f_\xi|^2/N_{\mathcal L}(f)\to\frac14$.
Thus its spectral margin is eventually at least $1/8$, and the compact
comparison is small for this fixed lattice. The constants
$c_*=\mu/2=1/16$ and $C_*=1/2$ of that theorem prove the assertion.
\end{proof}
Since $E_t(\bar g)$ may depend on the lattice shape, Corollary~\ref{cor:genusone}
compares each metric only with its own flat representative; it gives no
convexity in Teichm\"uller directions, and the systole bound provides
uniformity over conformal structures.

\section{The flat structure: loss of information and asymptotic lattice selection}
\label{sec:latticeselection}\label{sec:latticestatistics}
The conformal factor and the flat structure enter at different orders.  We now
make this distinction precise.  Throughout this section the area is fixed at
$A$, and the generator is $\Delta$, so the lifted Brownian displacement has
covariance $2tI$.  For a lattice $\mathcal L$ let
\[
 \phi_t(z)=(4\pi t)^{-1}e^{-|z|^2/(4t)},\qquad
 p_t^{\mathcal L}(z)=\sum_{\ell\in\mathcal L}\phi_t(z+\ell),\qquad
 H_t(\mathcal L)=2tJ_t(\mathcal L),\quad
 d_t(\mathcal L)=1-\frac{V_t(\mathcal L)}A.
\]
The Fisher tensor is constant on a flat torus, and
$V_t(\mathcal L)/A=\sqrt{\det H_t(\mathcal L)}$.

\begin{proposition}[The information lost by wrapping]\label{prop:wrappingloss}
Let $Z\sim N(0,2tI)$ and $Y=Z\bmod\mathcal L$.  Then
\begin{equation}\label{eq:missinginformation}
 I-H_t(\mathcal L)=\frac1{2t}\E\operatorname{Cov}(Z\mid Y)
 =\frac1{4t}\sum_{\ell\in\mathcal L\setminus\{0\}}
     \ell\otimes\ell\int_{\mathbb R^2}
       \frac{\phi_t(z)\phi_t(z+\ell)}{p_t^{\mathcal L}(z)}\,dz.
\end{equation}
In particular, $0\prec H_t(\mathcal L)\prec I$ and $0<V_t(\mathcal L)<A$
for every $t>0$.
\end{proposition}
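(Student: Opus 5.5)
The plan is to read $H_t(\mathcal L)$ as the normalized Fisher information of a \emph{coarsened} location experiment and to apply the missing-information (conditional variance) identity. The unwrapped experiment observes $x+Z$ with $Z\sim N(0,2tI)$. Its source score is $Z/(2t)$, so its Fisher information is $I/(2t)$ and its normalized version is $2t\cdot I/(2t)=I$. The torus experiment observes only $Y=(x+Z)\bmod\mathcal L$. The density of $Y$ relative to Lebesgue measure on the cell is $p_t^{\mathcal L}(y-x)$. Differentiating $\log p_t^{\mathcal L}(y-x)$ in $x$ termwise is justified by the Gaussian domination of $\sum_\ell|z+\ell|\phi_t(z+\ell)$. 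This shows that the wrapped score is
\[
 \E[Z/(2t)\mid Y],
\]
the conditional expectation of the unwrapped score given the wrapped observation. The law of total covariance then gives
\[
 J_t=\frac{I}{2t}-\frac{1}{4t^2}\E\operatorname{Cov}(Z\mid Y),
\]
and multiplying by $2t$ yields the first equality in \eqref{eq:missinginformation}.

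For the second equality, write out the conditional law explicitly. Given $Y=z$ in a fundamental cell, $Z$ equals $z+\ell$ with probability
\[
 w_\ell(z)=\frac{\phi_t(z+\ell)}{p_t^{\mathcal L}(z)}.
\]
For any discrete law, the covariance has the pair form
\[
 \operatorname{Cov}=\tfrac12\sum_{\ell,\ell'}w_\ell w_{\ell'}(\ell-\ell')\otimes(\ell-\ell').
\]
Next average over $Y$, whose density on the cell is $p_t^{\mathcal L}$; one factor of $p_t^{\mathcal L}$ cancels. Then set $m=\ell'-\ell$ and unfold the sum over $\ell$ together with the cell integral into a single integral over $\mathbb R^2$, using the periodicity of $p_t^{\mathcal L}$. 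This gives
\[
 \E\operatorname{Cov}(Z\mid Y)=\tfrac12\sum_{m\neq0}m\otimes m\int_{\mathbb R^2}\frac{\phi_t(z)\phi_t(z+m)}{p_t^{\mathcal L}(z)}\,dz.
\]
Dividing by $2t$ produces the coefficient $1/(4t)$ in \eqref{eq:missinginformation}. Tonelli justifies every interchange here, since all terms are nonnegative.

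For the strict inequalities, each integral in \eqref{eq:missinginformation} is strictly positive. The nonzero lattice vectors span $\mathbb R^2$, so $\sum_m c_m\,m\otimes m$ with all $c_m>0$ is positive definite, which gives $H_t\prec I$.

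For $H_t\succ0$, use $J_t=\int_{\mathrm{cell}}\nabla p\otimes\nabla p/p$. Suppose $v^TJ_tv=0$. Then $\partial_vp_t^{\mathcal L}\equiv0$. But the Fourier coefficients of $p_t^{\mathcal L}$ are $A^{-1}e^{-t|k|^2}\ne0$ for every $k\in\mathcal L^\vee$ (Poisson summation). Hence $\partial_vp_t^{\mathcal L}$ has the nonzero coefficient $i(k\cdot v)A^{-1}e^{-t|k|^2}$ for any dual $k$ with $k\cdot v\ne0$, a contradiction. So the eigenvalues of $H_t$ lie in $(0,1)$, and
\[
 V_t/A=\sqrt{\det H_t}\in(0,1).
\]

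The main obstacle I expect is the rigorous identification of the wrapped score with the conditional expectation of the unwrapped score. This requires uniform domination of the differentiated lattice sums and of $|\nabla p|^2/p$ near minima of $p$. I will handle it with the lower bound $p_t^{\mathcal L}\ge\min_{\mathrm{cell}}p_t^{\mathcal L}>0$ and Gaussian tail sums bounded via the systole packing count of Lemma~\ref{lem:latticeWiener}.
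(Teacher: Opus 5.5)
Your proposal is correct and follows the paper's proof essentially step by step. The steps coincide: the wrapped score is the conditional expectation of the unwrapped score $Z/(2t)$, the law of total covariance gives the first equality, the pair form of the posterior covariance followed by unfolding gives the second, and the Fourier-coefficient argument ($A^{-1}e^{-t|\xi|^2}\neq0$ on the spanning dual lattice) gives $J_t\succ0$. Your argument for $H_t\prec I$ (strictly positive weights on $\ell\otimes\ell$ over a spanning set of lattice vectors) is an explicit version of the paper's remark that the posterior charges every lattice point.
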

\begin{proof}
The location score of the unwrapped Gaussian is $Z/(2t)$, and the score
after wrapping is its conditional expectation given $Y$; this follows by
differentiating the absolutely convergent Gaussian sum.  Total covariance
therefore gives the first equality.  At a representative $z$ of $Y$, the
posterior weights are $w_\ell=\phi_t(z+\ell)/p_t^{\mathcal L}(z)$.
The identity
$\operatorname{Cov}_w(z+\ell)=\frac12\sum_{\ell,m}w_\ell w_m
(\ell-m)\otimes(\ell-m)$, followed by integration over a cell and unfolding,
gives the second equality; all sums converge absolutely.  The posterior puts
positive weight on every lattice point, so its covariance is positive definite.
To see that $J_t$ is positive definite, a null direction would make the
directional derivative of $p_t^{\mathcal L}$ identically zero.  Its Fourier
coefficients are $A^{-1}e^{-t|\xi|^2}>0$ for every dual vector $\xi$; since the
dual lattice spans $\mathbb R^2$, that direction must be zero.
\end{proof}

\begin{figure}[tbp]
\centering
\begin{tikzpicture}
\begin{axis}[width=0.85\textwidth,height=5.2cm,xmin=-3.2,xmax=10.5,ymin=0,ymax=1.12,
 axis lines=left,xtick={0,2,4},xticklabels={$0$,$s/2$,$s$},ytick={0.0183,0.3679,1},
 yticklabels={$e^{-s^2/4t}$,$e^{-s^2/16t}$,$1$},tick label style={font=\scriptsize},
 xlabel={position along a shortest lattice vector},xlabel style={font=\small},samples=200]
\addplot[name path=A,very thick,blue!60!black,domain=-3.2:7.2] {exp(-x^2/4)};
\addplot[name path=B,very thick,red!60!black,domain=-3.2:7.2] {exp(-(x-4)^2/4)};
\addplot[name path=M,draw=none,domain=-3.2:7.2] {min(exp(-x^2/4),exp(-(x-4)^2/4))};
\addplot[name path=Z,draw=none,domain=-3.2:7.2] {0};
\addplot[orange!40] fill between[of=M and Z];
\draw[densely dotted] (axis cs:2,0) -- (axis cs:2,0.3679) -- (axis cs:-3.2,0.3679);
\draw[densely dotted] (axis cs:4,0) -- (axis cs:4,0.0183);
\fill (axis cs:2,0.3679) circle (1.6pt);
\node[font=\scriptsize,blue!50!black,anchor=south] at (axis cs:-0.2,1.0) {source $x$};
\node[font=\scriptsize,red!50!black,anchor=south] at (axis cs:4.2,1.0) {image $x+\ell$};
\node[font=\scriptsize,align=left,anchor=west] at (axis cs:6.4,0.5) {overlap: endpoints here\\ cannot tell $x$ from $x+\ell$};
\draw[->,thin] (axis cs:6.35,0.47) -- (axis cs:2.7,0.2);
\end{axis}
\end{tikzpicture}
\caption{Why the winding loss has exponent $s^2/16t$. Along a shortest
lattice vector $\ell$, $|\ell|=s$, the endpoint densities of a diffusion
from $x$ and from its image $x+\ell$ are Gaussians of variance $2t$. Their
overlap (shaded), where an observed endpoint is ambiguous between the two
lifts, is largest at the midpoint, at height $e^{-(s/2)^2/4t}=e^{-s^2/16t}$.
The value of one Gaussian at the other lattice point, $e^{-s^2/4t}$, is much
smaller and does not set the scale (Theorem~\ref{thm:systolicloss}). Drawn
with $s=4$, $t=1$.}
\label{fig:winding}
\end{figure}

\begin{theorem}[The systolic scale of the volume defect]\label{thm:systolicloss}
Write $s(\mathcal L)=\operatorname{sys}(\mathcal L)$ and let
$m(\mathcal L)$ count its nonzero shortest vectors, counting both signs.
For each fixed lattice,
\begin{equation}\label{eq:shortestloss}
 d_t(\mathcal L)=\frac{\sqrt\pi\,m(\mathcal L)s(\mathcal L)}{8\sqrt t}
  e^{-s(\mathcal L)^2/(16t)}\bigl(1+O_{\mathcal L}(t)\bigr)
 \qquad(t\downarrow0).
\end{equation}
Moreover, for fixed $A,\ell_0>0$, there are $c,C,t_0>0$ such that
all area-$A$ lattices with $s(\mathcal L)\ge\ell_0$ satisfy
\begin{equation}\label{eq:uniformsystolicloss}
 c\,t^{-1/2}e^{-s(\mathcal L)^2/(16t)}
 \le d_t(\mathcal L)\le
 C\,t^{-1/2}e^{-s(\mathcal L)^2/(16t)},\qquad 0<t<t_0.
\end{equation}
Consequently $-16t\log d_t(\mathcal L)\to s(\mathcal L)^2$ uniformly on this
noncollapsing family.  The exponent $s^2/(16t)$ comes from overlapping
images at the midpoint of a shortest vector, rather than the pointwise
image size $e^{-s^2/(4t)}$ at a lattice point.
\end{theorem}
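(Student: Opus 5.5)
We start from the exact formula of Proposition~\ref{prop:wrappingloss}. Writing $M_t=I-H_t(\mathcal L)$, we have $V_t/A=\sqrt{\det(I-M_t)}$. Since $M_t$ is exponentially small, this gives
\[
 d_t=\tfrac12\operatorname{tr}M_t+O(\|M_t\|^2).
\]
It therefore suffices to evaluate
\[
 \operatorname{tr}M_t=\frac1{4t}\sum_{\ell\ne0}|\ell|^2 I_\ell(t),\qquad I_\ell(t)=\int_{\mathbb R^2}\frac{\phi_t(z)\phi_t(z+\ell)}{p_t^{\mathcal L}(z)}\,dz .
\]
The quadratic remainder $O(\|M_t\|^2)$ carries the square of the exponential, so it is negligible compared with the leading term.

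\textbf{Upper bound.} Since $p_t^{\mathcal L}\ge\max(\phi_t(z),\phi_t(z+\ell))$, we get
\[
 I_\ell\le\int\min(\phi_t(z),\phi_t(z+\ell))\,dz=2\,\mathbb P\bigl(N(0,2t)>|\ell|/2\bigr)\le C\frac{\sqrt t}{|\ell|}e^{-|\ell|^2/(16t)}.
\]
Summing over the lattice: vectors with $|\ell|>s$ satisfy $|\ell|^2\ge s^2+c\ell_0^2$ at the second shell, uniformly on the noncollapsing family, by the lattice packing count of Lemma~\ref{lem:latticeWiener}. So all $\ell$ other than the $m$ shortest vectors contribute a factor $e^{-c\ell_0^2/(16t)}$ smaller, and the tail $\sum_\ell|\ell|e^{-|\ell|^2/(16t)}$ is controlled by a disk-packing count. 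This yields the upper half of \eqref{eq:uniformsystolicloss}, since $\frac{1}{4t}s^2\cdot\frac{\sqrt t}{s}\asymp t^{-1/2}$ with $s\le C(A)$.

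\textbf{Lower bound.} Restrict the integral to a tube around the midpoint segment, $z=-\ell/2+\zeta$ with $|\zeta|\le\sqrt t$. There $p_t^{\mathcal L}(z)\le\phi_t(z)+\phi_t(z+\ell)+\sum_{n\ne0,-\ell}\phi_t(z+n)$. The extra images are exponentially smaller, because by packing the points $-\ell/2$ are at distance at least $|\ell|/2+c$ from the other lattice points. (The midpoint is strictly closer to $0$ and $\ell$ than to any other lattice point, since $\ell$ is primitive and shortest.) Hence the ratio $\phi_t\phi_t/p_t^{\mathcal L}$ is at least $c\,\min(\phi_t(z),\phi_t(z+\ell))$ on the tube, and
\[
 I_\ell\ge c\,\sqrt t\,s^{-1}e^{-s^2/(16t)}.
\]

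\textbf{Sharp asymptotic \eqref{eq:shortestloss}.} For a fixed lattice we compute $I_\ell$ exactly to leading order, neglecting the other images with relative error $O(e^{-c/t})$. The two-image integral
\[
 \int\frac{\phi_t(z)\phi_t(z+\ell)}{\phi_t(z)+\phi_t(z+\ell)}\,dz
\]
factorizes in coordinates parallel and orthogonal to $\ell$. The orthogonal factor integrates to one. Setting $u=z\cdot\hat\ell+s/2$, the parallel factor equals
\[
 (4\pi t)^{-1/2}e^{-s^2/(16t)}\int e^{-u^2/(4t)}\frac{1}{2\cosh(su/(4t))}\,du .
\]
By Laplace's method this is $(4\pi t)^{-1/2}e^{-s^2/(16t)}\frac{4t}{s}\int\frac{dv}{2\cosh v}\,(1+O(t))=(4\pi t)^{-1/2}e^{-s^2/(16t)}\frac{2\pi t}{s}(1+O(t))$. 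Multiplying by $\frac1{8t}s^2\,m$ gives $\frac{m s\sqrt\pi}{8\sqrt t}e^{-s^2/(16t)}(1+O(t))$, matching \eqref{eq:shortestloss}. The correction is $O(t)$ because the Gaussian weight $e^{-u^2/(4t)}$ over the $u$-scale $t/s$ expands in even powers of $t^{1/2}$, giving relative error $O(t/s^2)$. The statement $-16t\log d_t\to s^2$ then follows from \eqref{eq:uniformsystolicloss}.

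\textbf{Main obstacle.} The main difficulty is the uniformity in the lower bound and in the tail sums over the family. It requires a quantitative gap $|\ell'|^2-s^2\ge c(A,\ell_0)$ between the systole and the next shell, and a uniform separation of the midpoint from all other lattice points. I would derive both from the packing argument of Lemma~\ref{lem:latticeWiener} together with reduced-basis inequalities for two-dimensional lattices.
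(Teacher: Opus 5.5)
Your proposal follows the paper's proof essentially step for step. Both use the posterior-covariance formula of Proposition~\ref{prop:wrappingloss}, compare with the two principal images via the $\operatorname{sech}$ integral for the sharp constant, restrict to a small neighbourhood of the midpoint $-\ell/2$ for the lower bound, and pass from $d_t$ to $\operatorname{tr}(I-H_t)$. Using $\max$ instead of the sum of the two principal images only worsens the constant. Integrating the minimum over your disk $|\zeta|\le\sqrt t$ is equivalent to the paper's box $|u|\le t/s$, $|v|\le\sqrt t$.

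One claim is false, however, and you single out proving it as the main obstacle. There is no uniform gap $|\ell'|^2-s^2\ge c(A,\ell_0)$ between the systole and the next shell. For example, the rectangular lattice spanned by $(a,0)$ and $(0,b)$ with $ab=A$ and $b\downarrow a$ has $m=2$, yet its next vectors $\pm(0,b)$ satisfy $b^2-a^2\to0$; the same happens next to the hexagonal lattice. This is also why \eqref{eq:shortestloss} holds only with a lattice-dependent $O_{\mathcal L}(t)$: $m$ jumps in such limits.

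For a fixed lattice the gap does exist, so your treatment of \eqref{eq:shortestloss} is fine. The paper notes explicitly that uniformity needs no gap, and your argument does not need one either once the claim is dropped.

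\begin{itemize}
\item \emph{Upper bound.} You only need $\sum_{\ell\ne0}|\ell|e^{-|\ell|^2/(16t)}\le C_{A,\ell_0}e^{-s^2/(16t)}$. This follows from $s^2\le2A/\sqrt3$, the bounded number of lattice points in a fixed ball (each contributing at most $Ce^{-s^2/(16t)}$), and Gaussian annuli beyond that ball.
\item \emph{Lower bound.} One shortest vector suffices, since every term $|\ell|^2I_\ell$ is nonnegative. You also need the midpoint to be separated from the other images, and this is uniform by the parallelogram identity $|n-\ell/2|^2=\tfrac12(|n|^2+|n-\ell|^2)-s^2/4\ge\tfrac34s^2$ for $n\in\mathcal L\setminus\{0,\ell\}$. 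No reduced-basis argument is required.
\end{itemize}

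A minor slip: in the lower-bound step the excluded images are $n\ne0,\ell$, not $n\ne0,-\ell$.
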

\begin{proof}
For $\ell\ne0$, put $r=|\ell|$ and denote the integral in
\eqref{eq:missinginformation} by $I_\ell(t)$.  Since
$p_t^{\mathcal L}\ge\phi_t(z)+\phi_t(z+\ell)$, coordinates
$z=-\ell/2+u\ell/r+v\ell^\perp/r$ give
\begin{align}
 I_\ell(t)&\le \int_{\mathbb R^2}
  \frac{\phi_t(z)\phi_t(z+\ell)}{\phi_t(z)+\phi_t(z+\ell)}\,dz\notag\\
 &=\frac{\sqrt t}{\sqrt\pi r}e^{-r^2/(16t)}
   \int_{\mathbb R}e^{-4tw^2/r^2}\operatorname{sech}w\,dw
 \le\frac{\sqrt{\pi t}}r e^{-r^2/(16t)}.\label{eq:twogaussianoverlap}
\end{align}
Here $w=ru/(4t)$ and $\int\operatorname{sech}w\,dw=\pi$.
For a shortest vector $\ell$, every image other than $0,\ell$ is strictly
farther from $-\ell/2$: indeed, for $m\ne0,\ell$,
\[
 |m-\ell/2|^2=\tfrac12(|m|^2+|m-\ell|^2)-r^2/4\ge3r^2/4.
\]
On a sufficiently small fixed neighborhood of this midpoint the other
images are exponentially smaller than the two displayed images.  On its
complement the two-image integral has an exponentially smaller value than
\eqref{eq:twogaussianoverlap}: in the normal direction this follows from
$\operatorname{sech}(ru/(4t))$, and in the tangential direction from
$e^{-v^2/(4t)}$.  Gaussian lattice summation justifies the comparison with
all images.  Hence for a shortest vector,
\[
 I_\ell(t)=\frac{\sqrt{\pi t}}r e^{-r^2/(16t)}(1+O_{\mathcal L}(t)),
\]
where the $O(t)$ follows by expanding $e^{-4tw^2/r^2}$ under the integrable
weight $\operatorname{sech}w$.  For a fixed lattice its next length exceeds
$r$, so \eqref{eq:twogaussianoverlap} makes the sum of nonshortest terms
exponentially smaller.  With $Q_t=I-H_t$, formula
\eqref{eq:missinginformation} now gives
$\operatorname{tr}Q_t\sim\sqrt\pi m(\mathcal L)r
 e^{-r^2/(16t)}/(4\sqrt t)$.
Since $1-\sqrt{\det(I-Q)}=\frac12\operatorname{tr}Q+O(|Q|^2)$ as $Q\to0$,
this proves \eqref{eq:shortestloss}.

For uniformity one need not keep a gap between the first two lattice lengths.
The elementary area bound $s(\mathcal L)^2\le2A/\sqrt3$ and packing disjoint
disks of radius $\ell_0/2$ give, for $0<t<t_0(A,\ell_0)$,
\[
 \sum_{\ell\ne0}|\ell|e^{-|\ell|^2/(16t)}
 \le C_{A,\ell_0}e^{-s(\mathcal L)^2/(16t)}.
\]
For example, split at $|\ell|=2\sqrt{2A/\sqrt3}$, where the number of
points is uniformly bounded, and sum the remaining Gaussian annuli.
Together with \eqref{eq:twogaussianoverlap} this bounds
$\operatorname{tr}Q_t$ from above.  For a shortest vector, restrict its
integral to $|u|\le t/r$, $|v|\le\sqrt t$.
On this box the two principal images are comparable.  The preceding
midpoint gap, $r\ge\ell_0$, and the same lattice packing estimate show
$p_t^{\mathcal L}\le C(\phi_t(z)+\phi_t(z+\ell))$ uniformly for small $t$.
The integrand is therefore at least
$c t^{-1}e^{-r^2/(16t)}$ there, and the box has area $4t^{3/2}/r$.
It follows that $I_\ell(t)\ge c\sqrt t\,r^{-1}e^{-r^2/(16t)}$,
giving the uniform lower bound for $\operatorname{tr}Q_t$.
Finally, when $0\preceq Q\prec I$,
$\frac12\operatorname{tr}Q\le1-\sqrt{\det(I-Q)}\le\operatorname{tr}Q$.
This proves \eqref{eq:uniformsystolicloss}; taking logarithms proves the
uniform limit.
\end{proof}

\begin{corollary}[Asymptotic selection of the hexagonal flat structure]
\label{cor:hexselection}
Let $\mathcal L_{\rm hex}$ be the area-$A$ hexagonal lattice.  On the moduli
space of area-$A$ lattices with systole at least $\ell_0>0$, assume this
lattice is admissible.  Every sequence of maximizers of $V_t$ converges,
as $t\downarrow0$, to $\mathcal L_{\rm hex}$ modulo rotations and changes
of lattice basis.  The same holds for a sequence $\mathcal L_t$ satisfying
\[
 V_t(\mathcal L_t)\ge\sup_{\mathcal L}V_t(\mathcal L)-A\eta_t,
 \qquad \eta_t=O(d_t(\mathcal L_{\rm hex})).
\]
For every fixed nonhexagonal lattice, $V_t(\mathcal L_{\rm hex})>
V_t(\mathcal L)$ for all sufficiently small $t$, with a threshold that may
depend on that lattice.  For such a sequence, more quantitatively,
$0\le2A/\sqrt3-s(\mathcal L_t)^2=O(t)$, and its distance to the hexagonal
class in reduced modulus coordinates is $O_{A,\ell_0}(t)$.  These conclusions do not
assert that the exact maximizer equals the hexagonal lattice for each small
positive time.
\end{corollary}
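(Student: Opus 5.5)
The plan is to reduce every assertion to the two-sided systolic estimate \eqref{eq:uniformsystolicloss} and the exact asymptotic \eqref{eq:shortestloss} of Theorem~\ref{thm:systolicloss}, together with the classical fact that among area-$A$ lattices the systole satisfies $s(\mathcal L)^2\le 2A/\sqrt3$, with equality exactly for the hexagonal class. Write $s_{\rm hex}^2=2A/\sqrt3$. Maximizing $V_t$ is the same as minimizing $d_t$.

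\emph{Step 1 (systole of near-maximizers).} Let $\mathcal L_t$ satisfy $d_t(\mathcal L_t)\le d_t(\mathcal L_{\rm hex})+\eta_t\le C'd_t(\mathcal L_{\rm hex})$. This covers exact maximizers, where $\eta_t=0$. The uniform bounds \eqref{eq:uniformsystolicloss} then give
\[
 c\,t^{-1/2}e^{-s(\mathcal L_t)^2/(16t)}\le C''t^{-1/2}e^{-s_{\rm hex}^2/(16t)}.
\]
Taking logarithms yields $s_{\rm hex}^2-s(\mathcal L_t)^2\le16t\log(C''/c)=O(t)$. Nonnegativity comes from the Hermite bound.

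\emph{Step 2 (from systole to shape).} In reduced modulus coordinates $\tau=x+iy$ with $|x|\le\frac12$, $|\tau|\ge1$, the area-$A$ lattice has $s^2=A/y$. The condition $s^2\ge s_{\rm hex}^2-O(t)$ therefore forces $y\le\sqrt3/2+O(t)$. The fundamental-domain constraints then confine $\tau$ to an $O(t)$-neighborhood of the corner $e^{i\pi/3}$ (or of its mirror image, which represents the same class). One caveat is that near the corner the domain is a cusp-shaped region bounded by $|\tau|\ge1$, $|x|\le\frac12$. A linear estimate there needs a short computation: if $y\le\sqrt3/2+\delta$ and $x^2+y^2\ge1$, then $|x|\ge\frac12-O(\delta)$, which gives distance $O(\delta)$ and not merely $O(\sqrt\delta)$. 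This proves the quantitative statement. Convergence modulo rotations and basis change follows, with the constants uniform in $A,\ell_0$ because \eqref{eq:uniformsystolicloss} is.

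\emph{Step 3 (fixed competitor).} For a fixed nonhexagonal $\mathcal L$ we have $s(\mathcal L)<s_{\rm hex}$. Then \eqref{eq:uniformsystolicloss}, or \eqref{eq:shortestloss} directly, gives
\[
 d_t(\mathcal L)/d_t(\mathcal L_{\rm hex})\ge (c/C)\exp\bigl[(s_{\rm hex}^2-s(\mathcal L)^2)/(16t)\bigr]\to\infty,
\]
so $V_t(\mathcal L_{\rm hex})>V_t(\mathcal L)$ for small $t$, with a threshold depending on $\mathcal L$.

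The main obstacle is that maximizers need not exist or be unique. A sequence of maximizers should be read as any choice of exact maximizers, if these exist. Existence holds because $V_t$ is continuous on the compact moduli set $\{s\ge\ell_0\}$, which is compact by Mahler's criterion with the area fixed, and continuity follows from the absolutely convergent formula \eqref{eq:missinginformation}. The only genuinely delicate input is the uniformity in \eqref{eq:uniformsystolicloss}, and that is already provided. No claim is made that the hexagonal lattice is the exact maximizer at positive $t$.
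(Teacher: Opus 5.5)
Your proposal is correct and follows the paper's proof essentially step by step. Near-maximality gives $d_t(\mathcal L_t)\le C_0\,d_t(\mathcal L_{\rm hex})$, the uniform two-sided bounds \eqref{eq:uniformsystolicloss} give $0\le 2A/\sqrt3-s(\mathcal L_t)^2\le16t\log(C_0C/c)$, the reduced coordinates $y=A/s^2$, $|x|\le\frac12$, $x^2+y^2\ge1$ turn this into $O(t)$ closeness to the hexagonal class, and \eqref{eq:shortestloss} disposes of a fixed competitor of smaller systole. The differences are cosmetic: you cite Mahler's criterion where the paper proves compactness directly from a reduced basis, and the corner at $e^{i\pi/3}$ has interior angle $\pi/3$ rather than being a cusp, so the linear estimate you check there is immediate.
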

\begin{proof}
The noncollapsing moduli space is compact: take a shortest primitive basis
vector of length $s$, reduce the projection of the second basis vector to
$[-s/2,s/2]$, and note that its perpendicular component is $A/s$.
The Gaussian sums and the Fisher quotient vary continuously there, so
maximizers exist.  The same reduced basis shows
$A\ge(\sqrt3/2)s^2$, with equality precisely at the hexagonal lattice.
The near-maximality assumption gives
$d_t(\mathcal L_t)\le d_t(\mathcal L_{\rm hex})+\eta_t
\le C_0d_t(\mathcal L_{\rm hex})$.  Apply the uniform logarithmic limit
of Theorem~\ref{thm:systolicloss}: then
$s(\mathcal L_t)^2\to2A/\sqrt3$.  The uniform two-sided bounds give the stronger estimate
\[
 0\le2A/\sqrt3-s(\mathcal L_t)^2
       \le16t\log(C_0C/c).
\]
For a reduced basis the modulus is $z=x+iy$, with
$y=A/s(\mathcal L_t)^2$, $|x|\le1/2$ and $x^2+y^2\ge1$.
Consequently $y-\sqrt3/2=O_{A,\ell_0}(t)$ and
$1/2-|x|=O_{A,\ell_0}(t)$; the two boundary points
$\pm1/2+i\sqrt3/2$ represent the same hexagonal class.
This proves both convergence and its asserted rate.
For a fixed competitor with strictly smaller systole,
\eqref{eq:shortestloss} shows
$d_t(\mathcal L_{\rm hex})/d_t(\mathcal L)\to0$.
\end{proof}

\begin{remark}[Two scales, and the accuracy needed for lattice selection]
\label{rem:twoscales}
On a fixed smooth torus the curvature contribution to $A-V_t(g)$ is
$t^2\mathcal K_2(g)/9+O(t^3)$ by Theorem~\ref{thm:fixedlimit}; for a
flat torus the residual defect is instead exponentially small as in
\eqref{eq:shortestloss}.  Thus polynomial-accuracy small-time observations
can detect a nonzero curvature energy while failing to resolve the flat
structure.  In particular an $o(1)$ or polynomially small error in maximizing
$V_t$ cannot imply hexagonal selection: every fixed flat torus has
$V_t/A\to1$.  The exponentially accurate hypothesis in
Corollary~\ref{cor:hexselection} is essential to its conclusion.  Selection
of a preferred lattice concerns optimization across conformal structures;
it is not recovery of an arbitrary unknown lattice from the single scalar
$V_t$.
\end{remark}

\begin{figure}[tbp]
\centering
\includegraphics[width=\textwidth]{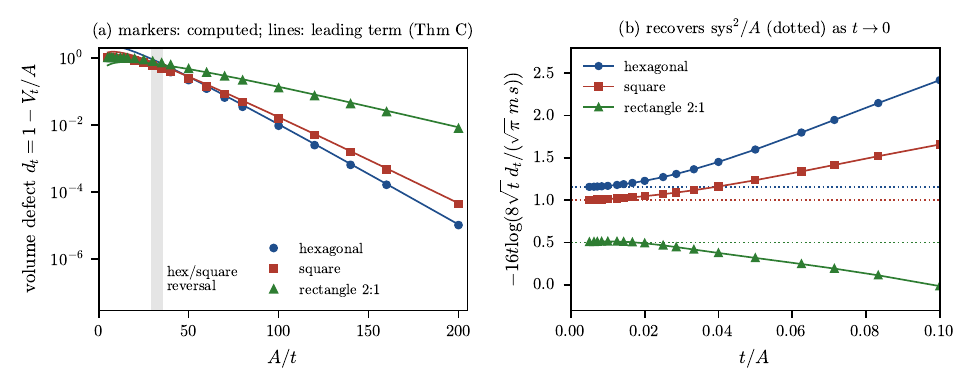}
\caption{Winding loss on three area-one flat tori (hexagonal, square,
and $2{:}1$ rectangle). (a) Volume defect $d_t=1-V_t/A$ against $A/t$:
markers are direct quadrature, solid lines the leading term of
Theorem~\ref{thm:systolicloss}. The lattice with the longest systole
loses least as $t\to0$; the hexagonal and square rankings reverse between
$t/A=.02$ and $.03$ (shaded; cf.\ Remark~\ref{rem:hexnotalltimes}).
(b) Removing the prefactor, $-16t\log\bigl(8\sqrt t\,d_t/(\sqrt\pi\,m\,s)\bigr)$
tends to $\operatorname{sys}^2/A$ (dotted), the exponent in
Theorem~\ref{thm:systolicloss}. Numerical comparisons illustrate, but do
not prove, the asymptotic theorems.}
\label{fig:latticecomparison}
\end{figure}

\begin{remark}[No all-time hexagonal maximum]\label{rem:hexnotalltimes}
For area one, direct Gaussian quadrature gives the following values of
$V_t/A$ (the rectangular lattice has aspect ratio $2$):
\[
\begin{array}{c|ccc}
 t/A&\text{rectangle}&\text{square}&\text{hexagonal}\\\hline
 .005&.9921060721&.9999553590&.9999895519\\
 .010&.8735375480&.9842744586&.9904796886\\
 .020&.5244020171&.7631019134&.7831314880\\
 .030&.2745773086&.4813933474&.4798687430\\
 .050&.0601635672&.1552417827&.1339212031
\end{array}
\]
These are numerical checks, not interval-certified bounds: midpoint grids
of $256^2$ and $512^2$, using image indices $-8\le i,j\le8$, agreed to
$2\cdot10^{-15}$ or better, and the direct Fisher matrix agreed with the
posterior-covariance expression (the first equality in
\eqref{eq:missinginformation}) to the same absolute precision. An independent reciprocal-lattice
Fourier calculation at $t/A=.02,.03,.05$ reproduced the square and
hexagonal values within $3\cdot10^{-16}$.
The reversal is also rigorous at large time.  The reciprocal-lattice
Fourier series gives
\[
 \frac{V_t(\mathcal L_{\rm sq})}A\sim16\pi^2t e^{-8\pi^2t},\qquad
 \frac{V_t(\mathcal L_{\rm hex})}A\sim
 \frac{48\pi^2}{\sqrt3}t e^{-16\pi^2t/\sqrt3}\quad(A=1).
\]
Indeed the leading Fisher matrix is
$\sum_{|\xi|^2=\lambda_1}\xi\otimes\xi\,e^{-2t\lambda_1}$;
the first reciprocal shells are isotropic for these two lattices.
Thus the hexagonal volume is smaller for all sufficiently large $t$.
Montgomery's theta-minimization theorem \cite{Montgomery1988} concerns a
linear Gaussian lattice sum; the nonlinear Fisher quotient is a different
functional, so that theorem alone cannot establish its extrema.
\end{remark}

\section{Curved backgrounds and the remaining resolution problem}
\label{sec:curvedquestions}
The uniform limit and the flat-band theorem answer different questions.
The first controls a family with uniformly bounded spatial derivatives;
the second permits frequencies of order $t^{-1/2}$. Neither alone
transfers the flat multiplier to a curved high-frequency background.

\begin{proposition}[An exact low-frequency obstruction on the sphere]
\label{prop:sphericalgauge}
Let $g_c$ be round, with curvature $K_c>0$, and let $f$ be a first
spherical harmonic, so $-\Delta_{g_c}f=2K_cf$. For every $t>0$,
\[
 D^2E_t(g_c)[f,f]=0
\]
when the derivatives use the area-preserving conformal variable
$g=(1+f)g_c$. Thus a formula
$\tfrac12D^2E_t(g_c)[f,f]/\|\Delta f\|_2^2=q(t\lambda)+O(t)$
cannot hold uniformly on the full band $0<t\lambda\le\Theta$.
\end{proposition}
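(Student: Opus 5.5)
The plan is to exploit the conformal automorphism group of the round sphere. The functional $E_t$ is built only from the metric: heat kernel, source Fisher tensor and Riemannian volume. It is therefore invariant under pullback by diffeomorphisms, $E_t(\varphi^*g)=E_t(g)$.

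Take the one-parameter group of Möbius dilations $\varphi_\sigma$ generated by the conformal vector field $X=\nabla f$, where $f$ is a first spherical harmonic (so $X$ is a gradient conformal field on $(S^2,g_c)$). Then $\varphi_\sigma^*g_c=e^{2w_\sigma}g_c$ with conformal factor $e^{2w_\sigma}=\det d\varphi_\sigma$. Since $\varphi_\sigma$ is a diffeomorphism, the area is preserved exactly, and differentiating gives $\frac{d}{d\sigma}\big|_0 e^{2w_\sigma}=\operatorname{div}X/1\cdot$(up to sign convention) $=\Delta_{g_c}f=-2K_cf$. So, after rescaling $\sigma$, the curve $\sigma\mapsto\varphi_\sigma^*g_c=(1+f_\sigma)g_c$ is an area-preserving conformal curve whose conformal variable satisfies $f_\sigma=\sigma f+O(\sigma^2)$. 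Along this curve $E_t$ is constant.

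To pass from this curve to the second variation, I expand $\sigma\mapsto E_t((1+f_\sigma)g_c)$ to second order. This gives $0=\sigma^2\bigl(\tfrac12D^2E_t(g_c)[f,f]+\tfrac12DE_t(g_c)[\ddot f_0]\bigr)+O(\sigma^3)$. The correction term has to be handled: the second-order coefficient $\ddot f_0$ of the conformal factor need not vanish. I would show that the first variation $DE_t(g_c)[h]$ vanishes on mean-zero $h$. By $SO(3)$-invariance of $E_t$ and homogeneity of $g_c$, the linear functional $DE_t(g_c)$ is represented by a constant density, so $DE_t(g_c)[h]=c\int h\,dA$.

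Area preservation of $f_\sigma$ at second order forces $\int\ddot f_0\,dA=0$. This exactness holds because the area $\int(1+f_\sigma)\,dA=A$ for all $\sigma$. The term therefore drops out, and $D^2E_t(g_c)[f,f]=0$ for every $t>0$. Regularity of $E_t$ in the conformal variable at fixed $t$ on a compact smooth surface (smooth dependence of the heat kernel on a smooth metric, with positive kernel) justifies the Taylor expansion; I regard this as routine.

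For the consequence, $\|\Delta f\|_2^2=4K_c^2\|f\|_2^2>0$. If the stated formula held uniformly, then choosing $t$ with $t\lambda=2K_ct\to0$ would force $\tfrac12D^2E_t/\|\Delta f\|_2^2\to q(0)=1/4$, contradicting the exact zero. The main obstacle is making the invariance step clean, namely identifying the Möbius flow with the area-preserving conformal variable $g=(1+f)g_c$ up to reparametrization, and checking that $DE_t(g_c)$ kills the second-order correction; the rest is bookkeeping.
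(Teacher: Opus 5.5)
Your proposal is correct and follows essentially the same route as the paper: $E_t$ is constant along a M\"obius (conformal-automorphism) path whose tangent conformal factor is a first harmonic, the first variation vanishes on mean-zero directions by rotation invariance, the acceleration has zero integral by exact area preservation, and the resulting zero Hessian contradicts $q(2K_ct)\to q(0)=\tfrac14$ as $t\to0$. Your hedged sign remark is unnecessary: the flow of $X=\nabla f$ has Jacobian whose derivative at $\sigma=0$ is exactly $\operatorname{div}_{g_c}X=\Delta_{g_c}f=-2K_cf$, so the tangent conformal factor is determined with no sign ambiguity.
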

\begin{proof}
Rotation invariance makes the first conformal variation of $E_t$ at
$g_c$ a constant multiple of $\int f\,dA_{g_c}$; it therefore vanishes
on the fixed-area slice. A conformal automorphism path pulls $g_c$ back
to area-preserving metrics with the same value of $E_t$. Its tangent
conformal factors are exactly the first spherical harmonics. Twice
differentiating along such a path gives the claimed Hessian value,
because its acceleration also has zero integral and the constrained
first variation is zero. Positive-time differentiability follows from
smooth dependence of the heat kernel on the metric on the compact
surface; this argument requires no uniform small-time derivative bound.
For a nonzero first harmonic the proposed ratio is zero, whereas
$q(2K_ct)\to1/4$.
\end{proof}

Removing these gauge modes is necessary, but it does not remove all
curvature dependence at low frequency: the Calabi quadratic form is
$\| (\Delta+2K_c)f\|_2^2/4$, not $\|\Delta f\|_2^2/4$.
For example, a degree-two harmonic on the unit sphere has the former
quantity divided by $\|\Delta f\|_2^2$ equal to $1/9$. The finite-amplitude
statement of Proposition~\ref{prop:curvedfiniteamplitude} rigorously recovers
this number. Passage to the actual small-time Hessian is subject to the
parameter-remainder condition described there.

\paragraph{A correctly localized universality question.}
On the round sphere, rotational symmetry makes each degree-$l$ harmonic
space an eigenspace of the conformal Hessian. For $0<\delta<\Theta<\infty$
and $\lambda_l=l(l+1)K_c$, a meaningful high-frequency target is
\[
 \sup_{\delta\le t\lambda_l\le\Theta}
 \left|\frac{D^2E_t(g_c)[f_l,f_l]}{2\lambda_l^2\|f_l\|_2^2}
          -q(t\lambda_l)\right|\le C_{\delta,\Theta,g_c}t.
\]
This is an open estimate here. A proof must control two metric
variations of the off-diagonal Fisher quotient, retain curvature terms,
and show that the resulting error is uniform for $l\asymp t^{-1/2}$.
The flat quartic anisotropy cancellation cannot be presumed at a curved
background. A numerical critical value near $4.408$ would motivate this
estimate, but would not supply its uniform error bound. No such spherical
critical values are used as established results in this paper.
On a general constant-curvature quotient, a spectral-subspace or
microlocal formulation may be needed, since an eigenspace need not
carry a scalar Hessian multiplier.

\paragraph{Convexity requires a background scale.}
A separate number $\theta_c<\tau_*$ cannot be called a universal
nonlinear convexity boundary without specifying the background radius
and norm. Theorem~\ref{thm:mixedhessian} gives a positive Hessian on a
sufficiently small $H^2$ ball for every fixed $\theta<\tau_*$. An observed
loss of convexity at a chosen scale can therefore concern a
specified nonzero background or a prescribed radius, but not all small
backgrounds. We make no numerical universal-threshold claim of this kind.

\paragraph{What remains global.}
The bounded-geometry hypotheses in the uniformization theorem have not
been derived from the heat-Fisher objective itself. Removing them would
require an equicoercivity result controlling concentration and conformal
degeneration. Likewise, asymptotic selection of hexagonal moduli does
not identify the exact optimizer at each sufficiently small positive
time. These are distinct open problems, beyond the two selection
limits proved above.

\appendix
\section{Coefficients in the proof of Lemma~\ref{lem:closedform}}
\label{app:closedform}
Write the right side of the termwise integration in the proof of
Lemma~\ref{lem:closedform} as $G_1+G_2+G_3$, with
$G_1=\frac18(c_0-c_2+\mathscr L_0-4\mathscr L_1-3\mathscr L_2)$,
$G_2=-\mathscr L_1/(2z)$ and
$G_3=\frac z{16}(-c_0+c_1+c_2-c_3+\mathscr L_0+3\mathscr L_1-3\mathscr L_2-\mathscr L_3)$.
Inserting the formulas for $c_k$ and $\mathscr L_k$, each group is a
combination of $\operatorname{Shi}z$, $\cosh z$ and $\sinh z$; the table
lists $8z^3$ times the coefficients.

\begin{table}[htbp]
\centering
\renewcommand{\arraystretch}{1.25}
\begin{tabular}{c|ccc}
 & $\operatorname{Shi}z$ & $\cosh z$ & $\sinh z$\\\hline
 $8z^3G_1$ & $-2z^2-8z+12$ & $10z$ & $8z-22$\\
 $8z^3G_2$ & $-8$ & $0$ & $8$\\
 $8z^3G_3$ & $-z^3+3z^2+6z-6$ & $z^2-11z$ & $-7z+17$\\\hline
 sum & $-(z^3-z^2+2z+2)$ & $z^2-z$ & $z+3$
\end{tabular}
\caption{Contributions to $8z^3\cdot2e^z\mathscr F(2z)$. The column sums
give the displayed formula in the proof of Lemma~\ref{lem:closedform}.}
\label{tab:closedform}
\end{table}

For example, $G_2=-\mathscr L_1/(2z)=-(\operatorname{Shi}z-\sinh z)/z^3$.
In $G_1$, the $\operatorname{Shi}$ coefficients of $\mathscr L_0,\mathscr L_1,\mathscr L_2$
are $-2/z$, $2/z^2$, $-4/z^3$, so $8z^3G_1$ has
$\operatorname{Shi}$-coefficient $z^3(-2/z-8/z^2+12/z^3)=-2z^2-8z+12$; the other
entries are obtained in the same way.

\section*{Computational checks and preparation}
Numerical quadrature is used only to illustrate the analytic formulas,
compare lattices, and check normalizations. The accompanying scripts
record quadrature orders and convergence comparisons; floating-point
results are not interval certificates. An AI assistant (Claude,
Anthropic) was used for proof review, numerical checks, and editing.
The authors have checked all mathematical content and take full
responsibility for it.

\printbibliography

\end{document}